\newcommand{\ef}{\end{equation}}
\chardef\bslash=`\\ 
\newtheorem{thm}{Theorem} [section]
\newtheorem*{thm*}{Theorem}
\newtheorem{cor}[thm]{Corollary}
\newtheorem{lem}[thm]{Lemma}
\newtheorem{defn}[thm]{Definition}
\newtheorem{acknowledgment*}[thm] {Acknowledgment}
 \theoremstyle{remark}
 \newtheorem{remark}[thm]{Remark}
\newcommand{\thmref}[1]{Theorem~\ref{#1}}
\newcommand{\lemref}[1]{Lemma~\ref{#1}}
 \renewcommand{\sectionmark}[1]{}
\newcommand{\doe}{\overset{\text{def}}{=}}
\newcommand{\loc} {\operatorname{loc}}
 \date{}
\begin{document}

\title[A criterion for compactness in $L_p(\mathbb R)$]
{A criterion for compactness in $L_p(\mathbb R)$ of the
resolvent of \\
 the maximal Sturm-Liouville operator of general form}
\author[N.A. Chernyavskaya]{N.A. Chernyavskaya}
\address{Department of Mathematics and Computer Science, Ben-Gurion
University of the Negev, P.O.B. 653, Beer-Sheva, 84105, Israel}
\author[L.A. Shuster]{L.A. Shuster}
 \address{Department of Mathematics,
 Bar-Ilan University, 52900 Ramat Gan, Israel}
 \email{miriam@macs.biu.ac.il}

\subjclass[2000] {34B24, 34L40}

\begin{abstract}
We consider  the equation
$$
-(r(x)y'(x))'+q(x)y(x)=f(x),\quad x\in \mathbb R
$$
where $f\in L_p(\mathbb R),$\ $p\in(1,\infty)$ and
$$
  r>0, \  q\ge0, \  \frac{1}{r}\in L_1^{\loc}(\mathbb R), \ q\in
 L_1^{\loc}(\mathbb R),
$$
 $$
 \lim_{|d|\to\infty}\int_{x-d}^x\frac{dt}{r(t)}\cdot \int_{x-d}^x q(t)dt=\infty,\quad x\in\mathbb R.
$$
 We assume that this equation is correctly solvable in
 $L_p(\mathbb R).$ Under these assumptions, we study the problem
 on compactness of the resolvent $\mathcal L_p^{-1}:L_p(\mathbb
 R)\to L_p(\mathbb R)$ of the maximal continuously invertible
 Sturm-Liouville operator $\mathcal L_p:\mathcal D_p(\mathbb R)\to
 L_p(\mathbb R).$
 Here
 $$\mathcal L_py=-(ry')'+qy,\qquad y\in\mathcal D_p$$
 $$\mathcal D_p=\{y\in L_p(\mathbb R): y,ry'\in\mathcal A
 C^{\loc}(\mathbb R),\ -(ry')'+qy\in L_p(\mathbb R)\}.$$
 For the compact operator $\mathcal L_p^{-1}: L_p(\mathbb R)\to
 L_p(\mathbb R)$, we obtain two-sided sharp by order estimates of
 the maximal eigenvalue.
\end{abstract}

 \maketitle

\baselineskip 20pt

\section{Introduction}\label{introduction}
\setcounter{equation}{0} \numberwithin{equation}{section}

In the present paper, we consider the equation
\begin{equation}\label{1.1}
-(r(x)y'(x))'+q(x)y(x)=f(x),\quad x\in \mathbb R
\end{equation}
 where $f\in L_p(\mathbb R),$\ $(L_p(\mathbb R):=L_p),$\ $p\in(1,\infty)$ and
 \begin{equation}\label{1.2}
  r>0, \  q\ge0, \  r^{-1}\in  L_1^{\loc}(\mathbb R), \ q\in
 L_1^{\loc}(\mathbb R)\quad \left(r^{-1}:\equiv\frac{1}{r}\right),
 \end{equation}
 \begin{equation}\label{1.3}
 \lim_{|d|\to\infty}\int_{x-d}^x\frac{dt}{r(t)}\cdot\int_{x-d}^x
 q(t)dt=\infty,\quad x\in\mathbb R.
 \end{equation}

Our general goal consists in finding criteria for compactness of
the resolvent of equation \eqref{1.1}. To state the problem more
precisely, we need the following definitions and restrictions.

Here and in the sequel, by a solution of equation \eqref{1.1}, we
mean any function $y$   absolutely continuous together with $ry'$
and satisfying \eqref{1.1} almost everywhere on $\mathbb R.$ We
say that equation \eqref{1.1} is correctly solvable in a given
space $L_p,$\ $p\in[1,\infty)$ if the following assertions hold
(see \cite[Ch.III, \S6, no.2]{1}):
\begin{enumerate}
 \item[I)] for every function $f\in L_p$, there exists a unique
 solution of \eqref{1.1}, $y\in L_p;$

\item[II)] there exists an absolute constant $c(p)\in(0,\infty)$
such that the solution of \eqref{1.1}, $y\in L_p,$ satisfies the
inequality
  \begin{equation}\label{1.4}
   \|y\|_p\le c(p)\|f\|_p,\qquad \forall f\in L_p\quad (\|f\|_p:=\|f\|_{L_p}).
 \end{equation}
\end{enumerate}

See \cite{2} and \S2 below for precise conditions that guarantee
I)--II).  In the sequel, for brevity, this is referred to as
``problem I)--II)" or ``question on I)--II)". It is easy to see
that the  problem I)--II) can be reformulated in different terms
(see \cite{2,3}).

To this end, let us introduce the set $\mathcal D_p$ and the
operator $\mathcal L_p:$
$$\mathcal D_p=\{y\in L_p: y,ry'\in L_p,\quad
-(ry')'+qy\in L_p\},$$
$$\mathcal L_py=-(ry')'+qy,\quad y\in\mathcal D_p.$$
(Here $\mathcal A C^{\loc}(\mathbb R)$ is the set of functions
absolutely continuous on every finite segment.) The linear
operator $\mathcal L_p$ is called the maximal Sturm-Liouville
operator, and problem   I)--II) is obviouisly equivalent to the
problem on existence and boundedness of the operator $\mathcal
L_p^{-1}: L_p\to L_p $ (see \cite{3}).

We can now give a precise statement of the problem studied in the
present paper:

\textit{To find minimal additional requirements  to \eqref{1.2}
and \eqref{1.3} to the functions $r$ and $q$ under which, together
with I)--II),  the following condition III) also holds (``problem
I)--III)" or ``question on I)--III)"):}

III) \textit{for a given $p\in(1,\infty)$ the operator $\mathcal
L_p^{-1}: L_p\to L_p$ is compact.}

The main goal of the present paper is an answer to the question on
I)--III).

For the reader's convenience we outline the structure of the
paper. In \S2 we collect the preliminaries necessary for
exposition; \S3 contains a list of all results of the paper
together with comments; \S4 contains the proofs; in \S5 we present
examples of applications of our results to a concrete equation;
and, finally, \S6 contains the proofs of some technical
assertions.

\section{Preliminaries}

\begin{thm}  \cite{4}  \label{thm2.1}
 Suppose that conditions \eqref{1.2} and
\begin{equation}\label{2.1}
\int_{-\infty}^x q(t)dt>0,\qquad \int_x^\infty q(t)dt>0,\qquad
x\in\mathbb R \end{equation} hold. Then the equation
\begin{equation}\label{2.2}
(r(x)z'(x))'=q(x)z(x),\qquad x\in\mathbb R
\end{equation}
has a fundamental system of solutions (FSS) with the following
properties:
\begin{equation}\label{2.3}
v(x)>0,\ u(x)>0,\quad v'(x)\ge0,\quad u'(x)\le0,\qquad x\in\mathbb
R,
\end{equation}
\begin{equation}\label{2.4}
r(x)[v'(x)u(x)-u'(x)v(x)]=1,\qquad x\in\mathbb R,
\end{equation}
\begin{equation}\label{2.5}
\lim_{x\to-\infty}\frac{v(x)}{u(x)}=\lim_{x\to\infty}\frac{u(x)}{v(x)}=0,
\end{equation}
\begin{equation}\label{2.6}
\int_{-\infty}^0\frac{dt}{r(t)u^2(t)}<\infty,\ \int_0^
\infty\frac{dt}{r(t)v^2(t)}<\infty,\
\int_{-\infty}^0\frac{dt}{r(t)v^2(t)}=\int_0^\infty\frac{dt}{r(t)u^2(t)}=\infty.
\end{equation}
\end{thm}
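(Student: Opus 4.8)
The plan is to construct the fundamental system of solutions $(u,v)$ directly by first producing a single positive, nonincreasing ``principal'' solution at $+\infty$ and, symmetrically, a positive, nondecreasing solution at $-\infty$, and then to verify that together they satisfy \eqref{2.3}--\eqref{2.6}. Since \eqref{2.2} is a second-order linear equation with coefficients satisfying \eqref{1.2}, local existence and uniqueness (in the class of $z$ with $z, rz' \in \mathcal A C^{\loc}$) is standard, so the real work is in extracting solutions with the prescribed global sign and monotonicity behaviour.

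First I would build $v$. Fix a base point, say $x=0$, and for each $n$ consider the solution $v_n$ of \eqref{2.2} on $[0,\infty)$ normalized by a suitable boundary condition at $x=n$ (for instance $v_n(n)=1$ together with a Dirichlet-type condition, or one obtained from a Sturm--Liouville problem on $[0,n]$ with the right spectral sign); the key point is that $q\ge 0$ forces any solution that is positive somewhere to be convex in the sense that $(rv')'=qv\ge 0$ wherever $v\ge 0$, so $rv'$ is nondecreasing there. Using \eqref{2.1} (which guarantees $q$ is not identically zero near $\pm\infty$, so the relevant monotone solution is genuinely monotone and not merely constant) one shows $v_n$ can be taken positive and nondecreasing on $[0,n]$, with a normalization making the family locally bounded; a compactness/diagonal argument (Arzel\`a--Ascoli applied to $v_n$ and $rv_n'$, both of which are monotone and uniformly bounded on compacta) produces a limit $v$ on $[0,\infty)$ which is positive, nondecreasing, and solves \eqref{2.2}. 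One then extends $v$ to all of $\mathbb R$ by the equation; convexity ($rv'$ nondecreasing) together with $v'(0)\ge 0$ propagates $v'\ge 0$ to the left as well, and positivity of $v$ on all of $\mathbb R$ follows because a first zero of $v$ (moving leftward) would force $rv'$ to have been negative earlier, contradicting monotonicity of $rv'$. Symmetrically one constructs $u$, positive and nonincreasing on $\mathbb R$. This establishes \eqref{2.3}.

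Once $u$ and $v$ are in hand, the remaining properties are consequences of standard ODE bookkeeping together with the hypotheses. The Wronskian $W(x)=r(x)[v'(x)u(x)-u'(x)v(x)]$ is constant for \eqref{2.2} (differentiate and use \eqref{2.2}); I must check $W\ne 0$, i.e.\ that $u,v$ are linearly independent — if they were proportional, a single solution would be simultaneously nondecreasing and nonincreasing hence constant, which \eqref{1.3} (or \eqref{2.1}, forcing $q\not\equiv 0$) rules out — and then one rescales $u$ (or $v$) so that $W\equiv 1$, giving \eqref{2.4}. For \eqref{2.6} I would use the reduction-of-order formula: since $u$ solves \eqref{2.2}, a second independent solution is $u(x)\int \frac{dt}{r(t)u(t)^2}$; comparing with $v$ via \eqref{2.4} yields $\bigl(v/u\bigr)' = \frac{1}{r u^2}$, so $v(x)/u(x) = v(0)/u(0) + \int_0^x \frac{dt}{r(t)u(t)^2}$, and similarly $u(x)/v(x) = $ const $- \int_0^x \frac{dt}{r(t)v(t)^2}$. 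Because $u>0$ is nonincreasing and $v>0$ is nondecreasing, the ratio $v/u$ is nondecreasing and $\ge 0$; its limit as $x\to+\infty$ must be finite (otherwise the solution $u$ would be ``small'' in a way incompatible with $u$ being the nonincreasing, i.e.\ recessive, solution there — this is exactly where one invokes \eqref{1.3} to exclude the limit-circle/non-principal alternative), which forces $\int_0^\infty \frac{dt}{ru^2}<\infty$ and, since $v/u$ cannot tend to a positive constant without $u/v\to 0$, also gives the first equality in \eqref{2.5}; the divergence $\int_0^\infty \frac{dt}{rv^2}=\infty$ is the complementary statement that $v$ is the dominant (non-principal) solution at $+\infty$. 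The behaviour at $-\infty$ is obtained by the mirror-image argument. Thus \eqref{2.5} and \eqref{2.6} follow.

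The main obstacle is the middle step: proving that the monotone solutions one extracts are genuinely the principal/recessive solutions at the respective ends, equivalently that the limiting ratios in \eqref{2.5} are finite rather than infinite and that the integrals in \eqref{2.6} split the way claimed. Monotonicity and positivity come cheaply from $q\ge 0$, but the \emph{finiteness} assertions in \eqref{2.5}--\eqref{2.6} are precisely where the growth hypothesis \eqref{1.3} (joined with \eqref{2.1}) has to do its job — heuristically, $\int_{x-d}^x r^{-1}\cdot\int_{x-d}^x q\to\infty$ says the equation is ``non-oscillatory with a spectral gap'' uniformly, which is the right quantitative input to pin down which solution is dominant. I would expect the cleanest route is to phrase this via the classical dichotomy for disconjugate equations (principal vs.\ non-principal solutions, cf.\ Hartman) and then verify the hypothesis of that dichotomy from \eqref{1.3}; alternatively, a direct estimate bounding $v(x)/u(x)$ from above on $[0,\infty)$ using Hölder on the product in \eqref{1.3} can be made to work, at the cost of a more computational argument.
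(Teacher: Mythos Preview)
The paper does not prove this theorem: it appears in \S2 (Preliminaries) with a citation to \cite{4} and is quoted as background, so there is no in-paper proof to compare against.

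That said, your proposal contains a genuine confusion that would make the argument fail as written. In the paragraph on \eqref{2.5}--\eqref{2.6} you claim that the limit of $v/u$ as $x\to+\infty$ must be \emph{finite}, forcing $\int_0^\infty \frac{dt}{r u^2}<\infty$. This is backwards: \eqref{2.5} asserts $u/v\to 0$, i.e.\ $v/u\to\infty$, and \eqref{2.6} asserts $\int_0^\infty \frac{dt}{r u^2}=\infty$ while it is $\int_0^\infty \frac{dt}{r v^2}$ that is finite. The nonincreasing solution $u$ is the \emph{principal} (recessive) solution at $+\infty$, and principal solutions are characterized precisely by divergence of $\int^\infty \frac{dt}{r u^2}$; the non-principal $v$ is the one whose integral converges. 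Your identities $(v/u)'=1/(r u^2)$ and $(u/v)'=-1/(r v^2)$ are correct, but the conclusions you draw from them have $u$ and $v$ interchanged.

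A second issue: you repeatedly invoke \eqref{1.3}, but the hypotheses of Theorem~\ref{thm2.1} are only \eqref{1.2} and \eqref{2.1}; condition \eqref{1.3} is not assumed and cannot be used. The principal/non-principal dichotomy at each end, and the identification of your monotone positive solutions with the correct side of that dichotomy, must come from $q\ge 0$ (disconjugacy) together with \eqref{2.1} alone --- this is the classical Hartman theory you allude to. The overall architecture (extract monotone positive solutions by compactness on expanding intervals, normalize the Wronskian, read off \eqref{2.5}--\eqref{2.6} via reduction of order) is sound; the gaps are the reversed roles of $u$ and $v$ at $+\infty$ and the appeal to a hypothesis you do not have.
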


Moreover, properties \eqref{2.3}--\eqref{2.6} determine the FSS
$\{u,v\}$ uniquely up to constant mutually inverse factors.

\begin{cor}  \cite{4} \label{cor2.2}
Suppose that conditions \eqref{1.2} and \eqref{2.1} hold. Then
equation \eqref{2.2} has no solutions $z\in L_p$ apart from
$z\equiv 0.$
\end{cor}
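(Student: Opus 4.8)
The plan is to exploit the explicit qualitative structure of the fundamental system of solutions $\{u,v\}$ furnished by \thmref{thm2.1}. Since $\{u,v\}$ is an FSS of \eqref{2.2}, every solution of \eqref{2.2} is of the form $z=c_1u+c_2v$ with constants $c_1,c_2$, so it suffices to prove that $z\in L_p$ forces $c_1=c_2=0$.

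First I would record two elementary consequences of \eqref{2.3}. Because $v>0$ and $v'\ge0$ on $\mathbb R$, the function $v$ is nondecreasing, so $v(x)\ge v(0)>0$ for all $x\ge0$; hence $\int_0^\infty v(x)^p\,dx=\infty$, i.e.\ $v\notin L_p$. Symmetrically, $u>0$ and $u'\le0$ give $u(x)\ge u(0)>0$ for all $x\le0$, so $\int_{-\infty}^0 u(x)^p\,dx=\infty$ and $u\notin L_p$.

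Next, assume $z=c_1u+c_2v\in L_p$ and, towards a contradiction, that $(c_1,c_2)\ne(0,0)$. If $c_2\ne0$, I would examine the behavior near $+\infty$: by \eqref{2.5} we have $u(x)/v(x)\to0$ as $x\to\infty$, so there is $x_0$ with $|c_1|\,u(x)/v(x)\le\frac{1}{2}|c_2|$ for $x\ge x_0$, whence for such $x$
\[
|z(x)|=v(x)\Bigl|c_2+c_1\frac{u(x)}{v(x)}\Bigr|\ge\frac{1}{2}|c_2|\,v(x)\ge\frac{1}{2}|c_2|\,v(x_0)>0 .
\]
Thus $z\notin L_p$, a contradiction. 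If instead $c_2=0$, then $c_1\ne0$ and $z=c_1u\notin L_p$ by the previous paragraph, again a contradiction. Hence $z\equiv0$.

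I expect no real obstacle here: the statement follows directly from the qualitative properties \eqref{2.3} and \eqref{2.5} of the FSS. The only point needing a moment of care — and the reason \eqref{2.5} is invoked — is to rule out cancellation in the sum $c_1u+c_2v$ of the two positive functions: \eqref{2.5} shows that $v$ strictly dominates $u$ at $+\infty$ (and $u$ dominates $v$ at $-\infty$), so the leading term of $z$ cannot vanish and the monotonicity bound above applies.
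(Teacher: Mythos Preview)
Your argument is correct. The paper does not actually prove Corollary~\ref{cor2.2}; it is quoted from \cite{4} as a preliminary fact, so there is no in-paper proof to compare against. Your derivation is the natural elementary one: use that $\{u,v\}$ is an FSS, that $v$ is positive nondecreasing and $u$ is positive nonincreasing (from \eqref{2.3}) to see neither lies in $L_p$, and then invoke \eqref{2.5} to prevent cancellation in a nontrivial combination $c_1u+c_2v$. Each step is justified and no additional hypotheses beyond \eqref{1.2}--\eqref{2.1} (which feed into \thmref{thm2.1}) are needed.
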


The FSS from \thmref{thm2.1} is denoted below by $\{u,v\}$.

\begin{thm} \cite{4,5}   \label{thm2.3}
For the FSS $\{u,v\}$ we have the Davies-Harrell representations
\begin{equation}\label{2.7}
u(x)=\sqrt{\rho(x)}\exp\left(-\frac{1}{2}\int_{x_0}^x\frac{d\xi}{r(\xi)\rho(\xi)}\right),\quad
v(x)=\sqrt{\rho(x)}\exp\left(\frac{1}{2}\int_{x_0}^x\frac{d\xi}{r(\xi)\rho(\xi)}\right)
\end{equation}
where $x\in\mathbb R,$ $\rho(x)=u(x)v(x),$ $x_0$ is  a unique
solution of the equation $u(x)=v(x)$ in $\mathbb R.$ Furthermore,
for the Green function $G(x,t)$ corresponding to equation
\eqref{1.1}:
\begin{equation}\label{2.8}
G(x,t)=\begin{cases} u(x)v(t),\quad & x\ge t\\ u(t)v(x), \quad &
x\le t\end{cases}\end{equation} and for its ``diagonal value"
$G(x,t)\big|_{x=t}=\rho(x)$, we have the following representation
\eqref{2.9} and equalities \eqref{2.10}:
\begin{equation}\label{2.9}
G(x,t)=\sqrt{\rho(x)\rho(t)}\exp\left(-\frac{1}{2}\left|\int_x^t\frac{d\xi}{r(\xi)\rho(\xi)}\right|\right),\quad
x,t\in\mathbb R,\end{equation}
\begin{equation}\label{2.10}
\int_{-\infty}^0\frac{d\xi}{r(\xi)\rho(\xi)}=\int_0^\infty\frac{d\xi}{r(\xi)\rho(\xi)}=\infty.\end{equation}
\end{thm}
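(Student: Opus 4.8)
The plan is to establish the two Davies–Harrell formulas \eqref{2.7}, then derive \eqref{2.9} and \eqref{2.10} from them together with the properties \eqref{2.3}--\eqref{2.6} furnished by \thmref{thm2.1}. First I would note that by \eqref{2.3} both $u$ and $v$ are positive on $\mathbb R$, so $\rho=uv$ is a well-defined positive function, and by \eqref{2.5} we have $v/u\to0$ at $-\infty$ and $u/v\to0$ at $+\infty$; since $v/u$ is nondecreasing (its derivative is $(v'u-u'v)/u^2=1/(ru^2)\ge0$ by \eqref{2.4} and \eqref{2.3}), the continuous function $v/u$ increases from $0$ to $+\infty$, hence takes the value $1$ at exactly one point $x_0$, which is the unique solution of $u(x)=v(x)$. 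This makes the lower limit $x_0$ in \eqref{2.7} meaningful.

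Next I would verify \eqref{2.7} directly. Set $\psi(x)=\tfrac12\log\bigl(v(x)/u(x)\bigr)$, which is well-defined and $C^1$ because $u,v>0$ and $u,v$ are absolutely continuous; then $\psi(x_0)=0$ and, using \eqref{2.4},
\begin{equation*}
\psi'(x)=\frac12\left(\frac{v'}{v}-\frac{u'}{u}\right)=\frac{v'u-u'v}{2uv}=\frac{1}{2r(x)u(x)v(x)}=\frac{1}{2r(x)\rho(x)}.
\end{equation*}
Integrating from $x_0$ to $x$ gives $\tfrac12\log(v/u)=\tfrac12\int_{x_0}^x\frac{d\xi}{r(\xi)\rho(\xi)}$, i.e. $v/u=\exp\bigl(\int_{x_0}^x\frac{d\xi}{r(\xi)\rho(\xi)}\bigr)$. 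Combining this with $uv=\rho$, that is $u=\sqrt{\rho}\cdot\sqrt{u/v}$ and $v=\sqrt{\rho}\cdot\sqrt{v/u}$, yields exactly the two formulas in \eqref{2.7}. Formula \eqref{2.9} then follows by multiplying the expressions for $\sqrt{\rho(x)}\,u$-type and $v$-type factors: from \eqref{2.8}, for $x\ge t$ we get $G(x,t)=u(x)v(t)=\sqrt{\rho(x)\rho(t)}\exp\bigl(-\tfrac12\int_{x_0}^x\frac{d\xi}{r\rho}+\tfrac12\int_{x_0}^t\frac{d\xi}{r\rho}\bigr)=\sqrt{\rho(x)\rho(t)}\exp\bigl(-\tfrac12\int_t^x\frac{d\xi}{r\rho}\bigr)$, and since the integrand $1/(r\rho)$ is nonnegative the exponent equals $-\tfrac12\bigl|\int_x^t\frac{d\xi}{r\rho}\bigr|$; the case $x\le t$ is symmetric, so \eqref{2.9} holds for all $x,t$.

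Finally, for the divergence equalities \eqref{2.10}, I would rewrite the integrals using \eqref{2.7}: since $v^2=\rho\exp\bigl(\int_{x_0}^x\frac{d\xi}{r\rho}\bigr)$, one has $\frac{1}{r(t)v^2(t)}=\frac{1}{r(t)\rho(t)}\exp\bigl(-\int_{x_0}^t\frac{d\xi}{r\rho}\bigr)$, so if $F(t):=\int_{x_0}^t\frac{d\xi}{r(\xi)\rho(\xi)}$ then $\frac{d}{dt}\bigl(-e^{-F(t)}\bigr)=F'(t)e^{-F(t)}=\frac{1}{r(t)v^2(t)}$; hence $\int_{-\infty}^0\frac{dt}{r(t)v^2(t)}=\bigl[-e^{-F(t)}\bigr]_{-\infty}^0=\lim_{t\to-\infty}e^{-F(t)}-e^{-F(0)}$. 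This limit is $+\infty$ precisely when $F(t)\to-\infty$ as $t\to-\infty$, i.e. when $\int_{-\infty}^{x_0}\frac{d\xi}{r\rho}=\infty$; but \eqref{2.6} asserts $\int_{-\infty}^0\frac{dt}{r(t)v^2(t)}=\infty$, which by the computation just performed forces exactly that divergence. Similarly, using $u^2=\rho\exp\bigl(-\int_{x_0}^x\frac{d\xi}{r\rho}\bigr)$ and the equality $\int_0^\infty\frac{dt}{r(t)u^2(t)}=\infty$ from \eqref{2.6}, one gets $\int_{x_0}^{\infty}\frac{d\xi}{r\rho}=\infty$. Adding the two divergences and splitting $\mathbb R=(-\infty,0]\cup[0,\infty)$ (a finite shift of the base point $x_0$ changes each integral only by a finite amount, using $r^{-1}\in L_1^{\loc}$ and continuity/positivity of $\rho$) gives $\int_{-\infty}^0\frac{d\xi}{r\rho}=\int_0^\infty\frac{d\xi}{r\rho}=\infty$, which is \eqref{2.10}.

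The main obstacle I anticipate is purely bookkeeping rather than conceptual: one must be careful that all the integrals involved are well-defined (finite on compact sets) so that the fundamental-theorem-of-calculus manipulations with $F(t)$ and $e^{\pm F}$ are legitimate — this uses $1/r\in L_1^{\loc}$, $q\in L_1^{\loc}$, the local absolute continuity of $u,v,ru',rv'$, and the strict positivity of $\rho$ on every compact set — and that the passage between the base point $0$ appearing in \eqref{2.6} and \eqref{2.10} and the base point $x_0$ appearing in \eqref{2.7} only contributes finite corrections, hence does not affect the divergence statements. Once these finiteness/measurability points are dispatched, each of \eqref{2.7}, \eqref{2.9}, \eqref{2.10} is a short direct computation as sketched above.
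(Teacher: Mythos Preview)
The paper does not prove \thmref{thm2.3}; it is quoted as a preliminary result from the cited references (see Remark~\ref{rem2.4}: representations \eqref{2.7}--\eqref{2.9} are taken from \cite{5} for $r\equiv1$ and from \cite{4} in general, and \eqref{2.10} is referred to \cite{4}). So there is no in-paper proof to compare against.

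Your argument is correct and is essentially the standard derivation. The uniqueness of $x_0$ via monotonicity of $v/u$, the computation $\psi'=(2r\rho)^{-1}$ from the Wronskian identity \eqref{2.4}, and the deduction of \eqref{2.9} from \eqref{2.7}--\eqref{2.8} are all clean. The passage from \eqref{2.6} to \eqref{2.10} via the antiderivative $-e^{-F}$ of $(r v^2)^{-1}$ is the right idea and correctly executed. One small wording point: you call $\psi$ ``$C^1$'', but under the standing hypotheses $u,v$ are only locally absolutely continuous (with $ru',rv'\in AC^{\loc}$), so $\psi$ is $AC^{\loc}$ rather than $C^1$; this is exactly what you need for the fundamental theorem of calculus step, so the argument is unaffected. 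Your closing remarks about local integrability of $(r\rho)^{-1}$ and the irrelevance of shifting the base point between $0$ and $x_0$ are the right caveats and are easily justified from the positivity and continuity of $\rho$ together with $r^{-1}\in L_1^{\loc}$.
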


\begin{remark}\label{rem2.4}
Representations \eqref{2.7} and \eqref{2.8} are given in \cite{5}
for $r\equiv1$ and in \cite{4} for $r\not\equiv1.$ See \cite{4}
for equalities \eqref{2.10}. Throughout the sequel conditions
\eqref{1.2}--\eqref{1.3} are assumed to be satisfied (if not
stated otherwise) without special mentioning.
\end{remark}

\begin{lem}  \cite{4}  \label{lem2.5}
  For every given $x\in\mathbb R$ each of the following
equations \begin{equation}\label{2.11}
\int_{x-d}^x\frac{dt}{r(t)}\cdot \int_{x-d}^xq(t)dt=1,\qquad
\int_x^{x+d}\frac{dt}{r(t)}\cdot\int_x^{x+d}q(t)dt=1\end{equation}
in $d\ge0$ has a unique finite positive solution. Denote them by
$d_1(x)$ and $d_2(x),$ respectively. For $x\in\mathbb R$ we
introduce the following functions:
\begin{equation}
\begin{aligned}\label{2.12}
\varphi(x)&=\int_{x-d_1(x)}^x\frac{dt}{r(t)},\qquad
\psi(x)=\int_x^{x+d_2(x)}\frac{dt}{r(t)},\\
  h(x)&=\frac{\varphi(x)\psi(x)}{\varphi(x)+\psi(x)}\
\left(\equiv\left(\int_{x-d_1(x)}^{x+d_2(x)}q(t)dt\right)^{-1}\right).
\end{aligned}
\end{equation}
\end{lem}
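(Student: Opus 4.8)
The plan is to establish existence and uniqueness of $d_1(x)$ (the case of $d_2(x)$ being entirely symmetric) by the usual intermediate-value-plus-monotonicity scheme applied to the function $F(d):=\int_{x-d}^x\frac{dt}{r(t)}\cdot\int_{x-d}^x q(t)\,dt$, $d\ge 0$. First I would record the elementary properties of $F$ on $[0,\infty)$. Since $r^{-1}\in L_1^{\loc}(\mathbb R)$ and $q\in L_1^{\loc}(\mathbb R)$, each of the two factors is finite for every finite $d$ and is an absolutely continuous function of $d$; hence $F$ is continuous on $[0,\infty)$. Obviously $F(0)=0$, while hypothesis \eqref{1.3} (specialised to $d\to +\infty$) gives $F(d)\to\infty$ as $d\to\infty$. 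By the intermediate value theorem the equation $F(d)=1$ has a solution, and any such solution is automatically positive (because $F(0)=0<1$) and finite.

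For uniqueness I would exploit a strict monotonicity of $F$ near any solution. Since $r^{-1}>0$ and $q\ge 0$, both factors of $F$ are non-decreasing in $d$, so $F$ is non-decreasing. Now if $F(d_0)=1$, then $\int_{x-d_0}^x r^{-1}$ and $\int_{x-d_0}^x q$ are both strictly positive; for $d>d_0$ the first integral strictly increases (its integrand being strictly positive) while the second stays positive, and a product $a'b'$ with $a'>a>0$, $b'\ge b>0$ satisfies $a'b'\ge a'b>ab$, so $F(d)>F(d_0)=1$. Hence $F$ cannot take the value $1$ twice, and we set $d_1(x)$ equal to the unique solution of $F(d)=1$. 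The same computation with $\widetilde F(d):=\int_x^{x+d}\frac{dt}{r(t)}\cdot\int_x^{x+d}q(t)\,dt$ produces $d_2(x)$; the only extra remark needed is that, after the substitution $d\mapsto -d$, condition \eqref{1.3} taken along $d\to -\infty$ says precisely $\widetilde F(d)\to\infty$ as $d\to +\infty$, because passing from the interval $[x-d,x]$ to $[x,x+d]$ changes the sign of each of the two integrals and so leaves the product unchanged.

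It remains only to note that $\varphi,\psi,h$ in \eqref{2.12} are then well defined, and to check the parenthetical identity for $h$. This is purely algebraic: evaluating the equations \eqref{2.11} at $d=d_1(x)$ and $d=d_2(x)$ gives $\varphi(x)^{-1}=\int_{x-d_1(x)}^x q(t)\,dt$ and $\psi(x)^{-1}=\int_x^{x+d_2(x)} q(t)\,dt$, hence $\varphi(x)^{-1}+\psi(x)^{-1}=\int_{x-d_1(x)}^{x+d_2(x)} q(t)\,dt$, and since $h=\varphi\psi/(\varphi+\psi)=(\varphi^{-1}+\psi^{-1})^{-1}$ the claimed formula follows.

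I do not expect a genuine obstacle here; the proof is short. The two places that call for a little care are the strict-monotonicity step that upgrades ``$F$ non-decreasing'' to ``$F$ is injective wherever it is positive'' (one really must use that the product grows strictly as soon as it is nonzero, not merely that each factor is non-decreasing), and the correct reading of hypothesis \eqref{1.3} for the second equation, where the relevant divergence of $\widetilde F$ is obtained from \eqref{1.3} by letting $d\to-\infty$ and flipping the sign in both factors.
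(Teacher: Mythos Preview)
Your argument is correct and follows the same intermediate-value-plus-monotonicity scheme that the paper uses for its parallel lemmas. Note that the paper does not actually prove \lemref{lem2.5} here---it is quoted from \cite{4}---but the identical template is carried out in \S6 for \lemref{lem2.10} (the function $F(\eta)=\int_{x-\eta}^{x+\eta}\frac{dt}{r(t)\rho(t)}$: continuous, $F(0)=0$, $F(\infty)=\infty$, $F'>0$) and in \lemref{lem5.3} for $F_2$, so your approach is exactly the intended one.

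One small remark on the comparison: in those analogous proofs the paper can write $F'(\eta)>0$ outright because the integrands there are strictly positive. In your situation $q\ge 0$ only, so $F$ is a~priori merely non-decreasing; your extra step---observing that at any root $d_0$ both factors are strictly positive and that the $r^{-1}$-factor is strictly increasing thereafter, forcing $F(d)>1$ for $d>d_0$---is genuinely needed and correctly supplied. Your reading of \eqref{1.3} along $d\to-\infty$ to get $\widetilde F(d)\to\infty$ is also correct, and the verification of the parenthetical identity for $h$ via $h^{-1}=\varphi^{-1}+\psi^{-1}$ is exactly right.
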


\begin{thm} \cite{4}  \label{thm2.6}
For $x\in\mathbb R$ the following inequalities hold:
\begin{equation}
\begin{aligned}\label{2.13}
2^{-1}v(x)\le(r(x)v'(x))\varphi(x)\le 2v(x)\\
2^{-1}u(x)\le(r(x)|u'(x)|)\psi(x)\le 2u(x)
\end{aligned}
\end{equation}
\begin{equation}\label{2.14}
2^{-1}h(x)\le\rho(x)\le 2h(x).\end{equation}
\end{thm}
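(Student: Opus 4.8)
The plan is to prove \eqref{2.13} first, handling $v$ on the interval $[x-d_1(x),x]$ and $u$ on $[x,x+d_2(x)]$, and then to read \eqref{2.14} off \eqref{2.13} together with the Wronskian identity \eqref{2.4}. The tools are the sign and monotonicity properties of the FSS coming from \eqref{2.2}--\eqref{2.3}: since $(rv')'=qv\ge 0$ the function $rv'$ is nondecreasing (while $v>0$, $v'\ge 0$), and since $(ru')'=qu\ge 0$ the function $ru'$ is nondecreasing, so $r|u'|=-ru'$ is nonincreasing (while $u>0$, $u'\le 0$). I would also use the defining relation for $d_1(x)$ from \eqref{2.11}, which after division by $\varphi(x)$ reads $\int_{x-d_1(x)}^x q(t)\,dt=\varphi(x)^{-1}$, and likewise $\int_x^{x+d_2(x)}q(t)\,dt=\psi(x)^{-1}$.

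For the upper bound $r(x)v'(x)\varphi(x)\le 2v(x)$ I would argue by contradiction. Put $a=x-d_1(x)$; integrating \eqref{2.2} over $[t,x]$ for $t\in[a,x]$ and using $0\le v\le v(x)$ there gives $r(x)v'(x)-r(t)v'(t)=\int_t^x qv\le v(x)\int_a^x q=v(x)/\varphi(x)$. If $r(x)v'(x)>2v(x)/\varphi(x)$, then $r(t)v'(t)>v(x)/\varphi(x)$ for every $t\in[a,x]$, and dividing by $r(t)$ and integrating yields $v(x)-v(a)=\int_a^x v'>v(x)$, contradicting $v(a)>0$. For the lower bound I would add two complementary one-sided estimates: dropping $r(a)v'(a)\ge 0$ in $r(x)v'(x)=r(a)v'(a)+\int_a^x qv$ and using $v\ge v(a)$ on $[a,x]$ together with $\int_a^x q=\varphi(x)^{-1}$ gives $r(x)v'(x)\varphi(x)\ge v(a)$, while monotonicity of $rv'$ gives $v'(t)\le r(x)v'(x)/r(t)$ on $[a,x]$, hence $v(x)-v(a)=\int_a^x v'\le r(x)v'(x)\varphi(x)$; summing the two yields $v(x)\le 2r(x)v'(x)\varphi(x)$.

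The inequalities for $u$ in \eqref{2.13} follow by repeating the same computations on $[x,x+d_2(x)]$ with $u$ nonincreasing and $r|u'|$ nonincreasing in place of $v$ and $rv'$. To get \eqref{2.14} I would rewrite \eqref{2.4} as $r(x)v'(x)u(x)+r(x)|u'(x)|v(x)=1$ and substitute \eqref{2.13} in the form $v(x)/(2\varphi(x))\le r(x)v'(x)\le 2v(x)/\varphi(x)$ and $u(x)/(2\psi(x))\le r(x)|u'(x)|\le 2u(x)/\psi(x)$. Since $\rho=uv$ and $\varphi^{-1}+\psi^{-1}=h^{-1}$, the left-hand side is then pinched between $\tfrac12\rho(x)/h(x)$ and $2\rho(x)/h(x)$, which is precisely $2^{-1}h(x)\le\rho(x)\le 2h(x)$.

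I expect the only delicate point to be organizing the lower bound in \eqref{2.13} as a sum of two one-sided estimates — one cannot assume that $v(a)$ is comparable to $v(x)$, nor that it is much smaller — and keeping straight which of $rv'$, $r|u'|$ is monotone in which direction and on which of the intervals $[x-d_1(x),x]$, $[x,x+d_2(x)]$; the estimates themselves are short one-line computations.
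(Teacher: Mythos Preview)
The paper does not actually prove Theorem~\ref{thm2.6}: it is quoted in \S2 as a preliminary result from~\cite{4}, so there is no in-paper proof to compare against. That said, your argument is correct. The upper bound via contradiction (if $r(x)v'(x)>2v(x)/\varphi(x)$ then integrating $v'(t)>v(x)/(r(t)\varphi(x))$ over $[x-d_1(x),x]$ forces $v(x)-v(a)>v(x)$) is clean, and your device for the lower bound --- splitting $v(x)=v(a)+(v(x)-v(a))$ and bounding each summand separately by $r(x)v'(x)\varphi(x)$ using, respectively, $r(x)v'(x)\ge\int_a^x qv\ge v(a)/\varphi(x)$ and the monotonicity of $rv'$ --- is exactly the right way to avoid any unjustified assumption on the size of $v(a)$. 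The derivation of \eqref{2.14} from \eqref{2.13} and the Wronskian identity is immediate once one notes $\varphi^{-1}+\psi^{-1}=h^{-1}$, which is the content of the parenthetical in~\eqref{2.12}. This is the standard route to these Otelbaev-type inequalities and is presumably close to what appears in~\cite{4}.
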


\begin{cor} \cite{4} \label{cor2.7}
Let $r\equiv1.$ For every given $x\in\mathbb R$ consider the
following equation:
\begin{equation}\label{2.15}
d\cdot\int_{x-d}^{x+d}q(t)dt=2 \end{equation}
 in $d\ge0.$ Equation \eqref{2.15} has a unique finite positive
 solution. Denote it by $\tilde d(x).$ We have the inequalities:
 \begin{equation}\label{2.16}
 4^{-1}\cdot\tilde d(x)\le \rho(x)\le 3\cdot 2^{-1}\tilde
 d(x),\qquad x\in\mathbb R.\end{equation}
\end{cor}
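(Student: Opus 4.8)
The plan is to reduce Corollary~\ref{cor2.7} to Theorem~\ref{thm2.6} and Lemma~\ref{lem2.5} by analyzing what happens when $r\equiv 1$. First I would verify the existence and uniqueness claim for \eqref{2.15}: the function $d\mapsto d\int_{x-d}^{x+d}q(t)\,dt$ is continuous, vanishes at $d=0$, and is nondecreasing in $d$ (each factor is nonnegative and nondecreasing); condition \eqref{1.3} with $r\equiv1$ forces it to tend to $\infty$ as $d\to\infty$, so it hits the value $2$ exactly once as long as it is strictly increasing past that point, which follows because $\int_x^\infty q>0$ and $\int_{-\infty}^x q>0$ by \eqref{2.1} (these are consequences of \eqref{1.3}). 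This gives a well-defined $\tilde d(x)$.

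Next I would relate $\tilde d(x)$ to the quantities $d_1(x)$, $d_2(x)$, $\varphi(x)$, $\psi(x)$, $h(x)$ from Lemma~\ref{lem2.5}. With $r\equiv1$ we have $\varphi(x)=d_1(x)$ and $\psi(x)=d_2(x)$, the defining equations \eqref{2.11} become $d_1(x)\int_{x-d_1(x)}^x q=1$ and $d_2(x)\int_x^{x+d_2(x)}q=1$, and $h(x)=d_1(x)d_2(x)/(d_1(x)+d_2(x))$. The key elementary step is to show $\tilde d(x)$ is comparable to $h(x)$ up to absolute constants. One direction: taking $d=\min\{d_1,d_2\}$ in the left side of \eqref{2.15} gives $\min\{d_1,d_2\}\int_{x-\min}^{x+\min}q \ge \min\{d_1,d_2\}\big(\int_{x-d_1}^x q\big)\cdot[\,d_1\le d_2\,] + \ldots$ — more cleanly, if $d_1\le d_2$ then $\int_{x-d_1}^{x+d_1}q\ge\int_{x-d_1}^x q = 1/d_1$, so $d_1\int_{x-d_1}^{x+d_1}q\ge 1$; combined with the analogous lower bound at $d=\max\{d_1,d_2\}$ giving value $\ge 2$, and monotonicity, one sandwiches $\min\{d_1,d_2\}\le \tilde d(x)$ and (using the value $2$) $\tilde d(x)\le d_1(x)+d_2(x)$ or similar, from which, together with $\tfrac12\min\le h\le\min$ always holding for the harmonic-mean expression, one gets constants $c_1 h(x)\le \tilde d(x)\le c_2 h(x)$ with explicit small integer constants. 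Then \eqref{2.14}, namely $2^{-1}h(x)\le\rho(x)\le 2h(x)$, is chained with this to produce \eqref{2.16}; the constants $4^{-1}$ and $3\cdot 2^{-1}$ are just the product of the two rounds of estimates, and I would track them carefully to match.

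The main obstacle I anticipate is pinning down the comparison between $\tilde d(x)$ and $h(x)$ with the exact constants $4^{-1}$ and $3/2$ after composing with \eqref{2.14}; the qualitative comparability is straightforward from monotonicity and the two defining integral equations, but getting the sharp small constants requires choosing the right test values of $d$ (namely $d_1(x)$, $d_2(x)$, and perhaps $d_1(x)+d_2(x)$ or $\max\{d_1,d_2\}$) in \eqref{2.15} and carefully splitting the integral $\int_{x-d}^{x+d}q$ at the point $x$ to reuse the normalizations $d_i\int q=1$. A secondary, purely bookkeeping point is to confirm that \eqref{2.1} does hold here, but this is immediate from \eqref{1.3} since if $q$ vanished on a half-line the product in \eqref{1.3} would stay bounded (indeed be eventually zero) in the corresponding direction, contradicting the hypothesis.
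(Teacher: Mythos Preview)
The paper does not prove Corollary~\ref{cor2.7}; it is quoted from \cite{4} as a preliminary, so there is no in-paper argument to compare against. Your plan is sound for the existence/uniqueness of $\tilde d(x)$ and for the qualitative conclusion $\rho(x)\asymp\tilde d(x)$, which is in fact all that is used later (see the proof of Corollary~\ref{cor3.9}, where only $h(x)\asymp\rho(x)\asymp\tilde d(x)$ is invoked).

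However, the route ``compare $\tilde d$ with $h$ and then chain with \eqref{2.14}'' cannot produce the stated constants $4^{-1}$ and $3\cdot 2^{-1}$, and this is a genuine gap rather than a bookkeeping issue. Consider $q$ consisting (up to negligible tails needed for \eqref{1.3}) of a narrow bump of total mass $1$ centred at $x-1$ and a narrow bump of total mass $M_2=\dfrac{1}{2+\varepsilon}$ centred at $x+2+\varepsilon$. Then $d_1(x)=1$, $d_2(x)=2+\varepsilon$, $h(x)=\dfrac{2+\varepsilon}{3+\varepsilon}$, while for $\tilde d\in[1,2+\varepsilon)$ the defining equation \eqref{2.15} reads $\tilde d\cdot 1=2$, so $\tilde d(x)=2$. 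Hence
\[
\frac{\tilde d(x)}{h(x)}=\frac{2(3+\varepsilon)}{2+\varepsilon}\ \xrightarrow[\varepsilon\to 0^+]{}\ 3,
\]
and the best the chain $\rho\ge \tfrac12 h$ can yield is $\rho\ge \tilde d/6$, not $\tilde d/4$. In other words, the inequality $\tilde d\le 2h$ that you would need for the lower bound in \eqref{2.16} is false in general. The specific constants in \eqref{2.16} therefore require a direct argument (as in \cite{4}) that works with the ODE $z''=qz$ on the symmetric interval $[x-\tilde d(x),\,x+\tilde d(x)]$ --- integrating $v''=qv$ and $u''=qu$ and exploiting the monotonicity \eqref{2.3} together with the Wronskian relation \eqref{2.4} --- rather than passing through the asymmetric interval $[x-d_1(x),\,x+d_2(x)]$ that defines $h$. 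Your anticipated obstacle is thus not merely about tracking constants carefully: the two-step reduction loses too much, and a different first step is needed.
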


\begin{remark}\label{rem2.8}
Two-sided sharp by order a priori estimate of type \eqref{2.13}
first appear in \cite{6} (for $r\equiv1$ and under some additional
requirements to $q).$ Under conditions \eqref{1.2} and
$\inf\limits_{x\in\mathbb R} q(x)>0,$ estimates similar to
 \eqref{2.13}, with other more complicated auxiliary functions,
 were given in \cite{7}. Sharp by order estimates of the function
 $\rho$ were first obtained in \cite{8} (under some additional
 requirements to $r$ and $q$). Therefore, we call inequalities of
 such type Otelbaev inequalities. Note that in \cite{8} auxiliary
 functions more complicated than $h$ and $\tilde d$ were used.
 The function $\tilde d$ was introduced by M. Otelbaev (see
 \cite{9}).

 Throughout the sequel we denote by $c, c(p),\dots$ absolute
 positive constants which are not essential for exposition and may
 differ even within a single chain of computations. We write
 $\alpha(x)\asymp\beta(x),$ $x\in (a,b)$ if positive functions
 $\alpha$ and $\beta$ defined in $(a,b)$ satisfy the inequalities
 $$c^{-1}\cdot\alpha(x)\le\beta(x)\le c\alpha(x),\qquad x\in
 (a,b).$$
 \end{remark}

 \begin{lem}  \cite{10} \label{lem2.9} For $x\in\mathbb R$ we have
 the inequality
 \begin{equation}\label{2.17}
 r(x)|\rho'(x)|<1.
 \end{equation}
 In addition, the inequality $m<1$ where
 \begin{equation}\label{2.18}
 m=\sup_{x\in\mathbb R}r(x)|\rho'(x)|\end{equation}
 holds if and only if $\varphi(x)\asymp \psi(x),$\ $x\in\mathbb
 R.$\end{lem}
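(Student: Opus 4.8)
The idea is to rewrite $r\rho'$ as the difference of two nonnegative functions whose sum is identically $1$, and then to control each summand separately by means of the Otelbaev inequalities \eqref{2.13}--\eqref{2.14}.

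Since $\rho=uv$, one has $r\rho'=ru'v+ruv'$. Put $A=ruv'$ and $B=r|u'|v$; by \eqref{2.3} both are nonnegative, and because $u'=-|u'|$ the Wronskian identity \eqref{2.4} reads $A+B=r(v'u-u'v)=1$, whereas $r\rho'=ruv'-r|u'|v=A-B$. Hence $r|\rho'|=|A-B|\le A+B=1$, with strict inequality at a point $x$ precisely when $A(x)>0$ and $B(x)>0$. The latter always holds: $\varphi(x)$ and $\psi(x)$ are integrals of the a.e.-positive function $1/r$ over nondegenerate intervals (recall $d_1(x),d_2(x)>0$ by \lemref{lem2.5}), so $\varphi(x),\psi(x)>0$, and then \eqref{2.13} gives $rv'\ge v/(2\varphi)>0$ and $r|u'|\ge u/(2\psi)>0$, whence $A\ge\rho/(2\varphi)>0$ and $B\ge\rho/(2\psi)>0$. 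This proves \eqref{2.17}.

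For the criterion I would first combine \eqref{2.13} with \eqref{2.14} and the relation $\rho\asymp h=\varphi\psi/(\varphi+\psi)$ (all constants absolute) to obtain
\[
A(x)\asymp\frac{\rho(x)}{\varphi(x)}\asymp\frac{\psi(x)}{\varphi(x)+\psi(x)},\qquad
B(x)\asymp\frac{\rho(x)}{\psi(x)}\asymp\frac{\varphi(x)}{\varphi(x)+\psi(x)},\qquad x\in\mathbb R.
\]
Since $A+B\equiv1$, we have $r|\rho'|=|A-B|=|1-2B|$, so $m=\sup_{x\in\mathbb R}|1-2B(x)|<1$ if and only if $B$ stays uniformly away from both $0$ and $1$, i.e. if and only if $\inf_{x\in\mathbb R}A(x)>0$ and $\inf_{x\in\mathbb R}B(x)>0$ hold simultaneously. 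By the displayed estimates, $\inf_{x}A(x)>0\iff\sup_{x}\varphi(x)/\psi(x)<\infty$ and $\inf_{x}B(x)>0\iff\sup_{x}\psi(x)/\varphi(x)<\infty$, and the conjunction of these two bounds is exactly the relation $\varphi(x)\asymp\psi(x)$ on $\mathbb R$. Reading this chain of equivalences in both directions yields $m<1\iff\varphi\asymp\psi$.

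I do not expect a genuine obstacle: once \eqref{2.4} is used to produce the splitting $r\rho'=A-B$ with $A+B\equiv1$, everything reduces to bookkeeping with the $\asymp$-relation and to the elementary fact that $|1-2B|$ stays below $1$ exactly when $B$ is separated from the endpoints of $[0,1]$. The only points that need some care are the strict positivity of $\varphi,\psi$ in the first part (so that the pointwise estimates in \eqref{2.13} are genuinely strict) and keeping the orientation of the inequalities straight in the $\asymp$-manipulations of the second part, where it is essential that the constants supplied by \thmref{thm2.6} are absolute.
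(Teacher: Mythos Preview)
Your argument is correct. The paper itself does not prove \lemref{lem2.9}; it is quoted from \cite{10} as a preliminary fact, so there is no in-paper proof to compare against. That said, the identities \eqref{4.34} (also taken from \cite{10}) are exactly your splitting in disguise: multiplying through by $2r\rho$ turns them into $2A=1+r\rho'$ and $2B=1-r\rho'$, i.e.\ $r\rho'=A-B$ with $A+B\equiv1$, so your approach is the natural one and is presumably the same mechanism used in the cited source.
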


 We also introduce  a new auxiliary function $s$ and the function
 $d$ already known from \cite{4}. The properties of the functions are
 similar, and therefore for brevity we present them together. See
 \cite{4} for the proofs for $d,$ and \S6 below for the proofs for
 $s.$

 \begin{lem} 
\cite [\S6   below]{4}
  \label{lem2.10}   For every
 $x\in\mathbb R$ each of the equations
 \begin{equation}\label{2.19}
 \int_{x-d}^{x+d}\frac{dt}{r(t)h(t)}=1,\qquad
 \int_{x-s}^{x+s}\frac{dt}{r(t)\rho(t)}=1
 \end{equation}
 in $d\ge0$ and $s\ge0$ has a unique finite positive solution.
 Denote the solutions of \eqref{2.19} by $d(x)$ and $s(x)$,
 respectively. The functions $d(x)$ and $s(x)$ are continuous for
 $x\in\mathbb R.$\end{lem}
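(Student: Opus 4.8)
The plan is to prove the two statements — existence/uniqueness of a positive solution of each equation in \eqref{2.19}, and continuity of $d(x)$ and $s(x)$ — by the same elementary device, exhibiting them as level sets of a continuous, strictly increasing function of the radius. Since the two cases are formally identical after replacing the kernel $\tfrac{1}{r(t)h(t)}$ by $\tfrac{1}{r(t)\rho(t)}$, I would carry out the argument for $s(x)$ (the new function) in detail, and remark that the proof for $d(x)$ is verbatim the same with $h$ in place of $\rho$; the paper already points to \cite{4} for $d$.

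Fix $x\in\mathbb R$ and set $F(x,s)=\int_{x-s}^{x+s}\frac{dt}{r(t)\rho(t)}$ for $s\ge0$. First I would check that the integrand is locally integrable and strictly positive: by \eqref{2.14} we have $\rho\asymp h>0$, and $h$ is given in \eqref{2.12} as the reciprocal of $\int_{x-d_1(x)}^{x+d_2(x)}q$, a finite positive quantity for each point, so $\frac{1}{r\rho}$ is comparable to $\frac{h}{r}\cdot\frac{1}{h^2}$-type expression that lies in $L_1^{\loc}$ because $r^{-1}\in L_1^{\loc}$ by \eqref{1.2} and $h$ is continuous and bounded away from $0$ on compacts (continuity of $\rho$, hence of $h$ up to the equivalence, follows from $\rho=uv$ with $u,v$ continuous by \thmref{thm2.1}). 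Thus $F(x,\cdot)$ is well-defined, finite, with $F(x,0)=0$; it is continuous in $s$ (absolute continuity of the integral) and strictly increasing in $s$ because the integrand is strictly positive. The one genuine point is that $F(x,s)\to\infty$ as $s\to\infty$: this is exactly \eqref{2.10}, which gives $\int_{-\infty}^{0}\frac{d\xi}{r(\xi)\rho(\xi)}=\int_0^{\infty}\frac{d\xi}{r(\xi)\rho(\xi)}=\infty$, so for $s$ large the integral over $[x-s,x+s]$ exceeds any prescribed bound. (For the $d(x)$ equation one uses instead the corresponding divergence $\int\frac{dt}{r(t)h(t)}=\infty$ at $\pm\infty$, which follows from \eqref{2.10} together with \eqref{2.14}, since $h\asymp\rho$.) By the intermediate value theorem and strict monotonicity, the equation $F(x,s)=1$ has a unique solution $s=s(x)\in(0,\infty)$.

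For continuity of $s(x)$ in $x$, I would note that $(x,s)\mapsto F(x,s)$ is jointly continuous on $\mathbb R\times[0,\infty)$ — this is again absolute continuity of the integral together with continuity of the integrand, using $\rho\in C(\mathbb R)$ and $\rho>0$ — and that for each fixed $x$ the map $s\mapsto F(x,s)$ is a strictly increasing homeomorphism of $[0,\infty)$ onto $[0,\infty)$. Then $s(x)$ is characterized implicitly by $F(x,s(x))=1$, and a standard implicit-function argument for monotone maps (or a direct $\varepsilon$–$\delta$ argument: given $\varepsilon>0$, $F(x_0,s(x_0)-\varepsilon)<1<F(x_0,s(x_0)+\varepsilon)$, and by joint continuity the same strict inequalities persist for $x$ near $x_0$, forcing $|s(x)-s(x_0)|<\varepsilon$) yields continuity of $s$. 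The same argument applies to $d$.

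I expect the only mildly delicate step to be the local integrability and local positivity of the kernels — i.e. verifying that $\rho$ (and $h$) are continuous and locally bounded away from zero and infinity — since everything else is soft analysis once \eqref{2.10} supplies the divergence at infinity. These regularity facts are, however, immediate from $\rho=uv$ with the FSS properties in \thmref{thm2.1} and the equivalence $\rho\asymp h$ in \eqref{2.14}, so no real obstacle arises; the bulk of the write-up is simply organizing the monotonicity/continuity bookkeeping for the two functions in parallel.
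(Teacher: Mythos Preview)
Your existence/uniqueness argument is exactly the paper's: define $F(\eta)=\int_{x-\eta}^{x+\eta}\frac{dt}{r(t)\rho(t)}$, observe $F(0)=0$, $F(\infty)=\infty$ by \eqref{2.10}, and $F'(\eta)>0$, then invoke the intermediate value theorem. (One small wording point: the integrand need not be continuous since only $r^{-1}\in L_1^{\loc}$ is assumed, but you correctly identify absolute continuity of the integral as the real reason $F$ is continuous, so nothing is actually wrong.)

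For continuity of $s(\cdot)$ the paper takes a different, more hands-on route: instead of joint continuity of $F(x,s)$ and an implicit-function argument, it proves directly the Lipschitz estimate
\[
|s(x+t)-s(x)|\le |t|\qquad\text{for }|t|\le s(x),
\]
by the set inclusions $[x-s(x),x+s(x)]\subseteq[(x+t)-(t+s(x)),(x+t)+(t+s(x))]$ and $[(x+t)-(s(x)-t),(x+t)+(s(x)-t)]\subseteq[x-s(x),x+s(x)]$, which immediately bound $s(x+t)$ from above and below. Your soft argument is perfectly valid for Lemma~\ref{lem2.10} as stated, but the paper's quantitative inequality is not a throwaway: it is exactly inequality \eqref{6.2}/\eqref{6.3}, which is then recycled verbatim to prove \eqref{2.21} in Lemma~\ref{lem2.11}. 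So while both approaches settle the present lemma, the paper's buys the next lemma essentially for free, whereas with your method you would have to supply that Lipschitz bound separately.
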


  \begin{lem} \cite [\S6   below]{4}  \label{lem2.11}   
  For
  $x\in\mathbb R$, $t\in[x-\varepsilon d(x),x+\varepsilon d(x)]$
  $(t\in[x-\varepsilon s(x),x+\varepsilon s(x)])$ and
  $\varepsilon\in[0,1]$, we have the inequalities:
  \begin{equation}\label{2.20}
  (1-\varepsilon)d(x)\le d(t)\le(1+\varepsilon)d(x),\end{equation}
  \begin{equation}\label{2.21}
  ((1-\varepsilon)s(x)\le s(t)\le(1+\varepsilon)s(x)).\end{equation}
  In addition, we have the equalities:
  \begin{equation}\label{2.22}
  \lim_{x\to-\infty}(x+d(x))=-\infty,\qquad
  \lim_{x\to\infty}(x-d(x))=\infty,\end{equation}
   \begin{equation}\label{2.23}
  \Big(\lim_{x\to-\infty}(x+s(x))=-\infty,\quad
  \lim_{x\to\infty}(x-s(x))=\infty\Big).\end{equation}
    \end{lem}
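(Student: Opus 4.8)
The plan is to establish both statements in \lemref{lem2.11} by exploiting the defining integral equations \eqref{2.19} together with the Otelbaev-type comparison \eqref{2.14} and the bound \eqref{2.17}. I would treat $d(x)$ and $s(x)$ in parallel, since $h$ and $\rho$ play the same structural role; I will write out the argument for $s(x)$ and indicate that the case of $d(x)$ is obtained verbatim after replacing $\rho$ by $h$ and using that $h\asymp\rho$ by \eqref{2.14} — in fact the proof for $d$ is already referenced to \cite{4}, so the real content is the proof for $s$ promised for \S6.

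\textbf{Step 1: the doubling-type inequality \eqref{2.21}.} Fix $x\in\BR$ and $\ve\in[0,1]$, and let $t\in[x-\ve s(x),x+\ve s(x)]$. By definition $\int_{t-s(t)}^{t+s(t)}\frac{d\xi}{r(\xi)\rho(\xi)}=1$. The idea is to compare the interval $[t-s(t),t+s(t)]$ with $[x-s(x),x+s(x)]$. First I would show $s(t)\ge(1-\ve)s(x)$: the interval $[x-(1-\ve)s(x),x+(1-\ve)s(x)]$ is contained in $[t-s(x),t+s(x)]$ (since $|t-x|\le\ve s(x)$), so
\[
\int_{t-s(x)}^{t+s(x)}\frac{d\xi}{r(\xi)\rho(\xi)}\ge\int_{x-(1-\ve)s(x)}^{x+(1-\ve)s(x)}\frac{d\xi}{r(\xi)\rho(\xi)},
\]
and the right side can be bounded below once we know how $\int$ over a shrunk symmetric interval relates to $1$. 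For the reverse inequality $s(t)\le(1+\ve)s(x)$, I would note $[t-(1+\ve)s(x),t+(1+\ve)s(x)]\supseteq[x-s(x),x+s(x)]$, hence its $\rho$-weighted measure is $\ge1$, which forces $s(t)\le(1+\ve)s(x)$ by monotonicity of $\int_{t-s}^{t+s}\frac{d\xi}{r(\xi)\rho(\xi)}$ in $s$. The lower bound is the delicate half: controlling $\int_{x-(1-\ve)s(x)}^{x+(1-\ve)s(x)}\frac{d\xi}{r(\xi)\rho(\xi)}$ from below by something like $(1-\ve)$ requires the near-constancy of $\rho$ on $[x-s(x),x+s(x)]$. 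Here I would invoke \eqref{2.17}: $r(x)|\rho'(x)|<1$ gives $|\rho(\xi)-\rho(x)|=\big|\int_x^\xi\rho'\big|\le\int_x^\xi\frac{d\tau}{r(\tau)}$, and one controls $\int\frac{d\tau}{r}$ over $[x-s(x),x+s(x)]$ via the very equation $\int_{x-s}^{x+s}\frac{d\xi}{r(\xi)\rho(\xi)}=1$ together with the lower bound $\rho\le 2h$ and the expression for $h$ in \eqref{2.12}. This will be the main technical obstacle: turning the pointwise derivative bound into a multiplicative oscillation estimate $\rho(\xi)\asymp\rho(x)$ on the whole interval, uniformly in $x$.

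\textbf{Step 2: the limit relations \eqref{2.23}.} These say that $x+s(x)\to-\infty$ as $x\to-\infty$ and $x-s(x)\to\infty$ as $x\to\infty$. Equivalently, $s(x)=o(|x|)$ at infinity, which is exactly the statement that the "interval of definition" does not run off to a fixed bound. I would argue by contradiction: suppose $x_n\to-\infty$ but $x_n+s(x_n)\ge -M$ for some fixed $M$ and all $n$. Then $[x_n-s(x_n),x_n+s(x_n)]\supseteq[x_n,-M]$ up to a bounded correction, so
\[
1=\int_{x_n-s(x_n)}^{x_n+s(x_n)}\frac{d\xi}{r(\xi)\rho(\xi)}\ge\int_{x_n}^{-M}\frac{d\xi}{r(\xi)\rho(\xi)},
\]
while the right side tends to $\int_{-\infty}^{-M}\frac{d\xi}{r(\xi)\rho(\xi)}=\infty$ by \eqref{2.10}, a contradiction. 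The case $x\to\infty$ is symmetric, using the other half of \eqref{2.10}. I expect this step to be short once \eqref{2.10} is in hand; the subtlety is only in making the inclusion of intervals precise when $s(x_n)$ itself might be large, which is handled by splitting into the cases $x_n+s(x_n)\ge -M$ directly rather than bounding $s(x_n)$.

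In summary, \eqref{2.21} rests on a uniform oscillation bound for $\rho$ over $[x-s(x),x+s(x)]$ derived from \eqref{2.17}, \eqref{2.14} and \eqref{2.12}, while \eqref{2.23} is an immediate consequence of the divergence \eqref{2.10}. The corresponding assertions \eqref{2.20} and \eqref{2.22} for $d(x)$ follow by the same scheme with $\rho$ replaced by $h$, using $r(x)|h'(x)|\le c$ (which follows from \eqref{2.14}, \eqref{2.17} and the definition of $h$) in place of \eqref{2.17}, and the analogue of \eqref{2.10} for $\int\frac{d\xi}{r(\xi)h(\xi)}$, which again follows from \eqref{2.10} and \eqref{2.14}.
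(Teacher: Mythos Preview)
Your Step~2 (the limits \eqref{2.23}) is correct and coincides with the paper's argument: assume $x_n\to\infty$ with $x_n-s(x_n)\le c$, and contradict \eqref{2.10}. Your upper bound in Step~1 is also correct and is exactly the paper's method: the inclusion $[t-(1+\ve)s(x),t+(1+\ve)s(x)]\supseteq[x-s(x),x+s(x)]$ together with strict monotonicity of $\eta\mapsto\int_{t-\eta}^{t+\eta}\frac{d\xi}{r(\xi)\rho(\xi)}$ forces $s(t)\le(1+\ve)s(x)$.

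The gap is in your lower bound. You set up the inclusion $[x-(1-\ve)s(x),x+(1-\ve)s(x)]\subseteq[t-s(x),t+s(x)]$ and then try to bound $\int_{t-s(x)}^{t+s(x)}\frac{d\xi}{r\rho}$ \emph{from below}; but a lower bound on this integral gives an \emph{upper} bound on $s(t)$, not the lower bound you want. To get $s(t)\ge(1-\ve)s(x)$ you need $\int_{t-(1-\ve)s(x)}^{t+(1-\ve)s(x)}\frac{d\xi}{r\rho}\le 1$, and your displayed inequality does not address this interval at all. You then propose to rescue the argument via oscillation estimates on $\rho$ drawn from \eqref{2.17}, \eqref{2.14}, \eqref{2.12}, calling this ``the main technical obstacle''. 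None of that machinery is needed. The paper simply uses the mirror-image inclusion: since $|t-x|\le\ve s(x)$,
\[
[t-(1-\ve)s(x),\,t+(1-\ve)s(x)]\subseteq[x-s(x),\,x+s(x)],
\]
whence $\int_{t-(1-\ve)s(x)}^{t+(1-\ve)s(x)}\frac{d\xi}{r(\xi)\rho(\xi)}\le\int_{x-s(x)}^{x+s(x)}\frac{d\xi}{r(\xi)\rho(\xi)}=1$, and monotonicity gives $s(t)\ge(1-\ve)s(x)$ immediately. Equivalently (and this is how the paper phrases it, via \eqref{6.2}--\eqref{6.4}), both halves of \eqref{2.21} are the single Lipschitz estimate $|s(x+\tau)-s(x)|\le|\tau|$ for $|\tau|\le s(x)$, obtained from the two interval inclusions and nothing else. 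No pointwise control of $\rho$ enters.
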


    \begin{defn}  \cite{19}  \label{defn2.12}
    Suppose we are given $x\in\mathbb R,$ a positive and
    continuous function $\varkappa(t)$ for $t\in\mathbb R$, a
    sequence $\{x_n\}_{n\in\mathbb N'},$ $\mathbb
    N'=\{\pm1,\pm2,\dots\}.$ Consider segments
    $\Delta_n=[\Delta_n^-,\Delta_n^+],$
    $\Delta_n^{\pm}=x_n\pm\varkappa(x_n).$ We say that the
    segments
    $\{\Delta_n\}_{n=1}^\infty\left(\{\Delta_n\}_{n=-\infty}^{-1}\right)$
    form an $\mathbb R(x,\varkappa)$-covering of $[x,\infty)$
    $((-\infty,x])$ if the following requirements hold:
    \begin{enumerate}\item[1)] $\Delta_n^+=\Delta_{n+1}^-$ for
    $n\ge1$\quad
    $(\Delta_{n-1}^+=\Delta_n^-$ for $n\le-1)$,
\item[2)] $\Delta_1^-=x$ $(\Delta_{-1}^+=x),$\quad
    $\bigcup\limits_{h\ge1}\Delta_n=[x,\infty)$\quad
    $\Big(\bigcup\limits_{n\le-1}\Delta_n=(-\infty,x]\Big).$
    \end{enumerate}
    \end{defn}

    \begin{lem}  \cite{19}  \label{lem2.13}
Suppose that for a positive and continuous function $\varkappa(t)$
for $t\in\mathbb R$, we have the relations
\begin{equation}\label{2.24}
\lim_{t\to\infty}(t-\varkappa(t)=\infty\quad
\Big(\lim_{t\to-\infty}(t+\varkappa(t)))=-\infty\Big).
\end{equation}
Then for every $x\in\mathbb R$ there is an $\mathbb
R(x,\varkappa)$-covering of $[x,\infty)(\mathbb
R(x,\varkappa)$-covering of $(-\infty,x])$.
\end{lem}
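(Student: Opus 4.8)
The plan is to prove Lemma 2.13 by an explicit greedy construction, building the segments $\Delta_n$ one at a time from $x$ outward, and verifying that requirements 1) and 2) of Definition 2.12 are met. I will describe the construction for $[x,\infty)$; the case of $(-\infty,x]$ is entirely symmetric (replacing $t\mapsto t-\varkappa(t)$ by $t\mapsto t+\varkappa(t)$ and reflecting). First I would set $\Delta_1^-=x$ and look for a center $x_1$ with $x_1-\varkappa(x_1)=\Delta_1^-=x$; having found it, define $\Delta_1^+=x_1+\varkappa(x_1)$, then set $\Delta_2^-=\Delta_1^+$ and repeat. So the heart of the matter is the following inductive step: given a left endpoint $a\in\mathbb R$, produce a center $c>a$ with $c-\varkappa(c)=a$, and then the right endpoint $b=c+\varkappa(c)=a+2\varkappa(c)>a$ is determined.

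\textbf{Existence of the center.} For fixed $a$, consider the continuous function $g(t)=t-\varkappa(t)$ on $[a,\infty)$. We have $g(a)=a-\varkappa(a)<a$ since $\varkappa>0$. By hypothesis \eqref{2.24}, $g(t)\to\infty$ as $t\to\infty$, so there is $T>a$ with $g(T)>a$. By the intermediate value theorem applied to $g$ on $[a,T]$, there exists $c\in(a,T)$ with $g(c)=a$, i.e.\ $c-\varkappa(c)=a$. (If several such $c$ exist, take, say, the smallest; continuity of $\varkappa$ guarantees the solution set is closed, so a smallest one in $[a,T]$ exists.) This gives $\Delta_n^+=c+\varkappa(c)=a+2\varkappa(c)$, and since $\varkappa(c)>0$ we get strict growth $\Delta_n^+>\Delta_n^-$ at each step. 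Requirement 1), $\Delta_n^+=\Delta_{n+1}^-$, holds by construction, and $\Delta_1^-=x$ holds by the initialization, so the only remaining point is that $\bigcup_{n\ge1}\Delta_n=[x,\infty)$.

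\textbf{Exhaustion of $[x,\infty)$.} Since the endpoints satisfy $x=\Delta_1^-<\Delta_1^+=\Delta_2^-<\Delta_2^+=\cdots$, the union $\bigcup_{n\ge1}\Delta_n$ is the interval $[x,L)$ or $[x,L]$ where $L=\lim_{n\to\infty}\Delta_n^+\in(x,\infty]$. I must rule out $L<\infty$. Suppose for contradiction $L<\infty$. Then the centers $x_n$ lie in $[x,L]$ and satisfy $x_n-\varkappa(x_n)=\Delta_n^-\to L$; passing to a convergent subsequence $x_{n_k}\to x_*\in[x,L]$ and using continuity of $\varkappa$, we would get $x_*-\varkappa(x_*)=L$. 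But also $x_{n_k}+\varkappa(x_{n_k})=\Delta_{n_k}^+\to L$, whence $x_*+\varkappa(x_*)=L$. Subtracting, $2\varkappa(x_*)=0$, contradicting $\varkappa>0$. Hence $L=\infty$ and $\bigcup_{n\ge1}\Delta_n=[x,\infty)$, completing the construction and proving requirement 2).

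\textbf{Expected main obstacle.} The construction itself is routine once set up; the one place that needs genuine care is precisely this last exhaustion argument — ensuring the greedy process does not stall at a finite accumulation point $L$. The key structural fact making it work is that each segment has length $2\varkappa(c_n)$ with center $c_n=\Delta_n^-+\varkappa(c_n)$, so a finite limit of the left endpoints would force the center sequence to accumulate at a point where $\varkappa$ vanishes, which is impossible by positivity and continuity of $\varkappa$. Condition \eqref{2.24} is used only to guarantee that the intermediate value theorem can be applied at each step (so that centers exist at all); it is the positivity of $\varkappa$, not \eqref{2.24}, that drives the non-stalling. I would also note explicitly that, by Lemma~\ref{lem2.11}, the functions $d$ and $s$ satisfy \eqref{2.24} (via \eqref{2.22}--\eqref{2.23}), so Lemma~\ref{lem2.13} applies in particular to $\varkappa=d$ and $\varkappa=s$, which is how it will be used in the sequel.
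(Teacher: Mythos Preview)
Your proof is correct. Note, however, that the paper does not actually supply a proof of Lemma~\ref{lem2.13}: the lemma is simply quoted from the external reference~\cite{19}, and in \S6 the proof of Lemma~\ref{lem2.15} just says ``immediately follows from Lemmas~\ref{lem2.10}, \ref{lem2.11} and~\ref{lem2.13}.'' So there is no in-paper argument to compare against; your greedy IVT construction is the natural one and would be the expected proof.

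One small simplification in your exhaustion step: you do not need to pass to a subsequence. Since $\Delta_n^- < x_n < \Delta_n^+$ (because $x_n=\Delta_n^-+\varkappa(x_n)$ and $\Delta_n^+=x_n+\varkappa(x_n)$ with $\varkappa(x_n)>0$), and both $\Delta_n^-$ and $\Delta_n^+$ increase to $L$ (recall $\Delta_{n+1}^-=\Delta_n^+$), the squeeze theorem gives $x_n\to L$ directly. Then continuity of $\varkappa$ yields $\varkappa(L)=\lim\varkappa(x_n)=\lim\tfrac{1}{2}(\Delta_n^+-\Delta_n^-)=0$, the desired contradiction. Also, your final remark about $d$ and $s$ satisfying~\eqref{2.24} via~\eqref{2.22}--\eqref{2.23} is exactly how the paper uses the lemma (cf.\ the one-line proof of Lemma~\ref{lem2.15}).
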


\begin{remark}\label{rem2.14}
If for some $x\in\mathbb R$ there exist $\mathbb
R(x,\varkappa)$-coverings of both $[x,\infty)$ and $(-\infty,x]$,
then their union will be called an $\mathbb
R(x,\varkappa)$-covering of $\mathbb R.$
\end{remark}

\begin{lem} \cite [\S6   below]{4} \label{lem2.15} For every
$x\in \mathbb R$ there exist $\mathbb R(x,d)$ and $\mathbb
R(x,s)$-coverings of $\mathbb R.$\end{lem}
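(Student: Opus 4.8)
The plan is to simply verify that the functions $d$ and $s$ satisfy the hypotheses of \lemref{lem2.13} and then invoke \remref{rem2.14}. There is no real computation to do here; the content has already been packaged into the preceding lemmas, so the proof is essentially a matter of quoting them in the right order.

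First I would recall that by \lemref{lem2.10} the equations \eqref{2.19} each have a unique finite positive solution for every $x\in\mathbb R$, so $d(x)>0$ and $s(x)>0$ for all $x$, and moreover $d$ and $s$ are continuous on $\mathbb R$. Thus $d$ and $s$ are positive continuous functions on $\mathbb R$, which is the standing requirement in the definition of an $\mathbb R(x,\varkappa)$-covering and in \lemref{lem2.13}.

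Next I would check the asymptotic relations \eqref{2.24} with $\varkappa=d$ and with $\varkappa=s$. For $\varkappa=d$ these are precisely the equalities \eqref{2.22} of \lemref{lem2.11}, namely $\lim_{t\to\infty}(t-d(t))=\infty$ and $\lim_{t\to-\infty}(t+d(t))=-\infty$; for $\varkappa=s$ they are the equalities \eqref{2.23} of the same lemma. Hence \lemref{lem2.13} applies to both $\varkappa=d$ and $\varkappa=s$, and for every $x\in\mathbb R$ it yields an $\mathbb R(x,d)$-covering of $[x,\infty)$ and an $\mathbb R(x,d)$-covering of $(-\infty,x]$, and likewise $\mathbb R(x,s)$-coverings of $[x,\infty)$ and of $(-\infty,x]$.

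Finally, by \remref{rem2.14}, the union of the $\mathbb R(x,d)$-coverings of $[x,\infty)$ and of $(-\infty,x]$ is an $\mathbb R(x,d)$-covering of $\mathbb R$, and the same argument produces an $\mathbb R(x,s)$-covering of $\mathbb R$. This completes the proof. The only point requiring any care is that the two hypotheses of \lemref{lem2.13} — continuity/positivity and the two one-sided limits — hold simultaneously for each of $d$ and $s$; both are supplied verbatim by \lemref{lem2.10} and \lemref{lem2.11}, so there is no genuine obstacle, and the argument is purely one of citation.
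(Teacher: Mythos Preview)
Your proposal is correct and follows essentially the same approach as the paper: the paper's own proof is a one-line citation stating that the assertion ``immediately follows from Lemmas \ref{lem2.10}, \ref{lem2.11} and \ref{lem2.13},'' and you have simply spelled out this citation in detail (with the additional explicit mention of Remark~\ref{rem2.14} to combine the two half-line coverings).
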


\begin{remark}\label{rem2.16}
Assertions of the type in \lemref{lem2.15} and estimates of the
form \eqref{2.20} were introduced by   Otelbaev (see
\cite{9}).\end{remark}

\begin{lem} \cite [\S6   below]{4}  \label{lem2.17}
Let $x\in\mathbb R,$ $t\in[x-d(x),x+d(x)]$
$(t\in[x-s(x),x+s(x)])$. Then the following inequalities hold:
\begin{equation}\label{2.25}
\alpha^{-1}v(x)\le v(t)\le\alpha v(x), \qquad \alpha^{-1}u(x)\le
u(t)\le\alpha u(x),
\end{equation}
\begin{equation}\label{2.26}
\alpha^{-1}\rho(x)\le \rho(t)\le\alpha \rho(x), \qquad
(4\alpha)^{-1}h(x)\le h(t)\le4\alpha h(x).
\end{equation}

\begin{equation}\label{2.27}
\left(\begin{array}{cc}
 c^{-1}v(x)\le v(t)\le cv(x),\quad c^{-1}u(x)\le u(t)\le
 cu(x)
  \\  \\
 c^{-1}\rho(x)\le\rho(t)\le c\rho(x)
\end{array}\right)
.
\end{equation}
 Here $\alpha=\exp(2).$
    \end{lem}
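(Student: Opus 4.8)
The plan is to reduce all six two-sided bounds to the control of the single integral $I(x,t):=\bigl|\int_x^t\frac{d\xi}{r(\xi)\rho(\xi)}\bigr|$, by combining the Davies--Harrell representation \eqref{2.7} with the pointwise estimate \eqref{2.17}. Dividing each of the two formulas in \eqref{2.7} evaluated at $t$ by the same formula evaluated at $x$ (the base point $x_0$ cancels) gives, for arbitrary $x,t\in\mathbb R$,
$$
\frac{v(t)}{v(x)}=\sqrt{\frac{\rho(t)}{\rho(x)}}\,\exp\left(\frac12\int_x^t\frac{d\xi}{r(\xi)\rho(\xi)}\right),\qquad
\frac{u(t)}{u(x)}=\sqrt{\frac{\rho(t)}{\rho(x)}}\,\exp\left(-\frac12\int_x^t\frac{d\xi}{r(\xi)\rho(\xi)}\right).
$$
Since $\rho=uv$ is positive and locally absolutely continuous, so is $\ln\rho$, and \eqref{2.17} gives $|(\ln\rho)'(\xi)|=|\rho'(\xi)|/\rho(\xi)<1/(r(\xi)\rho(\xi))$ for a.e.\ $\xi$; integrating between $x$ and $t$ yields $|\ln(\rho(t)/\rho(x))|\le I(x,t)$. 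Hence, if for the point $t$ under consideration we know $I(x,t)\le A$, then $\rho(t)/\rho(x)\in[e^{-A},e^{A}]$, and, since the exponential factor in each identity above then lies in $[e^{-A/2},e^{A/2}]$, also $v(t)/v(x),\,u(t)/u(x)\in[e^{-A},e^{A}]$.

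It remains to produce such a bound $A$ on each interval. For $t\in[x-s(x),x+s(x)]$ it is immediate: the integrand is positive and $t$ lies in the symmetric interval, so the defining equation \eqref{2.19} for $s(x)$ gives $I(x,t)\le\int_{x-s(x)}^{x+s(x)}\frac{d\xi}{r(\xi)\rho(\xi)}=1$; thus $A=1$ works and \eqref{2.27} follows (with $c=e$). For $t\in[x-d(x),x+d(x)]$ we estimate $1/(r\rho)\le2/(rh)$ by \eqref{2.14} and then invoke the defining equation \eqref{2.19} for $d(x)$:
$$
I(x,t)\le\int_{x-d(x)}^{x+d(x)}\frac{d\xi}{r(\xi)\rho(\xi)}\le2\int_{x-d(x)}^{x+d(x)}\frac{d\xi}{r(\xi)h(\xi)}=2.
$$
Thus $A=2=\ln\alpha$ works, which yields \eqref{2.25} and the first half of \eqref{2.26}. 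The remaining estimate on $h$ follows by applying \eqref{2.14} again at both points: $h(t)\le2\rho(t)\le2\alpha\rho(x)\le4\alpha h(x)$ and, symmetrically, $h(t)\ge\tfrac12\rho(t)\ge\tfrac{1}{2\alpha}\rho(x)\ge\tfrac{1}{4\alpha}h(x)$, that is, $(4\alpha)^{-1}h(x)\le h(t)\le4\alpha h(x)$.

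I do not expect a genuine obstacle. The only points requiring a little care are, first, that the partial integral $I(x,t)$ is dominated by the integral over the whole symmetric interval centered at $x$ (clear, since the integrand is positive and $t$ belongs to that interval), and, second --- this being the real mechanism of the proof --- the observation that the exponent $\tfrac12\int_x^t d\xi/(r\rho)$ occurring in the Davies--Harrell formulas \eqref{2.7} is, up to the inessential factor $\tfrac12$, precisely the quantity that \eqref{2.17} shows to control $|\ln(\rho(t)/\rho(x))|$; once this is noticed, all the bounds reduce to the elementary computations above.
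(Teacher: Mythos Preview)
Your proof is correct and follows essentially the same idea as the paper's: both control the ratios $v(t)/v(x)$, $u(t)/u(x)$, $\rho(t)/\rho(x)$ by the integral $\int_x^t d\xi/(r\rho)$ via \eqref{2.17}, and then bound that integral by the defining relations \eqref{2.19}. The only cosmetic difference is that the paper works with the differential identity \eqref{4.34} for $v'/v$ and integrates, whereas you start from the integrated Davies--Harrell representation \eqref{2.7} directly; these are equivalent, and your version is arguably a bit cleaner since it treats $u$, $v$, and $\rho$ uniformly and avoids the forward reference to \eqref{4.34}.
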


    \begin{thm}  \cite{2}  \label{thm2.18}
    Suppose that conditions \eqref{1.2} and \eqref{2.1} hold and
    $p\in(1,\infty).$ Then equation \eqref{1.1} is correctly
    solvable in $L_p$ if and only if the Green operator $G:L_p\to
    L_p $ is bounded. In the latter case, for every function $f\in
    L_p$ the solution $y\in L_p$ of \eqref{1.1} is of the form
    $y=Gf.$ In particular, $\mathcal L_p^{-1}=G.$ Here (see
    \eqref{2.8}):
    \begin{equation}
\label{2.29} (Gf)(x)\doe\int_{-\infty}^\infty G(x,t)f(t)dt,\quad
x\in\mathbb R,\quad f\in L_p.
\end{equation}
\end{thm}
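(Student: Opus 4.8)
The plan is to establish both implications directly through the Green function, identifying $\mathcal L_p^{-1}$ with $G$ along the way.

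\emph{Sufficiency.} Assume $G\colon L_p\to L_p$ is bounded. For $0\le f\in L_p$ the integral in \eqref{2.29} is defined in $[0,\infty]$ and equals the $L_p$-function $Gf$ almost everywhere, hence is finite a.e.; decomposing the kernel \eqref{2.8} and using $u>0$, $v>0$ one reads off that $\int_{-\infty}^x vf$ and $\int_x^\infty uf$ are finite for a.e.\ $x$, and then for every $x$, by monotonicity of these tails together with local boundedness of $u,v$ (a general $f$ is split into positive and negative parts to drop the sign restriction). With this in hand I would write
$$(Gf)(x)=u(x)\int_{-\infty}^x v(t)f(t)\,dt+v(x)\int_x^\infty u(t)f(t)\,dt,$$
and differentiate twice: the first differentiation cancels the boundary terms $\pm u(x)v(x)f(x)$ and yields $(Gf)'=u'\int_{-\infty}^x vf+v'\int_x^\infty uf$ a.e., so $r(Gf)'$ is locally absolutely continuous because $ru',rv'$ are (as $u,v$ solve \eqref{2.2}) and the integral factors are; differentiating once more and using $(ru')'=qu$, $(rv')'=qv$ together with the Wronskian identity \eqref{2.4} gives $-(r(Gf)')'+q(Gf)=f$ a.e. Thus $Gf\in L_p$ is a solution of \eqref{1.1}, which gives existence in I); uniqueness is immediate, since the difference of two $L_p$-solutions solves \eqref{2.2} and hence vanishes by \corref{cor2.2}. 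Finally II) holds with $c(p)=\|G\|_{L_p\to L_p}$, and by uniqueness $\mathcal L_p^{-1}=G$.

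\emph{Necessity.} Conversely, assume \eqref{1.1} is correctly solvable, so $\mathcal L_p^{-1}\colon L_p\to L_p$ exists with $\|\mathcal L_p^{-1}\|\le c(p)$ by II); it remains to identify it with $G$, after which boundedness of $G$ follows. First take $f\in L_p$ with $\supp f\subset[-N,N]$: all integrals defining $Gf$ then converge trivially, and the computation above shows $Gf$ solves \eqref{1.1}, so $w:=\mathcal L_p^{-1}f-Gf$ solves the homogeneous equation \eqref{2.2}, whence $w=c_1u+c_2v$ by \thmref{thm2.1}. On $[N,\infty)$ we have $(Gf)(x)=u(x)\int_{-N}^N vf$, so $(\mathcal L_p^{-1}f)(x)=\alpha u(x)+c_2v(x)$ with $\alpha$ constant; since $v>0$ is nondecreasing (hence $v\notin L_p$ at $+\infty$) while $u(x)/v(x)\to0$ by \eqref{2.5}, $c_2\ne0$ would force $|\mathcal L_p^{-1}f|\ge\frac12|c_2|v$ near $+\infty$, contradicting $\mathcal L_p^{-1}f\in L_p$; so $c_2=0$, and symmetrically (using $(Gf)(x)=v(x)\int_{-N}^N uf$ on $(-\infty,-N]$, $u>0$ nonincreasing, and $v(x)/u(x)\to0$ as $x\to-\infty$) $c_1=0$. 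Thus $\mathcal L_p^{-1}f=Gf$ for compactly supported $f$. For general $f\in L_p$ pick compactly supported $f_n\to f$ in $L_p$ (possible as $p<\infty$): then $Gf_n=\mathcal L_p^{-1}f_n$ is Cauchy in $L_p$ with limit $\mathcal L_p^{-1}f$, while for $f\ge0$, taking $f_n$ to be $f$ restricted to $[-n,n]$, monotone convergence in \eqref{2.29} gives $(Gf_n)(x)\to(Gf)(x)$ pointwise; hence $Gf=\mathcal L_p^{-1}f\in L_p$, the sign restriction being removed as before. So $\mathcal L_p^{-1}=G$ on all of $L_p$ and $G$ is bounded.

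\emph{Main obstacle.} Both differentiations are routine; the care is needed, first, in the regularity bookkeeping for the sufficiency part --- that $Gf$ and $r(Gf)'$ are genuinely locally absolutely continuous and that differentiating \eqref{2.29} term by term is legitimate once $\int_{-\infty}^x v|f|$ and $\int_x^\infty u|f|$ are known finite --- and, second and more seriously, in the necessity part, in ruling out the homogeneous summand $c_1u+c_2v$ of the $L_p$-solution. That step rests on combining the monotonicity and positivity in \eqref{2.3} with the asymptotics \eqref{2.5} (and is consistent with \corref{cor2.2}, which already forbids nonzero $L_p$-solutions of \eqref{2.2}); I expect it to be the crux.
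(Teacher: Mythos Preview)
The paper does not prove \thmref{thm2.18}; it is quoted in \S2 as a preliminary result from \cite{2}, so there is no in-paper argument to compare against. Judged on its own, your proof is sound: the sufficiency direction is the standard variation-of-parameters verification (differentiating the decomposition $Gf=u\int_{-\infty}^x vf+v\int_x^\infty uf$, using $(ru')'=qu$, $(rv')'=qv$ and the Wronskian \eqref{2.4}), with uniqueness supplied by \corref{cor2.2}; the necessity direction correctly identifies $\mathcal L_p^{-1}$ with $G$ first on compactly supported $f$ by eliminating the homogeneous piece $c_1u+c_2v$ via the monotonicity in \eqref{2.3} and the asymptotics in \eqref{2.5}, and then passes to general $f$ by density and monotone convergence.

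One small remark on the necessity step: you cannot invoke \corref{cor2.2} directly on $w=\mathcal L_p^{-1}f-Gf$ because you have not yet shown $Gf\in L_p$ for compactly supported $f$; your workaround---analyzing $\mathcal L_p^{-1}f=\alpha u+c_2v$ on $[N,\infty)$ and forcing $c_2=0$ from $v\notin L_p(N,\infty)$---is exactly the right way to sidestep this, and only after $c_1=c_2=0$ do you get $Gf=\mathcal L_p^{-1}f\in L_p$. That ordering is correct as you wrote it; just be aware it is essential.
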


\begin{remark}\label{rem2.19} If $r^{-1}\notin L_1(-\infty,0)$ and
$r^{-1}\notin L_1(0,\infty)$, then condition  \eqref{2.1} and, a
fortiori, \eqref{1.3} are necessary for correct solvability of
equation \eqref{1.1} in $L_p,$ $p\in(1,\infty)$ (see \cite{2}).
\end{remark}

\begin{lem}  \cite{2}  \label{lem2.20}
Suppose that conditions \eqref{1.2} and \eqref{2.1} hold and
$p\in(1,\infty).$ Consider the integral operators
\begin{equation}\label{2.30}
(G_1f)(x)=u(x)\int_{-\infty}^x v(t)f(t)dt,\qquad x\in\mathbb R,
\end{equation}
\begin{equation}\label{2.31}
(G_2f)(x)=v(x)\int_x^\infty u(t)f(t)dt,\qquad x\in\mathbb R.
\end{equation}
We have the relations
\begin{equation}\label{2.32}
G=G_1+G_2,\end{equation}
\begin{equation}\label{2.33}
 \frac{\|G_1\|_{p\to p}+\|G_2\|_{p\to p}}{2}\le\|G\|_{p\to p}\le \|G_1
\|_{p\to p}+\|G_2\|_{p\to p}.\end{equation}
\end{lem}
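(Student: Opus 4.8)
The plan is to establish the decomposition $G = G_1 + G_2$ via direct computation and then bound $\|G\|_{p\to p}$ in terms of $\|G_1\|_{p\to p}$ and $\|G_2\|_{p\to p}$.

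First I would verify \eqref{2.32}. Fix $f\in L_p$ and $x\in\mathbb R$. Using the explicit form \eqref{2.8} of the Green function, split the defining integral \eqref{2.29} at $t=x$:
\[
(Gf)(x)=\int_{-\infty}^x G(x,t)f(t)\,dt+\int_x^\infty G(x,t)f(t)\,dt
        =u(x)\int_{-\infty}^x v(t)f(t)\,dt+v(x)\int_x^\infty u(t)f(t)\,dt,
\]
which is exactly $(G_1f)(x)+(G_2f)(x)$. The only subtlety here is ensuring that the two integrals defining $G_1f$ and $G_2f$ converge absolutely for a.e.\ $x$ when $f\in L_p$; this should follow from \thmref{thm2.18} (which gives $Gf\in L_p$, hence finite a.e.) together with the monotonicity properties \eqref{2.3} of $u$ and $v$, or more directly from the integrability estimates \eqref{2.6}. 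One must also record that $G_1$ and $G_2$ are themselves bounded on $L_p$; the right-hand inequality of \eqref{2.33} will in fact yield this once we know $G$ is bounded, but to make the statement of the lemma meaningful one typically invokes the known boundedness criteria for Hardy-type operators, or simply notes that under the hypotheses of \thmref{thm2.18} all three operators are simultaneously bounded or unbounded.

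Next, the right-hand inequality $\|G\|_{p\to p}\le\|G_1\|_{p\to p}+\|G_2\|_{p\to p}$ is immediate from \eqref{2.32} and the triangle inequality for operator norms. For the left-hand inequality, the idea is to recover $G_1f$ and $G_2f$ from $Gf$ by testing against functions of one sign. Since $u>0$, $v>0$, for $f\ge0$ we have $(G_1f)(x)\ge0$, $(G_2f)(x)\ge0$, and $(G_1f)(x)\le(Gf)(x)$, $(G_2f)(x)\le(Gf)(x)$ pointwise; hence $\|G_1f\|_p\le\|Gf\|_p\le\|G\|_{p\to p}\|f\|_p$ and likewise for $G_2$, for all nonnegative $f$. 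To pass to general $f\in L_p$, note that $|(G_if)(x)|\le(G_i|f|)(x)$ because the kernels of $G_1,G_2$ are nonnegative, so $\|G_if\|_p\le\|G_i|f|\,\|_p\le\|G\|_{p\to p}\|\,|f|\,\|_p=\|G\|_{p\to p}\|f\|_p$. Therefore $\|G_i\|_{p\to p}\le\|G\|_{p\to p}$ for $i=1,2$, and averaging gives $\tfrac12(\|G_1\|_{p\to p}+\|G_2\|_{p\to p})\le\|G\|_{p\to p}$.

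**The main obstacle** is not any single estimate — each step above is short — but rather the bookkeeping needed to justify pointwise manipulations: the absolute convergence of the truncated integrals $\int_{-\infty}^x v(t)f(t)\,dt$ and $\int_x^\infty u(t)f(t)\,dt$ for a.e.\ $x$, and the legitimacy of splitting \eqref{2.29} at $t=x$ (which is harmless since a single point has measure zero, but the convergence of each half must be argued separately). I expect this is handled by combining \eqref{2.6} with Hölder's inequality on bounded intervals, reducing everything to the finiteness already guaranteed by \thmref{thm2.18}; the inequalities \eqref{2.33} themselves are then purely formal consequences of kernel positivity.
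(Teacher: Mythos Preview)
The paper does not supply its own proof of this lemma: it is quoted in \S2 as a preliminary result from \cite{2}, and no argument appears anywhere in the present paper. Your proof is correct and is exactly the standard one---the decomposition \eqref{2.32} is read off from the case split in \eqref{2.8}, the upper bound in \eqref{2.33} is the triangle inequality, and the lower bound comes from the pointwise domination $|G_if|\le G_i|f|\le G|f|$ afforded by the nonnegativity of the kernels, which yields $\|G_i\|_{p\to p}\le\|G\|_{p\to p}$ for $i=1,2$. Your remarks on absolute convergence are appropriate caveats but do not represent a gap; under the standing hypothesis that $G:L_p\to L_p$ is bounded (which is the only situation in which \eqref{2.33} carries content), $G|f|\in L_p$ is finite a.e., and since both summands $G_1|f|$ and $G_2|f|$ are nonnegative, each is finite a.e.\ as well.
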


\begin{thm}  \cite{2}  \label{thm2.21}
Equation \eqref{1.1} is correctly solvable in $L_p$,
$p\in(1,\infty)$ if and only if $B<\infty.$ Here
\begin{equation}\label{2.34}
B\doe\sup_{x\in\mathbb R} h(x)d(x).
\end{equation}
Moreover, the following relations hold:
\begin{equation}\label{2.35}
\|G\|_{p\to p}\asymp\|G_1\|_{p\to p}\asymp\|G_2\|_{p\to p}\asymp
B.\end{equation}
\end{thm}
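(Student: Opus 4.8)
The plan is to reduce the statement, by means of \thmref{thm2.18} and \lemref{lem2.20}, to a pair of weighted one‑sided Hardy inequalities, to read off boundedness and norms from the classical Muckenhoupt criterion, and then to evaluate the resulting Muckenhoupt constants in terms of $B$ with the help of the Davies--Harrell representation \eqref{2.7}, the Otelbaev inequality \eqref{2.14}, and the $\mathbb R(x,d)$‑coverings. Concretely: by \thmref{thm2.18} correct solvability of \eqref{1.1} in $L_p$ is equivalent to boundedness of $G$ on $L_p$, and by \lemref{lem2.20} to simultaneous boundedness of $G_1,G_2$ with $\|G\|_{p\to p}\asymp\|G_1\|_{p\to p}+\|G_2\|_{p\to p}$. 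Since $(G_1f)(x)=u(x)\int_{-\infty}^xv(t)f(t)\,dt$ and $(G_2f)(x)=v(x)\int_x^\infty u(t)f(t)\,dt$ are one‑sided weighted Hardy operators, Muckenhoupt's criterion gives $\|G_i\|_{p\to p}\asymp A_i$ $(i=1,2)$, where, with $1/p+1/p'=1$ and both sides allowed to be infinite,
$$A_1=\sup_{x}\Big(\int_x^\infty u^p\Big)^{1/p}\Big(\int_{-\infty}^xv^{p'}\Big)^{1/p'},\qquad A_2=\sup_{x}\Big(\int_{-\infty}^xv^{p}\Big)^{1/p}\Big(\int_x^\infty u^{p'}\Big)^{1/p'}.$$
So it remains to prove $A_1\asymp A_2\asymp B$. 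Since $x\mapsto-x$ interchanges $u$ with $v$ and the two ends of the line while fixing $B$, it is enough to treat $A_1$, and within it to estimate $\big(\int_x^\infty u^\ell\,dt\big)^{1/\ell}$ for an arbitrary $\ell\in(1,\infty)$.

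\emph{Lower bound.} Restricting the integrals defining $A_1$ to $[x,x+d(x)]$ and to $[x-d(x),x]$, on which \lemref{lem2.17} gives $u(t)\asymp u(x)$ and $v(t)\asymp v(x)$, one obtains $\big(\int_x^\infty u^p\big)^{1/p}\gtrsim u(x)d(x)^{1/p}$ and $\big(\int_{-\infty}^xv^{p'}\big)^{1/p'}\gtrsim v(x)d(x)^{1/p'}$; multiplying and using $1/p+1/p'=1$ together with $u(x)v(x)=\rho(x)\asymp h(x)$ (by \eqref{2.14}) yields $A_1\gtrsim\sup_xh(x)d(x)=B$, and the same argument gives $A_2\gtrsim B$.

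\emph{Upper bound.} The heart of the matter is the pointwise estimate
$$\Big(\int_x^\infty u^\ell\,dt\Big)^{1/\ell}\lesssim B^{1/\ell}\,u(x)\,\rho(x)^{-1/\ell},\qquad \ell\in(1,\infty),$$
which, applied with $\ell=p$ and (in the $x\mapsto-x$ reflected form) with $\ell=p'$ to the two factors of $A_1$, gives $A_1\lesssim B\sup_xu(x)v(x)/\rho(x)=B$, and likewise $A_2\lesssim B$. To prove this estimate, fix an $\mathbb R(x,d)$‑covering $[x,\infty)=\bigcup_{n\ge1}\Delta_n$ (\lemref{lem2.15}); on $\Delta_n=[x_n-d(x_n),x_n+d(x_n)]$, \lemref{lem2.17} gives $\int_{\Delta_n}u^\ell\asymp u(x_n)^\ell d(x_n)$, hence $\int_x^\infty u^\ell\asymp\sum_{n\ge1}u(x_n)^\ell d(x_n)$. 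Writing $\tau(t)=\int_{x_0}^t(r\rho)^{-1}\,d\xi$, the Davies--Harrell representation \eqref{2.7} reads $u=\rho^{1/2}e^{-\tau/2}$; by \eqref{2.17} one has $\big|\frac{d}{d\tau}\ln\rho\big|=r|\rho'|<1$, so $\rho$ varies slowly in the variable $\tau$; and since $\int_{\Delta_k}(rh)^{-1}=1$ (\lemref{lem2.10}) with $h\asymp\rho$ by \eqref{2.14}, each $\Delta_k$ contributes an amount $\asymp1$ to $\tau$, so that $\tau(x_n)\ge\tau(x)+c(n-1)$. Feeding into the series the monotonicity $u(x_n)\le u(x)$, the slow variation of $\rho$, and---decisively---the bound $d(x_n)\le B/h(x_n)\le 2B/\rho(x_n)$ (immediate from the definition of $B$ and \eqref{2.14}), one majorizes $\sum_nu(x_n)^\ell d(x_n)$ by a convergent geometric series with sum $\lesssim B\,u(x)^\ell/\rho(x)$, which is the claim. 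This last summation is the principal obstacle: $\rho(x_n)$ may a priori grow along the covering, and one must play the geometric decay $e^{-\tau(x_n)/2}$ of $u$ off against that growth---it is exactly the hypothesis $B<\infty$, forcing $d(x_n)$ to be small whenever $\rho(x_n)$ is large, that tips the balance; tracking the (harmless) dependence of the constants on $\ell\in\{p,p'\}$ is routine.

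\emph{Conclusion.} Combining the two bounds, $A_1\asymp A_2\asymp B$, so $\|G_1\|_{p\to p}\asymp\|G_2\|_{p\to p}\asymp B$ and, by \lemref{lem2.20}, $\|G\|_{p\to p}\asymp B$---this is \eqref{2.35}---and by \thmref{thm2.18} equation \eqref{1.1} is correctly solvable in $L_p$ if and only if $\|G\|_{p\to p}<\infty$, i.e. if and only if $B<\infty$.
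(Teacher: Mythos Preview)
The paper does not prove \thmref{thm2.21}; it is quoted from \cite{2} in the preliminaries. So there is no ``paper's own proof'' to compare against. That said, your outline is correct and, in fact, the paper carries out almost exactly this computation in a slightly different guise: \lemref{lem4.2} establishes the pointwise two-sided estimate
\[
c^{-1}h(x)d(x)\;\le\;\theta_p(x)\;\le\;c\,B^{1/p}\sup_{t\ge x}\bigl(h(t)d(t)\bigr)^{1/p'}\qquad(x\ge0)
\]
for the Muckenhoupt function $\theta_p$ (your $A_2$ is $\sup_x\theta_p(x)$), and taking the supremum recovers $A_2\asymp B$. The lower bound there is obtained exactly as you do it, by restricting to $[x-d(x),x+d(x)]$ and invoking \lemref{lem2.17} and \eqref{2.14}.

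For the upper bound the paper uses a different device from yours. Rather than bounding $\int_x^\infty u^\ell$ and $\int_{-\infty}^x v^\ell$ separately, it inserts a factor $u(x)^\gamma$ (for $p\le2$) or $v(x)^\gamma$ (for $p>2$) between the two integrals and absorbs it on the other side via $u(x)^\gamma v(t)^\gamma\le\rho(t)^\gamma$; the exponent $\gamma=|p-p'|/(p+p')$ is chosen so that \emph{both} resulting integrands become $\rho(t)\exp\bigl(-c\int(r\rho)^{-1}\bigr)$ with the same power of $\rho$. These are then summed over an $\mathbb R(x,d)$-covering using \lemref{lem4.1}. This $\gamma$-trick sidesteps the case split $\ell\lessgtr2$ that your approach requires when controlling $(\rho(x_n)/\rho(x))^{\ell/2-1}$ against the geometric decay. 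Your route works too---the slow-variation bound \eqref{2.17} gives $|\ln(\rho(x_n)/\rho(x))|\le\tau(x_n)-\tau(x)$, and for every $\ell>1$ the net exponent is still negative---but the paper's balancing argument is cleaner and yields uniform constants in $p$ more transparently.

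One minor point: the ``reflection $x\mapsto -x$ interchanges $u$ with $v$'' is only a heuristic, since nothing forces $r,q$ to be even. What you actually need is that the estimate for $\int_{-\infty}^x v^\ell$ is proved by the mirror-image argument using the $\mathbb R(x,d)$-covering of $(-\infty,x]$; this is straightforward but should be said rather than inferred from a symmetry that isn't there.
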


\begin{thm}\label{thm2.22}
Let \eqref{1.2} and \eqref{2.1} be satisfied. Then equation
\eqref{1.1} is correctly solvable in $L_p,$ $p\in(1,\infty)$ if
and only if $S<\infty.$ Here
\begin{equation}\label{2.36}
S\doe \sup_{s\in\mathbb R}(\rho(x)s(x)).
\end{equation}
\end{thm}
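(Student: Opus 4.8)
The plan is to reduce Theorem~\ref{thm2.22} to the already‑established criterion of Theorem~\ref{thm2.21}, namely that correct solvability in $L_p$ for $p\in(1,\infty)$ is equivalent to $B=\sup_x h(x)d(x)<\infty$, together with the relations \eqref{2.35}. Thus it suffices to prove the two‑sided estimate
\begin{equation}\label{planBS}
c^{-1}B\le S\le cB,
\end{equation}
i.e. that $B<\infty$ if and only if $S<\infty$. By \eqref{2.14} we have $\rho(x)\asymp h(x)$ uniformly in $x$, so in \eqref{planBS} the factor $\rho(x)$ may freely be replaced by $h(x)$; the whole problem is therefore to compare the two ``radii'' $d(x)$ and $s(x)$ defined by the normalizations $\int_{x-d}^{x+d}\frac{dt}{r(t)h(t)}=1$ and $\int_{x-s}^{x+s}\frac{dt}{r(t)\rho(t)}=1$ (see \eqref{2.19}). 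Since $\frac1{r(t)h(t)}\asymp\frac1{r(t)\rho(t)}$ pointwise, the two integrals defining $d(x)$ and $s(x)$ are comparable as functions of the upper limit, and one expects $d(x)\asymp s(x)$; this is exactly the kind of conclusion the Otelbaev‑type localization lemmas are designed to yield.

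First I would make the comparison $d(x)\asymp s(x)$ precise. Fix $x$. Using $\frac1{r(t)\rho(t)}\le \frac{2}{r(t)h(t)}$ (from \eqref{2.14}) one gets $\int_{x-s}^{x+s}\frac{dt}{r(t)h(t)}\ge \tfrac12\int_{x-s}^{x+s}\frac{dt}{r(t)\rho(t)}=\tfrac12$; on the other hand the covering/monotonicity estimates \eqref{2.20} for $d$ (and the analogue \eqref{2.21} for $s$) control how the normalizing integral grows when the interval of integration is dilated by a bounded factor. Concretely, an $\mathbb R(x,d)$‑covering of $\mathbb R$ (Lemma~\ref{lem2.15}) together with \eqref{2.20} shows that over a union of $N$ consecutive $d$‑segments the integral $\int \frac{dt}{r(t)h(t)}$ is comparable to $N$; combining this with the previous inequality forces $s(x)\le c\,d(x)$. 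The reverse inequality $d(x)\le c\,s(x)$ is obtained symmetrically, using the other half of \eqref{2.14}, the $\mathbb R(x,s)$‑covering, and \eqref{2.21}. Hence $d(x)\asymp s(x)$ with absolute constants, and then \eqref{planBS} follows from \eqref{2.14}: $\rho(x)s(x)\asymp h(x)d(x)$ uniformly in $x$, so $\sup_x\rho(x)s(x)\asymp\sup_x h(x)d(x)$, i.e. $S\asymp B$.

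Finally I would assemble the statement: by Theorem~\ref{thm2.21}, equation \eqref{1.1} is correctly solvable in $L_p$ iff $B<\infty$; by the estimate just proved $B<\infty\iff S<\infty$; therefore \eqref{1.1} is correctly solvable in $L_p$ iff $S<\infty$, which is the assertion of Theorem~\ref{thm2.22}. (One should also note in passing that under \eqref{1.2} and \eqref{2.1} all the objects $h,\rho,d,s,B,S$ are well defined and the auxiliary lemmas of \S2 apply.)

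The main obstacle I anticipate is the quantitative step $d(x)\asymp s(x)$: passing from the \emph{pointwise} equivalence $r h\asymp r\rho$ to an \emph{equivalence of the implicitly defined radii} is not automatic, because a priori a bounded distortion of the integrand could be concentrated where it matters and shift the solution of the normalization equation by an unbounded factor. This is precisely why the localization estimates \eqref{2.20}--\eqref{2.21} and the covering Lemmas~\ref{lem2.13}, \ref{lem2.15} are needed: they provide the ``doubling‑type'' control that prevents such pathologies. Once that comparison is in hand, everything else is bookkeeping with the constants from \eqref{2.14} and a citation of Theorem~\ref{thm2.21}.
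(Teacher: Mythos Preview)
Your reduction to Theorem~\ref{thm2.21} has a genuine gap that is not a mere technicality: it loses the hypothesis. Theorem~\ref{thm2.22} is stated under \eqref{1.2} and \eqref{2.1} only, whereas Theorem~\ref{thm2.21} (and the objects $h$, $d$, $B$ that it uses) are set up under the standing assumption \eqref{1.3} (see Remark~\ref{rem2.4} and Lemma~\ref{lem2.5}). The existence of $d_1(x),d_2(x)$ in \eqref{2.11}, hence of $\varphi,\psi,h$ in \eqref{2.12} and of $d$ in \eqref{2.19}, relies on \eqref{1.3}; under \eqref{2.1} alone these functions need not be defined, so the comparison $d(x)\asymp s(x)$ you propose does not even get off the ground. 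Your parenthetical claim that ``under \eqref{1.2} and \eqref{2.1} all the objects $h,\rho,d,s,B,S$ are well defined'' is exactly the point that fails. This is not an empty distinction: Theorems~\ref{thm3.13} and~\ref{thm3.14} are applications of Theorem~\ref{thm2.22} (and Theorem~\ref{thm3.2}) in regimes where \eqref{1.3} may fail (e.g.\ $r^{-1}\in L_1$ with $q$ small), and the paper explicitly invokes the $\rho,s$ criterion there precisely because the $h,d$ criterion is unavailable.

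The paper's route, described in Remark~\ref{rem2.23}, is different from yours: rather than comparing $S$ with $B$, one \emph{reruns} the proof of Theorem~\ref{thm2.21} (i.e.\ the argument from \cite{2}) with $\rho$ and $s$ in the roles of $h$ and $d$. This works because the pair $(\rho,s)$ enjoys the same package of properties that the proof actually uses --- the Davies--Harrell representation \eqref{2.9}, the divergence \eqref{2.10}, the Otelbaev localization \eqref{2.21}, \eqref{2.23}, \eqref{2.27}, and the $\mathbb R(x,s)$--covering of Lemma~\ref{lem2.15} --- and all of these are available under \eqref{1.2}+\eqref{2.1} alone, since $\rho$ comes directly from the FSS of Theorem~\ref{thm2.1}. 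In fact the argument is slightly cleaner, because the passage through \eqref{2.14} is no longer needed. If you only care about the statement under \eqref{1.3}, your comparison $d\asymp s$ (via \eqref{2.14} and the covering lemmas) does give $S\asymp B$ and hence the equivalence; but to prove Theorem~\ref{thm2.22} as stated you must argue directly with $\rho$ and $s$.
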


\begin{remark}\label{rem2.23}
Theorems \ref{thm2.21} and \ref{thm2.22} are proved in the same
way because the properties of the functions $d$ and $s,$\ $\rho$
and $h$ are quite analogous (see above). Moreover, the proof of
\thmref{thm2.22} is even simpler compared to \thmref{thm2.21}
because there is no need to apply estimates \eqref{2.14}. In
particular, for this reason, in \thmref{thm2.22} instead of
condition \eqref{1.3} of \thmref{thm2.21} there appears a weaker
condition \eqref{2.1}. Thus, since the proof of \thmref{thm2.22}
is reduced to the repetition of the argument from \cite{2}, we do
not present it here.
\end{remark}

\begin{thm}  \cite{2, 12}  \label{thm2.24} Suppose that the
conditions \eqref{1.2} and $r\equiv 1$ hold. Then equation
\eqref{1.1}is correctly solvable in $L_p,$ $p\in[1,\infty)$ if and
only if there exists $a>0$ such that $m(a)>0$. Here
$$m(a)=\inf_{x\in\mathbb R}\int_{x-a}^{x+a}q(t)dt.$$
\end{thm}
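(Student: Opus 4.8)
The plan is to reduce the assertion to \thmref{thm2.22} and the Otelbaev estimate of \corref{cor2.7}: when $r\equiv1$, correct solvability is equivalent to boundedness of $\rho$, and the latter is controlled by the averages of $q$ through the function $\tilde d$.

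First I would check that the full machinery of \S2 is available on both sides of the equivalence. If $m(a)>0$ for some $a>0$, then for every $x$ the interval $[x-2a,x]$ has length $2a$, so $\int_{-\infty}^xq\ge\int_{x-2a}^x q\ge m(a)>0$ and likewise $\int_x^{\infty}q>0$; thus \eqref{2.1} holds, and since $\int_{x-d}^xr^{-1}\,dt=d$ for $d>0$ while $\int_{x-d}^x q\ge m(a)$ once $d\ge2a$ (symmetrically as $d\to-\infty$), condition \eqref{1.3} follows as well. Conversely, if \eqref{1.1} is correctly solvable then \eqref{2.1}, and a fortiori \eqref{1.3}, holds by Remark~\ref{rem2.19}. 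Hence in both implications Theorems~\ref{thm2.1}, \ref{thm2.22} and \corref{cor2.7} apply.

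The core step is to prove the equivalence: \emph{under \eqref{1.2}, \eqref{2.1} and $r\equiv1$, equation \eqref{1.1} is correctly solvable in $L_p$, $p\in(1,\infty)$, if and only if $\sup_{x\in\mathbb R}\rho(x)<\infty$.} By \thmref{thm2.22} this is the same as $S<\infty\iff\sup_x\rho(x)<\infty$, where $S=\sup_x\rho(x)s(x)$. If $\rho\le M$, then $1=\int_{x-s(x)}^{x+s(x)}\rho(t)^{-1}\,dt\ge 2s(x)/M$, so $s(x)\le M/2$ and $\rho(x)s(x)\le M^2/2$. In the other direction, \lemref{lem2.9} gives $|\rho'|<1$ (since $r\equiv1$), hence $\rho(t)\ge\rho(x)-|t-x|$; if $s(x)<\rho(x)$ this yields
\[
1=\int_{x-s(x)}^{x+s(x)}\frac{dt}{\rho(t)}\le 2\int_0^{s(x)}\frac{du}{\rho(x)-u}=2\ln\frac{\rho(x)}{\rho(x)-s(x)},
\]
so $s(x)\ge(1-e^{-1/2})\rho(x)$, an inequality that is trivial when $s(x)\ge\rho(x)$; therefore $\rho(x)^2\le(1-e^{-1/2})^{-1}S$ for all $x$.

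Finally I would translate boundedness of $\rho$ into the condition on $q$. By \corref{cor2.7}, $\rho(x)\asymp\tilde d(x)$, where $\tilde d(x)$ is the unique positive root of $d\int_{x-d}^{x+d}q(t)\,dt=2$, so $\sup_x\rho(x)<\infty\iff\sup_x\tilde d(x)<\infty$. If $m(a)>0$, pick $N\in\mathbb N$ with $N^2a\,m(a)\ge2$; splitting $[x-Na,x+Na]$ into $N$ intervals of length $2a$ gives $Na\int_{x-Na}^{x+Na}q\ge Na\cdot Nm(a)\ge2$, hence $\tilde d(x)\le Na$ for every $x$. Conversely, if $\sup_x\tilde d(x)=A<\infty$, then $[x-\tilde d(x),x+\tilde d(x)]\subseteq[x-A,x+A]$ gives $\int_{x-A}^{x+A}q\ge 2/\tilde d(x)\ge 2/A$, i.e. $m(A)>0$. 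Chaining these facts proves the theorem for $p\in(1,\infty)$, and the case $p=1$ follows from the same scheme via the $L_1$-versions in \cite{2,12}. The only step that is not a routine averaging estimate is the lower bound $s(x)\ge c\,\rho(x)$ above — that an unbounded $\rho$ really forces $S=\infty$ — for which \lemref{lem2.9} is the essential input.
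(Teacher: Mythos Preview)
The paper does not prove \thmref{thm2.24}: it is quoted in \S2 as a preliminary result from \cite{2,12}, with no argument supplied. So there is no ``paper's own proof'' to compare against; one can only assess whether your argument stands on its own.

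For $p\in(1,\infty)$ your route through \thmref{thm2.22} and \corref{cor2.7} is sound. The chain $S<\infty \iff \sup_x\rho(x)<\infty \iff \sup_x\tilde d(x)<\infty \iff m(a)>0$ for some $a$ is correctly established; in particular, the lower bound $s(x)\ge(1-e^{-1/2})\rho(x)$ via $|\rho'|\le1$ from \lemref{lem2.9} is a clean way to rule out unbounded $\rho$ when $S<\infty$, and your combinatorial translation between $\sup\tilde d<\infty$ and $m(a)>0$ is fine. Two small points of exposition: (i) the phrase ``and a fortiori \eqref{1.3}'' is not literally right --- what you need is the easy observation that for $r\equiv1$ condition \eqref{2.1} already forces \eqref{1.3}, since $\int_{x-d}^x q\to\int_{-\infty}^x q>0$ as $d\to\infty$ and the prefactor $d$ diverges; once you say this, \corref{cor2.7} is legitimately available in both directions; (ii) the case $p=1$ is genuinely outside the scope of \thmref{thm2.22}, so your last sentence is a deferral rather than a proof. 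Since the paper itself defers the entire theorem to \cite{2,12}, that is acceptable here, but be aware that the $L_1$ endpoint needs a separate argument.
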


\begin{thm}  \cite{2, 4}  \label{thm2.25} For every
$p\in(1,\infty)$ equation \eqref{1.1} is correctly solvable in
$L_p$ if $A>0.$ Here
\begin{equation}\label{2.37}
\mathcal A= \inf_{x\in\mathbb R}\mathcal A(x),\qquad \mathcal
A(x)= \frac{1}{2d(x)}\int_{x-d(x)}^{x+d(x)}q(t)dt.\end{equation}
\end{thm}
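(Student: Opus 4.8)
The plan is to reduce the whole problem, via \thmref{thm2.21}, to the single a priori bound $B<\infty$, where $B=\sup_{x\in\mathbb R}h(x)d(x)$; note that this one bound settles correct solvability in $L_p$ for \emph{all} $p\in(1,\infty)$ at once. So, assuming $\mathcal A>0$ (hence $\mathcal A(x)\ge\mathcal A>0$ for every $x$), the goal becomes to prove $h(x)d(x)\le c\,\mathcal A^{-1}$ for all $x\in\mathbb R$ with an absolute constant $c$. Everything hinges on the key inequality
$$
\int_{x-d(x)}^{x+d(x)}q(t)\,dt\ \le\ \frac{c}{h(x)},\qquad x\in\mathbb R,
$$
which compares the ``mass of $q$'' on the very segment $[x-d(x),x+d(x)]$ that enters $\mathcal A(x)$ with $h(x)^{-1}$.

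To establish the key inequality I would integrate equation \eqref{2.2} for the component $v$ of the FSS $\{u,v\}$, i.e.\ $(rv')'=qv$, over $[x-d(x),x+d(x)]$ (legitimate since $rv'\in\mathcal A C^{\loc}$):
$$
\int_{x-d(x)}^{x+d(x)}q(t)v(t)\,dt=r(x+d(x))v'(x+d(x))-r(x-d(x))v'(x-d(x))\le r(x+d(x))v'(x+d(x)),
$$
the last step using $r>0$ and $v'\ge0$ from \eqref{2.3}. Next, the Otelbaev inequality \eqref{2.13} at the point $\xi=x+d(x)$ gives $r(\xi)v'(\xi)\le 2v(\xi)/\varphi(\xi)$; since $\xi\in[x-d(x),x+d(x)]$, \lemref{lem2.17} yields $v(\xi)\le\alpha v(x)$ and $h(\xi)\ge(4\alpha)^{-1}h(x)$, while $\varphi(\xi)\ge h(\xi)$ because $h=\varphi\psi/(\varphi+\psi)$. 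Hence the left-hand side above is $\le 8\alpha^{2}v(x)/h(x)$. On the other hand, \lemref{lem2.17} also gives $v(t)\ge\alpha^{-1}v(x)$ throughout $[x-d(x),x+d(x)]$, so the same left-hand side is $\ge\alpha^{-1}v(x)\int_{x-d(x)}^{x+d(x)}q$. Cancelling $v(x)$ yields the key inequality with $c=8\alpha^{3}$ (recall $\alpha=\exp(2)$).

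Granting the key inequality, the proof closes in one line: for every $x\in\mathbb R$,
$$
\mathcal A\ \le\ \mathcal A(x)\ =\ \frac{1}{2d(x)}\int_{x-d(x)}^{x+d(x)}q(t)\,dt\ \le\ \frac{c}{2\,d(x)h(x)},
$$
so $d(x)h(x)\le c/(2\mathcal A)$ for all $x$, i.e.\ $B\le c/(2\mathcal A)<\infty$, and \thmref{thm2.21} gives correct solvability in $L_p$ for every $p\in(1,\infty)$. The only genuine work is the key inequality, and the crux there is to replace the ``boundary'' quantities $v(x\pm d(x))$, $\varphi(x+d(x))$ by the ``central'' ones $v(x)$, $h(x)$ — exactly what the near-constancy of $v$ and $h$ on $[x-d(x),x+d(x)]$ (\lemref{lem2.17}), the a priori estimate \eqref{2.13}, and the trivial bound $h\le\min(\varphi,\psi)$ are there to provide. (Should one wish to avoid \eqref{2.13}, the key inequality follows equally well from a finite greedy covering of $[x,x+d(x)]$ and $[x-d(x),x]$ by segments $[\xi,\xi+d_2(\xi)]$ resp.\ $[\xi-d_1(\xi),\xi]$, on each of which $\int q$ equals $\psi(\xi)^{-1}$ resp.\ $\varphi(\xi)^{-1}$, both $\le 4\alpha\,h(x)^{-1}$ by \lemref{lem2.17}; the number of segments is bounded since each carries $r^{-1}$-mass $\ge(4\alpha)^{-2}\int_{x-d(x)}^{x+d(x)}\frac{dt}{r(t)}$.)
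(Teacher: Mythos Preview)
Your proof is correct. The reduction to $B<\infty$ via \thmref{thm2.21} and the closing step $h(x)d(x)\le c\,\mathcal A(x)^{-1}\le c\,\mathcal A^{-1}$ are exactly as in the paper. The only difference lies in how the key inequality
\[
h(x)\int_{x-d(x)}^{x+d(x)}q(t)\,dt\le c
\]
is obtained. The paper (see \eqref{4.45} in the proof of \corref{cor3.7}) uses the Wronskian identity \eqref{2.4} to get the global bound $\int_{-\infty}^{\infty}q(t)G(x,t)\,dt\le 1$, then restricts to $[x-d(x),x+d(x)]$ and invokes the Davies--Harrell representation \eqref{2.9} together with \eqref{2.14}, \eqref{2.26} to replace $G(x,t)$ by $c^{-1}h(x)$ on that segment. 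You instead integrate $(rv')'=qv$ over the segment, drop the nonnegative boundary term at $x-d(x)$, and control $r(\xi)v'(\xi)$ at $\xi=x+d(x)$ via the Otelbaev estimate \eqref{2.13}, combined with the near-constancy of $v$ and $h$ on the segment from \lemref{lem2.17} and the trivial bound $\varphi\ge h$. Your route avoids the Green function machinery \eqref{2.9} entirely but trades it for \eqref{2.13}; the paper's route is slightly slicker in that the Wronskian gives a clean ``$\le 1$'' from the start, whereas you pick up the explicit constant $8\alpha^{3}$. Either way the argument is short and the two are of comparable depth.
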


\begin{remark}\label{rem2.26}
In contrast to the condition $B<\infty,$ the meaning of the
requirement $\mathcal A>0$ is quite obvious: some special Steklov
average of the function $q$ must be separated from zero uniformly
on the whole axis (see \cite{4}). Moreover, the requirement $A>0$
can be viewed as a weakening of the simplest condition
$\inf\limits_{x\in\mathbb R}q(x)>0$ guaranteeing correct
solvability of \eqref{1.1} in $L_p$, $p\in[1,\infty)$ (see
\cite{4,7}. We continue this comment in the  next assertion
(\thmref{thm2.28}) by defining a meaningful class of equations
\eqref{1.1} (see \cite{10}) in which the requirement $B<\infty$ is
equivalent to a condition of the form $\mathcal A>0.$ Towards this
end, we need a new auxiliary function.
\end{remark}

\begin{lem} \cite{10, 13}  \label{lem2.27} Let $\varphi(x)\asymp
\psi(x),$ $x\in\mathbb R.$ For a given $x\in\mathbb R$ consider
the equation in $\mu\ge0:$
\begin{equation}\label{2.38}
\int_{x-\mu}^{x+\mu}q(t)h(t)dt=1.
\end{equation}
Equation \eqref{2.38} has at least one positive finite solution.
Let
\begin{equation}\label{2.39}
\mu(x)=\inf_{\mu\ge0}\left\{\mu:\int_{x-\mu}^{x+\mu}q(t)h(t)dt=1\right\}.
\end{equation}
The function $\mu(x)$ is continuous for $x\in\mathbb R$, and, in
addition,
\begin{equation}\label{2.40}
\lim_{x\to-\infty}(x+\mu(x))=-\infty, \qquad
\lim_{x\to\infty}(x-\mu(x))=\infty.\end{equation}
\end{lem}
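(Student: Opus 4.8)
The plan is to establish the three assertions of the lemma in turn: solvability of \eqref{2.38}, continuity of $\mu$, and the asymptotic relations \eqref{2.40}.

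\emph{Solvability.} Set $F(x,\mu)=\int_{x-\mu}^{x+\mu}q(t)h(t)\,dt$ for $\mu\ge0$. Since $q\ge0$, $h>0$ and $qh\in L_1^{\loc}(\mathbb R)$, for each fixed $x$ the map $\mu\mapsto F(x,\mu)$ is non-negative, continuous, non-decreasing, and vanishes at $\mu=0$; hence it suffices to show $\int_0^\infty q(t)h(t)\,dt=\int_{-\infty}^0 q(t)h(t)\,dt=\infty$. Granting this, $F(x,\mu)\to\infty$ as $\mu\to\infty$, the set $\{\mu\ge0:F(x,\mu)=1\}$ is a non-degenerate closed interval, $\mu(x)$ is its left endpoint, so $0<\mu(x)<\infty$ and $F(x,\mu(x))=1$. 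To prove the divergence, fix $\alpha<\beta$, write $\rho=uv$, and use $qu=(ru')'$ (equation \eqref{2.2}) together with integration by parts, legitimate since $ru'$ and $v$ are absolutely continuous: $\int_\alpha^\beta q\rho\,dt=[ru'v]_\alpha^\beta-\int_\alpha^\beta ru'v'\,dt$. By \eqref{2.3}--\eqref{2.4} one has $r|u'|v\le1$ and $rv'u\le1$, so the boundary term lies in $[-1,1]$, whereas $-\int_\alpha^\beta ru'v'\,dt=\int_\alpha^\beta r|u'|v'\,dt\ge0$ because $u'\le0\le v'$. By the Otelbaev inequalities \eqref{2.13}, $rv'\ge v/(2\varphi)$ and $r|u'|\ge u/(2\psi)$, so $r|u'|v'=r^{-1}(r|u'|)(rv')\ge\rho/(4r\varphi\psi)$. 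Since $\varphi\asymp\psi$, the identity $h=\varphi\psi/(\varphi+\psi)$ gives $h\asymp\varphi\asymp\psi$, and with $\rho\asymp h$ (see \eqref{2.14}) this yields $\varphi\psi\asymp\rho^2$; hence $r|u'|v'\ge c^{-1}(r\rho)^{-1}$ and $\int_\alpha^\beta q\rho\,dt\ge c^{-1}\int_\alpha^\beta(r\rho)^{-1}\,dt-1$. By \eqref{2.10} the last integral tends to $\infty$ as $\beta\to\infty$ (resp.\ as $\alpha\to-\infty$), and since $h\ge\rho/2$ by \eqref{2.14}, both $\int_0^\infty qh$ and $\int_{-\infty}^0 qh$ diverge. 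I expect this to be the main obstacle, as it is the only point where several preliminary facts ($qu=(ru')'$, the Otelbaev inequalities, $\rho\asymp h$, the equalities \eqref{2.10}) and the standing hypothesis $\varphi\asymp\psi$ must all be combined.

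\emph{Continuity.} I would prove the stronger statement that $\mu$ is $1$-Lipschitz: $|\mu(x)-\mu(y)|\le|x-y|$, which clearly gives continuity. Let $x<y$ and put $\tilde\mu=\mu(x)+(y-x)>0$. Then $[y-\tilde\mu,\,y+\tilde\mu]=[x-\mu(x),\,2y-x+\mu(x)]\supseteq[x-\mu(x),\,x+\mu(x)]$, so, since $qh\ge0$, $F(y,\tilde\mu)\ge F(x,\mu(x))=1$, and therefore $\mu(y)\le\tilde\mu=\mu(x)+(y-x)$ (recall $\mu(y)=\min\{\mu\ge0:F(y,\mu)\ge1\}$). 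Repeating the computation with the roles of $x$ and $y$ interchanged gives $\mu(x)\le\mu(y)+(y-x)$. The two inequalities together yield the Lipschitz bound.

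\emph{Asymptotics.} Suppose $\lim_{x\to\infty}(x-\mu(x))=\infty$ fails; then there exist a constant $C$ and points $x_k\to\infty$ with $x_k-\mu(x_k)\le C$. For $x_k>C$ the interval $[x_k-\mu(x_k),x_k+\mu(x_k)]$ contains $[C,x_k]$, so $1=F(x_k,\mu(x_k))\ge\int_C^{x_k}q(t)h(t)\,dt$, and the right-hand side tends to $\int_C^\infty qh=\infty$ by the first step --- a contradiction. The equality $\lim_{x\to-\infty}(x+\mu(x))=-\infty$ follows symmetrically: if $x_k+\mu(x_k)\ge-C$ with $x_k\to-\infty$, then for $x_k<-C$ the interval $[x_k-\mu(x_k),x_k+\mu(x_k)]$ contains $[x_k,-C]$, so $1\ge\int_{x_k}^{-C}q(t)h(t)\,dt\to\int_{-\infty}^{-C}qh=\infty$, again a contradiction. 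This would complete the proof.
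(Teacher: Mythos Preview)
The paper does not actually prove Lemma~\ref{lem2.27}; it is quoted from the earlier works \cite{10,13} in the preliminaries section, so there is no ``paper's own proof'' to compare against directly.  Your argument is nonetheless correct, and your continuity and asymptotics steps coincide almost verbatim with the proofs the paper \emph{does} give in \S6 for the analogous function $s(x)$ (Lemmas~\ref{lem2.10} and~\ref{lem2.11}): the $1$-Lipschitz bound via interval inclusions and the contradiction argument against divergence of the defining integral are exactly the template used there.

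The one piece that has no counterpart in \S6 is your proof that $\int_0^{\pm\infty}q(t)h(t)\,dt=\infty$.  For $s(x)$ this is trivial because the integrand $1/(r\rho)$ diverges by \eqref{2.10}; for $\mu(x)$ it is a genuine issue and is precisely where the hypothesis $\varphi\asymp\psi$ enters.  Your route---integrate $q\rho=quv$ by parts via $(ru')'=qu$, bound the boundary term by~$1$ using the Wronskian identity \eqref{2.4}, and bound $r|u'|v'$ from below by $c^{-1}(r\rho)^{-1}$ using the Otelbaev estimates \eqref{2.13} together with $\varphi\psi\asymp\rho^2$---is clean and correct.  An alternative in the spirit of the paper would be to use an $\mathbb R(x,d)$-covering and the relation $h(x)^{-1}=\int_{x-d_1(x)}^{x+d_2(x)}q(t)\,dt$ from \eqref{2.12}, but your direct approach is arguably more transparent and avoids the covering machinery.
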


\begin{thm}  \cite{10}  \label{thm2.28}
Let $\varphi(x)\asymp \psi(x),$ $x\in\mathbb R.$ Then $B<\infty$
if and only if $\tilde{\mathcal A}>0.$ Here
\begin{equation}\label{2.41}
\tilde{\mathcal A}=\inf_{x\in\mathbb R}\tilde{\mathcal
A}(x),\qquad \tilde{\mathcal A}(x)=\frac{1}{2\mu(x)}
\int_{x-\mu(x)}^{x+\mu(x)}q(t)dt.\end{equation}
\end{thm}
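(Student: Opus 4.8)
Assume throughout that $\varphi(x)\asymp\psi(x)$. By \lemref{lem2.9} this is the same as $m:=\sup_{x\in\mathbb R}r(x)|\rho'(x)|<1$, and combined with \eqref{2.12} and \eqref{2.14} it gives $h(x)\asymp\rho(x)\asymp\varphi(x)\asymp\psi(x)$ on $\mathbb R$. The plan is to establish the uniform equivalences
$$h(x)\,d(x)\ \asymp\ \rho(x)\,s(x)\ \asymp\ \mu(x)\,h(x)\ \asymp\ \tilde{\mathcal A}(x)^{-1},\qquad x\in\mathbb R,$$
for then passing to $\sup_x$ gives $B\asymp\tilde{\mathcal A}^{-1}$, so that $B<\infty\iff\tilde{\mathcal A}>0$.

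The first two $\asymp$ are the soft part. A finite chaining of \eqref{2.20}--\eqref{2.21} together with \lemref{lem2.17} yields $h(t)\asymp h(x)$ and $\rho(t)\asymp\rho(x)$ whenever $|t-x|\le c\,d(x)$, with $c$ absolute; since $\tfrac1{rh}\asymp\tfrac1{r\rho}$ pointwise, comparing the two equations in \eqref{2.19} and invoking these stability estimates (the $\S6$ machinery) gives $s(x)\asymp d(x)$, and \eqref{2.26} evaluated at $t=x\pm d(x)$ then gives $\rho(x)s(x)\asymp h(x)d(x)$. Moreover, once the bound $\mu(x)\le c\,d(x)$ is available (it follows from $\mu\asymp s\asymp d$, proved next), the relation $h(t)\asymp h(x)$ on $[x-\mu(x),x+\mu(x)]$ and the defining identity \eqref{2.38} give $h(x)\int_{x-\mu(x)}^{x+\mu(x)}q\asymp\int_{x-\mu(x)}^{x+\mu(x)}qh=1$, hence $\tilde{\mathcal A}(x)\asymp(\mu(x)h(x))^{-1}$. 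Thus the whole theorem reduces to the single estimate $\mu(x)\asymp s(x)$.

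For $\mu\asymp s$ the key analytic inputs are a pointwise identification and an integral identity for the FSS $\{u,v\}$. From the Otelbaev bounds \eqref{2.13} one reads off $rv'\asymp v/\varphi$ and $r|u'|\asymp u/\psi$, whence
$$r(x)\,|u'(x)v'(x)|\ \asymp\ \frac{\rho(x)}{r(x)\varphi(x)\psi(x)}\ \asymp\ \frac{1}{r(x)\rho(x)},\qquad x\in\mathbb R,$$
where the last step uses $\varphi\psi=h(\varphi+\psi)\asymp h\cdot h\asymp\rho^2$, valid precisely because $\varphi\asymp\psi$. On the other hand, since $(ru')'=qu$ and $(rv')'=qv$ (by \eqref{2.2} applied to $u$ and $v$), one has $2q\rho=2quv=(ru')'v+(rv')'u$, so integration by parts over $[a,b]$ yields
$$\int_a^b q\rho\,dt\ =\ \tfrac12\bigl(r(b)\rho'(b)-r(a)\rho'(a)\bigr)+\int_a^b r\,|u'v'|\,dt .$$
Combining the pointwise equivalence with $\int_{x-s(x)}^{x+s(x)}\frac{dt}{r\rho}=1$, this lets one compare the two non-decreasing functions $\ell\mapsto\int_{x-\ell}^{x+\ell}qh\,dt$ and $\ell\mapsto\int_{x-\ell}^{x+\ell}\frac{dt}{r\rho}$ near $\ell=s(x)$ and, using the $\mathbb R(x,s)$-coverings of \lemref{lem2.15}, conclude $\mu(x)\asymp s(x)$. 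Granting this, $h(x)d(x)\asymp\rho(x)s(x)\asymp\mu(x)h(x)\asymp\tilde{\mathcal A}(x)^{-1}$, and taking suprema finishes the proof.

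The main obstacle is this last comparison, and inside it the boundary term $\tfrac12\bigl(r\rho'\bigr)\big|_a^b$: it is bounded only by $m$, which need not be small, so one application of the identity on $[x-s(x),x+s(x)]$ controls $\mu(x)$ from above but not from below. The natural remedy is to localize to intervals $[x-\varepsilon s(x),x+\varepsilon s(x)]$ on which, by \eqref{2.21} and \eqref{2.26}, the weight $(r\rho)^{-1}$ is comparable to its central value, so that $\int_{x-\varepsilon s(x)}^{x+\varepsilon s(x)}(r\rho)^{-1}$ has exact order $\varepsilon$ (this rests on $\ell\mapsto\int_{x-\ell}^{x+\ell}(r\rho)^{-1}$ being essentially linear on $(0,s(x)]$, a consequence of the coverings), while \eqref{2.13} applied once more at the endpoints is meant to make the boundary contribution genuinely subordinate; summing the pieces should recover the two-sided bound. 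Making the boundary term truly lower order, however, requires using $\varphi\asymp\psi$ quantitatively and not merely through $m<1$, and this is where the real effort lies: it is the exact counterpart, for the pair $\{\mu,s\}$, of the work carried out in \cite{2,4} for $\{d,s\}$ and for $\{\rho,h\}$, which is why \cite{10} can invoke that framework rather than redo it.
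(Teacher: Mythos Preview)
The paper does not prove \thmref{thm2.28}; it is quoted from \cite{10} in the preliminaries and used as a black box, so there is no ``paper's own proof'' to compare to. What I can do is assess whether your sketch stands on its own.

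Your reduction is sound: under $\varphi\asymp\psi$ one has $h\asymp\rho$ and $d\asymp s$ by the \S2 machinery, and once $\mu(x)\le c\,d(x)$ is known the stability \eqref{2.26} gives $\int_{x-\mu}^{x+\mu}qh\asymp h(x)\int_{x-\mu}^{x+\mu}q$, whence $\tilde{\mathcal A}(x)\asymp(\mu(x)h(x))^{-1}$. So everything does reduce to $\mu\asymp s$. Your integral identity is correct, and in fact from \eqref{4.34} one gets the exact form $r|u'v'|=\dfrac{1-(r\rho')^2}{4r\rho}$, which under $m<1$ yields $r|u'v'|\asymp(r\rho)^{-1}$ with constants depending only on $m$.

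The gap is the one you yourself flag, and your proposed remedy does not close it. For the \emph{upper} bound $\mu(x)\le C\,s(x)$ your idea works: enlarging to $[x-Cs(x),\,x+Cs(x)]$ and using the $\mathbb R(x,s)$-covering together with \eqref{2.21} makes $\int(r\rho)^{-1}$ of order $C$, so the bulk term $\int r|u'v'|\asymp\int(r\rho)^{-1}$ eventually swamps the boundary contribution $\tfrac12[r\rho']$, which is bounded by $m$ regardless of the interval; hence $\int_{x-Cs}^{x+Cs}qh\ge 1$ for $C$ large. But for the \emph{lower} bound $\mu(x)\ge c\,s(x)$ you need $\int_{x-cs}^{x+cs}qh\le 1$ for some small $c$, and here shrinking the interval moves in the wrong direction: the boundary term stays of size $m$ while the bulk term shrinks, so the identity only yields $\int q\rho\le m+\tfrac14$ on $[x-s,x+s]$ and nothing better on subintervals. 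In other words, localizing to $[x-\varepsilon s,x+\varepsilon s]$ makes the boundary term \emph{more} dominant, not less. Your final paragraph concedes this and defers to \cite{10}; as written, the proposal is an outline with an unresolved core step rather than a proof.
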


\begin{remark}\label{rem2.29}
To apply \thmref{thm2.21} to concrete equations, one has to know
the auxiliary functions $h$ and $d.$ Usually it is not possible to
express these functions through the original coefficients $r$ and
$q$ of equation \eqref{1.1}. However, it is easy to see that when
studying the value of $B,$ one can replace in an equivalent way
the functions $h$ and $d$ with their sharp by order two-sided
estimates. In most cases, such inequalities can be obtained using
standard tools of local analysis (see, e.g., \cite{4} and a
detailed exposition in \cite{10}; one example of obtaining such
estimates is given in \S6 below). It is clear that in concrete
cases of the question on I)--II), it is particularly convenient to
use criteria which either do not use the functions $h$ and $d$ at
all, or use, say, only the function $h.$ Such assertions are
contained in the following theorem.
\end{remark}

\begin{thm}  \cite{2}  \label{thm2.30}
Suppose that conditions \eqref{1.2}--\eqref{1.3} hold. Then we
have the following assertions:

{\rm A)} Equation \eqref{1.1} is correctly solvable in $L_p,$
$p\in(1,\infty)$ if any of the following conditions holds:
\begin{alignat}{4}
& 1)\quad && B_1<\infty, \quad && B_1=\sup_{x\in\mathbb R}B_1(x),
\quad && B_1(x)=r(x)h^2(x),\qquad\qquad\qquad\qquad\label{2.42}\\
& 2)\quad && B_2<\infty, \quad && B_2=\sup_{x\in\mathbb R}B_2(x),
\quad && B_2(x)=r(x)\varphi(x)\psi(x),\qquad\qquad\qquad\qquad\label{2.43}\\
& 3)\quad && B_3<\infty, \quad && B_3=\sup_{x\in\mathbb R}B_3(x),
\quad && B_3(x)=
h(x)\cdot|x|,\qquad\qquad\qquad\qquad\label{2.44}\end{alignat}

{\rm B)} Suppose that in addition to \eqref{1.2} and \eqref{1.3}
the following conditions hold:
\begin{equation}\label{2.45}
r^{-1}\in L_1,\qquad q\notin L_1(-\infty,0),\qquad q\notin
L_1(0,\infty).\end{equation} Then equation \eqref{1.1} is
correctly solvable  in $L_p,$ $p\in(1,\infty)$ if $\theta<\infty.$
Here $\theta=\sup_{x\in\mathbb R}\theta(x),$
\begin{equation}\label{2.46}
\theta(x)=|x|\left(\int_{-\infty}^x\frac{dt}{r(t)}\right)\cdot
\left(\int_x^\infty \frac{dt}{r(t)}\right).
\end{equation}
\end{thm}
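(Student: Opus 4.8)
The plan is to derive each of the four sufficient conditions from the single criterion $B<\infty$ of \thmref{thm2.21}, where $B=\sup_{x\in\mathbb R}h(x)d(x)$; since $B$, $B_1$, $B_2$, $B_3$ and $\theta$ are all independent of $p$, once $B<\infty$ is established the assertion holds for every $p\in(1,\infty)$. Two trivial observations about the function $h$ of \eqref{2.12} will be used throughout: from $h=\varphi\psi/(\varphi+\psi)$ one gets $h(x)\le\min\{\varphi(x),\psi(x)\}$, and, since $\varphi+\psi\ge 2\sqrt{\varphi\psi}$, also $\varphi(x)\psi(x)\ge 4h^2(x)$.

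First I would prove A)\,1). Fix $x\in\mathbb R$. On the segment $[x-d(x),x+d(x)]$ the lower bound $h(t)\ge(4\alpha)^{-1}h(x)$ from \eqref{2.26} (with $\alpha=\exp 2$) combined with $r(t)\le B_1 h(t)^{-2}$ gives $\frac{1}{r(t)h(t)}\ge\frac{h(t)}{B_1}\ge\frac{h(x)}{4\alpha B_1}$; integrating over this segment and using the defining relation \eqref{2.19} for $d$, namely $\int_{x-d(x)}^{x+d(x)}\frac{dt}{r(t)h(t)}=1$, yields $1\ge\frac{h(x)}{4\alpha B_1}\,2d(x)$, i.e.\ $h(x)d(x)\le 2e^2 B_1$, hence $B<\infty$. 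Then A)\,2) is immediate: $\varphi\psi\ge 4h^2$ gives $r(x)h^2(x)\le\frac14 r(x)\varphi(x)\psi(x)\le\frac14 B_2$, so $B_1\le\frac14 B_2<\infty$ and A)\,1) applies.

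Next I would treat A)\,3). By \eqref{2.22} there is $X>0$ with $x-d(x)>0$ whenever $x>X$ and $x+d(x)<0$ whenever $x<-X$, so $d(x)<|x|$ for $|x|>X$ and therefore $h(x)d(x)<h(x)|x|\le B_3$ there. On the compact interval $[-X,X]$ I would bound $h(x)d(x)$ using \eqref{2.14} (which gives $h\le2\rho$), the continuity and positivity of $\rho=uv$ from \thmref{thm2.1}, and the continuity of $d$ from \lemref{lem2.10}; this makes $hd$ bounded on $[-X,X]$, so $B<\infty$. Finally, for B) I would reduce to A)\,3): writing $C_0=\int_{-\infty}^{\infty}\frac{dt}{r(t)}<\infty$, $R_-(x)=\int_{-\infty}^x\frac{dt}{r(t)}$, $R_+(x)=\int_x^{\infty}\frac{dt}{r(t)}$, one has $\varphi(x)\le R_-(x)$, $\psi(x)\le R_+(x)$ and $R_-(x)+R_+(x)=C_0$, so $\max\{R_-(x),R_+(x)\}\ge C_0/2$ and hence $h(x)\le\min\{R_-(x),R_+(x)\}=\frac{R_-(x)R_+(x)}{\max\{R_-(x),R_+(x)\}}\le\frac{2}{C_0}R_-(x)R_+(x)$; therefore $h(x)|x|\le\frac{2}{C_0}\theta(x)$, and $\theta<\infty$ forces $B_3<\infty$. (One also notes that \eqref{2.45}, or already \eqref{1.3}, implies \eqref{2.1}, which is needed for \thmref{thm2.21}.)

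I do not anticipate a real obstacle: the argument is a chain of elementary reductions to \thmref{thm2.21}, together with two one-line inequalities for $h$. The only step needing a little care is the bound on $hd$ over the compact interval $[-X,X]$ in A)\,3) — also used in B) — where one should invoke the continuity of $\rho$ and $d$ rather than that of $h$ itself.
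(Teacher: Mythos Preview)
Your proposal is correct. The paper itself does not prove \thmref{thm2.30} (it is quoted from \cite{2}), but the arguments it gives for the companion compactness statements --- Corollary~\ref{cor3.10} for part~A) and Theorem~\ref{thm3.13} with Lemma~\ref{lem4.9} for part~B) --- are the natural point of comparison.

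For A)1)--3) your route coincides with the paper's. In A)1) the paper (proving Corollary~\ref{cor3.10}) uses the Schwarz inequality $4d^2(x)\le\int_{\Delta(x)}r(t)h(t)\,dt$ before invoking $h(t)\asymp h(x)$, whereas you go straight from $\tfrac{1}{r(t)h(t)}\ge\tfrac{h(t)}{B_1}$; both land on $h(x)d(x)\le 2e^2B_1$, so this is a cosmetic difference. Your treatment of A)2), A)3), and of the compact interval $[-X,X]$ via continuity of $\rho$ and $d$, is exactly how the paper argues (cf.\ Lemma~\ref{lem4.5}). Part~B) is where the approaches genuinely diverge: the paper bounds $\rho$ through the FSS representation $u(x)=v(x)\int_x^\infty\frac{dt}{r(t)v^2(t)}$ (Lemma~\ref{lem4.9}), obtains $\rho(x)\le\tau R_-(x)R_+(x)$, and concludes via $S<\infty$ and Theorem~\ref{thm2.22}. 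You stay entirely at the level of the auxiliary functions: the elementary observation $\varphi\le R_-$, $\psi\le R_+$ gives $h\le\min\{R_-,R_+\}\le\tfrac{2}{C_0}R_-R_+$ with no FSS needed, and then you recycle A)3). Your route is shorter and self-contained within the $h$--$d$ framework; the paper's route via $\rho$ and $s$ buys a slightly weaker hypothesis (only \eqref{2.1} rather than \eqref{1.3} is needed for Theorem~\ref{thm2.22}), which is why the paper can state the related Theorem~\ref{thm3.13} under that weaker assumption.
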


We also need the following known facts.

\begin{thm}  \cite[Ch.IV, \S8, Theorem 20]{14}  \label{thm2.31}
Let $p\in(1,\infty).$ The set $\mathcal K\in L_p$ is precompact if
and only if the following conditions hold:
\begin{alignat}{2}
& 1)\quad && \sup_{f\in\mathcal
K}\|f\|_p<\infty,\qquad\qquad\qquad\qquad\qquad\qquad\qquad\label{2.47}\\
& 2)\quad && \lim_{\delta\to0} \sup_{f\in\mathcal
K}\sup_{|t|<\delta}\|f(\cdot+t)-f(\cdot)\|_p=0
,\qquad\qquad\qquad\qquad\qquad\qquad\qquad\label{2.48}\\
& 3)\quad && \lim_{x\to\infty} \sup_{f\in\mathcal K}\int_{|x|\ge
N}|f(x)|^pdx=0.\qquad\qquad\qquad\qquad\qquad\qquad\qquad\label{2.49}\end{alignat}
\end{thm}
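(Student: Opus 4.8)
The plan is to prove the two implications separately: for necessity I would use that a precompact set is totally bounded together with the continuity of translation in $L_p$; for sufficiency I would use a mollification argument combined with the Arzel\`a--Ascoli theorem, reducing matters to a compact interval.

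\textbf{Necessity.} Suppose $\mathcal K$ is precompact, hence totally bounded. Then \eqref{2.47} is immediate. Given $\varepsilon>0$, fix a finite $\varepsilon$-net $f_1,\dots,f_m\in\mathcal K$. For each fixed $f_i$ one has $\|f_i(\cdot+t)-f_i\|_p\to0$ as $t\to0$ (translation is continuous in $L_p$: this holds for functions in $C_c(\mathbb R)$ by uniform continuity and compact support, and extends to all of $L_p$ by density, since translation is an $L_p$-isometry) and $\int_{|x|\ge N}|f_i|^p\,dx\to0$ as $N\to\infty$ (monotone convergence). As $\{f_1,\dots,f_m\}$ is finite, these limits hold uniformly in $i$; combining with the triangle inequality and the translation-invariance of $\|\cdot\|_p$, for an arbitrary $f\in\mathcal K$ I pick $f_i$ with $\|f-f_i\|_p<\varepsilon$ and obtain $\|f(\cdot+t)-f\|_p\le 2\varepsilon+\max_i\sup_{|t|<\delta}\|f_i(\cdot+t)-f_i\|_p$ and $\big(\int_{|x|\ge N}|f|^p\big)^{1/p}\le\varepsilon+\max_i\big(\int_{|x|\ge N}|f_i|^p\big)^{1/p}$, which yields \eqref{2.48} and \eqref{2.49}.

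\textbf{Sufficiency.} Assume \eqref{2.47}--\eqref{2.49}; it suffices to show $\mathcal K$ is totally bounded, since $L_p$ is complete. Fix $\varepsilon>0$. First, by \eqref{2.49} choose $N$ so that $\int_{|x|\ge N}|f|^p\,dx<\varepsilon^p$ for every $f\in\mathcal K$. Next let $\phi_\delta(x)=\delta^{-1}\phi(x/\delta)$ be a standard mollifier, with $\phi\ge0$, $\int\phi=1$, $\supp\phi\subset[-1,1]$, and put $f_\delta=f*\phi_\delta$. Writing $f_\delta(x)-f(x)=\int\phi_\delta(t)(f(x-t)-f(x))\,dt$ and applying Minkowski's integral inequality gives $\|f_\delta-f\|_p\le\sup_{|t|\le\delta}\|f(\cdot-t)-f\|_p$, so by \eqref{2.48} one can fix $\delta>0$, depending only on $\varepsilon$, with $\|f_\delta-f\|_p<\varepsilon$ for all $f\in\mathcal K$. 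By H\"older's inequality $|f_\delta(x)|\le\|\phi_\delta\|_{p'}\|f\|_p$ and $|f_\delta(x)-f_\delta(y)|\le\|f\|_p\,\|\phi_\delta(x-\cdot)-\phi_\delta(y-\cdot)\|_{p'}$, so in view of \eqref{2.47} the family $\{f_\delta|_{[-N,N]}:f\in\mathcal K\}$ is uniformly bounded and equicontinuous on $[-N,N]$; by Arzel\`a--Ascoli it is totally bounded in $C([-N,N])$, hence, since $[-N,N]$ has finite measure, totally bounded in $L_p([-N,N])$. Consequently $\{\mathbf 1_{[-N,N]}f_\delta:f\in\mathcal K\}$, regarded in $L_p(\mathbb R)$ via extension by zero, is totally bounded.

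\textbf{Conclusion and main obstacle.} For each $f\in\mathcal K$,
\begin{equation*}
\|f-\mathbf 1_{[-N,N]}f_\delta\|_p\le\Big(\int_{|x|\ge N}|f|^p\,dx\Big)^{1/p}+\|\mathbf 1_{[-N,N]}(f-f_\delta)\|_p<\varepsilon+\|f-f_\delta\|_p<2\varepsilon .
\end{equation*}
Thus $\mathcal K$ is contained in a $2\varepsilon$-neighbourhood of a totally bounded set, hence is itself totally bounded; as $\varepsilon>0$ was arbitrary, $\mathcal K$ is precompact. The delicate points all lie in the sufficiency direction: verifying the uniform equicontinuity of the mollified family so that Arzel\`a--Ascoli applies, passing from sup-norm total boundedness on $[-N,N]$ to $L_p$ total boundedness, and keeping the order of the quantifiers correct --- $N$ must be selected from \eqref{2.49} first, then $\delta$ from \eqref{2.48}, and both choices must be uniform over $f\in\mathcal K$.
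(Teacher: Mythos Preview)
Your argument is correct and is essentially the standard proof of the Kolmogorov--Riesz--Fr\'echet compactness criterion. One small point: you should specify that the mollifier $\phi$ is, say, continuous (or bounded), so that $\phi_\delta\in L_{p'}$ and the H\"older estimates you invoke are justified; with the usual choice of a smooth bump this is automatic.

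As for comparison with the paper: there is nothing to compare. Theorem~\ref{thm2.31} appears in the Preliminaries section and is simply quoted from Dunford--Schwartz \cite[Ch.~IV, \S8, Theorem~20]{14}; the paper gives no proof of its own. It is used as a black box in the proof of Theorem~\ref{thm3.1}, where the three conditions \eqref{2.47}--\eqref{2.49} are verified for the image of the unit ball under the Green operator.
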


Let $\mu,\theta$ be almost everywhere finite measurable positive
functions defined in the interval $(a,b),-\infty\le a<b\le\infty.$

We introduce the integral operators
\begin{equation}\label{2.50}
(Kf)(x)=\mu(x)\int_x^b\theta(t)f(t)dt,\qquad x\in(a,b),
\end{equation}
\begin{equation}\label{2.51}
(\tilde Kf)(x)=\mu(x)\int_a^x\theta(t)f(t)dt,\qquad x\in(a,b).
\end{equation}

\begin{thm}  \cite{15} \cite[Ch.1, \S1.3]{16}  \label{thm2.32}
For $p\in(1,\infty)$ the operator $K:L_p(a,b)\to L_p(a,b)$ is
bounded if and only if $H_p(a,b)<\infty.$ Here
$H_p(a,b)=\sup_{x\in(a,b)}H_p(x,a,b),$
\begin{equation}\label{2.52}
H_p(x,a,b)=\left[\int_a^x\mu(t)^pdt\right]^{1/p}\cdot\left[\int_x^b\theta(t)^{p'}dt\right]^{1/p'},\quad
p'=\frac{p}{p-1}.\end{equation}
\end{thm}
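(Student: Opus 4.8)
The plan is to prove both directions and in fact to obtain the two-sided estimate $\|K\|_{p\to p}\asymp H_p(a,b)$, the stated equivalence being then immediate. Write $H:=H_p(a,b)$ and introduce the primitives $U(x)=\int_x^b\theta(t)^{p'}\,dt$ and $M(x)=\int_a^x\mu(t)^p\,dt$; since $p/p'=p-1$, the assumption $H<\infty$ says exactly that $M(x)\,U(x)^{p/p'}\le H^p$ for all $x\in(a,b)$. Throughout one may apply the standard truncation reducing matters to the case in which all the integrals that occur are finite.

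\emph{Necessity.} Assume $K$ bounded and fix $x_0\in(a,b)$. I would test $K$ on $f_0=\theta^{\,p'-1}\chi_{(x_0,b)}$ (cut off near $b$ and passed to the limit if $U(x_0)=\infty$). Because $p(p'-1)=p'$, one gets $\|f_0\|_p^p=U(x_0)$, while for $x<x_0$ the inner integral is constant, so $(Kf_0)(x)=\mu(x)\,U(x_0)$; hence $\|K\|_{p\to p}^p\,U(x_0)\ge M(x_0)\,U(x_0)^p$, that is $\|K\|_{p\to p}^p\ge M(x_0)\,U(x_0)^{p-1}=H_p(x_0,a,b)^p$. Taking the supremum over $x_0$ gives $H_p(a,b)\le\|K\|_{p\to p}$.

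\emph{Sufficiency.} Assume $H<\infty$; by positivity take $f\ge0$. If $U\equiv\infty$ then $M\equiv0$, so $\mu\equiv0$ and $K=0$; otherwise the bound on $M\,U^{p/p'}$ forces $M\equiv0$ on the initial interval where $U=\infty$, so on the remaining part we may assume $0<U<\infty$ on $(a,b)$, with $U$ continuous, strictly decreasing, $U(b^-)=0$ and $U(a^+)\in(0,\infty]$. Choose an increasing sequence $(x_k)$, indexed by $k$ in an interval of $\mathbb Z$ (two-sided when $U(a^+)=\infty$), with $U(x_k)=2^{-k}$, so that the $x_k$ exhaust $(a,b)$. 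Put $a_j:=\big(\int_{x_j}^{x_{j+1}}f^p\big)^{1/p}$, so $\sum_j a_j^p=\|f\|_p^p$. For $x\in[x_k,x_{k+1})$, using that $F(x):=\int_x^b\theta f$ is nonincreasing and H\"older on each block,
$$F(x)\le\sum_{j\ge k}\Big(\int_{x_j}^{x_{j+1}}\theta^{p'}\Big)^{1/p'}a_j\le\sum_{j\ge k}2^{-j/p'}a_j ,$$
whence, with $\int_{x_k}^{x_{k+1}}\mu^p\le M(x_{k+1})\le H^p2^{(k+1)p/p'}$ and summing over $k$ (writing $\lambda:=2^{p/p'}>1$ and $\gamma_j:=2^{-j/p'}a_j$, so that $\lambda^j\gamma_j^p=a_j^p$),
$$\|Kf\|_p^p\le\lambda H^p\sum_k\lambda^k\Big(\sum_{j\ge k}\gamma_j\Big)^p .$$
It then remains to prove the weighted discrete Hardy inequality $\sum_k\lambda^k\big(\sum_{j\ge k}\gamma_j\big)^p\le C(p)\sum_j\lambda^j\gamma_j^p$, which combined with the previous line gives $\|Kf\|_p^p\le C(p)\,H^p\,\|f\|_p^p$. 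For it I would split $\gamma_j=(\gamma_j\lambda^{\delta j})\lambda^{-\delta j}$ with a fixed $\delta\in(0,1/p)$, apply H\"older to $\sum_{j\ge k}\gamma_j$, and evaluate the two geometric series $\sum_{j\ge k}\lambda^{-\delta p'j}=c_1\lambda^{-\delta p'k}$ and $\sum_{k\le j}\lambda^{(1-\delta p)k}=c_2\lambda^{(1-\delta p)j}$, both finite precisely because $\lambda>1$ and $0<\delta<1/p$.

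The H\"older steps and the geometric sums are routine; I expect the only real work to be the bookkeeping in the borderline configurations — when $U(a^+)=\infty$, which the dyadic scheme handles automatically since the contribution of $k\to-\infty$ is damped by the factor $\lambda^k$; when $U(a^+)<\infty$, where one extra leftmost block $(a,x_{k_0})$ has to be absorbed into the same estimate; and the limiting argument in the necessity part. (A continuous variant also works: insert $U(t)^{-\varepsilon}U(t)^{\varepsilon}$ inside $\int_x^b\theta f$, use H\"older, integrate in $x$, apply Fubini and integrate by parts in the inner integral — but there the case $U(a^+)=\infty$ is exactly what breaks the estimate, which is why the dyadic route is preferable.)
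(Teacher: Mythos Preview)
Your argument is essentially correct and follows one of the standard routes to this classical Hardy--Muckenhoupt criterion: the necessity via the test function $f_0=\theta^{\,p'-1}\chi_{(x_0,b)}$ is the canonical one, and the sufficiency via a dyadic decomposition along level sets of $U$ followed by a discrete weighted Hardy inequality is also well known (it is, for instance, close in spirit to the proofs in the Kufner--Persson monograph that the paper cites). The bookkeeping you flag --- the two-sided versus one-sided indexing of the $x_k$ depending on whether $U(a^+)$ is finite, and the limiting argument in the necessity part --- is exactly where the care is needed, and you have identified it correctly.

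However, there is nothing to compare against: the paper does \emph{not} prove Theorem~2.32. It is listed in \S2 (``Preliminaries'') as a known result quoted from the references \cite{15} and \cite[Ch.~1, \S1.3]{16}, together with the two-sided norm estimate~\eqref{2.53}, and is then used as a black box (notably in the verification of condition~3) in the proof of Theorem~3.1). So your proposal supplies a proof where the paper deliberately offers none; in that sense it is a correct reconstruction of a classical argument rather than a variant of anything in the paper.
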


In addition, the following inequalities hold:
\begin{equation}\label{2.53}
H_p(a,b)\le\|K\|_{L_p(a,b)\to
L_p(a,b)}\le(p)^{1/p}(p')^{1/p'}H_p(a,b),
\end{equation}

\begin{thm}  \cite{15} \cite[Ch.1, \S1.3]{16}  \label{thm2.33}
For $p\in(1,\infty)$ the operator $\tilde K:L_p(a,b)\to L_p(a,b)$
is bounded if and only if $\tilde H_p(a,b)<\infty.$ Here $\tilde
H_p(a,b)=\sup_{x\in(a,b)}\tilde H_p(x,a,b),$ and
\begin{equation}\label{2.54}
\tilde
H_p(x,a,b)=\left[\int_a^x\theta(t)^{p'}dt\right]^{1/p'}\cdot\left[\int_x^b\mu(t)^{p}dt\right]^{1/p},\quad
p'=\frac{p}{p-1}.\end{equation}
\end{thm}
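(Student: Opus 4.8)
The plan is to deduce \thmref{thm2.33} from \thmref{thm2.32} by a reflection of the integration variable, since $\tilde K$ is nothing but the ``mirror image'' of the operator $K$ in \eqref{2.50}. Consider the substitution $t\mapsto -t$, which maps $(a,b)$ onto $(-b,-a)$ and induces an isometry $U\colon L_p(a,b)\to L_p(-b,-a)$, $(Uf)(s)=f(-s)$. A change of variables in the defining integral \eqref{2.51}, with $\sigma=-x$, gives
\begin{equation}
(\tilde Kf)(-\sigma)=\mu(-\sigma)\int_{\sigma}^{-a}\theta(-t)(Uf)(t)\,dt,\qquad \sigma\in(-b,-a),
\end{equation}
that is, $U\tilde K f=K^{\ast}(Uf)$, where $K^{\ast}$ is the operator of the form \eqref{2.50} on the interval $(-b,-a)$ associated with the weights $\mu^{\ast}(t)=\mu(-t)$, $\theta^{\ast}(t)=\theta(-t)$ (again a.e.\ finite, measurable and positive). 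Since $U$ is an isometry, $\|\tilde K\|_{L_p(a,b)\to L_p(a,b)}=\|K^{\ast}\|_{L_p(-b,-a)\to L_p(-b,-a)}$.

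First I would verify this identity carefully, paying attention to the orientation of the integral under $t\mapsto-t$ — essentially the only place an error could creep in. Then, by \thmref{thm2.32}, $K^{\ast}$ is bounded if and only if $H_p^{\ast}:=\sup_{\sigma\in(-b,-a)}\big[\int_{-b}^{\sigma}\mu^{\ast}(t)^p\,dt\big]^{1/p}\big[\int_{\sigma}^{-a}\theta^{\ast}(t)^{p'}\,dt\big]^{1/p'}<\infty$. A further substitution $u=-t$ in each factor converts $\int_{-b}^{\sigma}\mu(-t)^p\,dt$ into $\int_{-\sigma}^{b}\mu(u)^p\,du$ and $\int_{\sigma}^{-a}\theta(-t)^{p'}\,dt$ into $\int_{a}^{-\sigma}\theta(u)^{p'}\,du$; writing $y=-\sigma$, these are exactly the two factors of $\tilde H_p(y,a,b)$ in \eqref{2.54}, and as $\sigma$ runs over $(-b,-a)$ the point $y$ runs over $(a,b)$, so $H_p^{\ast}=\tilde H_p(a,b)$. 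This establishes the stated equivalence, and, carrying the constants of \eqref{2.53} through the isometry, also the two-sided estimate $\tilde H_p(a,b)\le\|\tilde K\|_{L_p(a,b)\to L_p(a,b)}\le (p)^{1/p}(p')^{1/p'}\tilde H_p(a,b)$.

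There is no genuinely hard step here; the only delicate points are the bookkeeping of the limits of integration under the reflection (so that the ``forward'' Hardy operator of \thmref{thm2.32} really corresponds to the ``backward'' operator $\tilde K$) and checking that the reflected weights inherit the standing hypotheses. Should a self-contained proof be preferred instead, the classical scheme applies as well: necessity of $\tilde H_p(a,b)<\infty$ follows by feeding into $\tilde K$ the test functions $f=\theta^{\,p'-1}\chi_{(a,x_0)}$ and letting $x_0$ vary, while sufficiency follows from Minkowski's integral inequality combined with a dyadic-type decomposition of $(a,b)$ by the level sets of $x\mapsto\int_a^x\theta(t)^{p'}\,dt$. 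The reduction to \thmref{thm2.32} is the shorter route, and that is what I would present.
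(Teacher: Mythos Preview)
Your reduction of \thmref{thm2.33} to \thmref{thm2.32} via the reflection $t\mapsto -t$ is correct and is the standard way to pass between the ``forward'' and ``backward'' weighted Hardy operators; the bookkeeping with the limits of integration and the identification $H_p^{\ast}=\tilde H_p(a,b)$ checks out. Note, however, that the paper does not prove \thmref{thm2.33} at all: it is quoted in the preliminaries from references \cite{15} and \cite[Ch.~1, \S1.3]{16}, so there is no in-paper argument to compare with. Your approach is exactly what one would do to derive it from the companion \thmref{thm2.32}, and the constant you obtain, $(p)^{1/p}(p')^{1/p'}$, is the correct one --- the exponent $(p)^{1/p'}$ and the symbol $\|K\|$ (rather than $\|\tilde K\|$) in the displayed inequality \eqref{2.55} appear to be typographical slips in the paper.
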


In addition, the following inequalities hold:
\begin{equation}\label{2.55}
\tilde H_p(a,b)\le\|K\|_{L_p(a,b)\to
L_p(a,b)}\le(p)^{1/p'}(p')^{1/p'}\tilde H_p(a,b).
\end{equation}

Note that some assertions (mainly of a technical nature) will be
given in \S4--\S5 in the course of the exposition.

\section{Main Results}

Recall that, if conditions I)--II) hold, then $\mathcal
L_p^{-1}=G,$ $p\in(1,\infty)$ (see \thmref{thm2.18}). Therefore,
in the sequel in the statements of the theorems, we write the
operator $G$ instead of the operator $\mathcal L_p^{-1}.$

Our main result is the following theorem.

\begin{thm}\label{thm3.1} Let $p\in(1,\infty)$, and suppose that
equation \eqref{1.1} is correctly solvable in $L_p.$ Then the
operator $G:L_p\to L_p$ is compact if and only if
\begin{equation}\label{3.1}
\lim_{|x|\to\infty}h(x)d(x)=0.
\end{equation}
\end{thm}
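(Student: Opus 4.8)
The plan is to use the compactness criterion in $L_p$ (\thmref{thm2.31}) together with the splitting $G=G_1+G_2$ and the norm estimates of Lemmas \ref{lem2.20}--\ref{lem2.27}, reducing everything to local control of the weights $u,v,\rho,h$ via the $\mathbb R(x,d)$- and $\mathbb R(x,s)$-coverings. Since $G = \mathcal L_p^{-1}$ is bounded by hypothesis, it suffices to test compactness against the unit ball of $L_p$; equivalently, one shows $G$ is a norm-limit of compact operators by truncating the Green function $G(x,t)$ to a large square $|x|,|t|\le N$. Concretely, write $G = G^{(N)} + R^{(N)}$ where $G^{(N)}$ has kernel $G(x,t)\mathbf 1_{[-N,N]^2}$ (a compact operator, being an integral operator with, after the local estimates, essentially finite-rank-approximable kernel on a bounded interval), and estimate $\|R^{(N)}\|_{p\to p}$. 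The key point is that by \thmref{thm2.21} and \lemref{lem2.20}, the norm of the ``tail'' operator $R^{(N)}$ is $\asymp \sup_{|x|\ge N'} h(x)d(x)$ for an appropriate $N'=N'(N)\to\infty$, exactly as in the proof of the correct-solvability criterion but with the supremum taken over the complement of a large interval rather than over all of $\mathbb R$. Hence $\|R^{(N)}\|_{p\to p}\to 0$ as $N\to\infty$ precisely when \eqref{3.1} holds, giving the ``if'' direction.

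For the ``only if'' direction, suppose \eqref{3.1} fails, so there is a sequence $|x_n|\to\infty$ and $\varepsilon_0>0$ with $h(x_n)d(x_n)\ge\varepsilon_0$. I would construct a normalized sequence $f_n\in L_p$, $\|f_n\|_p=1$, supported on the shrinking-at-infinity intervals $[x_n-d(x_n),x_n+d(x_n)]$ (suitably normalized using \thmref{thm2.32}/\thmref{thm2.33} for the Hardy-type operators $G_1,G_2$ restricted to these intervals), such that $\|Gf_n\|_p\ge c\,h(x_n)d(x_n)\ge c\varepsilon_0$ while $f_n\rightharpoonup 0$ weakly — the weak convergence being forced because the supports escape to infinity (using \eqref{2.22}, i.e. $x\pm d(x)\to\pm\infty$, so the intervals are eventually disjoint and run off to infinity). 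A compact operator maps weakly null sequences to norm-null sequences, so $\{Gf_n\}$ cannot have a norm-convergent subsequence, contradicting compactness of $G$. The lower bound $\|Gf_n\|_p\gtrsim h(x_n)d(x_n)$ comes from choosing $f_n$ to be, up to sign and normalization, the extremal-type function for the relevant Hardy operator on $\Delta_n=[x_n-d(x_n),x_n+d(x_n)]$ and using \eqref{2.25}--\eqref{2.26} to replace $u(t),v(t),\rho(t)$ by their values at $x_n$ on $\Delta_n$, together with \eqref{2.14} to pass between $\rho$ and $h$; this mirrors the lower estimate $\|G\|_{p\to p}\gtrsim B$ from \thmref{thm2.21}.

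The main obstacle I anticipate is the bookkeeping in the ``if'' direction: one must verify that the tail norm $\|R^{(N)}\|_{p\to p}$ is genuinely controlled by $\sup_{|x|\text{ large}} h(x)d(x)$ and not spoiled by the cross terms $G(x,t)$ with $x$ large and $t$ small (or vice versa). This requires the exponential off-diagonal decay of the Green function from \eqref{2.9}, $G(x,t)=\sqrt{\rho(x)\rho(t)}\exp(-\tfrac12|\int_x^t (r\rho)^{-1}|)$, combined with \eqref{2.10} (divergence of $\int (r\rho)^{-1}$ to $\pm\infty$), to show that the contribution of those cross terms is itself small once $N$ is large — essentially a Schur-test argument against the weights, or again a reduction to the Hardy operators $G_1,G_2$ whose norms over $(N,\infty)$ and $(-\infty,-N)$ are controlled by $\sup_{|x|>N}h(x)d(x)$ via Theorems \ref{thm2.32}--\ref{thm2.33}. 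Condition \eqref{2.48} (equicontinuity of translates) for $G^{(N)}$ applied to the unit ball is routine once the kernel is restricted to a bounded interval and the weights are locally bounded there by Lemmas \ref{lem2.17}, so the real content is entirely in the tail estimate.
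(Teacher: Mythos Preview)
Your proposal is essentially correct and close in spirit to the paper's argument, though organized differently.

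For necessity, you and the paper do the same thing: construct normalized test functions $f_n$ supported on intervals $[x_n-d(x_n),x_n+d(x_n)]$ with $|x_n|\to\infty$, and bound $\|Gf_n\|_p$ from below by $c\,h(x_n)d(x_n)$ via the local estimates \eqref{2.14}, \eqref{2.25}--\eqref{2.26}. You then invoke ``compact $\Rightarrow$ weakly null goes to norm null''; the paper instead takes an $\mathbb R(0,d)$-covering $\{\Delta_n\}$, notes that $\{Gf_n\}$ is precompact, and applies condition \eqref{2.49} of \thmref{thm2.31} to force $h(x_k)d(x_k)\to 0$ along the covering, then upgrades to all $x$ via \eqref{2.20}, \eqref{2.26}. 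These are interchangeable.

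For sufficiency the packaging differs more. The paper verifies all three conditions of \thmref{thm2.31} directly for the image of the unit ball under $G_2$ (and $G_1$): boundedness, equicontinuity of translates \eqref{2.48}, and uniform tail decay \eqref{2.49}. Your truncation $G=G^{(N)}+R^{(N)}$ with kernel cut to $[-N,N]^2$ is a legitimate alternative that trades the equicontinuity verification for an operator-norm estimate $\|R^{(N)}\|_{p\to p}\to 0$; the cross pieces of $R^{(N)}$ are rank-one and handled exactly by the same quantity that controls the corners. Both routes, however, bottleneck at the \emph{same} nontrivial estimate, which the paper isolates as \lemref{lem4.2}: one must show that
\[
\theta_p(x)=\Bigl(\int_{-\infty}^x v^p\Bigr)^{1/p}\Bigl(\int_x^\infty u^{p'}\Bigr)^{1/p'}
\]
satisfies $\theta_p(x)\le c\,B^{1/p}\sup_{t\ge x}(h(t)d(t))^{1/p'}$ for $x\ge 0$ (and symmetrically for $x\le 0$), so that $\theta_p(x)\to 0$ under \eqref{3.1}. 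Your phrase ``exponential off-diagonal decay from \eqref{2.9} plus a Schur test or reduction to Hardy operators'' is the right intuition but not a proof; the paper's actual argument uses the Davies--Harrell representation \eqref{2.7} with a carefully chosen exponent $\gamma=\gamma(p)$ to balance the two factors, followed by a summation over an $\mathbb R(x,d)$-covering via \lemref{lem4.1}. This is where the real work sits, and once that lemma is available your approach and the paper's finish in comparable length. What your route buys is that you never have to touch the translation-equicontinuity condition \eqref{2.48}; what the paper's route buys is that it never has to argue compactness of the truncated kernel operator or track the separate cross pieces of $R^{(N)}$.
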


\begin{thm}\label{thm3.2}
Suppose that conditions \eqref{1.2} and \eqref{2.1} hold,
$p\in(1,\infty),$ and equation \eqref{1.1} is correctly solvable
in $L_p.$ Thus the operator $G: L_p\to L_p$ is compact if and only
if
\begin{equation}\label{3.2}
\lim_{|x|\to\infty} \rho(x)s(x)=0.\end{equation}
\end{thm}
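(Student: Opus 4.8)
The plan is to reduce Theorem~\ref{thm3.2} to Theorem~\ref{thm3.1} by exploiting the parallel between the pairs $(\rho,s)$ and $(h,d)$ already emphasized in Remark~\ref{rem2.23}. Recall that by \thmref{thm2.18} we have $G=\mathcal L_p^{-1}$, and by \lemref{lem2.20} the operator $G=G_1+G_2$ with $\|G\|_{p\to p}\asymp\|G_1\|_{p\to p}+\|G_2\|_{p\to p}$; moreover each $G_i$ is a positive integral operator with kernel built from $u,v$. Since $G_1,G_2$ are positive operators, $G$ is compact if and only if both $G_1$ and $G_2$ are compact (a positive operator dominated by a compact positive operator is compact, and $0\le G_i\le G$). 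So it suffices to characterize compactness of $G_1$ and $G_2$ separately, and by symmetry I will treat $G_1$ only.

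First I would localize. Fix an $\mathbb R(0,s)$-covering $\{\Delta_n\}_{n\in\mathbb N'}$ of $\mathbb R$, which exists by \lemref{lem2.15} since $s$ satisfies \eqref{2.23}; write $\Delta_n=[\Delta_n^-,\Delta_n^+]$ with centers $x_n$ and half-lengths $s(x_n)$. On each $\Delta_n$, \lemref{lem2.17} (inequalities \eqref{2.25}--\eqref{2.27}) gives $u(t)\asymp u(x_n)$, $v(t)\asymp v(x_n)$, $\rho(t)\asymp\rho(x_n)$ with absolute constants. The key scalar quantity is the local "mass'' of $G_1$ on the diagonal block $\Delta_n\times\Delta_n$, which one computes, using \eqref{2.4} and \eqref{2.9}, to be comparable to $\rho(x_n)s(x_n)$ — this is exactly the $S$-type quantity of \eqref{2.36}, and it is the analog of $h(x_n)d(x_n)$ appearing in \thmref{thm2.21} and \thmref{thm3.1}. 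The correct solvability hypothesis gives $S=\sup_x\rho(x)s(x)<\infty$ (Theorem~\ref{thm2.22}), so the blocks are uniformly bounded. I would then run the standard Kac--Krein / Otelbaev block-diagonal argument: split $G_1$ into the "near-diagonal'' part (sum of blocks on $\Delta_n\times\Delta_m$ with $|n-m|\le 1$, say) and a "far'' part. The far part is controlled by the exponential decay in \eqref{2.9} — the factor $\exp(-\tfrac12|\int_x^t d\xi/(r\rho)|)$ together with the covering property $\int_{\Delta_n}d\xi/(r(\xi)\rho(\xi))=1$ forces geometric decay in $|n-m|$ — and is a norm-convergent sum of finite-rank-approximable pieces, hence compact, regardless of \eqref{3.2}. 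The near-diagonal part is a block-diagonal (up to finitely many off-diagonals) operator whose $n$-th block has norm $\asymp\rho(x_n)s(x_n)$ by the Hardy-operator estimates \thmref{thm2.32}--\thmref{thm2.33} applied on $\Delta_n$ (the weights $\mu=u$, $\theta=v$ are essentially constant there, so $H_p\asymp \rho(x_n)s(x_n)$). A block-diagonal operator with blocks $T_n$ is compact if and only if $\|T_n\|\to 0$; by \lemref{lem2.11} the map $n\mapsto x_n$ exhausts $\mathbb R$ with $|x_n|\to\infty$, and using \eqref{2.26}--\eqref{2.27} one checks $\rho(x_n)s(x_n)\to 0$ is equivalent to $\lim_{|x|\to\infty}\rho(x)s(x)=0$. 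Combining, $G_1$ (and symmetrically $G_2$) is compact iff \eqref{3.2} holds, which proves the theorem.

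Alternatively, and perhaps more cleanly, one can verify compactness directly via \thmref{thm2.31} applied to $\mathcal K=\{Gf:\|f\|_p\le1\}$: condition \eqref{2.47} is the boundedness of $G$, condition \eqref{2.48} (equicontinuity of translates) follows from \eqref{2.17} of \lemref{lem2.9} controlling $|\rho'|$ hence the smoothness of the kernel, and condition \eqref{2.49} (uniform smallness at infinity) is precisely where \eqref{3.2} enters, since $\int_{|x|\ge N}|(Gf)(x)|^p\,dx$ is estimated by $\sup_{|x|\ge N}(\rho(x)s(x))^p\cdot\|f\|_p^p$ after the same block decomposition. For the "only if'' direction one tests $G$ on the normalized local bumps $f_n=\mathbf 1_{\Delta_n}/|\Delta_n|^{1/p'}$: these form a weakly null sequence, and $\|Gf_n\|_p\gtrsim\rho(x_n)s(x_n)$, so compactness of $G$ forces $\rho(x_n)s(x_n)\to0$, i.e. \eqref{3.2}.

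The main obstacle I anticipate is the bookkeeping in the "only if'' direction: producing a genuine weakly null (not merely bounded) sequence of test functions whose images under $G$ have norm bounded below by $\rho(x_n)s(x_n)$ up to an absolute constant, uniformly in $n$. This requires a lower bound on $\|G_1f_n\|_p$ that survives the passage through the Hardy-operator estimate, and one must make sure the comparison constants from \lemref{lem2.17} do not degrade as $|x_n|\to\infty$ — which they do not, since $\alpha=\exp(2)$ there is absolute. Everything else (the exponential off-diagonal decay, the block-diagonal compactness criterion, the equivalence of $\rho(x_n)s(x_n)\to0$ with \eqref{3.2}) is routine given the machinery of \S2 and the fact, stressed in Remark~\ref{rem2.23}, that the argument is a verbatim transcription of the proof of \thmref{thm3.1} with $(h,d)$ replaced by $(\rho,s)$ and estimate \eqref{2.14} simply omitted.
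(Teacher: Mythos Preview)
Your alternative approach via \thmref{thm2.31}, together with the necessity argument using localized test bumps, is exactly what the paper does: it proves \thmref{thm3.1} by verifying conditions \eqref{2.47}--\eqref{2.49} for $\{G_2f:\|f\|_p\le1\}$ and running the bump construction for necessity, then declares in Remark~\ref{rem3.3} that \thmref{thm3.2} is the verbatim transcription with $(h,d)\mapsto(\rho,s)$. Two small corrections: your normalization should be $f_n=|\Delta_n|^{-1/p}\mathbf 1_{\Delta_n}$ (yours gives $\|f_n\|_p=|\Delta_n|^{(2-p)/p}$, which need not stay bounded), and the paper does not invoke weak convergence but applies condition~\eqref{2.49} directly to the precompact image $\{Gf_n\}$.

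Your primary block-decomposition approach, however, has a genuine gap. The claim that the far part $F=\sum_{k\ge2}F_k$ is compact ``regardless of \eqref{3.2}'' is not justified and is in fact false. Each $F_k$ --- the portion of $G_1$ with kernel supported on $\bigcup_n\Delta_n\times\Delta_{n-k}$ --- is a direct sum over $n$ of rank-one operators, and such a sum is compact only if the block norms tend to zero as $|n|\to\infty$. In the model case $r\equiv q\equiv1$ one has $u(x)=e^{-x}$, $v(x)=e^{x}$, $\rho\equiv1$, $s\equiv\tfrac12$, and the $n$-th block of $F_k$ has norm $\asymp e^{-k}$ \emph{independently of $n$}; thus $F_k$ is not compact, and neither is $F$. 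The decomposition $G_1=N+F$ therefore does not reduce compactness of $G_1$ to that of the near-diagonal part $N$. Separately, your reduction ``$G$ compact $\Rightarrow G_1,G_2$ compact'' via kernel domination $0\le G_i\le G$ is the Dodds--Fremlin theorem --- true in $L_p$ for $1<p<\infty$, but not the one-line remark you make it; the paper sidesteps this entirely by running the necessity argument on $G$ itself rather than on $G_1,G_2$.
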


\begin{remark}\label{rem3.3}
Theorems \ref{thm3.1} and \ref{thm3.2} are related to one another
in the same way as Theorems \ref{thm2.21} and \ref{thm2.22} (see
Remark \ref{rem2.23}). Therefore, we do not present a proof of
\thmref{thm3.2}.
\end{remark}

\begin{thm}\label{thm3.4}
Suppose that condition  \eqref{3.1} holds. Then the operator $G:
L_2\to L_2$ is compact, self-adjoint, and positive. Its maximal
and eigenvalue $\lambda$ satisfies the estimates (see
\eqref{2.34}):
\begin{equation}\label{3.3}
c^{-1}B\le \lambda\le cB.
\end{equation}
\end{thm}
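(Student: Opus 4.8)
The plan is to deduce the statement from the main compactness criterion \thmref{thm3.1} together with the two-sided norm estimate of \thmref{thm2.21}. First, since $p=2$ satisfies $p\in(1,\infty)$ and condition \eqref{3.1} holds, \thmref{thm3.1} yields that $G:L_2\to L_2$ is compact. Self-adjointness and positivity follow from the explicit form of the Green operator: by \thmref{thm2.18}, $G=\mathcal L_2^{-1}$ and $(Gf)(x)=\int_{-\infty}^\infty G(x,t)f(t)\,dt$ with the symmetric kernel \eqref{2.8}, $G(x,t)=G(t,x)$, so $G$ is symmetric; since it is bounded and everywhere defined on $L_2$, it is self-adjoint. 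Positivity: for $f\in L_2$, writing $y=Gf\in\mathcal D_2$, one integrates by parts in $\langle Gf,f\rangle=\langle y,\mathcal L_2 y\rangle=\int(-(ry')'+qy)\bar y=\int(r|y'|^2+q|y|^2)\ge0$ using $r>0$, $q\ge0$ and the decay of $y,ry'$ at infinity (which holds because $y\in L_2$ and $ry'\in L_2$, forcing the boundary terms to vanish along a suitable sequence); moreover $\langle Gf,f\rangle=0$ forces $y'\equiv0$ and $qy\equiv0$, hence $y\equiv0$ by \eqref{2.1}, so $G$ is strictly positive.

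Next, for the eigenvalue estimate \eqref{3.3}: a compact, self-adjoint, positive operator on a Hilbert space has its operator norm equal to its largest eigenvalue, i.e. $\lambda=\|G\|_{2\to2}$. Indeed compactness plus self-adjointness gives a spectral decomposition with real eigenvalues accumulating only at $0$, positivity makes them nonnegative, and $\|G\|_{2\to2}=\sup_n|\lambda_n|=\max_n\lambda_n=\lambda$ (the supremum is attained by compactness). Then \thmref{thm2.21}, applied with $p=2$, gives $\|G\|_{2\to2}\asymp B$, that is, there is an absolute constant $c$ with $c^{-1}B\le\|G\|_{2\to2}\le cB$; combining, $c^{-1}B\le\lambda\le cB$, which is \eqref{3.3}.

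The main thing to be careful about is the justification of the integration by parts establishing positivity of $G$ — specifically that the boundary terms $r(x)y'(x)\overline{y(x)}\big|_{-N}^{N}$ vanish along an appropriate sequence $N=N_k\to\infty$. Since $y\in L_2(\mathbb R)$ and $ry'\in L_2(\mathbb R)$ (as $y\in\mathcal D_2$), the product $(ry')\bar y\in L_1(\mathbb R)$, so $\liminf_{|x|\to\infty}|r(x)y'(x)\overline{y(x)}|=0$; choosing sequences realizing these lower limits on both sides gives the vanishing of the boundary contributions and hence the identity $\langle Gf,f\rangle=\int_{\mathbb R}(r|y'|^2+q|y|^2)\,dx$. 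Everything else is a direct invocation of the quoted results, so this is the only step requiring genuine (though routine) analytic care; I would present it as a short lemma-style paragraph and then conclude.
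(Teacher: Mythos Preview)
Your overall architecture is the same as the paper's, but there are two gaps.

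\textbf{First (minor):} You invoke \thmref{thm3.1} directly from condition \eqref{3.1}, but \thmref{thm3.1} has as a standing hypothesis that equation \eqref{1.1} is correctly solvable in $L_p$, which is \emph{not} assumed in \thmref{thm3.4}. The paper closes this by first proving a short lemma (Lemma~\ref{lem4.5}): \eqref{3.1} forces $h(x)d(x)\to0$, hence $h(x)d(x)$ is bounded (it is continuous on any compact interval), so $B<\infty$, and then \thmref{thm2.21} gives correct solvability. Only after that can \thmref{thm3.1} and the norm estimate $\|G\|_{2\to2}\asymp B$ be applied. You should insert this step.

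\textbf{Second (the real issue):} Your positivity argument rests on the claim ``$ry'\in L_2$ since $y\in\mathcal D_2$''. That is not part of the definition: $\mathcal D_p$ requires only $y,\,ry'\in\mathcal A C^{\loc}(\mathbb R)$ and $-(ry')'+qy\in L_p$ (the $L_p$ written in the displayed definition in the introduction is a typo --- note the explanatory sentence about $\mathcal A C^{\loc}$ immediately following it, and compare the abstract). So you cannot conclude $(ry')\bar y\in L_1$ and your $\liminf$ trick is unsupported. Whether $ry'\in L_2$ is a separability theorem, not a definition, and is not available here. The paper handles the boundary term differently: it proves (Lemma~\ref{lem4.7}) that for $y=Gf$ one actually has $r(x)y'(x)y(x)\to0$ as $|x|\to\infty$, by bounding $r|y'||y|$ in terms of $\dfrac{u(x)}{v(x)}\displaystyle\int_{-\infty}^x v^2$ and $\dfrac{v(x)}{u(x)}\displaystyle\int_x^\infty u^2$ (via \eqref{2.30}--\eqref{2.33}, \eqref{4.34}, \eqref{2.17}) and then showing (Lemma~\ref{lem4.6}, using the $\mathbb R(x,d)$-covering and \eqref{4.3}) that both of these quantities tend to $0$ precisely when \eqref{3.1} holds. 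This is where condition \eqref{3.1} is genuinely used for positivity; your argument bypasses it and therefore cannot be correct as stated. Replace your boundary-term paragraph with this route (or prove $ry'\in L_2$ separately, which is harder).
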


\begin{remark}\label{rem3.5}
Theorems \ref{thm3.1} and \ref{thm3.4} were obtained in \cite{13}
under an additional requirement $\varphi\asymp\psi(x),$
$x\in\mathbb R.$ The meaning of condition \eqref{3.1} can be
clarified ``in terms of the coefficients" of equation \eqref{1.1}
in the same way as is done in Remark \ref{rem2.26} for the
interpretation of the condition $B<\infty.$ In particular, in
order to expand on \thmref{thm3.1}, we state the following
theorem.
\end{remark}

\begin{thm}\label{thm3.6} \cite{13}
Let $\varphi\asymp\psi(x),$\ $x\in\mathbb R,$ $\ p\in(1,\infty),$
and suppose that equation \eqref{1.1} is correctly solvable in
$L_p.$ Then the operator $G: L_p\to L_p$ is compact if and only if
\begin{equation}\label{3.4}
\lim_{|x|\to\infty}\tilde{\mathcal A}(x)=\infty
\end{equation}
(see \eqref{2.41}).\end{thm}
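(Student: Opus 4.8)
The plan is to deduce the statement from Theorem~\ref{thm3.1} together with Theorem~\ref{thm2.28}, reducing the compactness criterion $\lim_{|x|\to\infty}h(x)d(x)=0$ to the ``at infinity'' version of the equivalence $B<\infty\iff\tilde{\mathcal A}>0.$ First I would observe that under the standing hypothesis $\varphi(x)\asymp\psi(x)$ the auxiliary function $\mu$ of Lemma~\ref{lem2.27} is defined and continuous on $\mathbb R$, and satisfies \eqref{2.40}, so that $\mathbb R(x,\mu)$-coverings of $\mathbb R$ exist by Lemmas~\ref{lem2.13} and \ref{lem2.15}. By Theorem~\ref{thm3.1}, $G:L_p\to L_p$ is compact if and only if $h(x)d(x)\to0$ as $|x|\to\infty$, so it suffices to prove the pointwise-at-infinity equivalence
\begin{equation}\label{3.5a}
\lim_{|x|\to\infty}h(x)d(x)=0\qquad\Longleftrightarrow\qquad\lim_{|x|\to\infty}\tilde{\mathcal A}(x)=\infty .
\end{equation}

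The heart of the matter is a ``localized'' form of the estimates behind Theorem~\ref{thm2.28}: one should show that there is an absolute constant $c$ such that, for all $x$,
\begin{equation}\label{3.5b}
c^{-1}\,\frac{1}{\tilde{\mathcal A}(x)}\ \le\ h(x)d(x)\ \le\ c\,\frac{1}{\tilde{\mathcal A}(x)}\,,
\end{equation}
from which \eqref{3.5a} is immediate. To get \eqref{3.5b} I would revisit the proof of Theorem~\ref{thm2.28}: the upper bound $d(x)h(x)\le c/\tilde{\mathcal A}(x)$ comes from estimating $d(x)$ by $\mu(x)$ up to the $\asymp$-constants (using \eqref{2.19}, \eqref{2.39}, the quasi-additivity estimates \eqref{2.26} for $h$ along an $\mathbb R(x,\mu)$-covering, and $r(t)h(t)\asymp r(t)\rho(t)$ from \eqref{2.14}), then writing $h(x)\asymp h(x)$, $\tilde{\mathcal A}(x)^{-1}=2\mu(x)\big(\int_{x-\mu(x)}^{x+\mu(x)}q\big)^{-1}$ and bounding $\big(\int_{x-\mu(x)}^{x+\mu(x)}q\big)^{-1}$ below by $h$ via \eqref{2.12} after comparing the interval $[x-\mu(x),x+\mu(x)]$ with $[x-d_1(x),x+d_2(x)]$; the lower bound is obtained symmetrically. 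All of these are purely local comparisons and produce absolute constants, so they survive the passage $|x|\to\infty$ unchanged; this is precisely why the \emph{global} equivalence $B<\infty\iff\tilde{\mathcal A}>0$ upgrades to the \emph{asymptotic} equivalence \eqref{3.5a}.

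The main obstacle I anticipate is making the comparison between $d(x)$ and $\mu(x)$ fully rigorous with constants independent of $x$: one must control how the Steklov-type average defining $\mu$ interacts with the $h$-weighted average defining $d$, and this requires carefully chaining the covering estimates \eqref{2.20}/\eqref{2.26} for $d$ and $h$ and the monotonicity of the integrals $\int_{x-d}^{x+d}q(t)h(t)\,dt$ and $\int_{x-d}^{x+d}dt/(r(t)h(t))$ in $d$. Once \eqref{3.5b} is in hand, the theorem follows at once: if $G$ is compact then $h(x)d(x)\to0$ by Theorem~\ref{thm3.1}, hence $\tilde{\mathcal A}(x)\to\infty$ by \eqref{3.5b}; conversely $\tilde{\mathcal A}(x)\to\infty$ forces $h(x)d(x)\to0$, which by Theorem~\ref{thm3.1} gives compactness of $G$. (Correct solvability in $L_p$ guarantees $B<\infty$, equivalently $\tilde{\mathcal A}>0$, so the quantities in \eqref{3.5b} are finite and positive for every $x$, and the statement is not vacuous.)
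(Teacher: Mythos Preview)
The paper does not actually prove Theorem~\ref{thm3.6}: it is quoted from \cite{13} and stated here only to illustrate the meaning of condition~\eqref{3.1} (see Remark~\ref{rem3.5}). So there is no in-paper proof to compare against.

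That said, your strategy is the natural one and is almost certainly what \cite{13} does, now that Theorem~\ref{thm3.1} is available without the hypothesis $\varphi\asymp\psi$: reduce the question to Theorem~\ref{thm3.1} by establishing a pointwise two-sided estimate of the form $h(x)d(x)\asymp \tilde{\mathcal A}(x)^{-1}$, which is exactly the local version of the global equivalence $B<\infty\Leftrightarrow\tilde{\mathcal A}>0$ of Theorem~\ref{thm2.28}. One direction is essentially already in the paper: inequality~\eqref{4.45} yields $h(x)\int_{x-d(x)}^{x+d(x)}q(t)\,dt\le c$, hence $h(x)d(x)\le c\,\mathcal A(x)^{-1}$ with the $d$-based Steklov average $\mathcal A(x)$ of \eqref{2.37}; to pass from $\mathcal A$ to $\tilde{\mathcal A}$ one needs $d(x)\asymp\mu(x)$ under $\varphi\asymp\psi$, and that is the step you have only sketched.

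The genuine gap in your proposal is precisely that comparison $d(x)\asymp\mu(x)$: you describe in words that it should follow from ``chaining the covering estimates \eqref{2.20}/\eqref{2.26}'' and monotonicity of the defining integrals, but you do not carry it out, and this is where the hypothesis $\varphi\asymp\psi$ must enter in an essential way (otherwise $\mu$ is not even defined, cf.\ Lemma~\ref{lem2.27}). To close the argument you need to show, with absolute constants, that the solution of $\int_{x-\mu}^{x+\mu}q(t)h(t)\,dt=1$ is comparable to the solution of $\int_{x-d}^{x+d}(r(t)h(t))^{-1}\,dt=1$; the link is that under $\varphi\asymp\psi$ one has $q(t)h(t)\asymp (r(t)h(t))^{-1}$ locally on $d$- (or $\mu$-) intervals, which can be extracted from \eqref{2.12}, \eqref{2.14} and the Otelbaev-type stability \eqref{2.26}. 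Once you write that down cleanly, your \eqref{3.5b} follows and the rest of your argument goes through.
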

 Thus, if $\varphi(x)\asymp\psi(x),$
$x\in\mathbb R$, requirement \eqref{3.1} means that some special
Steklov average value of the function $q$ must tend to infinity at
infinity.

We can now present several consequences of \thmref{thm3.1}. Their
significance consists in the fact that they allow us to clarify
the question on I)--III)) either not using at all the functions
$k$ and $d$, or with the help of only $h$ (see Remark
\ref{rem2.29}.

\begin{cor}\label{cor3.7}
Let $p\in(1,\infty)$ and $\mathcal A>0$ (see \eqref{2.37}). Then
the operator $G: L_p\to L_p$ is compact if $\mathcal
A(x)\to\infty$ as $|x|\to\infty.$ \end{cor}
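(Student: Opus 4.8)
The plan is to reduce the statement to \thmref{thm3.1} and then isolate the one non-formal ingredient, an absolute a priori bound on $h(x)d(x)$ in terms of $\mathcal A(x)^{-1}$. First, since $\mathcal A>0$, \thmref{thm2.25} guarantees that equation \eqref{1.1} is correctly solvable in $L_p$, so \thmref{thm3.1} applies and it suffices to show that $h(x)d(x)\to 0$ as $|x|\to\infty$. Rewriting the definition \eqref{2.37} of $\mathcal A(x)$,
$$
h(x)d(x)=\frac{1}{2\mathcal A(x)}\,h(x)\int_{x-d(x)}^{x+d(x)}q(t)\,dt ,
$$
so the whole matter reduces to the a priori estimate $h(x)\int_{x-d(x)}^{x+d(x)}q(t)\,dt\le c$ for $x\in\mathbb R$ with an absolute constant $c$. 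Once this is in hand, $h(x)d(x)\le c/(2\mathcal A(x))\to 0$ as $|x|\to\infty$ by hypothesis, and \thmref{thm3.1} finishes the proof.

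To prove the a priori estimate, I would put $I=[x-d(x),x+d(x)]$ and use \lemref{lem2.17} (inequality \eqref{2.26}), which gives $h(t)\asymp h(x)$ on $I$; together with \eqref{2.14} and $\rho=uv$ this reduces matters to bounding $\int_I quv\,dt$ by an absolute constant, since $h(x)\int_I q\,dt\le c\int_I q\rho\,dt=c\int_I quv\,dt$. Then I would exploit that $\{u,v\}$ is an FSS of \eqref{2.2}: integrating by parts over $I$ in $\int_I(ru')'v\,dt=\int_I quv\,dt$ and in $\int_I(rv')'u\,dt=\int_I quv\,dt$, and adding, one obtains
$$
2\int_I quv\,dt=\bigl[r(uv)'\bigr]_{\partial I}-2\int_I ru'v'\,dt=\bigl[r\rho'\bigr]_{\partial I}+2\int_I r|u'|v'\,dt ,
$$
the last equality because $u'\le 0\le v'$. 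Since $r(t)|\rho'(t)|<1$ by \lemref{lem2.9} (see \eqref{2.17}), the boundary term is at most $2$ in absolute value, so $\int_I quv\,dt\le 1+\int_I r|u'|v'\,dt$.

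Finally I would bound $\int_I r|u'|v'\,dt$ using the Otelbaev inequalities \eqref{2.13}: writing $r|u'|v'=r^{-1}(r|u'|)(rv')$ and applying $r|u'|\le 2u/\psi$ and $rv'\le 2v/\varphi$ gives $r|u'|v'\le 4\rho/(r\varphi\psi)$; then $\rho\le 2h=2\varphi\psi/(\varphi+\psi)$ (by \eqref{2.14} and \eqref{2.12}) and $h\le\varphi+\psi$ yield $r|u'|v'\le 8/(r(\varphi+\psi))\le 8/(rh)$, so that $\int_I r|u'|v'\,dt\le 8\int_I dt/(r(t)h(t))=8$ by the defining relation \eqref{2.19} of $d(x)$. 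Hence $\int_I quv\,dt\le 9$, which establishes the a priori bound and, with it, the corollary.

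The only genuinely non-routine point is the a priori bound $h(x)\int_{x-d(x)}^{x+d(x)}q\,dt\le c$; the remaining steps are either bookkeeping or direct appeals to \thmref{thm3.1}, the Otelbaev inequalities \eqref{2.13}, and the localization estimate \eqref{2.26}. I expect the care-demanding spots to be the correct handling of the boundary terms under the sign constraints $u'\le 0\le v'$ and $r|\rho'|<1$, and the verification that every constant in the chain of estimates is truly absolute, i.e.\ independent of $x$ and of the coefficients $r,q$.
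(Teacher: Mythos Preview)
Your proposal is correct, and the overall architecture matches the paper's: invoke \thmref{thm2.25} for correct solvability, then reduce to \thmref{thm3.1} via the a~priori bound $h(x)\int_{x-d(x)}^{x+d(x)}q\,dt\le c$, which immediately gives $h(x)d(x)\le c/(2\mathcal A(x))\to 0$.

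Where you diverge from the paper is in how that a~priori bound is obtained. The paper argues globally first: from $(rv')'=qv$ and $(ru')'=qu$ and the monotonicity \eqref{2.3} one gets $r(x)v'(x)\ge\int_{-\infty}^x qv$ and $-r(x)u'(x)\ge\int_x^{\infty}qu$, so the Wronskian identity \eqref{2.4} yields $1\ge\int_{\mathbb R}q(t)G(x,t)\,dt$. Restricting to $I=[x-d(x),x+d(x)]$ and using the Davies--Harrell representation \eqref{2.9} together with \eqref{2.14}, \eqref{2.26} and \eqref{2.19} then gives $1\ge c^{-1}h(x)\int_I q$. Your route is purely local: integrate by parts on $I$, control the boundary term by $r|\rho'|<1$ from \eqref{2.17}, and bound the remaining integral $\int_I r|u'|v'$ through the Otelbaev estimates \eqref{2.13}. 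The paper's argument is slightly slicker in that it uses only the Green-function machinery already set up (in particular it never needs \eqref{2.13} or \eqref{2.17}); your argument is more self-contained on the interval $I$ and avoids the explicit exponential formula \eqref{2.9}, at the price of invoking one more layer of the preliminary estimates. Both yield absolute constants, and the care points you flag (signs of $u',v'$ and absoluteness of the constants) are exactly the ones that matter.
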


\begin{cor}\label{cor3.8}
Let $p\in(1,\infty)$ and  $q(x)\to\infty$ as $|x|\to\infty.$ Then
the operator $G:L_p\to L_p$ is compact.
\end{cor}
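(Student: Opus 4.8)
The plan is to deduce this from Corollary \ref{cor3.7}, which in turn rests on \thmref{thm3.1}. So the whole task reduces to verifying the two hypotheses of \corref{cor3.7} when $q(x)\to\infty$ as $|x|\to\infty$: namely (a) $\mathcal A>0$, and (b) $\mathcal A(x)\to\infty$ as $|x|\to\infty$, where $\mathcal A(x)=\frac{1}{2d(x)}\int_{x-d(x)}^{x+d(x)}q(t)\,dt$ is the Steklov average of $q$ over the interval of half-length $d(x)$ centered at $x$ (see \eqref{2.37}). First I would record the obvious monotonicity fact: if $q(t)\ge M$ for all $t$ with $|t|\ge N$, then for any $x$ with, say, $|x|-d(x)\ge N$, the average $\mathcal A(x)\ge M$; so the behavior of $\mathcal A(x)$ at infinity is controlled by that of $q$, provided we know the averaging window $[x-d(x),x+d(x)]$ eventually stays inside the region where $q$ is large. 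That is exactly what \lemref{lem2.11} gives us: by \eqref{2.22}, $\lim_{x\to\infty}(x-d(x))=\infty$ and $\lim_{x\to-\infty}(x+d(x))=-\infty$, so for $|x|$ large the entire interval $[x-d(x),x+d(x)]$ lies in $\{|t|\ge N\}$. Combining these two observations yields $\mathcal A(x)\to\infty$ as $|x|\to\infty$, which is hypothesis (b).

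For hypothesis (a), $\mathcal A>0$: since $q(x)\to\infty$ at infinity, there is $N_0$ and $M_0>0$ with $q(t)\ge M_0$ for $|t|\ge N_0$. For $x$ with $|x|$ large, $\mathcal A(x)\ge M_0$ by the argument just given; it remains to bound $\mathcal A(x)$ away from zero on the compact set of remaining $x$. Here I would argue that $\mathcal A(x)$ is continuous on $\mathbb R$ — $d(x)$ is continuous by \lemref{lem2.10}, $d(x)>0$ for each $x$, and $\int_{x-d(x)}^{x+d(x)}q(t)\,dt$ is continuous since $q\in L_1^{\loc}$ — and strictly positive at each point (strict positivity uses that $\int_{x-d(x)}^{x+d(x)}q>0$, which follows from \eqref{2.1}, itself a consequence of \eqref{1.3} together with the standing assumptions; note $q(x)\to\infty$ forces $q\not\in L_1$ near $\pm\infty$, so \eqref{2.1} is automatic). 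A positive continuous function on a compact interval is bounded below by a positive constant, and together with the bound $M_0$ at infinity this gives $\mathcal A=\inf_{x\in\mathbb R}\mathcal A(x)>0$.

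With (a) and (b) in hand, \corref{cor3.7} applies and gives that $G:L_p\to L_p$ is compact, which is the assertion. The one point that needs a little care — and which I would treat as the main (though still minor) obstacle — is the continuity/strict-positivity argument for $\mathcal A$ on compact sets needed for (a): one must make sure $d(x)$ does not degenerate (it is finite and positive by \lemref{lem2.5}--\lemref{lem2.10}, and continuous by \lemref{lem2.10}) and that the integral of $q$ over the averaging window is genuinely positive (from \eqref{2.1}). Everything else is routine once \eqref{2.22} is invoked to push the averaging window out to infinity. One could also bypass \corref{cor3.7} and argue directly via \thmref{thm3.1} — showing $h(x)d(x)\to0$ — using the estimate $h(x)\asymp\rho(x)$ from \eqref{2.14} and the bound $h(x)\le\big(\int_{x-d_1(x)}^{x+d_2(x)}q\big)^{-1}$ from \eqref{2.12} together with $q\to\infty$; but routing through \corref{cor3.7} is cleaner since it isolates exactly the Steklov-average statement, so I would present it that way.
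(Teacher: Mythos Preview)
Your verification of hypothesis (b), $\mathcal A(x)\to\infty$, is fine and matches the paper's reasoning. The gap is in hypothesis (a), $\mathcal A>0$. You claim that strict positivity of $\int_{x-d(x)}^{x+d(x)}q$ at each point follows from \eqref{2.1}, but \eqref{2.1} only asserts positivity of the \emph{half-line} integrals $\int_{-\infty}^x q$ and $\int_x^\infty q$; it says nothing about integrals over bounded windows. Nothing in the hypotheses excludes $q\equiv 0$ on an interval containing $[x_0-d(x_0),x_0+d(x_0)]$ for some $x_0$, and in that case $\mathcal A(x_0)=0$, so $\mathcal A=0$ and \corref{cor3.7} does not apply. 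One can actually produce such examples: take $q\equiv 0$ on a neighborhood of the origin (with $q\to\infty$ at infinity) and choose $r$ small there; then $d(0)$ can be made small enough that the averaging window stays inside the zero set of $q$.

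The paper sidesteps this by \emph{not} routing through \corref{cor3.7}. It borrows the key inequality \eqref{4.45}, namely $c\ge h(x)\int_{x-d(x)}^{x+d(x)}q(t)\,dt$, from the proof of \corref{cor3.7}, but applies it only for $|x|$ large, where \eqref{2.22} guarantees $q\ge 1$ on the window. This yields $h(x)d(x)\le c$ for large $|x|$; continuity of $h(x)d(x)$ on the remaining compact piece gives $B<\infty$, hence correct solvability via \thmref{thm2.21}. Then \eqref{4.45} again, for large $|x|$, gives $h(x)d(x)\le c\,\mathcal A(x)^{-1}\to 0$, which is \eqref{3.1}, and \thmref{thm3.1} finishes. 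This is essentially the ``bypass'' you sketch at the end; it is not merely a stylistic alternative---it is the route that actually works, because the hypothesis $\mathcal A>0$ of \corref{cor3.7} may genuinely fail here.
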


\begin{cor}\label{cor3.9} \cite{17}
Suppose that conditions \eqref{1.2} hold, $r(x)\equiv1,$
$x\in\mathbb R,$ and $m(a_0)>0$ for some $a_0\in(0,\infty)$ (see
\thmref{thm2.24}). Then the operator $G:L_p\to L_p$ is compact if
and only if the Molchanov condition (see \cite{2}) holds:
\begin{equation}\label{3.5}
\lim_{|x|\to\infty}\int_{x-a}^{x+a}q(t)dt=\infty,\qquad \forall
a\in(0,\infty).
\end{equation}
\end{cor}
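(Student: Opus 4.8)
The plan is to derive Corollary~\ref{cor3.9} directly from \thmref{thm3.1} by specializing to the case $r\equiv1$ and identifying condition \eqref{3.1} with the Molchanov condition \eqref{3.5}. Since $r\equiv1$ and $m(a_0)>0$ for some $a_0>0$, \thmref{thm2.24} guarantees that equation \eqref{1.1} is correctly solvable in $L_p$, so \thmref{thm3.1} applies and tells us that $G:L_p\to L_p$ is compact if and only if $\lim_{|x|\to\infty}h(x)d(x)=0$. It therefore suffices to show that, under the hypotheses of the corollary, $h(x)d(x)\to0$ as $|x|\to\infty$ is equivalent to \eqref{3.5}.

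First I would recall the tools available when $r\equiv1$. By \corref{cor2.7}, $\rho(x)\asymp\tilde d(x)$, where $\tilde d(x)$ solves $\tilde d\cdot\int_{x-\tilde d}^{x+\tilde d}q(t)\,dt=2$; and by \eqref{2.14} we have $h(x)\asymp\rho(x)$, so $h(x)\asymp\tilde d(x)$. Thus the product $h(x)d(x)$ is comparable to $\tilde d(x)d(x)$, and since $\tilde d$ and $d$ are each (by Lemmas~\ref{lem2.10}--\ref{lem2.17}) "slowly varying" length scales comparable to each other on overlapping intervals, one expects $h(x)d(x)\asymp\tilde d(x)^2$ up to absolute constants; I would make this precise using \eqref{2.26} together with the defining equation \eqref{2.19} for $d(x)$ (substituting the estimate $r(t)h(t)\asymp h(t)\asymp\tilde d(t)\asymp\tilde d(x)$ valid for $t$ in the relevant interval, which forces $d(x)\asymp\tilde d(x)$). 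Consequently $h(x)d(x)\to0\iff\tilde d(x)\to0$ as $|x|\to\infty$.

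The remaining, and genuinely substantive, step is the equivalence $\tilde d(x)\to0\iff$ \eqref{3.5}. For the forward direction, fix $a>0$; once $|x|$ is large enough that $\tilde d(x)<a$, monotonicity of $d\mapsto d\int_{x-d}^{x+d}q$ in the relevant range (or simply enlarging the interval) gives $\int_{x-a}^{x+a}q(t)\,dt\ge\int_{x-\tilde d(x)}^{x+\tilde d(x)}q(t)\,dt=2/\tilde d(x)\to\infty$. For the converse, suppose \eqref{3.5} holds and fix $\ve>0$; applying \eqref{3.5} with $a=\ve$ yields $\int_{x-\ve}^{x+\ve}q(t)\,dt>2/\ve$ for all large $|x|$, which by the definition of $\tilde d(x)$ forces $\tilde d(x)<\ve$. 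I would take care to note that \eqref{3.5} is really a single condition (if it holds for one small $a$ it holds for all $a$, by monotonicity in $a$ of the integral against $q\ge0$), so there is no circularity. The main obstacle is the first equivalence, i.e.\ pinning down $h(x)d(x)\asymp\tilde d(x)^2$ with honest constants rather than hand-waving; this is where the covering lemmas and the slow-variation estimates \eqref{2.20}, \eqref{2.26} must be invoked carefully, analogously to how \thmref{thm2.21} is reduced to $\mathcal A>0$ type conditions in \cite{2,4}.
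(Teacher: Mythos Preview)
Your proposal is correct and follows essentially the same route as the paper: reduce to \thmref{thm3.1} via \thmref{thm2.24}, establish $h(x)\asymp d(x)\asymp\tilde d(x)$ when $r\equiv1$, and then prove the equivalence $\tilde d(x)\to0\iff$~\eqref{3.5} (which the paper isolates as Lemma~\ref{lem4.8}, with the same two short arguments you give). The step you flag as the main obstacle is in fact simpler than you suggest: no covering lemmas are needed, since \eqref{2.19} with $r\equiv1$ reads $1=\int_{x-d(x)}^{x+d(x)}\dfrac{dt}{h(t)}$, and \eqref{2.26} gives $h(t)\asymp h(x)$ on that very interval, whence $1\asymp d(x)/h(x)$, i.e.\ $d(x)\asymp h(x)$ directly.
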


\begin{cor}\label{cor3.10}
Let $p\in(1,\infty)$. Then assertions I)--III) hold if and only if
any of the following conditions is satisfied:
 \begin{alignat}{2}
&1)\ && B_1<\infty \ \text{(see \eqref{2.42})},\  r(x)h^2(x)\to 0
\
 \text{as}\
 |x|\to\infty\label{3.6}\qquad
\qquad\qquad\qquad\\
&2)\ && B_2<\infty \ \text{(see \eqref{2.43})},\
 r(x)\varphi(x)\psi(x)\to 0 \
 \text{as}\
 |x|\to\infty\label{3.7}\\
 &3)\ && B_3<\infty \ \text{(see \eqref{2.44})},\  h(x)\cdot|x|\to
 0  \
 \text{as}\
 |x|\to\infty\label{3.8}
\end{alignat}
\end{cor}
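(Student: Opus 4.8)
The plan is to deduce \corref{cor3.10} from \thmref{thm3.1} together with the sufficient conditions for correct solvability collected in \thmref{thm2.30}. In each of the three alternatives \eqref{3.6}--\eqref{3.8} the first requirement is one of $B_1<\infty$, $B_2<\infty$, $B_3<\infty$, so by part A) of \thmref{thm2.30} equation \eqref{1.1} is correctly solvable in $L_p$; hence by \thmref{thm2.18} $\,G=\mathcal L_p^{-1}$, and by \thmref{thm3.1} assertion III) is equivalent to $h(x)d(x)\to0$ as $|x|\to\infty$. Thus the whole corollary reduces to showing that, once the relevant $B_i<\infty$ is assumed, the second half of the $i$-th alternative is equivalent to $h(x)d(x)\to0$; the substantive part is that it forces $h(x)d(x)\to0$, so that I)--III) hold.

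I would dispose of alternative \eqref{3.8} first, since it is essentially immediate. By \eqref{2.22} we have $x-d(x)\to+\infty$ and $x+d(x)\to-\infty$, so $0\le d(x)\le|x|$ for all $x$ with $|x|$ large; hence $h(x)d(x)\le h(x)\,|x|$, and $h(x)|x|\to0$ gives at once $h(x)d(x)\to0$. Together with $B_3<\infty$ this yields I)--III) through \thmref{thm3.1}.

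Alternatives \eqref{3.6} and \eqref{3.7} are the heart of the matter, and here I would aim for a two-sided comparison of $h(x)d(x)$ with the closed-form quantities $r(x)h^2(x)$ and $r(x)\varphi(x)\psi(x)$. Substituting the local bound \eqref{2.26} into the equation $\int_{x-d(x)}^{x+d(x)}\frac{dt}{r(t)h(t)}=1$ that defines $d(x)$ gives $\int_{x-d(x)}^{x+d(x)}\frac{dt}{r(t)}\asymp h(x)$, so that $h(x)d(x)\asymp d(x)\int_{x-d(x)}^{x+d(x)}\frac{dt}{r(t)}$ while $r(x)h^2(x)\asymp r(x)\big(\int_{x-d(x)}^{x+d(x)}\frac{dt}{r(t)}\big)^{2}$; the desired comparison therefore reduces to $d(x)\asymp r(x)\int_{x-d(x)}^{x+d(x)}\frac{dt}{r(t)}$, i.e. to the assertion that the mean value of $r^{-1}$ over $[x-d(x),x+d(x)]$ is comparable to $r(x)^{-1}$. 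To get this I would split $[x-d(x),x+d(x)]$ by an $\mathbb R(x,d)$-covering (\lemref{lem2.15}), control the lengths of the pieces by \lemref{lem2.11}, and turn the pointwise a priori inequalities \eqref{2.13} and \eqref{2.17}---which limit how fast $rv'$, $r|u'|$ and $\rho$ may vary---into two-sided bounds on $\int r^{-1}$ over each piece. Alternative \eqref{3.7} then follows by the elementary estimate $h=\varphi\psi/(\varphi+\psi)\le\tfrac12\sqrt{\varphi\psi}$, which gives $B_1\le\tfrac14 B_2$ and $r(x)h^2(x)\le\tfrac14 r(x)\varphi(x)\psi(x)$, so \eqref{3.7}$\Rightarrow$\eqref{3.6}$\Rightarrow$ I)--III); the converse comparison needed when $B_2<\infty$ is obtained by repeating the covering argument with $\varphi$ and $\psi$ in place of $h$.

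The step I expect to be the real obstacle is precisely this mean-value comparison $\frac1{|I|}\int_I r^{-1}\asymp r(x)^{-1}$ for $I=[x-d(x),x+d(x)]$ (and its analogue involving $\varphi,\psi$): since $r$ is only assumed locally integrable, no pointwise regularity of $r$ is available, and the comparison must be extracted from the Otelbaev-type a priori estimates of \S2 by means of the covering lemmas \lemref{lem2.13}, \lemref{lem2.15} and the local-constancy estimates \lemref{lem2.11}, \lemref{lem2.17}. Once that is in hand, the remainder is the routine bookkeeping of chaining the implications through \thmref{thm2.30} and \thmref{thm3.1}.
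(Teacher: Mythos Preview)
Your treatment of alternative \eqref{3.8} and the reduction \eqref{3.7}$\Rightarrow$\eqref{3.6} via $h^2\le\varphi\psi$ are correct and match the paper. The problem is your plan for \eqref{3.6}. You reduce the implication $r(x)h^2(x)\to0\ \Rightarrow\ h(x)d(x)\to0$ to the pointwise mean-value comparison
\[
\frac{1}{2d(x)}\int_{x-d(x)}^{x+d(x)}\frac{dt}{r(t)}\ \asymp\ \frac{1}{r(x)},
\]
and propose to extract this from the a priori estimates of \S2. That comparison is simply false under the standing hypotheses: nothing in \S2 controls $r$ pointwise or its oscillation on $\Delta(x)=[x-d(x),x+d(x)]$. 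The Otelbaev machinery (\lemref{lem2.11}, \lemref{lem2.17}, \eqref{2.13}, \eqref{2.17}) gives local constancy of $u,v,\rho,h,d$ on $\Delta(x)$, but $r$ enters only through integrals of $r^{-1}$, so one can make $r(x)$ arbitrarily large or small on a null set (or a very short interval) without altering $h,d$ at all. In particular the two-sided relation $h(x)d(x)\asymp r(x)h^2(x)$ you aim for does not hold in general, and the covering argument you sketch cannot produce it.

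The paper avoids this entirely by a Cauchy--Schwarz trick against the defining relation \eqref{2.19}. Writing $1=\sqrt{r(t)h(t)}\cdot\bigl(r(t)h(t)\bigr)^{-1/2}$ on $\Delta(x)$ gives
\[
2d(x)\le\Bigl(\int_{\Delta(x)}r(t)h(t)\,dt\Bigr)^{1/2}\Bigl(\int_{\Delta(x)}\frac{dt}{r(t)h(t)}\Bigr)^{1/2}
=\Bigl(\int_{\Delta(x)}r(t)h(t)\,dt\Bigr)^{1/2},
\]
whence, inserting $r(t)h(t)=r(t)h^2(t)\cdot h(t)^{-1}$ and using $h(t)\asymp h(x)$ on $\Delta(x)$ (\eqref{2.26}),
\[
4d^2(x)\le c\,\eta(x)\,\frac{d(x)}{h(x)},\qquad \eta(x)=\sup_{t\in\Delta(x)}r(t)h^2(t),
\]
so $h(x)d(x)\le c\,\eta(x)$. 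Since $\Delta(x)$ drifts to infinity by \eqref{2.22}, $\eta(x)\to0$ follows from $r(t)h^2(t)\to0$, and \eqref{3.1} is obtained. Note that this gives only the one-sided bound actually needed; the paper's proof establishes only the sufficiency direction of the corollary, not the ``only if'' you were also trying to capture with the two-sided comparison.
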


\begin{cor}\label{cor3.11}
Denote
\begin{equation}\label{3.9}
r_0=\sup_{x\in\mathbb R} r(x),\qquad h_0=\sup_{x\in\mathbb R}
h(x).
\end{equation}
Let $r_0<\infty.$ Then \eqref{1.1} is correctly solvable in $L_p,$
$p\in(1,\infty)$ if $h_0<\infty.$ In addition, the operator $G:
L_p\to L_p,$ $p\in(1,\infty)$ is compact if $h(x)\to 0$ as
$|x|\to\infty.$
\end{cor}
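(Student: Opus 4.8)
The plan is to reduce both assertions to results already available in the paper, using the elementary relation between $h$, $\varphi$, $\psi$ and $r$. First I would prove correct solvability. Recall from \eqref{2.12} that $h(x)=\varphi(x)\psi(x)/(\varphi(x)+\psi(x))\le\min\{\varphi(x),\psi(x)\}$, so in particular $h(x)\le\varphi(x)$ and $h(x)\le\psi(x)$, whence $h^2(x)\le\varphi(x)\psi(x)$. Consequently
\begin{equation*}
B_1(x)=r(x)h^2(x)\le r(x)\varphi(x)\psi(x)=B_2(x),\qquad x\in\mathbb R,
\end{equation*}
and it suffices to bound $B_2$. By definition of $d_1(x)$ in \eqref{2.11} we have $\varphi(x)=\int_{x-d_1(x)}^x r^{-1}\le r_0^{-1}\,(\text{no})$—more precisely, since $1=\int_{x-d_1(x)}^x r^{-1}\cdot\int_{x-d_1(x)}^x q$ and each factor is positive, and since $\varphi(x)=\int_{x-d_1(x)}^x r^{-1}\le d_1(x)/\inf r$ is not directly available, I instead argue as follows. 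From $r\le r_0$ we get $\varphi(x)\le r_0\cdot(\text{length})$? No: $\varphi(x)=\int_{x-d_1(x)}^x dt/r(t)$, and $1/r\ge 1/r_0$, so $\varphi(x)\ge d_1(x)/r_0$. That is the wrong direction; the correct route is via $h$ directly: $r(x)h^2(x)\le r_0 h_0^2<\infty$ when $h_0<\infty$ and $r_0<\infty$. Thus $B_1\le r_0h_0^2<\infty$, and by Theorem~\ref{thm2.30}(A), part~1), equation \eqref{1.1} is correctly solvable in $L_p$. This settles the first claim.

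For the compactness assertion, assume $h(x)\to0$ as $|x|\to\infty$. Then $r(x)h^2(x)\le r_0h^2(x)\to0$ as $|x|\to\infty$, i.e.\ condition \eqref{3.6} of Corollary~\ref{cor3.10} holds (we have just verified $B_1<\infty$ above, and $r(x)h^2(x)\to0$). By Corollary~\ref{cor3.10}, part~1), assertions I)--III) hold; in particular the operator $G:L_p\to L_p$ is compact. This completes the proof.

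The only subtlety—and the step to be stated carefully rather than the main obstacle—is the pointwise inequality $h^2\le\varphi\psi$, which follows from $h=\varphi\psi/(\varphi+\psi)$ together with the AM–GM-type bound $\varphi+\psi\ge\varphi\psi/h$ being an identity; concretely $h\le\varphi$ and $h\le\psi$ give $h^2\le\varphi\psi$ directly. Everything else is a citation of Theorem~\ref{thm2.30} and Corollary~\ref{cor3.10}, so no genuine difficulty arises; the corollary is essentially a repackaging of \eqref{3.6} under the extra hypothesis $r_0<\infty$, which lets one pass freely between $h$ and $rh^2$.
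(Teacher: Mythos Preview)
Your argument is correct, though the write-up meanders: the detour through $B_2$, $\varphi$, $\psi$ and the closing paragraph about $h^2\le\varphi\psi$ are never used and should simply be deleted. The core of your proof is the one-line estimate $B_1(x)=r(x)h^2(x)\le r_0\,h^2(x)$, from which $B_1\le r_0h_0^2<\infty$ gives correct solvability via Theorem~\ref{thm2.30}(A), and $r(x)h^2(x)\to0$ gives compactness via Corollary~\ref{cor3.10}, part~1). (For the compactness clause you should also note that $h(x)\to0$ together with local boundedness of $h$---which follows from $h\asymp\rho$ and continuity of $\rho$---forces $h_0<\infty$, so $B_1<\infty$ is available.)

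The paper takes a different, more direct route: rather than pass through $B_1$ and Corollary~\ref{cor3.10}, it estimates $h(x)d(x)$ itself. From \eqref{2.19} and \eqref{2.26},
\[
1=\int_{x-d(x)}^{x+d(x)}\frac{dt}{r(t)h(t)}\ \ge\ \frac{c^{-1}}{h(x)}\int_{x-d(x)}^{x+d(x)}\frac{dt}{r(t)}\ \ge\ \frac{c^{-1}}{r_0}\cdot\frac{2d(x)}{h(x)},
\]
so $h(x)d(x)\le c\,r_0\,h^2(x)$, and one invokes Theorems~\ref{thm2.21} and~\ref{thm3.1} directly. Your reduction to Corollary~\ref{cor3.10} is perfectly legitimate and arguably tidier as a corollary proof, but it hides the mechanism: Corollary~\ref{cor3.10}(1) itself uses a Schwarz-inequality argument to pass from $rh^2$ to $hd$, whereas the paper's estimate here is simpler, needing only $r\le r_0$ and \eqref{2.26}.
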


\begin{remark}\label{rem3.12}
Note that the requirement
\begin{equation}\label{3.10}
q(x)\to \infty\qquad\text{as}\qquad |x|\to\infty
\end{equation}
is so strong that the answer to the question on I)--III) is no
dependent on the behaviour (within the framework of \eqref{1.2})
of the function $r.$ In this connection, look at the opposite
situation and find out what requirements on the function $r$ is
the positive solution of the behaviour (within a certain
framework) of the function $q$. See Theorems \ref{thm3.13} and
\ref{thm3.14} below for possible answers to these questions.

We emphasize that these assertions have been obtained from
Theorems \ref{thm2.22} and \ref{3.2} where \eqref{1.3} is not
used. Therefore, n Theorems \ref{thm3.13} and \ref{thm3.14}
requirements on the function $q$ are weakened to conditions
\eqref{1.2} and \eqref{2.1}.
\end{remark}

\begin{thm}\label{thm3.13}
Suppose that together with \eqref{1.2} condition \eqref{2.1} holds
and $\theta<\infty $ (see \thmref{thm2.30}). Then equation
\eqref{1.1} is correctly solvable in $L_p,$ $p\in(1,\infty).$ In
addition, the operator $G: L_p\to L_p$, $p\in(1,\infty)$ is
compact if $\theta(x)\to0$ as $|x|\to\infty$ (see \eqref{2.46}).
\end{thm}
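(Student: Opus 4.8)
The plan is to deduce both conclusions from \thmref{thm2.22} and \thmref{thm3.2} by establishing the pointwise estimate $\rho(x)s(x)\le c\,\theta(x)$ for all sufficiently large $|x|$: once this is available, $\theta<\infty$ gives $S:=\sup_{x}\rho(x)s(x)<\infty$ and hence correct solvability through \thmref{thm2.22}, while $\theta(x)\to0$ (which in turn forces $\theta<\infty$, since $\theta$ is continuous) gives $\rho(x)s(x)\to0$ and hence compactness of $G$ through \thmref{thm3.2}. I first note that $\theta<\infty$ forces $r^{-1}\in L_1(\mathbb R)$ (e.g.\ $\theta(1)<\infty$ already requires $\int_{-\infty}^1 r^{-1}<\infty$ and $\int_1^\infty r^{-1}<\infty$); set $V(x)=\int_{-\infty}^x r^{-1}$, $W(x)=\int_x^\infty r^{-1}$ and $R=V(x)+W(x)=\|r^{-1}\|_{L_1}$, so that $\theta(x)=|x|\,V(x)W(x)$, with $V$ increasing from $0$ to $R$ and $W=R-V$ decreasing.

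The first ingredient I would prove is the a priori bound $\rho(x)\le\min\{V(x),W(x)\}$, which follows from \thmref{thm2.1} alone. Differentiating $v/u$ and using the Wronskian identity \eqref{2.4} gives $(v/u)'=1/(ru^2)$; integrating from $-\infty$, where $v/u\to0$ by \eqref{2.5}, yields $v(x)/u(x)=\int_{-\infty}^x dt/(r(t)u(t)^2)$, that is, $\rho(x)=u(x)^2\int_{-\infty}^x dt/(r(t)u(t)^2)$. Since $u$ is positive and non-increasing by \eqref{2.3}, $u(t)^{-2}\le u(x)^{-2}$ for $t\le x$, and hence $\rho(x)\le V(x)$; the mirror-image computation with $u/v$ and $v$ non-decreasing gives $\rho(x)\le W(x)$. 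Because $\min\{a,b\}\le 2ab/(a+b)$ for $a,b>0$, it follows that
\[
\rho(x)\le\frac{2V(x)W(x)}{V(x)+W(x)}=\frac{2}{R}\,V(x)W(x)=\frac{2}{R}\cdot\frac{\theta(x)}{|x|},\qquad x\neq0 .
\]

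The second ingredient is the one-sided bound $s(x)\le|x|$ for $|x|$ large, $s(x)$ being the unique solution of $\int_{x-s}^{x+s}dt/(r(t)\rho(t))=1$ (see \eqref{2.19}). I would argue by contradiction: if $x>0$ is large and $x-s(x)<0$, then $[0,x]\subset[x-s(x),x+s(x)]$, and using $\rho\le W$ together with $1/(rW)=-(\ln W)'$,
\[
1=\int_{x-s(x)}^{x+s(x)}\frac{dt}{r(t)\rho(t)}\ge\int_0^x\frac{dt}{r(t)W(t)}=\ln\frac{W(0)}{W(x)},
\]
whose right-hand side tends to $+\infty$ as $x\to\infty$ (since $W(x)\to0$) --- a contradiction for $x$ large; hence $s(x)\le x=|x|$ eventually, and symmetrically, using $\rho\le V$ on $[x,0]$, $s(x)\le|x|$ as $x\to-\infty$. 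Fixing $X_0$ with $s(x)\le|x|$ for $|x|\ge X_0$ and combining with the first ingredient gives $\rho(x)s(x)\le|x|\,\rho(x)\le\frac{2}{R}\,\theta(x)$ whenever $|x|\ge X_0$; on $[-X_0,X_0]$ the product $\rho\,s$ is bounded since $\rho$ and $s$ are continuous (\thmref{thm2.1}, \lemref{lem2.10}). This yields $S<\infty$, so \thmref{thm2.22} gives correct solvability in $L_p$ for every $p\in(1,\infty)$; and if moreover $\theta(x)\to0$, then $\rho(x)s(x)\to0$, so \thmref{thm3.2} --- whose hypotheses, namely \eqref{1.2}, \eqref{2.1} and correct solvability, are now all in force --- gives compactness of $G:L_p\to L_p$.

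The step I expect to be the main obstacle is locating the two a priori estimates above: in particular, the fact that the diagonal Green function $\rho$ is controlled by the weight $r$ alone via $\rho(x)\le\min\{V(x),W(x)\}$, and the realization that only the one-sided bound $s(x)\le|x|$ (rather than a two-sided control of $s$) is what the argument requires. Once these are secured, the remainder is the purely mechanical reduction to \thmref{thm2.22} and \thmref{thm3.2} anticipated in Remark~\ref{rem3.12}.
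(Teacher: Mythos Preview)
Your proof is correct and follows essentially the same route as the paper's: the paper packages the bound $\rho(x)\le\min\{V(x),W(x)\}$ as \lemref{lem4.9} (obtaining $\rho(x)\le\tau V(x)W(x)$ with $\tau=\max\{V(0)^{-1},W(0)^{-1}\}$ by a case split on the sign of $x$ rather than your inequality $\min\{a,b\}\le 2ab/(a+b)$), and it reads $s(x)\le|x|$ for large $|x|$ directly off \eqref{2.23} instead of reproving it, but otherwise the reduction to \thmref{thm2.22} and \thmref{thm3.2} is identical.
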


\begin{thm}\label{thm3.14}
Suppose that  conditions \eqref{1.2}, \eqref{2.1} hold  and
$\nu<\infty $. Here $\nu=\sup\limits_{x\in\mathbb R}\nu(x),$
\begin{equation}\label{3.11}
\nu(x)=r(x)\left(\int_{-\infty}^x\frac{dt}{r(t)}\right)^2\cdot
\left(\int_x^\infty\frac{dt}{r(t)}\right)^2,\quad x\in\mathbb
R.\end{equation}
 Then equation \eqref{1.1} is correctly solvable
in $L_p,$ $p\in(1,\infty).$ If, in addition, $\nu(x)\to0$ as
$|x|\to\infty,$ then the operator $G:L_p\to L_p,$ $p\in(1,\infty)$
is compact.
\end{thm}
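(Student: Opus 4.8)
The plan is to reduce Theorem~\ref{thm3.14} to Theorem~\ref{thm2.22} together with Theorem~\ref{thm3.2}, exactly as Theorem~\ref{thm3.13} is handled via Theorem~\ref{thm2.30}(B). The essential observation is that under the additional hypothesis $r^{-1}\in L_1(-\infty,0)$, $r^{-1}\in L_1(0,\infty)$ (which we may assume, for otherwise $\nu\equiv\infty$ by the divergence of one of the tails), the quantities $\rho(x)$ and $s(x)$ admit sharp two-sided estimates in terms of the purely ``$r$-dependent'' function
$$
\omega(x)=\left(\int_{-\infty}^x\frac{dt}{r(t)}\right)\cdot\left(\int_x^\infty\frac{dt}{r(t)}\right),
$$
so that $\rho(x)s(x)\asymp r(x)\omega(x)^2=\nu(x)$ on $\mathbb R$. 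First I would establish the estimate $\rho(x)\asymp\omega(x)$. By Corollary~\ref{cor2.2} and the definitions \eqref{2.12}, when $q$ is very thin the numbers $d_1(x),d_2(x)$ become large; the Otelbaev inequality \eqref{2.14} gives $\rho(x)\asymp h(x)=\varphi(x)\psi(x)/(\varphi(x)+\psi(x))$, and since $r^{-1}\in L_1$ on both half-lines one shows $\varphi(x)\to\int_{-\infty}^x r^{-1}$ and $\psi(x)\to\int_x^\infty r^{-1}$ up to bounded factors (using that $\int_{x-d_1(x)}^x q\to 0$ forces $d_1(x)$ to exhaust the left half-line in the relevant regime, and symmetrically for $d_2$); hence $h(x)\asymp\omega(x)$ and $\rho(x)\asymp\omega(x)$.

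Next I would estimate $s(x)$, the solution of $\int_{x-s}^{x+s}dt/(r(t)\rho(t))=1$ from \eqref{2.19}. Using $\rho(t)\asymp\omega(t)$ and the fact that $\omega$ is, up to constants, comparable to $\min(\int_{-\infty}^t r^{-1},\int_t^\infty r^{-1})$ times the total mass, one evaluates $\int r^{-1}\rho^{-1}$ over $[x-s,x+s]$ and finds $s(x)\asymp(r(x)^{-1}\omega(x)^{-1})^{-1}\cdot$(something)$=r(x)\omega(x)$—more carefully, the integrand $1/(r\rho)\asymp 1/(r\omega)$ and integrating against the logarithmic growth of $1/\omega$ near the endpoints yields $s(x)\asymp r(x)\omega(x)$, so that $\rho(x)s(x)\asymp\omega(x)\cdot r(x)\omega(x)=\nu(x)$. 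Granting this, $S=\sup_x\rho(x)s(x)\asymp\nu<\infty$, so Theorem~\ref{thm2.22} gives correct solvability in $L_p$; and $\rho(x)s(x)\to 0$ as $|x|\to\infty$ is equivalent to $\nu(x)\to 0$, so Theorem~\ref{thm3.2} gives compactness of $G$.

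The main obstacle will be the rigorous control of the auxiliary functions when $q$ is allowed to be arbitrarily sparse subject only to \eqref{1.2} and \eqref{2.1}: the comparison $\varphi(x)\asymp\int_{-\infty}^x r^{-1}$, $\psi(x)\asymp\int_x^\infty r^{-1}$ is not automatic, since $d_1(x)$ could in principle stay bounded if $q$ has a heavy lump near $x$. One resolves this by a case analysis—either $d_1(x)+d_2(x)$ is so large that $[x-d_1(x),x+d_2(x)]$ covers essentially all of $\mathbb R$ (the ``sparse'' regime, where the estimates above apply), or it is comparable to a fixed scale, in which case $\nu(x)$ is bounded below by the corresponding finite constant and one uses the coarser bound $\rho(x)\le 2h(x)\le 2\varphi(x)\le 2\int_{-\infty}^x r^{-1}$ together with the symmetric bound to still get $\rho(x)s(x)\le c\,\nu(x)$ by a monotonicity argument on the endpoint integrals. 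This dichotomy, plus the routine integral estimates for $s(x)$, completes the proof; all the compactness machinery is already packaged in Theorems~\ref{thm2.22} and~\ref{thm3.2}, so no further functional-analytic work is needed.
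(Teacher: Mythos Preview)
Your reduction to Theorems~\ref{thm2.22} and~\ref{thm3.2} is the right endpoint, but the route you propose to bound $\rho(x)s(x)$ has a genuine gap. The two-sided estimate $\rho(x)\asymp\omega(x)$ is simply false: $\rho$ depends on $q$ through equation~\eqref{2.2}, while $\omega$ depends only on $r$. If $q$ is large near $x$ then $d_1(x),d_2(x)$ are small and $h(x)$ (hence $\rho(x)$, by~\eqref{2.14}) is much smaller than $\omega(x)$. Your case analysis does not repair this: you claim that in the ``dense-$q$'' regime $\nu(x)$ is bounded below by a fixed constant, but $\nu(x)$ is a function of $r$ alone and carries no information about where $q$ is heavy, so the dichotomy never closes. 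There is also a framework issue: under only~\eqref{1.2} and~\eqref{2.1} (without~\eqref{1.3}) the auxiliary functions $d_1,d_2,\varphi,\psi,h$ of Lemma~\ref{lem2.5} need not exist at all (if $r^{-1}\in L_1$ and $q\in L_1$ the products in~\eqref{2.11} may stay below $1$), which is precisely why the paper phrases Theorem~\ref{thm3.14} in terms of $\rho$ and $s$.

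What you are missing is that only the \emph{one-sided} bound $\rho(x)\le\tau\,\omega(x)$ is available (this is Lemma~\ref{lem4.9}, an immediate consequence of~\eqref{4.50} and the monotonicity of $u,v$), and one must extract the bound on the product $\rho(x)s(x)$ without ever estimating $s(x)$ separately. The paper does this with a Cauchy--Schwarz trick on the defining integral~\eqref{2.19}: with $\tilde\Delta(x)=[x-s(x),x+s(x)]$,
\[
2s(x)=\int_{\tilde\Delta(x)}\sqrt{\frac{r(t)\rho(t)}{r(t)\rho(t)}}\,dt
\le\Big(\int_{\tilde\Delta(x)}r(t)\rho(t)\,dt\Big)^{1/2}
\Big(\int_{\tilde\Delta(x)}\frac{dt}{r(t)\rho(t)}\Big)^{1/2}
=\Big(\int_{\tilde\Delta(x)}r(t)\rho(t)\,dt\Big)^{1/2}.
\]
Now write $r(t)\rho(t)=r(t)\rho(t)^{2}\cdot\rho(t)^{-1}$ and use the local comparability $\rho(t)\asymp\rho(x)$ on $\tilde\Delta(x)$ from~\eqref{2.27} to pull out $\rho(x)^{-1}$; combined with $r(t)\rho(t)^{2}\le c\,\nu(t)$ (from Lemma~\ref{lem4.9}) this gives $4s(x)^{2}\le c\,\rho(x)^{-1}s(x)\sup_{t\in\tilde\Delta(x)}\nu(t)$, i.e.\ $\rho(x)s(x)\le c\sup_{t\in\tilde\Delta(x)}\nu(t)$. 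This yields $S\le c\nu$ at once, and, using~\eqref{2.23} to see that $\sup_{t\in\tilde\Delta(x)}\nu(t)\to 0$ when $\nu(t)\to 0$, it also gives~\eqref{3.2}. No lower bound on $\rho$ in terms of $\omega$ is ever needed.
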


\section{Proofs}

\renewcommand{\qedsymbol}{}
\begin{proof}[Proof of \thmref{thm3.1}]  Necessity.

Let us check \eqref{3.1} as $x \to\infty.$ (The case $x\to-\infty$
is treated in a similar way.) Let $\{\Delta_n\}_{n\in\mathbb N'}$
be an $\mathbb R(0,d)$-covering of $\mathbb R,$\
$F=\{f_n(t)\}_{n\in\mathbb N'}$ and
$$f_n(t)=\begin{cases} d(x_n)^{-1/p}&\quad\text{if}\
 t\in\Delta_n \\
&\qquad\qquad\qquad, \quad n\in \mathbb N'\\
0&\quad\text{if}\
 t\notin\Delta_n \end{cases}$$
Then $\|f_n\|_p^p=2,$ $n\in\mathbb N'$ and the set
$\{Gf_n\}_{n\in\mathbb N'}$ is precompact in $L_p.$ Let
$x\in\Delta_n,$ $n\in\mathbb N'.$ In the following relations we
apply \eqref{2.14} and \eqref{2.25}--\eqref{2.26}:
\begin{align}
(Gf_n)(x)&=u(x)\int_{\Delta_n^-}^x
v(t)f_n(t)dt+v(x)\int_x^{\Delta_n^+}u(t)f_n(t)dt\nonumber\\
&=\frac{u(x)}{u(x_n)}\rho(x_n)\int_{\Delta_n^-}^x\frac{v(t)}{v(x_n)}\cdot\frac{dt}{d(x_n)^{1/p}}
+\frac{v(t)}{v(x_n)}\cdot\rho(x_n)\int_x^{\Delta_n^+}\frac{u(t)}{u(x_n)}\frac{dt}{d(x_n)^{1/p}}
\nonumber\\
&\ge c^{-1}\rho(x_n)d(x_n)^{1/p'}\ge
c^{-1}h(x_n)d(x_n)^{1/p'},\quad n\in \mathbb N'.\label{4.1}
\end{align}

By \thmref{thm2.31}, for a given $\varepsilon>0$ there exists
$\mathbb N(\varepsilon)\gg1$ such that
$$\sup_{f_k\in F}\int_{|x|\ge\mathbb
N(\varepsilon)}|(Gf_k)(t)|^pdt\le\varepsilon.$$ From the
properties of an $\mathbb R(0,d)$-covering of $\mathbb R,$ it
follows that there exists $n_0=n_0(\varepsilon)\in\mathbb
N=\{1,2,3,\dots\}$ such that $\mathbb
N(\varepsilon)\in\Delta_{n_0}.$ Set
$n_1=n_1(\varepsilon)=n_0(\varepsilon)+1.$ Since $\mathbb
N(\varepsilon)\le\Delta_{n_1}^-,$ we have
\begin{equation}\label{4.2}
\sup_{f_k\in
F}\int_{\Delta_{n_1}^-}^\infty|(Gf_k)(t)|^pdt\le\varepsilon.\end{equation}
Let $k\ge n_1(\varepsilon)$. Then from \eqref{4.1}--\eqref{4.2},
it follows that
$$\varepsilon\ge\int_{\Delta_{n_1}^-}^\infty|(Gf_k)(t)|^pdt\ge\int_{\Delta_k^-}^{\Delta_k^+}
|(Gf_k)(t)|^pdt\ge c^{-1}(h(x_k)d(x_k))^p.$$ Therefore,
$\lim\limits_{k\to\infty}(h(x_k)d(x_k))=0.$ From the inequalities
$$0< h(x)d(x)\le ch(x_n)d(x_n),\qquad x\in\Delta_n,\quad
n\in\mathbb N',$$ that follows from \eqref{2.26} and \eqref{2.20}
(for $\varepsilon=0$), we note get \eqref{3.1}.

 \end{proof}

\renewcommand{\qedsymbol}{\openbox}

\begin{proof}[Proof of \thmref{thm3.1}]  Sufficiency.
Assume that the hypotheses of the theorem are satisfied. Then by
\thmref{thm2.18} the operator $G: L_p\to L_p$ is bounded, and by
\lemref{lem2.20} so are the operators $G_1: L_p\to L_p$ and $G_2:
L_p\to L_p.$ Clearly, if $G_1$ and $G_2$ are compact, then so is
$G$ (see \eqref{2.32}). Furthermore, compactness of $G_1$ and
$G_2$ is checked in the same way, and therefore below we only
consider $G_2.$

Let $F=\{f\in L_p:\|f\|_p\le 1\}.$ Compactness of $G_2: L_p\to
L_p$ will be established as soon as we check that the set
$W=\{g\in L_p:g=G_2f,\ f\in F\}$ is precompact in $L_p.$ Below we
show that the set $W$ satisfies conditions 1), 2), and 3) of
\thmref{thm2.31} and thus proves \thmref{thm3.1}.

\textit{Verification of condition 1)}. The above arguments
(together with the definition of the set $F$ and Theorems
\ref{thm2.18} and \ref{thm2.21}) imply the inequality  1),
\begin{align*}
\sup_{g\in W}\|g\|_p&=\sup_{f\in F}\|G_2f\|_p\le \|G_2\|_{p\to
p}\cdot\|f\|_p\\
&\le \|G_2\|_{p\to p}\le cB<\infty\quad \Rightarrow
1).\end{align*}

\textit{Verification of condition 3)}. We need some auxiliary
assertions.

\begin{lem}\label{lem4.1} \cite{2} Let $x\in\mathbb R$ and let
$\{\Delta_n\}_{n\in\mathbb N'}$ be an $\mathbb R(x,d)$-covering of
$\mathbb R.$ Then \begin{equation}
\begin{aligned}\label{4.3}
\int_{\Delta_n^+}^{\Delta_{-1}^+}\frac{d\xi}{r(\xi)h(\xi)}&=|n|-1,\quad
\text{if}\quad n\le -1\\
\int_{\Delta_1^-}^{\Delta_n^-}\frac{d\xi}{r(\xi)h(\xi)}&=n-1,\quad\
\ \text{if}\quad n\ge  1.\end{aligned} \end{equation}
\end{lem}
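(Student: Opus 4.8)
The plan is to unwind the definitions and exploit the additivity of the integral $\int \frac{d\xi}{r(\xi)h(\xi)}$ over the pieces of the covering, using \lemref{lem2.10} (which defines $d(x)$ by $\int_{x-d}^{x+d}\frac{dt}{r(t)h(t)}=1$) together with the fact that in an $\mathbb R(x,d)$-covering consecutive segments $\Delta_n=[\Delta_n^-,\Delta_n^+]$ share endpoints, $\Delta_n^+=\Delta_{n+1}^-$, and $\Delta_1^-=x$, $\Delta_{-1}^+=x$. The key observation is that each $\Delta_n$ has the form $[x_n-d(x_n),x_n+d(x_n)]$, so by the defining equation for $d(x_n)$,
\begin{equation}\label{planeq1}
\int_{\Delta_n^-}^{\Delta_n^+}\frac{d\xi}{r(\xi)h(\xi)}=\int_{x_n-d(x_n)}^{x_n+d(x_n)}\frac{d\xi}{r(\xi)h(\xi)}=1,\qquad n\in\mathbb N'.
\end{equation}

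From here the proof is a telescoping sum. First I would treat the case $n\ge1$: since $\Delta_1^-=x$ and the segments $\Delta_1,\Delta_2,\dots,\Delta_{n-1}$ tile $[\Delta_1^-,\Delta_n^-]=[x,\Delta_n^-]$ with pairwise disjoint interiors (using $\Delta_k^+=\Delta_{k+1}^-$), additivity of the integral gives
\begin{equation}\label{planeq2}
\int_{\Delta_1^-}^{\Delta_n^-}\frac{d\xi}{r(\xi)h(\xi)}=\sum_{k=1}^{n-1}\int_{\Delta_k^-}^{\Delta_k^+}\frac{d\xi}{r(\xi)h(\xi)}=\sum_{k=1}^{n-1}1=n-1.
\end{equation}
The case $n\le-1$ is symmetric: the segments $\Delta_{-1},\Delta_{-2},\dots,\Delta_{n+1}$ tile $[\Delta_n^+,\Delta_{-1}^+]$, their interiors are disjoint, there are $|n|-1$ of them, and each contributes $1$ to the integral by \eqref{planeq1}, yielding $\int_{\Delta_n^+}^{\Delta_{-1}^+}\frac{d\xi}{r(\xi)h(\xi)}=|n|-1$.

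I do not anticipate a serious obstacle here — this is essentially a bookkeeping argument — but the one point that needs a little care is making sure the orientation and the index ranges in the telescoping sum are correct (in particular that for $n\ge1$ we sum over $k=1,\dots,n-1$, so the formula is vacuously correct as $0$ when $n=1$, and likewise for $n=-1$ on the other side), and that the integrand $\frac{1}{r(\xi)h(\xi)}$ is genuinely locally integrable so that additivity over adjacent intervals is legitimate. The latter follows from \eqref{1.2} together with the positivity and continuity properties of $h$ recorded in \lemref{lem2.5} and the estimates \eqref{2.14}, \eqref{2.26}, so that $r(\xi)h(\xi)$ is bounded away from $0$ on each compact interval. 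With these remarks the identities \eqref{4.3} follow immediately.
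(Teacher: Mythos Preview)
Your argument is correct: each $\Delta_n$ has length $2d(x_n)$ and by the defining equation \eqref{2.19} contributes exactly $1$ to the integral $\int\frac{d\xi}{r(\xi)h(\xi)}$, so the telescoping sum over the $|n|-1$ consecutive tiles between $\Delta_n^+$ and $\Delta_{-1}^+$ (resp.\ between $\Delta_1^-$ and $\Delta_n^-$) gives $|n|-1$ (resp.\ $n-1$). The paper itself does not prove \lemref{lem4.1}; it is quoted from \cite{2}, and your proof is precisely the natural one that underlies that citation.
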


\begin{lem}\label{lem4.2}
Let $B<\infty$ (see \eqref{2.34}), $x\in\mathbb R,$
$p\in(1,\infty)$ and
\begin{equation}\label{4.4}
\theta_p(x)=\left[\int_{-\infty}^x
v(t)^pdt\right]^{1/p}\cdot\left[\int_x^\infty
u(\xi)^{p'}d\xi\right]^{1/p'},\quad p'=\frac{p}{p-1}.
\end{equation}
Then we have the inequalities \begin{equation}\label{4.5}
c^{-1}h(x)d(x)\le\theta_p(x)\le\begin{cases}
B^{1/p}\sup\limits_{t\ge x}(h(t)d(t))^{1/p'}, \quad \text{if}\
x\ge 0\\
B^{1/p'}\sup\limits_{t\le x}(h(t)d(t))^{1/p}, \quad \text{if}\
x\le 0\end{cases}\end{equation}
\end{lem}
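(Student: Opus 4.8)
\textbf{Proof plan for Lemma \ref{lem4.2}.}

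The plan is to estimate the quantity $\theta_p(x)$ defined in \eqref{4.4} by localizing the two integrals $\int_{-\infty}^x v(t)^p\,dt$ and $\int_x^\infty u(\xi)^{p'}\,d\xi$ using the $\mathbb R(x,d)$-covering machinery. First I would handle the \emph{lower bound} $c^{-1}h(x)d(x)\le\theta_p(x)$: restrict the integral $\int_{-\infty}^x v(t)^p\,dt$ to the single segment $[x-d(x),x]$ and $\int_x^\infty u(\xi)^{p'}\,d\xi$ to $[x,x+d(x)]$; on these segments \eqref{2.25} gives $v(t)\asymp v(x)$ and $u(\xi)\asymp u(x)$, so each integral is $\asymp v(x)^p d(x)$ and $\asymp u(x)^{p'}d(x)$ respectively. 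Multiplying the $p$-th root of one by the $p'$-th root of the other produces $v(x)u(x)d(x)=\rho(x)d(x)$, and \eqref{2.14} converts $\rho(x)$ into $h(x)$ up to a constant, yielding $c^{-1}h(x)d(x)$.

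The main work is the \emph{upper bound}, and I expect that to be the principal obstacle. Consider the case $x\ge 0$ (the case $x\le 0$ being symmetric with the roles of $p$ and $p'$, hence $u$ and $v$, interchanged). I would fix an $\mathbb R(x,d)$-covering $\{\Delta_n\}$ of $\mathbb R$ anchored at $x$ (available by \lemref{lem2.15}), so $\Delta_1^-=x$, and split $\int_x^\infty u(\xi)^{p'}\,d\xi=\sum_{n\ge 1}\int_{\Delta_n}u(\xi)^{p'}\,d\xi$. On each $\Delta_n$ we have $u(\xi)\asymp u(x_n)$ and $|\Delta_n|\asymp d(x_n)$ by \eqref{2.25} and \eqref{2.20}, so $\int_{\Delta_n}u^{p'}\asymp u(x_n)^{p'}d(x_n)$; I would then compare $u(x_n)$ with $u(x)$ via the Davies--Harrell representation \eqref{2.7} (equivalently, using \eqref{2.4} and monotonicity), which shows $u$ decays geometrically along the covering in the sense controlled by $\int_{\Delta_1^-}^{\Delta_n^-}\frac{d\xi}{r\rho}$, and invoke \lemref{lem4.1} together with \eqref{2.14} to identify that integral with $n-1$ up to constants. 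For the factor $\int_{-\infty}^x v^p$, since $v$ is nondecreasing we have $v(t)\le v(x)$ throughout, and the near-diagonal part $[x-d(x),x]$ contributes $\asymp v(x)^p d(x)$ while the tail $(-\infty,x-d(x)]$ must be shown comparable to it — again by a geometric-decay estimate on $v$ running leftward along a covering and summing a convergent series (here $B<\infty$ enters, since $\sup_t h(t)d(t)\le cB$ keeps the relevant ratios bounded).

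Putting the pieces together, I would bound
$$\int_x^\infty u(\xi)^{p'}\,d\xi\le c\,u(x)^{p'}\sum_{n\ge 1}(h(x_n)d(x_n))\,\bigl(\text{geometric factor}\bigr)^{\,n}$$
and similarly $\int_{-\infty}^x v(t)^p\,dt\le c\,v(x)^p\,h(x)d(x)$, after pulling $\sup_{t\ge x}(h(t)d(t))$ out of the sum over the tail segments (all of which lie in $[x,\infty)$) and absorbing the convergent geometric series and the factor $\rho(x)/h(x)\asymp 1$ into constants; the leftover single power of $B$ comes from the one factor of $h(x_n)d(x_n)\le B$ that is not taken as a supremum. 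Taking $p$-th and $p'$-th roots and recombining gives
$$\theta_p(x)\le c\,\rho(x)^{?}\,B^{1/p}\sup_{t\ge x}(h(t)d(t))^{1/p'},$$
and a final application of \eqref{2.14} cleans the $\rho$-prefactor into the stated form. The delicate bookkeeping is making the geometric summation uniform in $x$ and verifying that the supremum $\sup_{t\ge x}(h(t)d(t))$ (rather than a sum) genuinely dominates the tail — this is where the structure of the covering and \eqref{2.20} are essential, and it is the step I would write out most carefully.
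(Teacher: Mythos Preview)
Your lower bound argument is correct and coincides with the paper's. The gap is in the upper bound: the two intermediate estimates you rely on are false in general. Specifically, neither
\[
\int_{-\infty}^{x} v(t)^{p}\,dt \le c\,v(x)^{p}\,h(x)d(x)
\qquad\text{nor}\qquad
\int_{x}^{\infty} u(\xi)^{p'}\,d\xi \le c\,u(x)^{p'}\sup_{t\ge x}\bigl(h(t)d(t)\bigr)
\]
holds with a universal constant. On a covering segment $\Delta_n$ one has $\int_{\Delta_n}v^{p}\asymp \rho(x_n)^{p/2}d(x_n)\exp\bigl(-\tfrac{p}{2}\int_{x_n}^{x}\tfrac{ds}{r\rho}\bigr)$, and the factor $\rho(x_n)^{p/2}$ (rather than $\rho(x_n)^{1}$) is what prevents the sum from being controlled by $h(x_n)d(x_n)$ alone; for $p\neq 2$ there is no uniform bound on $\rho(x_n)^{p/2-1}$ from $B<\infty$. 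Your own bookkeeping reflects this: the uncleared prefactor $\rho(x)^{?}$ is exactly $\rho(x)^{1}$, and \eqref{2.14} cannot remove it. (A concrete obstruction: take $r\equiv 1$ and let $\rho$ drop linearly with slope close to $-1$ from $1$ to a small level $\varepsilon$ on a short interval and then stay at $\varepsilon$; then $B\asymp 1$ but at the right endpoint the ratio $\int_{-\infty}^{x}v^{p}\big/\bigl(v(x)^{p}h(x)d(x)\bigr)$ blows up as $\varepsilon\to 0$.)

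The paper fixes this by a balancing trick rather than by estimating the two integrals separately. Using the monotonicity of $u$ (for $p\le 2$; of $v$ for $p\ge 2$) one moves a power $u(x)^{\gamma}$ across: since $u(\xi)\le u(x)$ for $\xi\ge x$ and $u(x)\le u(t)$ for $t\le x$, one gets
\[
\theta_p(x)\le\Bigl[\int_{-\infty}^{x}\rho(t)^{\gamma p}v(t)^{(1-\gamma)p}\,dt\Bigr]^{1/p}
\Bigl[\int_{x}^{\infty}u(\xi)^{(1-\gamma)p'}\,d\xi\Bigr]^{1/p'}.
\]
With the Davies--Harrell form \eqref{2.7} the two integrands become $\rho^{(1+\gamma)p/2}$ and $\rho^{(1-\gamma)p'/2}$ times exponentials, and the choice $\gamma=(p'-p)/(p'+p)$ makes \emph{both} exponents equal to $1$. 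Now each integral is $\int\rho(\cdot)\exp(-c\int\tfrac{ds}{r\rho})$, which the covering and Lemma~\ref{lem4.1} bound by $cB$ and $c\sup_{t\ge x}h(t)d(t)$ respectively, giving \eqref{4.5}. The point you are missing is precisely this reduction of the power of $\rho$ to $1$; without it the geometric series does not carry a uniformly bounded coefficient.
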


\begin{proof}
Let $p\in(1,2],$ $\gamma\in(0,1]$ (the number $\gamma$ will be
chosen later). Now we apply Theorems \ref{thm2.1} and
\ref{thm2.3}:
\begin{align}
\theta_p(x)&\le\left[\int_{-\infty}^x v(t)^pdt\right]^{1/p}\cdot
u(x)^\gamma\cdot\left[\int_x^\infty
u(\xi)^{(1-\gamma)p'}d\xi\right]^{1/p'}\nonumber\\
&\le\left[\int_{-\infty}^x\rho(t)^{\gamma p}\cdot
v(t)^{(1-\gamma)p}dt\right]^{1/p}\cdot\left[\int_x^\infty
u(\xi)^{(1-\gamma)p'}d\xi\right]^{1/p'}\nonumber\\
&=\left[\int_{-\infty}^x\rho(t)^{\frac{1+\gamma}{2}p}\exp\left(-\frac{1-\gamma}{2}p\int_t^x
\frac{ds}{r(s)\rho(s)}\right)\cdot\exp\left(\frac{1-\gamma}{2}p\int_{x_0}^x\frac{ds}{r(s)\rho(s)}
\right)dt\right]^{1/p}\nonumber\\
&\quad \cdot
\left[\int_x^\infty\rho(\xi)^{\frac{1-\gamma}{2}p'}\exp\left(-\frac{1-\gamma}{2}p'
\int_x^\xi\frac{ds}{r(s)\rho(s)}\right)\exp\left(-\frac{1-\gamma}{2}p'\int_{x_0}^x\frac{ds}
{r(s)\rho(s)}\right)d\xi\right]^{1/p'}\nonumber\\
&=\left[\int_{-\infty}^x\rho(t)^{\frac{1+\gamma}{2}p}\exp\left(-\frac{1-\gamma}{2}p\int_t^x
\frac{ds}{r(s)\rho(s)}\right)dt\right]^{1/p}\nonumber\\
&\quad\cdot\left[\int_x^\infty\rho(\xi)
^{\frac{1-\gamma}{2}p'}\exp
\left(-\frac{1-\gamma}{2}p'\int_x^\xi\frac{ds}{r(s)\rho(s)}\right)d\xi\right]^{1/p'}.\label{4.6}
\end{align}

Let $\gamma_1$ be the solution of the equation
$$\frac{1+\gamma}{2}p=\frac{1-\gamma}{2}p'\quad\Rightarrow\quad\gamma:=\gamma_1=\frac{p'-p}{p'+p}.$$
  For $\gamma=\gamma_1$
inequality \eqref{4.6} takes the form
\begin{align}
\theta_p(x)&\le\left[\int_{-\infty}^x\rho(t)\exp\left(-(p-1)\int_t^x\frac{ds}{r(s)\rho(s)}
\right)
dt\right]^{1/p}\nonumber\\
 &\quad\cdot
 \left[\int_x^\infty\rho(\xi)\exp
\left(-\int_x^\xi\frac{ds}{r(s)\rho(s)}\right)d\xi\right]
 ^{1/p'}:=(J_1(x))^{1/p}\cdot(J_2(x))^{1/p'}.\label{4.7}
\end{align}

  Let us estimate $J_1(x)$ and $J_2(x)$. We only consider the case
  $x\ge0$ because the case $x\le0$ is treated in a similar way.
  Below we use the properties of an $\mathbb R(x,d)$-covering of
  $\mathbb R,$ inequalities \eqref{2.26} and \eqref{2.14}, and
  equalities \eqref{4.3}:
  \begin{align}
  J_1(x)&=\int_{-\infty}^x\rho(t)\exp\left(-(p-1)\int_t^x\frac{ds}{r(s)\rho(s)}\right)dt\nonumber\\
  &=\sum_{h=-\infty}^{-1}\int_{\Delta_n}\rho(t)\exp\left(-(p-1)\int_t^x\frac{ds}{r(s)\rho(s)}\right)dt\nonumber\\
 &\le c\sum_{h=-\infty}^{-1}
  h(x_n)d(x_n)\exp\left(-\frac{p-1}{2}\int_{\Delta_{n^+}}^{\Delta_{-1}^+}
  \frac{ds}{r(s)h(s)}\right)\nonumber\\
   &\le
  cB\sum_{n=-\infty}^{-1}\exp\left(-\frac{p-1}{2}(|n|-1)\right)=cB,\label{4.8}
\end{align}

\begin{align}
  J_2(x)&=\int_x^\infty\rho(\xi)\exp\left(-\int_x^\xi\frac{ds}{r(s)\rho(s)}\right)d\xi
  =\sum_{n=1}^\infty\int_{\Delta_n}\rho(\xi)\exp\left(-\int_x^\xi\frac{ds}{r(s)\rho(s)}\right)d\xi\nonumber\\
  &\le c\sum_{n=1}^\infty
  h(x_n)d(x_n)\exp\left(-\frac{1}{2}\int_{\Delta_1^-}^{\Delta_n^-}\frac{ds}{r(s)\rho(s)}\right)\nonumber\\
  &\le c\sup_{t\ge
  x}(h(t)d(t))\sum_{n=1}^\infty\exp\left(-\frac{n-1}{2}\right)=c
  \sup_{t\ge x}(h(t)d(t)).\label{4.9}
  \end{align}

  Thus, for $p\in(1,2]$, the upper estimate in \eqref{4.5} follows
  from \eqref{4.8}--\eqref{4.9}. Let $p\in(2,\infty),$
  $\gamma\in(0,1]$ (the number $\gamma$ will be chosen later).
  Now we apply Theorems \ref{thm2.1} and \ref{thm2.3}:
\begin{align}
\theta_p(x)&=\left[\int_{-\infty}^x
v(t)^pdt\right]^{1/p}\cdot\left[\int_x^\infty
u(\xi)^{p'}d\xi\right]^{1/p'}\nonumber\\
&\le\left[\int_{-\infty}^x v(t)^{(1-\gamma)p}dt\right]^{1/p}\cdot
v(x)^\gamma\cdot\left[\int_x^\infty
u(\xi)^{p'}d\xi\right]^{1/p'}\nonumber\\
&\le\left[\int_{-\infty}^x
v(t)^{(1-\gamma)p}dt\right]^{1/p}\cdot\left[\int_x^\infty\rho(\xi)^{\gamma
p'}\cdot u(\xi)^{(1-\gamma)p'}d\xi\right]^{1/p'}\nonumber\\
&=\left[\int_{-\infty}^x\rho(t)^{\frac{1-\gamma}{2}p}\cdot\exp\left(-\frac{1-\gamma}{2}p\int_t^x
\frac{ds}{r(s)\rho(s)}\right)\cdot\exp\left(\frac{1-\gamma}{2}p\int_{x_0}^x\frac{ds}{r(s)\rho(s)}\right)
dt\right]^{1/p}\nonumber\\
&\quad
\cdot\left[\int_x^\infty\rho(\xi)^{\frac{1+\gamma}{2}p'}\cdot\exp\left(-\frac{1-\gamma}{2}p'\int_x^\xi
\frac{ds}{r(s)\rho(s)}\right)\cdot\exp\left(-\frac{1-\gamma}{2}p'\int_{x_0}^x\frac{ds}{r(s)\rho(s)}\right)
d\xi\right]^{1/p'}\nonumber\\
&=\left[\int_{-\infty}^x\rho(t)^{\frac{1-\gamma}{2}p}\exp\left(-\frac{1-\gamma}{2}p\int_t^x\frac{ds}
{r(s)\rho(s)}\right)dt\right]^{1/p}\nonumber\\
&\quad
\cdot\left[\int_{x}^\infty\rho(\xi)^{\frac{1+\gamma}{2}p'}\exp\left(-\frac{1-\gamma}{2}p\int_x^\xi\frac{ds}
{r(s)\rho(s)}\right)d\xi\right]^{1/p'}.\label{4.10}
 \end{align}

Let now $\gamma$ be the solution $\gamma_2$ of the equation
$$\frac{1-\gamma}{2}p=\frac{1+\gamma}{2}p'\quad\Rightarrow\quad
\gamma:=\gamma_2=\frac{p-p'}{p+p'}.$$ For $\gamma=\gamma_2$
inequality \eqref{4.10} takes the form
\begin{align}
\theta_p(x)&\le\left[\int_{-\infty}^x\rho(t)\exp\left(-\int_t^x\frac{ds}{r(s)\rho(s)}\right)dt\right]
^{1/p}
\nonumber\\
 &\quad\cdot
 \left[\int_x^\infty\rho(\xi)\exp\left(-(p'-1)\int_x^\xi\frac{ds}{r(s)\rho(s)}\right)d\xi\right]^{1/p'}.
 \label{4.11}
 \end{align}

 That \eqref{4.11} implies the upper estimate in \eqref{4.5} can
 be proved similarly to the proof of the same estimate from
 \eqref{4.7}, and therefore we omit the proof. It remains to
 obtain the lower estimate in \eqref{4.5}. The following inequality
follows from \eqref{2.14} and \eqref{2.26}:
\begin{align*}
\theta_p(x)&\ge\left[\int_{x-d(x)}^xv(t)^pdt\right]^{1/p}\cdot\left[\int_x^{x+d(x)}u(t)^{p'}dt\right]
^{1/p'} \\
&\ge c^{-1}v(x)d(x)^{1/p}\cdot
c^{-1}u(x)d(x)^{1/p'}=c^{-1}\rho(x)d(x)\ge c^{-1}h(x)d(x).
\end{align*}
\end{proof}

\begin{cor}\label{cor4.3}
Let $p\in(1,\infty)$ and $B<\infty$ (see \eqref{2.34}). Then
$\theta_p(x)\to0$ as $|x|\to\infty$ if and only if condition
\eqref{3.1} holds.\end{cor}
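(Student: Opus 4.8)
The plan is to read the corollary straight off the two-sided estimate \eqref{4.5} established in \lemref{lem4.2}; all the analytic content is already contained in that lemma, so what remains is a one-line real-variable remark about tails. I would split the proof into the two implications.

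For the direction ``$\theta_p(x)\to0$ as $|x|\to\infty\ \Rightarrow\ $\eqref{3.1}'' I would use only the left-hand (lower) bound in \eqref{4.5}, namely $c^{-1}h(x)d(x)\le\theta_p(x)$ for every $x\in\mathbb R$. If $\theta_p(x)\to0$, then $0<h(x)d(x)\le c\,\theta_p(x)\to0$ as $|x|\to\infty$, which is exactly \eqref{3.1}.

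For the converse I would introduce the tail suprema $M_+(x)=\sup_{t\ge x}\bigl(h(t)d(t)\bigr)$ and $M_-(x)=\sup_{t\le x}\bigl(h(t)d(t)\bigr)$. Since $B<\infty$, we have $M_\pm(x)\le B$, so both quantities are finite; and the elementary fact I would invoke is that a nonnegative function which tends to $0$ at $+\infty$ (resp. at $-\infty$) has $M_+(x)\to0$ as $x\to+\infty$ (resp. $M_-(x)\to0$ as $x\to-\infty$), by the obvious $\varepsilon$--$N$ argument. Feeding this into the right-hand (upper) bound of \eqref{4.5}, which reads $\theta_p(x)\le B^{1/p}M_+(x)^{1/p'}$ for $x\ge0$ and $\theta_p(x)\le B^{1/p'}M_-(x)^{1/p}$ for $x\le0$, yields $\theta_p(x)\to0$ as $|x|\to\infty$, the constants $B^{1/p}$ and $B^{1/p'}$ being finite by hypothesis.

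I do not expect any genuine obstacle here. The only point requiring a moment's care is that \eqref{3.1} and the conclusion are both \emph{two-sided} limits (the cases $x\to+\infty$ and $x\to-\infty$ are separate), so one must handle the two tails independently and match them with the two cases of \eqref{4.5}; but once \lemref{lem4.2} is in hand, this is purely formal.
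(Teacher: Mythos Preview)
Your proposal is correct and is exactly the approach the paper takes: the paper's proof is the single sentence ``This is an immediate consequence of \eqref{4.5},'' and you have simply spelled out that consequence in detail, using the lower bound for one implication and the upper bound (with the tail suprema of $h(t)d(t)$) for the other.
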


\begin{proof}
This is an immediate consequence of \eqref{4.5}.
\end{proof}

\begin{cor}\label{cor4.4}
Let $p\in(1,\infty)$ and $B<\infty$ (see \eqref{2.34}). Suppose
that condition \eqref{3.1} holds, $N\ge 1$ and
\begin{equation}\label{4.12}
\theta_p^{(+)}(x,N)=\left[\int_N^xv(t)^pdt\right]^{1/p}\cdot\left[\int_x^\infty
u(\xi)^{p'}d\xi\right]^{1/p'},\quad x\ge N,
\end{equation}
\begin{equation}\label{4.13}
\theta_p^{(-)}(x,N)=\left[\int_{-\infty}^xv(t)^pdt\right]^{1/p}\cdot\left[\int_x^{-N}
u(\xi)^{p'}d\xi\right]^{1/p'},\quad x\le -N.
\end{equation}
Then
\begin{equation}\label{4.14}
\theta_p^{(-)}(x,N)\to0,\qquad
\theta_p^{(+)}(x,N)\to0\quad\text{as}\quad  N\to\infty.
\end{equation}
\end{cor}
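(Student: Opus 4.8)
The plan is to read off \eqref{4.14} from \corref{cor4.3} by a one-line monotonicity observation: each of $\theta_p^{(+)}(x,N)$ and $\theta_p^{(-)}(x,N)$ arises from $\theta_p(x)$ (see \eqref{4.4}) by replacing one of its two integration intervals by a smaller one while keeping the other factor unchanged, so that $\theta_p^{(\pm)}(x,N)\le\theta_p(x)$ on the relevant range of $x$. Since $B<\infty$ and condition \eqref{3.1} are assumed, \corref{cor4.3} gives $\theta_p(x)\to0$ as $|x|\to\infty$, and the claim follows.

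In detail, first I would note that for $x\ge N\ge1$ one has $[N,x]\subseteq(-\infty,x]$, hence $\int_N^x v(t)^p\,dt\le\int_{-\infty}^x v(t)^p\,dt$; since the second factor of $\theta_p^{(+)}(x,N)$ coincides with that of $\theta_p(x)$, this yields $\theta_p^{(+)}(x,N)\le\theta_p(x)$ for all $x\ge N$. Symmetrically, for $x\le-N\le-1$ the first factor of $\theta_p^{(-)}(x,N)$ coincides with that of $\theta_p(x)$, while $[x,-N]\subseteq[x,\infty)$ gives $\int_x^{-N}u(\xi)^{p'}\,d\xi\le\int_x^\infty u(\xi)^{p'}\,d\xi$, so $\theta_p^{(-)}(x,N)\le\theta_p(x)$ for all $x\le-N$. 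Then, by \corref{cor4.3}, for every $\varepsilon>0$ there is $N_0\ge1$ with $\theta_p(x)\le\varepsilon$ whenever $|x|\ge N_0$; combining the two observations, for all $N\ge N_0$ we get $\theta_p^{(+)}(x,N)\le\varepsilon$ uniformly in $x\ge N$ and $\theta_p^{(-)}(x,N)\le\varepsilon$ uniformly in $x\le-N$, which is \eqref{4.14} in the intended (uniform) sense.

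I do not expect a genuine obstacle here: all the analytic work is already contained in \lemref{lem4.2} and \corref{cor4.3}, and the present corollary only repackages it in the form that will be convenient for verifying condition~3) of \thmref{thm2.31}. The single point to keep in mind is the reading of \eqref{4.14} as \emph{uniform} decay over the admissible ranges of $x$: by \thmref{thm2.32} applied on $(N,\infty)$ and on $(-\infty,-N)$ (both times with $\mu=v$, $\theta=u$), the quantities $\sup_{x\ge N}\theta_p^{(+)}(x,N)$ and $\sup_{x\le-N}\theta_p^{(-)}(x,N)$ are precisely the Hardy constants of the Hardy operator $K$ truncated to those rays, and it is these truncated constants — together with the norm bound \eqref{2.53} — that will control the tails of $G_2f$ in the subsequent verification of condition~3).
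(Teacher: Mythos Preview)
Your proposal is correct and follows essentially the same approach as the paper: both arguments rest on the monotonicity observation $\theta_p^{(\pm)}(x,N)\le\theta_p(x)$ on the relevant half-lines and then invoke the decay of $\theta_p(x)$ as $|x|\to\infty$. The only cosmetic difference is that the paper cites inequality \eqref{4.5} of \lemref{lem4.2} directly (bounding $\sup_{x\ge N}\theta_p(x)$ by $cB^{1/p}\sup_{t\ge N}(h(t)d(t))^{1/p'}$), while you route the same conclusion through \corref{cor4.3}; since \corref{cor4.3} is itself an immediate consequence of \eqref{4.5}, the two proofs are effectively identical.
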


\begin{proof}
Now we use \eqref{4.5}:
\begin{align*}
0&<\theta_p^{(+)}(x,N)\le\sup_{x\ge
N}\theta_p^{(+)}(x,N)\le\sup_{x\ge N}\theta_p(x)\nonumber\\
&\le cB^{1/p}\sup_{t\ge N}(h(t)d(t))^{1/p'}\to
0\quad\text{as}\quad N\to\infty\quad\Rightarrow\quad \eqref{4.14}.
\end{align*}
The second relation of \eqref{4.14} can be checked in a similar
way.
\end{proof}

Let us now check 3). The following relations are obvious:
\begin{align*}
\sup_{g\in W}\int_{|x|\ge N}|g(t)|^pdt&=\sup_{f\in F}\int_{|x|\ge
N}|(G_2f)(x)|^pdx \\
&\le 2\sup_{f\in F}\max\left\{\int_{-\infty}^{-N}|(G_2f)(x)|^pdx,\
\int_N^\infty|(G_2f)(x)|^pdx\right\}.\end{align*} Denote
\begin{equation}\label{4.15}
T_1(N)=\sup_{f\in F}\int_{-\infty}^{-N}|(G_2f)(x)|^pdx,
\end{equation}
\begin{equation}\label{4.16}
T_2(N)=\sup_{f\in F}\int^{\infty}_{N}|(G_2f)(x)|^pdx.
\end{equation}
To prove 3), it is enough to verify that
\begin{equation}\label{4.17}
T_1(N)\to 0,\qquad T_2(N)\to0\qquad \text{as}\quad N\to\infty.
\end{equation}
Let us check \eqref{4.17} for $T_2(N).$ Now we use the definition
of the set $F,$ \thmref{thm2.32} and Corollary \ref{cor4.4}:
\begin{align*}
T_2(N)&=\sup_{f\in
F}\int_N^\infty|(G_2f)(x)|^pdx\le\|G_2\|^p_{L_p(N,\infty)\to
L_p(N,\infty)}\sup\|f\|_{L_p(N,\infty)}^p\\
&\le c(p)\sup_{x\ge N}\left[\left(\int_N^\infty
v(t)^pdt\right)^{1/p}\cdot\left(\int_x^\infty
u(\xi)^{p'}d\xi\right)^{1/p'}\right]^p\cdot\sup_{f\in F}\|f\|_p^p\\
&\le c(p)\sup_{x\ge N}[\theta_p^{(+)}(x,N)]^p\to 0\qquad
\text{as}\quad N\to\infty.
\end{align*}

Let us go to $T_1(N).$ First consider the value $(G_2f)(x)$ for
$x\le-N$ and $f\in F$:
\begin{align}
(G_2f)(x)&=v(x)\int_x^\infty
u(\xi)f(\xi)d\xi=v(x)\int_x^{-N}u(\xi)f(\xi)d\xi+v(x)\int_{-N}^\infty
u(\xi)f(\xi)d\xi\nonumber\\
&:=(\tilde P_Nf)(x)+(\hat P_Nf)(x).\label{4.18}
\end{align}
Here
\begin{equation}\label{4.19}
(\tilde P_Nf)(x)=v(x)\int_x^{-N}u(\xi)f(\xi)d\xi,\quad x\le
-N,\quad f\in F,
\end{equation}
\begin{equation}\label{4.20}
(\hat P_Nf)(x)=v(x)\int^\infty_{-N}u(\xi)f(\xi)d\xi,\quad x\le
-N,\quad f\in F.
\end{equation}

The following relations are obvious:
\begin{align}
T_1(N)&= \sup_{f\in
F}\int_{-\infty}^{-N}|(G_2f)(x)|^pdx=\sup_{f\in
F}\int_{-\infty}^{-N}|(\tilde P_Nf)(x)+(\hat
P_Nf)(x)|^pdx\nonumber\\
&\le 2^p\sup_{f\in F}\left[\int_{-\infty}^{-N}|(\tilde
P_Nf)(x)|^pdx+\int_{-\infty}^{-N}|(\hat
P_Nf)(x)|^pdx\right]\nonumber\\
&\le c(p)\left[\sup_{f\in F}\int_{-\infty}^{-N}|(\tilde
P_Nf)(x)|^pdx+\sup_{f\in F}\int_{-\infty}^{-N}|\hat
P_Nf)(x)|^pdx\right]\nonumber\\
&=c(p)[\tilde T_1(N)+\hat T_1(N)].\label{4.21} \end{align} Here
\begin{equation}\label{4.22}
\tilde T_1(N)=\sup_{f\in F}\int_{-\infty}^{-N}|(\tilde
P_Nf)(x)|^pdx,\end{equation}
\begin{equation}\label{4.23}
\hat T_1(N)=\sup_{f\in F}\int_{-\infty}^{-N}|(\hat
P_Nf)(x)|^pdx.\end{equation} Clearly, $T_1(N)$ satisfies
\eqref{4.17} if
\begin{equation}\label{4.24}
\tilde T_1(N)\to0,\qquad \hat T_2(N)\to0\qquad\text{as}\quad
N\to\infty.\end{equation}

To prove the first relation of \eqref{4.24}, we use the definition
of the set $F,$ \thmref{thm2.32} and Corollary \ref{cor4.4}:
\begin{align*}
\tilde T_1(N)&=\sup_{f\in F}\int_{-\infty}^{-N}|(\tilde
P_Nf)(x)|^pdx\le\|\tilde P_N\|^p_{L_p(-\infty,-N)\to
L_p(-\infty,-N)}\cdot \sup_{f\in F}\|f\|_{L_p(-\infty,-N)}^p\\
&\le c(p)\sup_{x\le -N}\left[\left(\int_{-\infty}^x
v(t)^pdt\right)^{1/p}\cdot\left(\int_x^{-N}u(\xi)^{p'}d\xi\right)^{1/p'}\right]^p\cdot\sup_{f\in
F}\|f\|_p^p\\
&\le c(p)\sup_{x\le-N}\theta_p^{(-)}(x,N)\to0\qquad\text{as}\quad
N\to \infty.
\end{align*}

To prove the second relation of \eqref{4.24}, we use the
definition of the set $F,$ H\"older's inequality and Corollary
\ref{cor4.3}:
\begin{align*}
\hat T_1(N)&=\sup_{f\in F}\int_{-\infty}^{-N}|(\hat
P_Nf)(x)|^pdx\le \sup_{f\in
F}\left(\int_{-\infty}^{-N}v(x)^pdx\right)\cdot\left(\int_{-N}^\infty
u(\xi)|f(\xi|d\xi\right)^p\\
&\le\left(\int_{-\infty}^{-N}v(x)^pdx\right)\cdot\left(\int_{-N}^\infty
u(\xi)^{p'}d\xi\right)^{p/p'}\cdot\sup_{f\in
F}\|f\|_{L_p(-N,\infty)}^p\\
&\le \theta_p^p(-N)\to 0\qquad\text{as}\quad N\to\infty.
\end{align*}
Thus relation \eqref{4.17} holds, and therefore condition 3) is
satisfied.

\textit{Verification of condition 2)}. According to \eqref{2.32},
it is enough to show that
\begin{equation}\label{4.25}
\lim_{\delta\to0}\sup_{f\in \mathcal
K}\sup_{|t|\le\delta}\|(G_if)(\cdot+t)-(G_if)(\cdot)\|_p=0,\quad
i=1,2.\end{equation}

Both equalities of \eqref{4.25} are checked in the same way;
therefore, below we only consider the case $i=2.$ Furthermore,
equality \eqref{4.25} will be prove as soon as we find
$\delta=\delta(\varepsilon)\in(0,1]$ for a given $\varepsilon>0$
such that
\begin{equation}\label{4.26}
\sup_{f\in\mathcal
K}\sup_{|t|\le\delta}\|(G_2f)(\cdot+t)-(G_2f)(\cdot)\|_[\le
\varepsilon.\end{equation} Thus, let $\varepsilon>0$ be given. Set
$N\ge 1$ (the choice of $N $ will be made more precise later).
Then for $f\in\mathcal K$ we have
\begin{align}
\|(G_2f)(\cdot+t)&-(G_2f)(\cdot)\|_p^p=\|(G_2f)(\cdot+t)-(G_2f)(\cdot)\|_{L_p(-N,N)^+}^p\nonumber\\
&\quad
+\|(G_2f)(\cdot+t)-(G_2f)(\cdot)\|_{L_p(-\infty,-N)}^p+\|(G_2f)(\cdot+t)-(G_2f)(\cdot)\|_{L_p(N,\infty))}^p
\nonumber \\
&\le
\|(G_2f)(\cdot+t)-(G_2f)(\cdot)\|_{L_p(-N,N)}^p+2\|G_2f\|_{L_p(-\infty,-N+1)}^p
\nonumber\\ &\quad +2\|G_2f\|_{L_p(N-1,+\infty)}^p.\label{4.27}
\end{align}

 By 3), for the given $\varepsilon>0$ there exists
 $N_0=N_0(\varepsilon)$ such that
 $$\sup_{f\in\mathcal
 K}\|G_2f\|_{L_p(-\infty,-N_0+1)}^p+\sup_{f\in\mathcal
 K}\|G_2f\|_{L_p(N_0-1,\infty)}\le
 \frac{\varepsilon}{4}^p.$$
 Therefore, for $N=N_0$ inequality \eqref{4.27} can be continued
 as follows:
 \begin{equation}\label{4.28}
 \|(G_2f)(\cdot+t)-(G_2f)(\cdot)\|_p^p\le\|(G_2f)(\cdot+t)-(G_2f)(\cdot)\|_{L_p(-N_0,N_0)}^p+\frac{\varepsilon
 ^p}{2}.\end{equation}

 Throughout the sequel, $|x|\le N_0$ and $|t|\le\delta$ (the
 number $\delta$ will be chosen later). Let us continue estimate
 \eqref{4.28}. We have
 \begin{align}
 |(G_2f)(x+t)&-(G_2f)(x)| =\left|v(x+t)\int_{x+t}^\infty
 u(\xi)f(\xi)d\xi-v(x)\int_x^\infty u(\xi)f(\xi)d\xi\right|\nonumber\\
 &\le |v(x+t)-v(x)|\cdot\left|\int_x^\infty
 u(\xi)f(\xi)d\xi\right|+v(x+t)\left|\int_x^{x+t}u(\xi)f(\xi)d\xi\right|\nonumber\\
 &:=(\mathcal A f)(x,t)+(Bf)(x,t).\label{4.29}\end{align}
 Here
 \begin{equation}\label{4.30}
 (Af)(x,t)=|v(x+t)-v(x)|\cdot\left|\int_x^\infty
 u(\xi)f(\xi)d\xi\right|,\quad f\in\mathcal K,\end{equation}
 \begin{equation}\label{4.31}
 (Bf)(x,t)= v(x+t)) \cdot\left|\int_x^{x+t}
 u(\xi)f(\xi)d\xi\right|,\quad f\in\mathcal K.\end{equation}

 Let us introduce the numbers
 \begin{equation}\label{4.32}
 \delta_1=\min_{x\in[-N_0,N_0]}d(x),\qquad \eta=\sup_{x\in[-N_0,N_0]}\sup_{|t|\le\delta} \left|\int_x
 ^{x+t}\frac{d\xi}{r(\xi)h(\xi)}\right|.\end{equation}
 From absolute continuity of the Lebesgue integral, it follows
 that given $\varepsilon>0,$ one can choose
 $\delta=\delta(\varepsilon) $ so small that the following
 inequalities  hold:
 \begin{equation}\label{4.33}
 \delta\le\delta_1,\qquad \eta\le\frac{\varepsilon}{\alpha}.\end{equation}
 (Here $\alpha$ is a positive number to be chosen later.)

 In the following estimate of $(Af)(x,t)$, we use
 \eqref{4.33}, the equalities (see \cite{10})
 \begin{equation}\label{4.34}
 \frac{v'(x)}{v(x)}=\frac{1+r(x)\rho'(x)}{2r(x)\rho(x)},\qquad \frac{u'(x)}{u(x)}= -
 \frac{1-r(x)\rho'(x)}{2r(x)\rho(x)},\qquad x\in\mathbb R,\end{equation}
 and estimates \eqref{2.17}, \eqref{2.25} and \eqref{2.26}:
 \begin{align}
 (Af)(x,t)&=|v(x+t)-v(x)|\cdot\left|\int_x^\infty
 u(\xi)f(\xi)d\xi\right|=\left|\int_x^{x+t}v'(s)ds\right|\cdot\frac{1}{v(x)}|(G_2f)(x)|\nonumber\\
 &=\left|\int_x^{x+t}\frac{r(s)v'(s)}{v(s)}
\cdot
\frac{v(s)}{v(x)}\cdot\frac{ds}{r(s)}\right|\cdot|(G_2f)(x)|\nonumber\\
 &\le\left|\int_x^{x+t}\frac{2}{\rho(s)}\cdot
 e^2\frac{ds}{r(s)}\right|\cdot|(G_2f)(x)|\le
 c\left|\int_x^{x+t}\frac{ds}{r(s)h(s)}\right|\cdot|(G_2f)(x)|\nonumber\\ &\le
 \frac{c\varepsilon}{\alpha}\cdot|(G_2f)(x)|.\label{4.35}
 \end{align}

 Furthermore, in the estimate of $(Bf)(x,t)$ we use \eqref{2.25},
 \eqref{2.26}, \eqref{4.33}, H\"older's inequality and the
 definition of the set $\mathcal K:$
 \begin{align}
 (Bf)(x,t)&=v(x+t)  \left|\int_x^{x+t}
 u(\xi)f(\xi)d\xi\right|\nonumber\\
 &\le \frac{v(x+t)u(x+t)}{v(x)u(x)}\cdot \rho(x)\left|\int_x^{x+t}\frac{u(\xi)}{u(x)}\cdot
 \frac{u(x)}{u(x+t)}\cdot |f(\xi)|d\xi\right|\nonumber\\
 &\le c\rho(x)\left|\int_x^{x+t}|f(\xi)|d\xi\right|\le c\rho(x)|t|^{1/p'}\cdot\|f\|_p\nonumber\\
 &\le c\rho(x)\delta^{1/p'}\le c\big(\max_{|x|\le N_0}\rho(x)\big)\cdot\delta^{1/p'}.\label{4.36}
 \end{align}

 The following estimates are derived from \eqref{4.35},
 \eqref{4.36}, the definition of the set $\mathcal K$ and
 \eqref{2.35}:
  \begin{align*}
 |(G_2f)(x+t)-(G_2f(x)|&\le (\mathcal A f)(x,t)+(Bf)(x,t) r\\
 &\le \frac{c\varepsilon}{\alpha}|(G_2f)(x)|+c\big(\max_{|x|\le N_0}\rho(x)\big)\delta^{1/p'}\ \Rightarrow
 \\
  \|(G_2f)(\cdot+t)-(G_2f)(\cdot)\|_{L_p(-N_0,N_0)}
 &\le \frac{c\varepsilon}{\alpha} \|(G_2f\|_p+c\big(\max_{|x|\le N_0}\rho(x)\big)N_0^{1/p}
 \cdot \delta^{1/p'}\\
 &\le \frac{cB}{\alpha}\varepsilon+c\big(\max_{|x|\le N_0}\rho(x)\big)N_0^{1/p}\delta^{1/p'}.
 \end{align*}

 Set $\alpha=2^{1+\frac{1}{p}}\cdot cB$ and, if necessary, choose
 a smaller $\delta$ so that the following inequality holds:
 $$c\big(\max_{|x|\le N_0}\rho(x)\big)\cdot
 N_0^{1/p}\delta^{1/p'}\le\frac{\varepsilon}{2^{1+1/p}}.$$
 Then we get the estimates
 $$\|(G_2f)(\cdot+t)-(G_2f)(\cdot)\|_{L_p(-N_0,N_0 )}\le
 \frac{\varepsilon}{2^{1/p}}\ \Rightarrow\ \text{(see
 \eqref{4.28})},$$
  $$\|(G_2f)(\cdot+t)-(G_2f)(\cdot)\|_p^p\le
 \frac{\varepsilon^p}{2 }+\frac{\varepsilon^p}{2}=\varepsilon^p\
 \Rightarrow\
 \eqref{4.25}\ \Rightarrow 2).$$

 The theorem is proved.
\end{proof}

\renewcommand{\qedsymbol}{}
\begin{proof}[Proof of \thmref{thm3.4}]
We need the following assertion.
\end{proof}

\begin{lem}\label{lem4.5}
Suppose that condition \eqref{3.1} holds. Then $B<\infty$ (see
\eqref{2.34}).\end{lem}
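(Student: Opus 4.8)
Suppose that condition (3.1) holds, i.e. $\lim_{|x|\to\infty} h(x)d(x) = 0$. Then $B < \infty$, where $B = \sup_{x\in\mathbb{R}} h(x)d(x)$.

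The plan is to show that $h(x)d(x)$ is a continuous, everywhere‑finite function on $\mathbb R$ which vanishes at $\pm\infty$, and then to invoke the elementary fact that such a function is bounded, so that its supremum $B$ is finite.

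First I would reduce the continuity question to the function $\rho(x)d(x)$. By inequality \eqref{2.14} of \thmref{thm2.6} we have $\tfrac12\,h(x)d(x)\le\rho(x)d(x)\le 2\,h(x)d(x)$ for all $x\in\mathbb R$, i.e.\ $h(x)d(x)\asymp\rho(x)d(x)$ with absolute constants. Hence it suffices to bound $\sup_{x\in\mathbb R}\rho(x)d(x)$, since then $B=\sup_{x\in\mathbb R}h(x)d(x)\le 2\sup_{x\in\mathbb R}\rho(x)d(x)<\infty$. The advantage of passing to $\rho d$ is that it is manifestly continuous: $\rho=uv$ is continuous because the FSS elements $u,v$ of \eqref{2.2} are (absolutely) continuous by \thmref{thm2.1}, and $d$ is continuous on $\mathbb R$ by \lemref{lem2.10}. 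Moreover $\rho(x)<\infty$ and $d(x)<\infty$ at every point, so $\rho(x)d(x)$ is finite everywhere and, being continuous, bounded on every compact interval.

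Next I would transfer the limit through this equivalence. From the hypothesis \eqref{3.1}, $\lim_{|x|\to\infty}h(x)d(x)=0$, and since $\rho(x)d(x)\le 2\,h(x)d(x)$, also $\lim_{|x|\to\infty}\rho(x)d(x)=0$. Therefore there is $N\ge 1$ with $\rho(x)d(x)\le 1$ for $|x|\ge N$, while $\rho(x)d(x)\le\max_{|x|\le N}\rho(x)d(x)=:M_N<\infty$ on $[-N,N]$ by continuity on a compact set. Combining the two bounds gives $\sup_{x\in\mathbb R}\rho(x)d(x)\le\max\{1,M_N\}<\infty$, whence $B<\infty$ by the reduction above.

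The argument is in essence ``a continuous function tending to $0$ at infinity is bounded'', so there is no genuine obstacle; the only point requiring care is the justification of continuity, and for that I deliberately work with $\rho d$ instead of $h d$ directly, transferring the conclusion back to $h$ via \eqref{2.14}, so that I need only the already recorded continuity of $u,v$ (\thmref{thm2.1}) and of $d$ (\lemref{lem2.10}).
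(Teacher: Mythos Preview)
Your proof is correct and follows essentially the same route as the paper: both pass from $hd$ to $\rho d$ via \eqref{2.14}, use the hypothesis to bound $\rho d$ outside a compact set, and invoke continuity (of $d$ from \lemref{lem2.10} and of $\rho=uv$) to bound it on the compact set. Your version is in fact slightly more explicit about why $\rho$ is continuous and returns directly to $B$ via \eqref{2.14}, whereas the paper detours through $S$ (see \eqref{2.36}).
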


 \renewcommand{\qedsymbol}{\openbox}
\begin{proof}
{}From \eqref{3.1} and \eqref{2.14} it follows that
$\rho(x)d(x)\to0$ as $|x|\to\infty.$ Hence there is $x_0\gg1$ such
that $\rho(x)d(x)\le 1$ for $|x|\ge x_0.$ By \lemref{lem2.10}, the
function $\rho(x)d(x)$ is continuous for $x\in\mathbb R$ and is
therefore bounded on $[-x_0,x_0]$. Hence $S<\infty$ (see
\eqref{2.36}), and therefore $B<\infty$ (see
\eqref{2.34}).\end{proof}

Let us now go to the assertion of the theorem. Since
$G(x,t)=G(t,x)$ for all $t,x\in\mathbb R$ (see \eqref{2.9}), the
operator $G:L_2\to L_2$ is symmetric and bounded (see
\lemref{lem4.5} and \eqref{2.35}). Hence the operator $G$ is
self-adjoint and, by \thmref{thm3.1}, compact. Furthermore,
estimates \eqref{3.3} follow from positivity of $G$ which, in
turn, will be proved below. Towards this end, we need the
following two lemmas.

\begin{lem}\label{lem4.6}
The equalities
\begin{equation}\label{4.37}
\lim_{|x|\to\infty}\frac{u(x)}{v(x)}\cdot\int_{-\infty}^xv(t)^2dt=0,
\end{equation}
\begin{equation}\label{4.38}
\lim_{|x|\to\infty}\frac{v(x)}{u(x)}\cdot\int^{\infty}_xu(t)^2dt=0
\end{equation}
hold if and only if condition \eqref{3.1} is satisfied.
\end{lem}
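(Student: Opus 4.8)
The plan is to exploit the Davies--Harrell representation \eqref{2.7}, which rewrites the two quantities in \eqref{4.37}--\eqref{4.38} as integrals against the single function $\rho$. Writing $P(\xi)=\int_{x_0}^\xi\frac{ds}{r(s)\rho(s)}$ and using \eqref{2.7}, one gets
$$
\frac{u(x)}{v(x)}\int_{-\infty}^x v(t)^2\,dt
= e^{-2P(x)}\int_{-\infty}^x \rho(t)\,e^{2P(t)}\,dt
= \int_{-\infty}^x \rho(t)\exp\!\left(-2\int_t^x\frac{ds}{r(s)\rho(s)}\right)dt,
$$
and symmetrically $\frac{v(x)}{u(x)}\int_x^\infty u(t)^2\,dt=\int_x^\infty\rho(\xi)\exp\!\left(-2\int_x^\xi\frac{ds}{r(s)\rho(s)}\right)d\xi$. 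These are exactly the integrals $J_1,J_2$ of \lemref{lem4.2} with the exponent parameter $p-1$ (resp. $p'-1$) replaced by $1$; equivalently they are $\theta_2(x)^2$-type half-products. So the natural route is to run the same covering estimate as in \lemref{lem4.2}.

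First I would fix $x\ge 0$ (the case $x\le 0$ is symmetric), take an $\mathbb R(x,d)$-covering $\{\Delta_n\}$ of $\mathbb R$ (\lemref{lem2.15}), and split the integral for $\frac{u(x)}{v(x)}\int_{-\infty}^x v^2$ over the segments $\Delta_n$, $n\le -1$. On each $\Delta_n$ I apply \eqref{2.26} and \eqref{2.14} to replace $\rho(t)$ by $ch(x_n)d(x_n)$ (up to the factor $d(x_n)=|\Delta_n|/2$ from the length), and I bound the exponential using \eqref{4.3}: $\int_t^x\frac{ds}{r(s)\rho(s)}\ge \tfrac12\int_{\Delta_n^+}^{\Delta_{-1}^+}\frac{ds}{r(s)h(s)}=\tfrac12(|n|-1)$. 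This yields
$$
\frac{u(x)}{v(x)}\int_{-\infty}^x v(t)^2\,dt
\le c\sum_{n\le -1} h(x_n)d(x_n)\,e^{-(|n|-1)}
\le c\,\Big(\sup_{t\le x} h(t)d(t)\Big)\sum_{n\le-1} e^{-(|n|-1)}
= c\sup_{t\le x} h(t)d(t).
$$
Combined with \eqref{2.22} (so that $x_n\to-\infty$ as $n\to-\infty$ uniformly for the part of the sum that matters), this gives the ``if'' direction: \eqref{3.1} forces $\sup_{t\le x}h(t)d(t)\to 0$ as $x\to+\infty$ is not quite what is needed — rather, for the limit in \eqref{4.37} one localizes: split the sum at a fixed index to separate a ``tail in $n$'' (small because of the geometric factor) from a ``head'' where $x_n$ is large in absolute value (small because of \eqref{3.1} and \eqref{2.22}). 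The same argument with $J_2$ handles \eqref{4.38}, and the case $x\to-\infty$ is identical by symmetry.

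For the converse, I would use the lower bound exactly as at the end of the proof of \lemref{lem4.2}: restricting the integral in \eqref{4.37} to $[x-d(x),x]$ and using \eqref{2.26}, \eqref{2.14},
$$
\frac{u(x)}{v(x)}\int_{-\infty}^x v(t)^2\,dt
\ge \frac{u(x)}{v(x)}\int_{x-d(x)}^x v(t)^2\,dt
\ge c^{-1}\rho(x)d(x)\ge c^{-1}h(x)d(x),
$$
so if \eqref{4.37} holds then $h(x)d(x)\to 0$, i.e. \eqref{3.1}. The main obstacle is purely bookkeeping: making the covering-sum estimate deliver a genuine \emph{limit} rather than just a \emph{bound}, which requires the standard two-part split of the series (tail controlled by the geometric decay $e^{-(|n|-1)}$, head controlled by \eqref{3.1} via $x_n\to\pm\infty$), together with a uniform use of \eqref{2.22} so the head indices indeed correspond to $|x_n|$ large. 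Since this is precisely the mechanism already used to pass from \eqref{4.5} to Corollary \ref{cor4.3}, I expect no new difficulty beyond carefully writing the $\varepsilon$-split.
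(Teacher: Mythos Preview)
Your approach is essentially identical to the paper's: the Davies--Harrell representation \eqref{2.7}, the $\mathbb R(x,d)$-covering estimate via \eqref{4.3}, the two-part $\varepsilon$-split of the covering sum (head controlled by \eqref{3.1} once one shows $\Delta_\ell^+\to\pm\infty$, tail by geometric decay against the bound $B$), and the lower bound via restriction to $[x-d(x),x+d(x)]$ all appear in the paper's proof exactly as you describe. One small slip: since $u/v=e^{-P}$ and $v^2=\rho\,e^{P}$ with your $P$, the exponent in your displayed formulas should be $-\int_t^x\frac{ds}{r(s)\rho(s)}$ rather than $-2\int_t^x\frac{ds}{r(s)\rho(s)}$ (this matches the paper's \eqref{4.39}), but the argument is unaffected.
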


\renewcommand{\qedsymbol}{}
\begin{proof}[Proof of \lemref{lem4.6}]  Necessity.
Both equalities are checked in the same way, and therefore below
we only consider \eqref{4.38}. Below $x\in\mathbb R,$ and we apply
estimates \eqref{2.25} and \eqref{2.14}:
\begin{align*}
I(x)&\doe\frac{v(x)}{u(x)}\int_x^\infty u^2(t)dt\ge
\frac{v(x)}{u(x)}\cdot\int_x^{x+d(x)}u^2(t)dt\\
&=\frac{v(x)}{u(x)}\int_x^{x+d(x)}\left(\frac{u(t)}{u(x)}\right)^2\cdot
u^2(x)dt\ge c^{-1}\rho(x)d(x)\ge c^{-1}h(x)d(x)>0.\end{align*} It
remains to refer to \eqref{4.38}.

\renewcommand{\qedsymbol}{\openbox}

\begin{proof}[Proof of \lemref{lem4.6}]  Sufficiency.
{}From \eqref{2.7} we obtain the equality
\begin{equation}\label{4.39}
I(x)=\frac{v(x)}{u(x)}\cdot\int_x^\infty u^2(t)dt=\int_x^\infty
\rho(t)\exp\left(-\int_x^t\frac{d\xi}{r(\xi)\rho(\xi)}\right)dt.\end{equation}
Let $x\to\infty.$ Below we use \eqref{4.39}, properties of an
$\mathbb R(x,d)$-covering of $[x,\infty),$ \eqref{2.26} and
\eqref{4.3}:
\begin{align*}
I(x)&=\sum_{n=1}^\infty\int_{\Delta_n}\rho(t)\exp\left(-\int_x^t\frac{d\xi}{r(\xi)\rho(\xi)}\right)dt\
 \le
c\sum_{n=1}^\infty\rho(x_n)d(x_n)\exp\left(-\int_{\Delta_1^-}^{\Delta_n^-}\frac{d\xi}{r(\xi)\rho(\xi)}\right)\\
&\le c\sum_{n=1}^\infty
h(x_n)d(x_n)\exp\left(-\frac{1}{2}\int_{\Delta_1^-}^{\Delta_n^-}\frac{d\xi}{r(\xi)h(\xi)}\right)\\
&\le c\sup_{t\ge
x}(h(t)d(t))\sum_{n=1}^\infty\exp\left(-\frac{n-1}{2}\right)=c\sup_{t\ge
x}(h(t)d(t)).\end{align*} The latter inequality and \eqref{3.1}
imply \eqref{4.38} (as $x\to\infty).$ Let now $x\to-\infty.$ Fix
$\varepsilon>0$ and choose $\ell=\ell(\varepsilon)\gg1$ so that
the following estimate will hold
\begin{equation}\label{4.40}
4c_0B \ell^2\cdot\exp\left(-\frac{\ell-1}{2}\right)\le
\varepsilon,\qquad
c_0=\sum_{k=1}^\infty\exp\left(-\frac{k-1}{2}\right).\end{equation}

Consider the segments $\{\Delta_k\}_{k=1}^\ell$ from an $\mathbb
R(x,d)$-covering of $[x,\infty).$ Let us show that
\begin{equation}\label{4.41}
\lim_{x\to-\infty}\Delta_\ell^+=-\infty.\end{equation} Assume the
contrary: there exists $c>-\infty$ such that $\Delta_\ell^+\ge c$
as $x\to-\infty.$ Then by \eqref{2.10} and \eqref{4.3}, we have
$$\ell=\int_{\Delta_1^-}^{\Delta_\ell^+}\frac{d\xi}{r(\xi)h(\xi)}\ge\int_x^c\frac{d\xi}{r(\xi)h(\xi)}
\ge\frac{1}{2}\int_x^c\frac{d\xi}{r(\xi)\rho(\xi)}\to\infty\quad\text{as}\quad
x\to-\infty,$$ a contradiction, so \eqref{4.41} is proved.

Let us now choose $x_1(\varepsilon)$ and $x_2(\varepsilon)$ so
that the following inequalities will hold:
\begin{alignat}{2}
4e^2c_0\cdot\ell\cdot h(t)d(t) &\le\varepsilon \quad
&&\text{for}\ t\le -x_1(\varepsilon),\label{4.42}\\
\Delta_\ell^+&\le-x_1(\varepsilon)\quad &&\text{for}\ x\le
-x_2(\varepsilon).\label{4.43} \end{alignat} Let
$x_0=\max\{x_1(\varepsilon),x_2(\varepsilon)\}.$ Below for $x\le
x_0$ we use \eqref{4.39}, properties of an $\mathbb
R(x,d)$-covering of $\mathbb R,$ \eqref{2.26}, \eqref{4.42},
\eqref{4.43} and \eqref{4.40}:
\begin{align*}
I(x)&=\sum_{n=1}^\infty\int_{\Delta_n}\rho(t)\exp\left(-\int_x^t\frac{d\xi}{r(\xi)\rho(\xi)}\right)\\
&\le 2e^2\left\{\sum_{n=1}^\ell
h(x_n)d(x_n)\exp\left(-\frac{n-1}{2}\right)+\sum_{n=\ell+1}^\infty
h(x_n)d(x_n)\exp\left(-\frac{n-1}{2}\right)\right\}\\
 &\le
 2e^2c_0\sup_{t\le\Delta_\ell^+}(h(t)d(t))+2e^2c_0B\exp(-\frac{\ell-1}{2} )\le
 \frac{\varepsilon}{2}+\frac{\varepsilon}{2}=\varepsilon.
\end{align*}
The obtained estimates lead to \eqref{4.38}.
\end{proof}

\begin{lem}\label{lem4.7}
Let $p\in(1,\infty),$ $f\in L_p$ and $y=Gf.$ Then, if condition
\eqref{3.1} holds, we have
\begin{equation}\label{4.44}
\lim_{|x|\to\infty }r(x)y'(x)y(x)=0.\end{equation}
\end{lem}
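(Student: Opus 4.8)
\textbf{Proof proposal for Lemma~\ref{lem4.7}.}
The plan is to express $r(x)y'(x)y(x)$ through the two half-line integrals that appear in the decomposition $y=Gf=G_1f+G_2f$ and then bound it by quantities already shown to vanish at infinity. First I would write, for $y=Gf$ with $f\in L_p$,
\begin{equation}\label{4.44a}
y(x)=u(x)\int_{-\infty}^x v(t)f(t)dt+v(x)\int_x^\infty u(t)f(t)dt,
\end{equation}
and then, differentiating and using \eqref{2.4} (so that the terms containing $f(x)$ cancel), obtain
\begin{equation}\label{4.44b}
r(x)y'(x)=r(x)u'(x)\int_{-\infty}^x v(t)f(t)dt+r(x)v'(x)\int_x^\infty u(t)f(t)dt.
\end{equation}
Hence $r(x)y'(x)y(x)$ is a sum of four products, each of which is (a constant times) a product of the form $(r u' u$ or $r u' v$ or $r v' u$ or $r v' v)$ times a product of two of the integrals $\int_{-\infty}^x v f$, $\int_x^\infty u f$. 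The strategy is to show each of these four terms tends to $0$ as $|x|\to\infty$.

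Next I would estimate the tail integrals by H\"older's inequality: $\bigl|\int_{-\infty}^x v(t)f(t)dt\bigr|\le \|f\|_p\,\bigl(\int_{-\infty}^x v(t)^{p'}dt\bigr)^{1/p'}$ and similarly $\bigl|\int_x^\infty u(t)f(t)dt\bigr|\le \|f\|_p\,\bigl(\int_x^\infty u(t)^{p'}dt\bigr)^{1/p'}$ — actually, to match the quantities $\theta_p$ from \lemref{lem4.2} I would instead use the exponents so that one factor is $\bigl(\int_{-\infty}^x v^p\bigr)^{1/p}$ and the other $\bigl(\int_x^\infty u^{p'}\bigr)^{1/p'}$, invoking that $y=Gf$ with $\|f\|_p\le$ a fixed bound and using \thmref{thm2.32}/\thmref{thm2.33} type control so that each half of $y$ is in $L_p$ with the right local bounds. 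The key point is that, after using \eqref{2.13} to replace $r(x)v'(x)$ by $\asymp v(x)/\varphi(x)$ and $r(x)|u'(x)|$ by $\asymp u(x)/\psi(x)$, together with $\varphi,\psi\ge h$ and $\rho\asymp h$ (see \eqref{2.14}), each of the four terms is dominated by a constant times an expression comparable to $\theta_p(x)^{\,a}\cdot(h(x)d(x))^{\,b}$ with positive exponents, or directly to $\frac{u(x)}{v(x)}\int_{-\infty}^x v^2$ and $\frac{v(x)}{u(x)}\int_x^\infty u^2$ (when $p=2$ the four terms collapse exactly to multiples of the quantities in \eqref{4.37}--\eqref{4.38}). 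Then \lemref{lem4.6} and \corref{cor4.3} (both of which are available under \eqref{3.1}) force the limit to be $0$.

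Concretely, I would split into the cases $p\le 2$ and $p>2$ exactly as in the proof of \lemref{lem4.2}, introducing the same parameter $\gamma$, so that the mixed products $u^{1-\gamma}v^{1-\gamma}\rho^{\cdots}$ reorganize into the integrals $J_1,J_2$ already estimated there; the point of reusing that computation is that it already yields bounds of the form $cB$ and $c\sup_{t\ge x}(h(t)d(t))$, and the latter goes to $0$ by \eqref{3.1}. The cross terms $ru'v\cdot(\int_{-\infty}^x vf)(\int_x^\infty uf)$ are handled by noting $r|u'|v\le 2uv/\psi=2\rho/\psi\le 2$ and $\rho\le 2h\le 2h d$ eventually, so this term is bounded by $c\,h(x)d(x)\cdot$(a convergent product of tails), again $\to 0$.

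The main obstacle I anticipate is the bookkeeping needed to show that the ``diagonal'' terms — the ones that survive when $p\ne 2$, namely $ru'u\cdot(\int_{-\infty}^x vf)^2$ is \emph{not} what appears; rather one gets $ru'u\cdot\int_{-\infty}^x vf\cdot$ nothing, so actually the genuinely delicate term is $r(x)v'(x)u(x)\bigl(\int_x^\infty uf\bigr)\bigl(\int_{-\infty}^x vf\bigr)$ versus $r(x)u'(x)v(x)\bigl(\cdot\bigr)$ — controlling these uniformly in $f$ on the unit ball and converting the $L_p$-norm tail estimates into pointwise decay of $r y' y$ at a single point $x$. This requires Minkowski/H\"older carefully and the observation (as in \lemref{lem4.2}) that $\bigl(\int_{-\infty}^x v^p\bigr)^{1/p}\bigl(\int_x^\infty u^{p'}\bigr)^{1/p'}=\theta_p(x)\to 0$ by \corref{cor4.3}; once that is in hand, since $r v'/v$ and $r|u'|/u$ are bounded by $2/h$ and $\rho\asymp h$, every term is $\le c\,\theta_p(x)$ or $\le c\,h(x)d(x)$, both vanishing, which completes the proof.
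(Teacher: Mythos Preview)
Your decomposition into four cross-terms and the plan to match them against $\theta_p$ is more complicated than needed, and the general-$p$ part has a real gap. H\"older applied to $\int_{-\infty}^x v f$ yields $\|f\|_p\bigl(\int_{-\infty}^x v^{p'}\bigr)^{1/p'}$, with the \emph{dual} exponent on $v$; this is not the factor $\bigl(\int_{-\infty}^x v^{p}\bigr)^{1/p}$ appearing in $\theta_p(x)$, so Corollary~\ref{cor4.3} does not apply, and the $\gamma$-trick of Lemma~\ref{lem4.2} is calibrated to $\theta_p$, not to the quantity you actually produce. For $p\ne 2$ you would be left needing $\tfrac{u(x)}{v(x)}\bigl(\int_{-\infty}^x v^{p'}\bigr)^{2/p'}\to 0$, which after the Davies--Harrell substitution reduces to controlling sums of $h(x_n)^{p'/2}d(x_n)$; since only the product $hd$ is controlled by \eqref{3.1} and \eqref{2.34}, not $h$ or $d$ separately, this does not follow. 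Your ``main obstacle'' paragraph already signals trouble with the bookkeeping, and indeed the trouble is structural, not just notational.

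The paper sidesteps all of this by (i) working only at $p=2$, which is the sole case used downstream in Theorem~\ref{thm3.4}, and (ii) bounding $r|y'|$ in one stroke rather than expanding the product. From \eqref{4.34} and \eqref{2.17} one has $r|u'|/u<1/\rho$ and $rv'/v<1/\rho$, so
\[
|r(x)y'(x)|\le \frac{r|u'|}{u}(G_1|f|)(x)+\frac{rv'}{v}(G_2|f|)(x)\le \frac{(G|f|)(x)}{\rho(x)},
\]
and since $|y(x)|\le (G|f|)(x)$ this gives $r|y'y|\le (G|f|)^2/\rho\le c\bigl[(G_1|f|)^2+(G_2|f|)^2\bigr]/\rho$. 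For $p=2$, Cauchy--Schwarz on each half produces exactly $\tfrac{u}{v}\int_{-\infty}^x v^2$ and $\tfrac{v}{u}\int_x^\infty u^2$, and Lemma~\ref{lem4.6} finishes. Your use of \eqref{2.13} to replace $rv'$ by $v/\varphi$ is correct but superfluous; the bound via \eqref{4.34}--\eqref{2.17} is sharper and collapses the four terms into one.
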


\begin{proof} From \eqref{3.1} and \lemref{lem4.5} it follows that
$B<\infty.$ Let, for example, $x\to\infty$ (the case $x\to-\infty$
is treated in a similar way). Below we use the definition and
properties of the operator $G: L_2\to L_2$ (see \eqref{2.29}),
\eqref{2.17}, \eqref{2.30}--\eqref{2.33} and the Schwarz
inequality:
\begin{align*}
r(x)|y'(x)|\cdot|y(x)|&\le
(G|f|)(x)\cdot\left[r(x)\left|\frac{d}{dx}(Gf)(x)\right|\right]\\
&\le
(G|f|)(x)\cdot\left[\frac{r(x)|u'(x)|}{u(x)}\cdot(G_1|f|)(x)+\frac{r(x)v'(x)}{v(x)}(G_2|f|)(x)\right]\\
&\le \frac{[(G|f|)(x)]^2}{\rho(x)}\le c\frac{[(G_1|f|)(x)]^2
+[(G_2|f|)(x)]^2}{\rho(x)}
\\
&\le c\left\{\frac{u(x)}{v(x)}\cdot\int_{-\infty}^x
v^2(t)dt+\frac{v(x)}{u(x)}\cdot\int_x^\infty
u^2(t)dt\right\}\cdot\|f\|_2^2.\end{align*} It remains to apply
\lemref{lem4.6}
\end{proof}

Let us now complete the proof of the theorem. Below we assume that
$f\in L_2$ and $y:=Gf.$ Then, obviously, $f=\mathcal L_2y$, and we
have the relations
\begin{align*}
\int_{-\infty}^\infty(Gf)(x)\cdot\bar
f(x)dx&=\int_{-\infty}^\infty y(x)\overline{(\mathcal
L_2y)(x)}dx=\lim_{\substack{ b\to\infty\\
a\to-\infty}}
\int_a^by(x)\overline{[-(r(x)y'(x))'+q(x)y(x)]}dx\\
&=\lim_{\substack{ b\to\infty\\
a\to-\infty}}\int_a^by(x)\big[-(r(x)\overline{y'(x)}\big]'+q(x)\overline{y(x)}dx\\
&=\lim_{\substack{ b\to\infty\\
a\to-\infty}}\left[-r(x)\overline{y'(x)}y(x)\Big|_a^b+\int_a^b(r(x)|y'(x)|^2+q(x)|y(x)|^2)dx\right]\\
&=\int_{-\infty}^\infty(r(x)|y'(x)|^2+q(x)|y(x)|^2)dx\ge0.
\end{align*}
\end{proof}

\begin{proof}[Proof of Corollary \ref{cor3.7}] The following
relations are based on \thmref{thm2.1}:
$$
\left.\begin{array}{ll}
\displaystyle{r(x)v'(x)-r(t)v'(t)=\int\limits_t^xq(\xi)v(\xi)d\xi,\quad t\le x\in\mathbb R }\\
\displaystyle{r(t)u'(t)-r(x)u'(x)=\int\limits_x^tq(\xi)u(\xi)d\xi\quad\
t\ge x\in\mathbb R}\end{array}\right\}\quad\Rightarrow
$$
$$r(x)v'(x)\ge \int_{-\infty}^x q(\xi)v(\xi)d\xi,\quad
-r(x)u'(x)\ge \int_x^\infty q(\xi)u(\xi)d\xi\quad\Rightarrow$$
\begin{align*} 1 =r(x)[v'(x)u(x)-u'(x)v(x)]&\ge u(x)\int_{-\infty}^x
q(t)v(t)dt+v(x)\int_x^\infty q(t)u(t)dt\\
&=\int_{-\infty}^\infty q(t)G(x,t)dt.
\end{align*}

Below we continue the last inequality using \eqref{2.9},
\eqref{2.14}, \eqref{2.19} and \eqref{2.26}: \begin{align}
1&\ge\int_{x-d(x)}^{x+d(x)}q(t)G(x,t)dt=\int_{x-d(x)}^{x+d(x)}\sqrt{\rho(t)\rho(x)}\exp\left(-\frac{1}
{2}\left|\int_x^t\frac{d\xi}{r(\xi)\rho(\xi)}\right|\right)dt\nonumber\\
&\ge
c^{-1}h(x)\exp\left(-\frac{1}{4}\int_{x-d(x)}^{x+d(x)}\frac{d\xi}{r(\xi)h(\xi)}\right)\cdot
\int_{x-d(x)}^{x+d(x)}q(t)dt
=c^{-1}h(x)\int_{x-d(x)}^{x+d(x)}q(t)dt.\label{4.45}
\end{align}

Equation \eqref{1.1} is correctly solvable in $L_p,$
$p\in(1,\infty)$ since $\mathcal A>0$ (see \thmref{thm2.25}), and
from \eqref{4.45} it follows that
$$c(\mathcal A(x))^{-1}\ge h(x)d(x),\qquad x\in\mathbb R\
\Rightarrow\ \eqref{3.1}.$$ The assertion now follows from
\thmref{thm3.1}.
\end{proof}

\begin{proof}[Proof of Corollary \ref{cor3.8}]
Since $q(x)\to\infty$ as $|x|\to\infty,$ condition \eqref{1.3}
holds, and therefore all auxiliary functions are defined (see
Lemmas \ref{lem2.5} and \ref{lem2.10}). Let $q(x)\ge 1$ for
$|x|\ge x_1.$ Then by \eqref{2.22}, there exists $x_2\gg x_1$ such
that for $|x|\ge x_2$ we have
\begin{equation}\label{4.46}
[x-d(x),x+d(x)]\cap[[-x_1,x_1]=\emptyset.\end{equation} Then for
$|x|\ge x_2$ from \eqref{4.45} it follows that
$$c\ge h(x)\int_{x-d(x)}^{x+d(x)} q(t)dt\ge
h(x)\int_{x-d(x)}^{x+d(x)}1 dt=2h(x)d(x)\ \Rightarrow$$
$$\sup_{|x|\ge x_2}(h(x)d(x))\le 2c<\infty.$$
Since the function $h(x)d(x)$ is bounded on $[-x_2,x_2]$ (see the
proof of \lemref{lem4.5}), we have $B<\infty,$ and by
\thmref{thm2.21} equation \eqref{1.1} is correctly solvable in
$L_p,$ $p\in(1,\infty).$ Further, from \eqref{2.22} it follows
that
$$c(\mathcal A(x))^{-1}\ge h(x)d(x),\quad |x|\ge x_2;\quad
\mathcal A(x)\to\infty\quad \text{as}\qquad |x|\to\infty.$$ Hence
condition  \eqref{3.1} holds, and the assertion of the corollary
follows from \thmref{thm3.1}.
\end{proof}

\begin{proof}[Proof of Corollary \ref{cor3.9}]
We need the following fact whose proof is presented for the sake
of completeness.

\begin{lem}\label{lem4.8} \cite{17} Suppose that conditions
\eqref{1.2}--\eqref{1.3} hold and $r\equiv1.$ Then equality
\eqref{3.5} holds if and only if $\tilde d(x)\to0$ as
$|x|\to\infty$ (see \eqref{2.15}).\end{lem}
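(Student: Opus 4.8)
The plan is to prove the equivalence $\eqref{3.5} \Leftrightarrow \tilde d(x)\to 0$ (with $r\equiv 1$) by quantitatively inverting the defining relation \eqref{2.15}, namely $\tilde d(x)\int_{x-\tilde d(x)}^{x+\tilde d(x)} q(t)\,dt = 2$, and then using monotonicity of the map $d\mapsto d\int_{x-d}^{x+d}q$ in $d\ge 0$, which is strictly increasing from $0$ to $+\infty$ (this monotonicity is exactly what gives uniqueness of $\tilde d(x)$ in Corollary~\ref{cor2.7}). The key elementary observation is that for any fixed $a>0$, comparing the value of this increasing function at $d=\tilde d(x)$ (where it equals $2$) with its value at $d=a$ tells us on which side of $a$ the solution $\tilde d(x)$ lies.

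For the direction ``$\tilde d(x)\to 0 \Rightarrow \eqref{3.5}$'': fix $a\in(0,\infty)$ and $\varepsilon>0$. Since $\tilde d(x)\to 0$, there is $N$ so that $\tilde d(x) < a$ for $|x|\ge N$; by monotonicity this forces $a\int_{x-a}^{x+a}q(t)\,dt \ge \tilde d(x)\int_{x-\tilde d(x)}^{x+\tilde d(x)}q(t)\,dt = 2$ already — but that only gives a lower bound $\ge 2/a$, not $\to\infty$. To get the full strength I would instead argue: for any $\lambda>0$, since $\tilde d(x)\to 0$ we may find $N$ with $\tilde d(x) < a/\lambda$ for $|x|\ge N$ (assuming $\lambda$ large), and then $\int_{x-a}^{x+a}q(t)\,dt \ge \int_{x-\tilde d(x)}^{x+\tilde d(x)}q \cdot \big(\text{number of disjoint translates of length }2\tilde d(x)\text{ fitting in }[x-a,x+a]\big)$. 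Using that $\int$ over each sub-window of radius $\tilde d$ centered nearby is, by the same equation applied at those centers and an estimate of type \eqref{2.16}/\eqref{2.20}, comparable to $2/\tilde d(x) \gtrsim \lambda/a$, summing $\sim \lambda$ windows yields $\int_{x-a}^{x+a}q \gtrsim \lambda^2/a \to\infty$. So $\lambda$ arbitrary gives \eqref{3.5}. The cleanest route actually uses \eqref{2.16} together with the covering/doubling estimates \eqref{2.20}, so I would phrase it through $\rho$ and $d$: $\tilde d(x)\to 0$ iff $h(x)d(x)\to 0$ (via \eqref{2.14}, \eqref{2.16}, and \lemref{lem2.10}), and then invoke the structure already developed.

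For the converse ``$\eqref{3.5}\Rightarrow \tilde d(x)\to 0$'': suppose not, so there is a sequence $|x_k|\to\infty$ with $\tilde d(x_k)\ge \delta_0>0$. By monotonicity, $\delta_0\int_{x_k-\delta_0}^{x_k+\delta_0}q(t)\,dt \le \tilde d(x_k)\int_{x_k-\tilde d(x_k)}^{x_k+\tilde d(x_k)}q(t)\,dt = 2$, hence $\int_{x_k-\delta_0}^{x_k+\delta_0}q(t)\,dt \le 2/\delta_0$, which stays bounded and contradicts \eqref{3.5} with $a=\delta_0$. This direction is short and is really just the definition plus monotonicity.

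I expect the main obstacle to be the first direction done \emph{quantitatively}: turning ``$\tilde d$ small'' into ``$\int_{x-a}^{x+a}q$ large'' requires packing many small sub-intervals of (nearly constant, by the doubling property \eqref{2.20}) radius $\tilde d$ inside $[x-a,x+a]$ and summing their $q$-masses, each of which is $\asymp 1/\tilde d$ by \eqref{2.15} applied at the sub-centers. Controlling the variation of $\tilde d$ over $[x-a,x+a]$ — so that ``$\sim a/\tilde d(x)$ windows'' is legitimate — is where I would lean on Lemmas~\ref{lem2.10}--\ref{lem2.11} (the analogue of \eqref{2.20} for $\tilde d$, i.e.\ $(1-\varepsilon)\tilde d(x)\le \tilde d(t)\le (1+\varepsilon)\tilde d(x)$ on $[x-\varepsilon\tilde d(x), x+\varepsilon\tilde d(x)]$), exactly as in the proofs of the covering lemmas. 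Everything else is bookkeeping with the monotone function $d\mapsto d\int_{x-d}^{x+d}q$.
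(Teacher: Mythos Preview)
Your converse direction (``$\eqref{3.5}\Rightarrow \tilde d(x)\to 0$'') is correct and is exactly the paper's argument.

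For the forward direction, however, you are working much too hard, and the covering scheme you sketch is neither needed nor fully justified (controlling $\tilde d(y)$ for \emph{all} $y\in[x-a,x+a]$ when $\tilde d(x)\ll a$ cannot be done with the local doubling estimate \eqref{2.20} alone, since that estimate only propagates over distances of order $\tilde d(x)$, not $a$). The point you miss is that you do not need monotonicity of the product $d\mapsto d\int_{x-d}^{x+d}q$; you only need monotonicity of the integral $d\mapsto \int_{x-d}^{x+d}q$ itself, which is immediate from $q\ge 0$. Once $|x|$ is large enough that $\tilde d(x)<a$, one has
\[
\frac{1}{2}\int_{x-a}^{x+a}q(t)\,dt \;\ge\; \frac{1}{2}\int_{x-\tilde d(x)}^{x+\tilde d(x)}q(t)\,dt \;=\; \frac{1}{\tilde d(x)}\;\to\;\infty,
\]
and \eqref{3.5} follows in one line. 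This is precisely the paper's proof of sufficiency. Your remark ``that only gives a lower bound $\ge 2/a$'' applies to the product $a\int_{x-a}^{x+a}q$, but what you actually want to bound below is $\int_{x-a}^{x+a}q$ alone, and for that the defining relation \eqref{2.15} already gives $2/\tilde d(x)$.
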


 \renewcommand{\qedsymbol}{}
\begin{proof}[Proof of \lemref{lem4.8}]  Necessity. Assume the
contrary: equality \eqref{3.5} holds but $\tilde d(x)\nrightarrow
0$ as $|x|\to\infty.$ This means that there exist $\varepsilon>0$
and points $\{x_n\}_{n=1}^\infty$ such that $|x_n|\to\infty$ as
$n\to\infty$ and $\tilde d(x_n)\ge\varepsilon.$ This implies
$$\frac{1}{\varepsilon}\ge\frac{1}{\tilde
d(x_n)}=\frac{1}{2}\int_{x_n-\tilde d(x_n)}^{x_n+\tilde
d(x_n)}q(t)dt\ge
\frac{1}{2}\int_{x_n-\varepsilon}^{x_n+\varepsilon}q(t)dt,\quad
n\ge1.$$ Thus equality \eqref{3.5} breaks down for
$a=\varepsilon,$ a contradiction.
\end{proof}

\renewcommand{\qedsymbol}{\openbox}

\begin{proof}[Proof of \lemref{lem4.8}]  Sufficiency.
If $\tilde d(x)\to0$ as $|x|\to\infty,$ then for any
$a\in(0,\infty)$ and for all $|x|\gg1,$ we have
$$\frac{1}{2}\int_{x-a}^{x+a}q(t)dt\ge\frac{1}{2}\int_{x-\tilde
d(x)}^{x+\tilde d(x)} q(t)dt=\frac{1}{\tilde d(x)}\ \Rightarrow\
\eqref{3.5}.$$
\end{proof}

Let us now go to the corollary. For $r\equiv1,$ from \eqref{2.10}
and \eqref{2.26} we obtain
$$
\left.\begin{array}{ll}
\displaystyle{1=\int_{x-d(x)}^{x+d(x)}\frac{dt}{h(t)}\le c\frac{d(x)}{h(x)} }\\
\displaystyle{1=\int_{x-d(x)}^{x+d(x)}\frac{dt}{h(t)}\le
c^{-1}\frac{d(x)}{h(x)}}\end{array}\right\}\quad\Rightarrow\quad
h(x)\asymp d(x),\qquad x\in\mathbb R.
$$
On the other hand, from \eqref{2.16} and \eqref{2.14}, it follows
that $h(x)\asymp \rho(x)\asymp\tilde d(x),$ $x\in\mathbb R.$
Putting this together, we obtain the main relations: $h(x)\asymp
d(x)\asymp\tilde d(x),$ $x\in\mathbb R.$ Further, as $m(a_0)>0$
for some $a_0\in(a,\infty)$, we conclude that equation \eqref{1.1}
is correctly solvable in $L_p,$ $p\in(1,\infty)$ by
\thmref{thm2.24}. We have $h(x)d(x)\to0$ as $|x|\to\infty$ if and
only if $\tilde d(x)\to0$ as $|x|\to\infty$ since
$h(x)d(x)\asymp\tilde d^2(x),$ $x\in\mathbb R$. The assertion of
the corollary now follows from \lemref{lem4.8}.
\end{proof}

\renewcommand{\qedsymbol}{\openbox}

\begin{proof}[Proof of Corollary \ref{cor3.10}] By
\thmref{thm2.30}, in all the following cases 1)--3), equation
\eqref{1.1} is correctly solvable in $L_p,$ $p\in(1,\infty).$ Let
us show that in the same cases condition \eqref{3.1} holds, and
thus by \thmref{thm3.1} our assertion will then be proved.

1) Let $x\in\mathbb R,$ $\Delta(x)=[x-d(x),x+d(x)]$. Below we use
the Schwarz inequality and \eqref{2.19}:
\begin{align*}
2d(x)&=\int_{\Delta(x)}\sqrt{\frac{r(t)h(t)}{r(t)h(t)}}dt\le
\left(\int_{\Delta(x)}r(t)h(t)dt\right)^{1/2}\cdot\left(\int_{\Delta(x)}\frac{dt}{r(t)h(t)}\right)^{1/2}
\\ &=\left(\int_{\Delta(x)}r(t)h(t)dt\right)^{1/2}\quad \Rightarrow\end{align*}
\begin{equation}\label{4.47} 4d^2(x)\le \int_{\Delta(x)} r(t)h(t)dt,\quad
x\in\mathbb R.\end{equation} Let
$\eta(x)=\sup_{t\in\Delta(x)}(r(t)h^2(t)).$ From \eqref{2.22} and
\eqref{3.6} it follows that $\eta(x)\to0$ as $|x|\to\infty.$
Further, from \eqref{4.47}  using \eqref{2.26}, we obtain
 $$4d^2(x)\le
 \int_{\Delta(x)}r(t)h^2(t)\cdot\frac{h(x)}{h(t)}\frac{dt}{h(x)}\le
c\eta(x)\frac{d(x)}{h(x)}\quad \Rightarrow$$
$$0< h(x)d(x)\le c\eta(x),\qquad x\in\mathbb
R\qquad\Rightarrow\qquad\eqref{3.1}.$$

2) This assertion follows from 1) and \eqref{2.12}, \eqref{3.7}
and \eqref{3.6}:
$$r(x)h^2(x)=r(x)\varphi(x)\psi(x)\frac{\varphi(x)\psi(x)}{(\varphi(x)+\psi(x))^2}\le
r(x)\varphi(x)\psi(x),\quad x\in\mathbb R.$$

3) From \eqref{2.22} it follows that $d(x)\le |x|$ for all
$|x|\gg1$. Hence $0<h(x)d(x)\le h(x)|x|$ for all $|x|\ge 1,$ and
therefore \eqref{3.1} holds because of \eqref{3.8}.
\end{proof}

\begin{proof}[Proof of Corollary \ref{cor3.11}] For $x\in\mathbb
R,$ according to \eqref{2.19} and \eqref{2.26}, we have
$$1=\int_{\Delta(x)}\frac{dt}{r(t)h(t)}\ge
\frac{c^{-1}}{h(x)}\int_{\Delta(x)}\frac{dt}{r(t)}\ge
\frac{c^{-1}}{r_0}\frac{d(x)}{h(x)}\quad\Rightarrow$$
$$0<h(x)d(x)\le ch^2(x).$$

Then $B\le ch_0^2<\infty,$ and condition \eqref{3.1} holds. The
assertion follows from Theorems \ref{thm2.21} and \ref{thm3.1}.
\end{proof}

\begin{proof}[Proof of \thmref{thm3.13}]
Since $\theta(x)\to 0$ as $|x|\to\infty,$ we have $r^{-1}\in
L_1(\mathbb R)$ and $\theta<\infty$ (see \eqref{2.46}). We now
need the following lemma.

\begin{lem}\label{lem4.9} Suppose that conditions \eqref{1.2} and
\eqref{2.1} hold and $r^{-1}\in L_1.$ Then we have the equality
\begin{equation}\label{4.48}
\rho(x)\le
\tau\int_{-\infty}^x\frac{dt}{r(t)}\cdot\int_x^\infty\frac{dt}{r(t)},\qquad
x\in\mathbb R.
\end{equation}
Here
\begin{equation}\label{4.49}
\tau=\max\left\{\left(\int_{-\infty}^0\frac{dt}{r(t)}\right)^{-1},\left(\int_0^\infty\frac{dt}{r(t)}\right
)^{-1}\right\}.\end{equation}
\end{lem}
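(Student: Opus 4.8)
The plan is to first establish the two one-sided bounds
$$\rho(x)\le\int_{-\infty}^x\frac{dt}{r(t)}\quad\text{and}\quad \rho(x)\le\int_x^\infty\frac{dt}{r(t)},\qquad x\in\mathbb R,$$
and then to deduce \eqref{4.48} from them by an elementary case distinction on the sign of $x$.

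To obtain the first of these bounds I would start from the Wronskian identity \eqref{2.4}, which yields $(v/u)'=(v'u-u'v)/u^2=1/(ru^2)$. The function $v/u$ is positive and nondecreasing, hence has a limit as $x\to-\infty$, and by \eqref{2.5} this limit equals $0$; integrating and using the finiteness recorded in \eqref{2.6} gives
$$\frac{v(x)}{u(x)}=\int_{-\infty}^x\frac{dt}{r(t)u^2(t)},\qquad x\in\mathbb R.$$
Since $u$ is positive and nonincreasing by \eqref{2.3}, we have $u(t)\ge u(x)$ for $t\le x$, so
$$\rho(x)=u(x)^2\cdot\frac{v(x)}{u(x)}=u(x)^2\int_{-\infty}^x\frac{dt}{r(t)u^2(t)}\le u(x)^2\cdot\frac{1}{u(x)^2}\int_{-\infty}^x\frac{dt}{r(t)}=\int_{-\infty}^x\frac{dt}{r(t)}.$$
The symmetric computation, using $(u/v)'=-1/(rv^2)$, $\lim_{x\to\infty}u(x)/v(x)=0$, \eqref{2.6}, and the fact that $v$ is nondecreasing, produces $\rho(x)\le\int_x^\infty dt/r(t)$.

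For the final step set $R_-(x)=\int_{-\infty}^x dt/r(t)$ and $R_+(x)=\int_x^\infty dt/r(t)$; both are finite and strictly positive since $r^{-1}\in L_1$ and $r>0$, so the constant $\tau$ in \eqref{4.49} is well defined and finite, with $\tau\ge R_-(0)^{-1}$ and $\tau\ge R_+(0)^{-1}$. If $x\ge0$ then $R_-(x)\ge R_-(0)$, whence, using the second one-sided bound,
$$\tau R_-(x)R_+(x)\ge R_-(0)^{-1}R_-(0)R_+(x)=R_+(x)\ge\rho(x);$$
if $x\le0$ then $R_+(x)\ge R_+(0)$, and similarly, using the first one-sided bound, $\tau R_-(x)R_+(x)\ge R_+(0)^{-1}R_-(x)R_+(0)=R_-(x)\ge\rho(x)$. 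In either case \eqref{4.48} follows.

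I do not expect a genuine obstacle here. The only points that need a little care are the justification that the limit of $v/u$ at $-\infty$ (and of $u/v$ at $+\infty$) vanishes in the integral representations, which is precisely what \eqref{2.5}--\eqref{2.6} supply, and the observation that $\tau<\infty$, which is exactly where the hypothesis $r^{-1}\in L_1$ is used.
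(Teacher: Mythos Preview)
Your proof is correct and follows essentially the same approach as the paper: both derive the integral representations $v(x)/u(x)=\int_{-\infty}^x dt/(r(t)u^2(t))$ and $u(x)/v(x)=\int_x^\infty dt/(r(t)v^2(t))$ from the Wronskian identity and \eqref{2.5}--\eqref{2.6}, use the monotonicity of $u$ and $v$ from \eqref{2.3} to obtain the one-sided bounds $\rho(x)\le R_\pm(x)$, and then conclude \eqref{4.48} by a case split on the sign of $x$. Your write-up is in fact more explicit about the final step than the paper, which simply asserts that \eqref{4.48} follows from the one-sided bounds and the definition of $\tau$.
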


\begin{proof}
From \thmref{thm2.1}, it easily follows that
\begin{equation}\label{4.50}
u(x)=v(x)\int_x^\infty\frac{dt}{r(t)v^2(t)},\quad
v(x)=u(x)\int_{-\infty}^x\frac{dt}{r(t)u^2(t)},\qquad x\in\mathbb
R.
\end{equation}
{}From \eqref{4.50} and \eqref{2.3} we now obtain
$$\rho(x)=v^2(x)\int_x^\infty\frac{dt}{r(t)v^2(t)}\le\int_x^\infty\frac{dt}{r(t)},\qquad
x\in\mathbb R,$$
$$\rho(x)u^2(x)\int_{-\infty}^x\frac{dt}{r(t)u^2(t)}\le\int_{-\infty}^x\frac{dt}{r(t)},\qquad
x\in \mathbb R.$$ Hence
\begin{equation}\label{4.51}
\rho(x)=\begin{cases}
\displaystyle{\int_x^\infty\frac{dt}{r(t)}},\quad & \text{if}\
x\ge 0\\ \\ \displaystyle{ \int_{-\infty}^x\frac{dt}{r(t)}},\quad
& \text{if}\ x\le 0\end{cases}\end{equation} Estimate \eqref{4.48}
follows from\eqref{4.51} and \eqref{4.49}.
\end{proof}
Further, from \eqref{2.23} we conclude that $s(x)\le|x|$ for all
$|x|\gg1$, and therefore
$$\rho(x)s(x)\le\tau|x|\cdot\int_{-\infty}^x
\frac{dt}{r(t)}\cdot\int_x^\infty
\frac{dt}{r(t)}=\tau\theta(x),\qquad |x|\gg1.$$ The latter
inequality means that $S\le\tau\theta<\infty.$ Hence equation
\eqref{1.1} is correctly solvable in $L_p,$ $p\in(1,\infty)$ by
\thmref{thm2.22}. If $\theta(x)\to0$ as $|x|\to\infty,$ then
condition \eqref{3.2} holds, and the operator $G: L_p\to L_p$,
$p\in(1,\infty)$, is compact by \thmref{thm3.2}
\end{proof}

\begin{proof}[Proof of \thmref{thm3.14}]
Below we follow the scheme of the proof of Corollary
\ref{cor3.10},1). Let $x\in\mathbb R,$ $\tilde
\Delta(x)=[x-s(x+s(x)]$ (see \eqref{2.19}). From the Schwarz
inequality and \eqref{2.19}, we get
\begin{align*}
2s(x)&=\int_{\tilde
\Delta(x)}\sqrt{\frac{r(t)\rho(t)}{r(t)\rho(t)}}dt\le\bigg(\int_{\tilde\Delta(x)}r(t)\rho(t)dt\bigg)^{1/2}
\bigg(\int_{\tilde\Delta(x)}\frac{dt}{r(t)\rho(t)}\bigg)^{1/2}\\
&=\bigg(\int_{\tilde\Delta(x)}r(t)\rho(t)dt\bigg)^{1/2},\quad
x\in\mathbb R\quad\Rightarrow
\end{align*}
\begin{equation}\label{4.52}
4s^2(x)\le\int_{\tilde\Delta(x)}r(t)\rho(t)dt,\qquad x\in\mathbb
R.
\end{equation}
Further, since $\nu<\infty,$ we have $r^{-1}\in L_1.$ Therefore by
\lemref{lem4.9} we have estimate \eqref{4.48}. This implies the
inequality
\begin{equation}\label{4.53}
r(x)\rho^2(x)\le c\nu(x),\qquad x\in\mathbb R.\end{equation}

Since $\nu<\infty,$ from \eqref{4.52}, \eqref{4.53} and
\eqref{2.28}, we get \begin{gather} 4s^2(x)\le
\int_{\tilde\Delta(x)}r(t)\rho^2(t)\frac{\rho(x)}{\rho(t)}\le c\nu
\frac{s(x)}{\rho(x)},\quad x\in\mathbb R\quad
\Rightarrow\nonumber\\
s(x)\rho(x)\le c\nu,\quad x\in\mathbb R\quad\Rightarrow\quad S\le
c\nu.\label{4.54}\end{gather} From \eqref{4.54} and
\thmref{thm2.22} it follows that equation \eqref{1.1} is correctly
solvable in $L_p,$ $p\in(1,\infty)$. Let $\nu(x)\to0$ as
$|x|\to\infty.$ Then by \eqref{4.53} and \eqref{2.23}, we also
have $\tilde\eta(x)\to0$ as $|x|\to\infty,$ where
$\tilde\eta(x)=\sup\limits_{t\in\tilde\Delta(x)}r(t)\rho^2(t).$
Hence
  $$
  4s^2(x)\le\int_{\tilde\Delta(x)}r(t)\rho^2(t)\frac{\rho(x)}{\rho(t)}\frac{dt}{\rho(x)}\le
  c\tilde\eta(x)\frac{s(x)}{\rho(x)}\quad\Rightarrow
 $$
$$\rho(x)s(x)\le c\tilde\eta(x),\quad x\in\mathbb
 R\quad\Rightarrow\quad \lim_{|x|\to\infty}\rho(x)s(x)=0.
$$ Thus
the operator $G: L_p\to L_p,$ $p\in(1,\infty),$ is compact by
\thmref{thm3.2}.
\end{proof}

\section{Additional assertions. Example}

Below we consider equation \eqref{1.1} with coefficients
\begin{equation}\label{5.1}
r(x)=e^{\alpha|x|},\qquad q(x)=e^{\beta|x|},\qquad x\in\mathbb
R\end{equation} where $\alpha$ and $\beta$ are any given real
numbers. In what follows, for brevity we refer to it as equation
\eqref{5.1}.

Our goal in connection to \eqref{5.1} is to obtain for this
equation a complete solution of problems 1)--II) and I)--III). As
mentioned above, to study concrete equations \eqref{1.1}, one
needs assertions that allow us to obtain  sharp by order two-sided
estimates of the functions $h$ and $d$ (see Remark \ref{rem2.29}).
Below we will see that getting such inequalities is a certain
technical problem of local analysis. However, the statement of
such a problem depends on the properties of the coefficients of
equation \eqref{1.1}. Therefore, here we restrict ourselves to
considering statements ``sufficient" for investigation of
\eqref{5.1}. (Cf. \cite{10} where estimates of $h$ and $d$ were
obtained for equations \eqref{1.1} with nonsmooth and oscillating
coefficients $r$ and~$q.$)

The next theorem contains a general method that guarantees
obtaining estimates for $h$ and $d.$ Note that this statement is a
formalization of certain devices which were first used by Otelbaev
for estimating his auxiliary functions (see \cite{9}).

\begin{thm}\label{thm5.1} \cite{2} Suppose that conditions
\eqref{1.2} and \eqref{1.3} hold. For a given $x\in\mathbb R$
introduce functions in $\eta\ge0:$
\begin{equation}\label{5.2}
F_1(\eta)=\int_{x-\eta}^x\frac{dt}{r(t)}\cdot\int_{x-\eta}^x
q(t)dt,
\end{equation}
\begin{equation}\label{5.3}
F_2(\eta)=\int_x ^{x+\eta}\frac{dt}{r(t)}\cdot\int_{x} ^{x+\eta}
q(t)dt,
\end{equation}
\begin{equation}\label{5.4}
F_4(\eta)=\int_{x-\eta} ^{x+\eta}\frac{dt}{r(t)h(t)}.
\end{equation}
Then the following assertions hold (see Lemmas \ref{lem2.5} and
\ref{lem2.10}):
 \begin{enumerate}
 \item[1)] the inequality $\eta\ge d_1(x)$ $(0\le\eta\le d_1(x))$
 holds if and only if $F_1(\eta)\ge 1$ $(F_1(\eta)\le 1);$ \item[2)]
the inequality $\eta\ge d_2(x)$ $(0\le\eta\le d_2(x))$ holds if
and only if $F_2(\eta)\ge 1$ $(F_2(\eta)\le 1)$; \item[3)] the
inequality $\eta\ge d(x)$ $(0\le\eta\le d(x))$ holds if and only
 if $F_3(\eta)\ge 1$ $(F_3(\eta)\le 1).$
\end{enumerate}
\end{thm}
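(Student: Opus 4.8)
The plan is to obtain all three equivalences from a single mechanism — strict monotonicity (or, where only that is available, monotonicity) of the relevant function of $\eta$, together with the uniqueness of the defining solutions guaranteed by Lemmas \ref{lem2.5} and \ref{lem2.10} — treating the cleanest case, \eqref{5.4}, first.

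Write $F_3$ for the function in \eqref{5.4} (as it is called in assertion 3)). Since $r>0$ and $h>0$ on $\mathbb R$, the integrand $1/(rh)$ is strictly positive, so $F_3$ is finite (this is already implicit in \lemref{lem2.10}), continuous, satisfies $F_3(0)=0$, and is \emph{strictly} increasing on $[0,\infty)$: for $0\le\eta_1<\eta_2$ the difference $F_3(\eta_2)-F_3(\eta_1)$ is the integral of a strictly positive function over a set of positive measure. By \lemref{lem2.10}, $F_3(\eta)=1$ has the unique positive root $d(x)$, and strict monotonicity then yields $F_3(\eta)<1$ for $0\le\eta<d(x)$ and $F_3(\eta)\ge1$ for $\eta\ge d(x)$, which is assertion 3).

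For $F_1$ (assertion 2) is the same, with $\int_x^{x+\eta}$ in place of $\int_{x-\eta}^x$): by \eqref{1.2} the two factors $\int_{x-\eta}^x dt/r(t)$ and $\int_{x-\eta}^x q(t)\,dt$ are non-negative, continuous, non-decreasing functions of $\eta\ge0$ vanishing at $\eta=0$; hence $F_1$ is continuous, non-decreasing, $F_1(0)=0$, and $F_1(\eta)\to\infty$ as $\eta\to\infty$ — this last fact is exactly hypothesis \eqref{1.3} read along $d=\eta\to+\infty$ (the limit $d\to-\infty$ supplies the analogous property of $F_2$). By \lemref{lem2.5}, $F_1(\eta)=1$ has the unique positive root $d_1(x)$. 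The implication $\eta\ge d_1(x)\Rightarrow F_1(\eta)\ge F_1(d_1(x))=1$ is immediate from monotonicity. For the converse, if $F_1(\eta_0)\ge1$ for some $\eta_0<d_1(x)$, then $F_1\equiv1$ on $[\eta_0,d_1(x)]$ (using $F_1(d_1(x))=1$ and monotonicity), contradicting the uniqueness clause of \lemref{lem2.5}; hence $F_1(\eta)<1$ for $\eta<d_1(x)$, that is, $F_1(\eta)\ge1\Rightarrow\eta\ge d_1(x)$. This gives 1), and the verbatim argument gives 2).

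The one delicate point — and the place where \eqref{1.3} and the uniqueness statements genuinely enter — is this last plateau argument: since $q$ may vanish on subintervals, $F_1$ and $F_2$ need not be strictly increasing, so the sharp strict inequality for $\eta<d_1(x)$ (resp.\ $\eta<d_2(x)$) cannot be read off from monotonicity alone and must be extracted from uniqueness. Everything else reduces to elementary continuity and monotonicity properties of integrals of $L_1^{\loc}$ functions.
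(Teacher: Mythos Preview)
The paper does not contain its own proof of Theorem~\ref{thm5.1}; the result is quoted from reference~\cite{2} and used without argument. Your proof is correct and is the natural one. Each $F_i$ is continuous and non-decreasing with $F_i(0)=0$; the existence of a unique root at level~$1$ is supplied by Lemmas~\ref{lem2.5} and~\ref{lem2.10}, and monotonicity then forces the stated equivalences. Your handling of the possibly non-strict monotonicity of $F_1$ and $F_2$ via the uniqueness clause (ruling out a plateau at height~$1$) is valid; one could alternatively note that once $\int_{x-\eta}^x q>0$ the product $F_1$ becomes \emph{strictly} increasing (the $1/r$-factor being strictly so), so a positive-level plateau is impossible --- but your argument is equally clean and does not require this observation.
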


The next theorem is an example of using \thmref{thm5.1}.

\begin{thm}\label{thm5.2}
Suppose that the following conditions hold:
\begin{equation}\label{5.5}
r>0,\quad q>0,\quad r\in\mathcal A C^{\loc}(\mathbb R),\quad
q\in\mathcal A C^{\loc}(\mathbb R) \end{equation} (here $\mathcal
A C^{\loc}(R)$ is the set of functions absolutely continuous on
every finite interval of the real axis). Let, in addition,
$$\varkappa_1(x)\to0,\quad \varkappa_2(x)\to0\qquad\text{as}\quad
|x|\to\infty$$ where
\begin{equation}\label{5.6}
\varkappa_1(x)=r(x)\sup_{|t|\le 80\hat
d(x)}\left|\int_x^{x+t}\frac{r'(\xi)}{r^2(\xi)}d\xi\right|,\quad
x\in\mathbb R,\end{equation}
\begin{equation}\label{5.8}
\varkappa_2(x)=\frac{1}{q(x)}\cdot\sup_{|t|\le 80\hat
d(x)}\left|\int_x^{x+t}q'(\xi) d\xi\right|,\quad x\in\mathbb
R,\end{equation}
\begin{equation}\label{5.9}
\hat d(x)=\sqrt{\frac{r(x)}{q(x)}},\qquad x\in\mathbb R.
\end{equation}
 Then for all $|x|\gg1$ each of the equations \eqref{2.11} has a
 unique finite positive solution $d_1(x)$ and $d_2(x)$,
 respectively, and we have (see \eqref{2.12}, \eqref{2.19}):
 \begin{equation}\label{5.10}
 \lim_{|x|\to\infty}\frac{d_1(x)}{\hat
 d(x)}=\lim_{|x|\to\infty}\frac{d_2(x)}{\hat
 d(x)}=1,\end{equation}
 \begin{equation}\label{5.11}
 \lim_{|x|\to\infty}\varphi(x)\sqrt{r(x)q(x)}=\lim_{|x|\to\infty}\psi(x)
\sqrt{r(x)q(x)}=1,\end{equation}
 \begin{equation}\label{5.12}
 \lim_{|x|\to\infty}h(x)\sqrt{r(x)q(x)}=\frac{1}{2},
 \end{equation}
 \begin{equation}\label{5.13}
 c^{-1}\hat d(x)\le d(x)\le c\hat d(x),\qquad x\in\mathbb R.\end{equation}

In addition, $B<\infty$ (see \eqref{2.34} if and only if
$\inf_{x\in\mathbb R} q(x)>0,$ and equality \eqref{3.1} holds if
and only if $q(x)\to\infty$ as $|x|\to\infty.$
\end{thm}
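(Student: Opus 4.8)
The engine of the whole proof is that the hypotheses $\varkappa_1(x)\to0$ and $\varkappa_2(x)\to0$ force $r$ and $q$ to be almost constant, quantitatively, on every interval of length $\le 80\hat d(x)$ about $x$, once $|x|$ is large. Indeed, for $|t-x|\le 80\hat d(x)$ we have $\frac1{r(t)}-\frac1{r(x)}=-\int_x^t\frac{r'(\xi)}{r^2(\xi)}\,d\xi$ and $q(t)-q(x)=\int_x^tq'(\xi)\,d\xi$, so by \eqref{5.6} and \eqref{5.8}, $|\frac1{r(t)}-\frac1{r(x)}|\le\varkappa_1(x)/r(x)$ and $|q(t)-q(x)|\le q(x)\varkappa_2(x)$ (the suprema defining $\varkappa_1,\varkappa_2$ run over both signs of $t$, so intervals on either side of $x$ are covered). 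Integrating, for $0\le\eta\le 80\hat d(x)$ one gets $\int_{x-\eta}^x\frac{dt}{r(t)}=\frac{\eta}{r(x)}(1+\alpha)$ and $\int_{x-\eta}^xq(t)\,dt=\eta\,q(x)(1+\beta)$ with $|\alpha|\le\varkappa_1(x)$, $|\beta|\le\varkappa_2(x)$, and likewise for the right-hand intervals. Since $\hat d(x)^2=r(x)/q(x)$, this yields $F_i(\eta)=(\eta/\hat d(x))^2(1+o(1))$ for $i=1,2$, uniformly for $\eta\in[0,80\hat d(x)]$ as $|x|\to\infty$, with the functions $F_1,F_2$ of \thmref{thm5.1}.

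From here the first four conclusions are algebra. Each $F_i$ ($i=1,2$) is continuous, strictly increasing and vanishes at $0$ (as $1/r>0$, $q>0$), and by the previous step $F_i(80\hat d(x))=80^2(1+o(1))>1$ for $|x|\gg1$; by the intermediate value theorem each equation \eqref{2.11} then has a unique positive solution $d_i(x)\in(0,80\hat d(x))$ — this incidentally reproves the existence part of \lemref{lem2.5} near infinity without invoking \eqref{1.3}. Feeding $\eta=d_i(x)$ into $F_i(d_i(x))=1$ gives $(d_i(x)/\hat d(x))^2(1+o(1))=1$, i.e.\ $d_i(x)/\hat d(x)\to1$, which is \eqref{5.10}. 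Substituting $\eta=d_1(x)$ (resp.\ $d_2(x)$) in the estimate of the one-sided $1/r$-integral gives $\varphi(x)\sqrt{r(x)q(x)}=\frac{d_1(x)}{\hat d(x)}(1+\alpha)\to1$, and similarly for $\psi$, proving \eqref{5.11}; then \eqref{5.12} follows from $h=\varphi\psi/(\varphi+\psi)$ (see \eqref{2.12}) upon multiplying numerator and denominator by $\sqrt{rq}$, since $h(x)\sqrt{r(x)q(x)}=\frac{(\varphi\sqrt{rq})(\psi\sqrt{rq})}{\varphi\sqrt{rq}+\psi\sqrt{rq}}\to\frac{1}{1+1}=\frac12$.

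The hardest point is \eqref{5.13} for $d(x)$, which is defined through $h$ in \eqref{2.19}; I would obtain it from item 3) of \thmref{thm5.1} by estimating $\int_{x-\eta}^{x+\eta}\frac{dt}{r(t)h(t)}$ for $\eta\asymp\hat d(x)$. The difficulty is that this integral involves $h$, for which \eqref{5.12} is available only for large $|t|$, so one must first rule out that, for $|x|$ large, an interval $[x-10\hat d(x),x+10\hat d(x)]$ can reach a fixed compact set: if it could, the near-constancy of the first paragraph would make $r,q$ comparable to $r(x),q(x)$ on that compact set, forcing $r(x),q(x)$, hence $\hat d(x)$, to stay bounded along such $x$ — impossible. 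Thus for $|x|\gg1$ the interval $[x-10\hat d(x),x+10\hat d(x)]$ lies in $\{|t|\ge M\}$ for any preassigned $M$, and \eqref{5.12} together with the near-constancy give $r(t)h(t)\asymp\hat d(x)$ there, whence $\int_{x-\eta}^{x+\eta}\frac{dt}{r(t)h(t)}\asymp\eta/\hat d(x)$ uniformly; choosing absolute constants $a$ small and $b$ large so that the integral over $[x-a\hat d(x),x+a\hat d(x)]$ is $<1$ and that over $[x-b\hat d(x),x+b\hat d(x)]$ is $>1$ for $|x|\gg1$ yields $a\hat d(x)<d(x)<b\hat d(x)$, and enlarging the constant over the complementary compact interval (where $d,\hat d$ are continuous and positive) gives \eqref{5.13}. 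Finally, \eqref{5.12} and \eqref{5.13} give $h(x)d(x)\asymp1/q(x)$ for $|x|\gg1$, and the two sides are comparable on compacts by continuity and positivity; hence (see \eqref{2.34}) $B<\infty$ if and only if $\inf_{x\in\mathbb R}q(x)>0$, and $\lim_{|x|\to\infty}h(x)d(x)=0$, i.e.\ \eqref{3.1}, holds if and only if $q(x)\to\infty$ as $|x|\to\infty$.
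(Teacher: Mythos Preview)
Your argument is correct and, for \eqref{5.10}--\eqref{5.12}, follows the same line as the paper: the near-constancy of $1/r$ and $q$ on intervals of half-length $\le 80\hat d(x)$ gives $F_i(\eta)=(\eta/\hat d(x))^2(1+o(1))$ uniformly, and everything else is algebra. The paper packages this via the explicit test points $\eta(x)=\hat d(x)(1\pm\delta(x))$ with $\delta=\varkappa_1+\varkappa_2$ and \thmref{thm5.1}, while you use the intermediate value theorem and plug $\eta=d_i(x)$ back into the asymptotic; these are the same argument in different clothing.

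The one genuine difference is the proof of \eqref{5.13}. The paper does \emph{not} try to apply \eqref{5.12} pointwise on $[x-\eta,x+\eta]$; instead it invokes the structural estimate \eqref{2.26} from \lemref{lem2.17}, namely $h(t)\asymp h(x)$ for $t\in[x-d(x),x+d(x)]$, which holds for all $x$ with an absolute constant. Assuming $d(x)>80\hat d(x)$, the paper bounds $1=\int_{x-d(x)}^{x+d(x)}\frac{dt}{r(t)h(t)}\ge (4e^2h(x))^{-1}\int_{x-80\hat d(x)}^{x+80\hat d(x)}\frac{dt}{r(t)}$ and then uses only \eqref{5.12} at the single point $x$ plus the near-constancy of $1/r$, obtaining a contradiction; the lower bound is analogous. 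Your route instead bootstraps \eqref{5.12} over the whole interval, which forces the extra step of showing that $[x-10\hat d(x),x+10\hat d(x)]$ eventually avoids any fixed compact set. That step is fine (your boundedness-of-$\hat d$ contradiction works), and your approach has the advantage of being self-contained, not relying on \lemref{lem2.17}; the paper's approach is shorter because \eqref{2.26} absorbs the ``stay away from compacts'' issue. Either way the conclusion $h(x)d(x)\asymp 1/q(x)$ for $|x|\gg1$ and the two final equivalences follow exactly as you wrote.
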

\renewcommand{\qedsymbol}{}
\begin{proof} Both relations \eqref{5.10} are proved in the same
way, and therefore we only consider, say, the second equality.
Below we use some properties of the function $F_2(\eta).$ It is
convenient to list these properties as a separate statement.
\end{proof}
\begin{lem}\label{lem5.3}
Under conditions \eqref{5.5}, the function $F_2(\eta)$ satisfies
the following relations:
\begin{enumerate}\item[1)] $F_2(\eta)\in\mathcal A
C^{\loc}(\mathbb R_+),$\quad $R_+=(0,\infty)$; \item[2)]
$F_2(\eta)>0$ for $\eta>0;$ \item[3)] $F_2'(\eta)>0$ for
$\eta>0.$\end{enumerate}\end{lem}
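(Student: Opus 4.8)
The plan is to write $F_2(\eta)=P(\eta)\,Q(\eta)$ with $P(\eta)=\int_x^{x+\eta}\frac{dt}{r(t)}$ and $Q(\eta)=\int_x^{x+\eta}q(t)\,dt$, and to deduce all three assertions from elementary properties of these two factors. First I would note that condition \eqref{5.5} implies \eqref{1.2} and, more to the point, gives additional regularity: since $r$ is absolutely continuous on every finite segment it is continuous, and as $r>0$ the function $r^{-1}$ is continuous on $\mathbb R$; similarly $q$ is continuous and positive. Consequently both integrands are locally bounded, $P$ and $Q$ are well defined for $\eta\ge0$, and by the fundamental theorem of calculus they are continuously differentiable with $P'(\eta)=r(x+\eta)^{-1}$ and $Q'(\eta)=q(x+\eta)$.

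For assertion 1) I would argue that on any compact $[a,b]\subset\mathbb R_+$ the functions $P$ and $Q$ are $C^1$, hence bounded and Lipschitz there, so their product is absolutely continuous on $[a,b]$; since $[a,b]\subset\mathbb R_+$ was arbitrary, $F_2\in\mathcal A C^{\loc}(\mathbb R_+)$. For assertion 2), for each fixed $\eta>0$ the integrand $r^{-1}$ is positive and continuous on an interval of positive length, so $P(\eta)>0$, and likewise $Q(\eta)>0$; hence $F_2(\eta)=P(\eta)Q(\eta)>0$. For assertion 3) I would differentiate the product, obtaining $F_2'(\eta)=r(x+\eta)^{-1}Q(\eta)+q(x+\eta)P(\eta)$, and note that for $\eta>0$ all four factors on the right are strictly positive by \eqref{5.5} and by assertion 2), whence $F_2'(\eta)>0$.

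I do not expect a genuine obstacle here; the only subtlety worth flagging is the first step, namely recognizing that \eqref{5.5} promotes $r^{-1}$ and $q$ from merely $L_1^{\loc}$ (as in \eqref{1.2}) to continuous functions, which is exactly what upgrades $P$ and $Q$ from absolutely continuous to $C^1$ and makes the pointwise derivative formula used in assertion 3) legitimate. Everything else reduces to the product rule and to the fact that the integral of a positive continuous function over an interval of positive length is positive.
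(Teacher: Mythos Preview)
Your proof is correct and follows essentially the same approach as the paper: both arguments compute the derivative $F_2'(\eta)=r(x+\eta)^{-1}\int_x^{x+\eta}q(t)\,dt+q(x+\eta)\int_x^{x+\eta}r(t)^{-1}\,dt$ and read off properties 1) and 3) from it, while 2) is immediate from the positivity in \eqref{5.5}. Your version is slightly more explicit about why \eqref{5.5} upgrades $P$ and $Q$ to $C^1$ (rather than merely absolutely continuous), which justifies the pointwise product rule, but the substance is identical.
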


\renewcommand{\qedsymbol}{\openbox}
\begin{proof}
Property 2) is an obvious consequence of \eqref{5.5}. Further,
$$F_2'(\eta)=\frac{1}{r(x+\eta)}\int_x^{x+\eta}q(t)dt+q(x+\eta)\int_x^{x+\eta}\frac{dt}{r(t)}.$$
This equality together with \eqref{5.5} imply properties 1) and
3).
\end{proof}

\begin{lem}\label{lem5.4}
Let $\eta(x)=\alpha\hat d(x),$ $x\in\mathbb R,$ $\alpha\in(0,80].$
Then we have the inequalities
 \begin{equation}\label{5.14}
 \frac{r(x)}{\eta(x)}
 \left|\int_0^{\eta(x)}\left(\int_x^{x+s}\frac{r'(\xi)}{r^2(\xi)}\right)ds\right|\le
 \varkappa_1(x),\quad x\in\mathbb R,
\end{equation}
\begin{equation}\label{5.15}
\frac{1}{q(x)\eta(x)}\left|\int_0^{\eta(x)}\left(\int_x^{x+s}q'(\xi)d\xi\right)ds\right|\le
\varkappa_2(x),\quad x\in\mathbb R.\end{equation}
\end{lem}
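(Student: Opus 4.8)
The plan is to derive both inequalities directly from the triangle inequality applied to the outer integral, combined with the elementary observation that the hypothesis $\alpha\le 80$ forces the integration interval $[0,\eta(x)]$ to sit inside the range over which the suprema in \eqref{5.6} and \eqref{5.8} are taken. No genuine difficulty is expected here; the argument is purely a matter of bookkeeping with absolute values.

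First I would record that $\eta(x)=\alpha\hat d(x)>0$ for every $x$, since $r>0$ and $q>0$ give $\hat d(x)=\sqrt{r(x)/q(x)}>0$ and $\alpha>0$; this legitimizes the factors $r(x)/\eta(x)$ and $1/(q(x)\eta(x))$ that appear on the left-hand sides. Then, to prove \eqref{5.14}, I fix $x\in\mathbb R$ and note that every $s\in[0,\eta(x)]$ satisfies $|s|\le\eta(x)=\alpha\hat d(x)\le 80\hat d(x)$, so by the definition \eqref{5.6} of $\varkappa_1$ the inner integral obeys the pointwise bound $\bigl|\int_x^{x+s}r'(\xi)r^{-2}(\xi)\,d\xi\bigr|\le\varkappa_1(x)/r(x)$. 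Pulling the modulus inside the outer integral and integrating this constant bound over the interval $[0,\eta(x)]$ produces a factor $\eta(x)$, so that $\bigl|\int_0^{\eta(x)}\!\bigl(\int_x^{x+s}r'(\xi)r^{-2}(\xi)\,d\xi\bigr)ds\bigr|\le\eta(x)\,\varkappa_1(x)/r(x)$; multiplying both sides by $r(x)/\eta(x)$ then cancels everything except $\varkappa_1(x)$, which is exactly \eqref{5.14}.

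The inequality \eqref{5.15} follows by the identical argument with $r'(\xi)r^{-2}(\xi)$ replaced by $q'(\xi)$ and the outer normalisation $r(x)$ replaced by $q(x)$: for $s\in[0,\eta(x)]$ one has $\bigl|\int_x^{x+s}q'(\xi)\,d\xi\bigr|\le q(x)\,\varkappa_2(x)$ by \eqref{5.8}, hence the double integral is at most $\eta(x)\,q(x)\,\varkappa_2(x)$, and dividing by $q(x)\eta(x)$ yields \eqref{5.15}. The only two points that require attention are the positivity of $\eta(x)$ and the inclusion $[0,\eta(x)]\subseteq[-80\hat d(x),\,80\hat d(x)]$, the latter being precisely where the assumption $\alpha\le 80$ is used; I regard this containment as the ``main obstacle'' only in the sense that forgetting it would break the estimate.
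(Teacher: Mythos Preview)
Your proof is correct and follows essentially the same approach as the paper's own proof. The paper dispatches the lemma in one line, bounding the inner integral by the supremum defining $\varkappa_1$ (using $|s|\le\eta(x)\le 80\hat d(x)$), then integrating the resulting constant over $[0,\eta(x)]$ and cancelling the factor $\eta(x)$; your write-up simply spells out these same steps more explicitly.
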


\begin{proof}
Inequalities \eqref{5.14}--\eqref{5.15} are obvious. Say,
$$
\frac{r(x)}{\eta(x)}\left|\int_0^{\eta(x)}\left(\int_x^{x+s}\frac{r'(\xi)}{r^2(\xi)}d\xi\right)ds\right|
\le \frac{r(x)}{\eta(x)}\cdot\eta(x)\sup_{|s|\le 80\hat
d(x)}\left|\int_x^{x+s}\frac{r'(\xi)}{r^2(\xi)}d\xi\right|=\varkappa_1(x).$$
Let us now go to \eqref{5.10}. Let $\eta\ge0.$ The following
relations are obvious:
\begin{align}
\int_x^{x+\eta}\frac{d\xi}{r(\xi)}&=\int_0^\eta\frac{ds}{r(x+s)}=\frac{\eta}{r(x)}-\int_0^\eta\left(
\int_x^{x+s}\frac{r'(\xi)}{r^2(\xi)}d\xi\right)ds\nonumber\\
&=\frac{\eta}{r(x)}\left[1-\frac{r(x)}{\eta}\int_0^\eta\left(\int_x^{x+s}\frac{r'(\xi)}{r^2(\xi)}d\xi\right)
ds\right],\quad
x\in\mathbb R,\label{5.16}\end{align}
\begin{align}
\int_x^{x+\eta}q(t)dt&=\int_0^\eta
q(x+s)ds=q(x)\eta+\int_0^\eta\left(\int_x^{x+s} q'(\xi)
d\xi\right)ds\nonumber\\
&=q(x)\eta\left[1+\frac{1}{q(x)\eta}\int_0^\eta\left(\int_x^{x+s}q'(\xi)d\xi\right)
ds\right],\quad x\in\mathbb R.\label{5.17}\end{align}
 Denote
\begin{equation}
\begin{aligned}\label{5.18}
\delta(x)=\varkappa_1(x)+\varkappa_2(x),\qquad x \in\mathbb R,\\
\eta(x)=\hat d(x)(1+\delta(x)),\qquad x \in\mathbb R.
\end{aligned}
\end{equation}
Then for all $|x|\gg1,$ from \eqref{5.18}, \eqref{5.17},
\eqref{5.16}, \eqref{5.14} and \eqref{5.15}, it follows that
\begin{align}
F_2(\eta(x))&=\int_x^{x+\eta(x)}\frac{dt}{r(t)}\cdot\int_x^{x+\eta}q(t)dt\nonumber\\
&=\eta^2(x)\frac{q(x)}{r(x)}\cdot
\left[1-\frac{r(x)}{\eta(x)}\int_0^{\eta(x)}\left(\int_x^{x+s}\frac{r'(\xi)}{r^2(\xi)}d\xi\right)ds\right]\nonumber\\
&\quad \cdot
\left[1+\frac{1}{q(x)\eta(x)}\int_0^{\eta(x)}\left(\int_x^{x+s}q'(\xi)d\xi\right)ds\right]\nonumber\\
&
\ge(1+\delta(x))^2(1-\varkappa_1(x))(1-\varkappa_2(x))\nonumber\\
&\ge (1+2\delta(x))(1-\delta(x))=1+\delta(x)-2\delta^2(x)\ge
1.\label{5.19}
\end{align}


Since $F_2(0)=0,$ from \eqref{5.19} and \lemref{lem5.3}, it
follows that the equation $F_2(d)=1$ has a unique finite positive
solution. Denote it $d_2(x).$ From \eqref{5.19} and
\thmref{thm5.1}, we obtain the estimate
\begin{equation}\label{5.20}
d_2(x)\le \eta(x)=\hat d(x)(1+\delta(x),\qquad |x|\gg
1.\end{equation} Let now
\begin{equation}\label{5.21}
\eta(x)=\hat d(x)(1-\delta(x)\gg 1.\end{equation} Clearly,
$\eta(x)>0$ for all $|x|\gg1.$ The following relations are similar
to \eqref{5.19}:
\begin{align*}
F_2(\eta(x))&=\int_x^{x+\eta(x)}\frac{dt}{r(t)}\cdot
\int_x^{x+\eta(x)}q(t)dt\\ &=\eta^2(x)\cdot
\frac{q(x)}{r(x)}\left[1-\frac{r(x)}{\eta(x)}\int_0^{\eta(x)}\left(\int_x^{x+s}\frac{r'(\xi)}{r^2(\xi)}
d\xi\right)ds\right]\\
&\quad \cdot
\left[1+\frac{1}{q(x)\eta(x)}\int_0^{\eta(x)}\left(\int_x^{x+s}q'(\xi)d\xi\right)ds\right]\le(1-\delta(x))^2
(1+\varkappa_1(x))(1+\varkappa_2(x))\\
&=[1-2\delta(x)+\delta^2(x)][1+\varkappa_1(x)+\varkappa_2(x)+\varkappa_1(x)\varkappa_2(x)].
\end{align*}

It is easy to see that for all $|x|\gg1$, we have the
inequalities:\begin{align*} 1-2\delta(x)+\delta^2(x)&\le
1-\frac{5}{3}\delta(x)\\
\varkappa_1(x)\cdot \varkappa_2(x)&\le
\frac{\varkappa_1(x)+\varkappa_2(x)}{2}=\frac{\delta(x)}{2}\end{align*}
that allow us to continue the estimate
\begin{equation}\label{5.22}
F_2(\eta(x))\le\left(1-\frac{5}{3}\delta(x)\right)\left(1+\frac{3}{2}\delta(x)\right)\le
1-\frac{\delta(x)}{6}\le 1.\end{equation}

{}From \eqref{5.22} and \thmref{thm5.1} we obtain the inequality
\begin{equation}\label{5.23}
d_2(x)\ge \eta(x)=\hat d(x)(1-\delta(x)),\qquad |x|\gg
1.\end{equation} From \eqref{5.20} and \eqref{5.23} we obtain
\eqref{5.10}. Let us now go to \eqref{5.11}. These inequalities
are a consequence of \eqref{5.10}. Indeed, as above, we get
\begin{align}
\psi(x)&=\int_x^{x+d_2(x)}\frac{dt}{r(t)}=\frac{d_2(x)}{r(x)}-\int_0^{d_2(x)}\left(\int_x^{x+s}\frac{
r'(x)}{r^2(\xi)}d\xi\right)ds\nonumber\\
&=\frac{d_2(x)}{r(x)}\left[1-\frac{r(x)}{d_2(x)}\int_0^{d_2(x)}\left(\int_x^{x+s}\frac{r'(\xi)}{r^2(\xi)}
d\xi\right)ds\right]\nonumber\\
&\Rightarrow\quad \psi(x)\sqrt{r(x)q(x)}=\frac{d_2(x)}{\hat
d(x)}\cdot (1+\gamma(x)),\qquad x\in\mathbb R.\label{5.24}
\end{align}
Here \eqref{5.24} it easily follows that
$|\gamma(x)|\le\varkappa_1(x)$ for $|x|\gg 1.$ This proves
\eqref{5.11} and hence, in view of \eqref{2.12}, also
\eqref{5.12}. Let us verify \eqref{5.13}. Let us show that
$d(x)\le 80\hat d(x)$ for all $|x|\gg1$. Assume the contrary. This
means that $d(x)>\eta(x)=80\hat d(x)$ for some $|x|\gg1.$ In the
following relations, apart from the above assumption, we use
\eqref{2.19}, \eqref{2.26}, \eqref{2.22}, \thmref{thm5.1} and the
part of the theorem  that has already been proved:
\begin{align*}
1&=\int_{x-d(x)}^{x+d(x)}\frac{dt}{r(t)h(t)}\ge
\frac{1}{4e^2}\cdot
 \frac{1}{h(x)}\int_{x-d(x)}^{x+d(x)}\frac{dt}{r(t)}\\
 &\ge\frac{1}{80}\sqrt{r(x)q(x)}\left[2\frac{\eta(x)}{r(x)}-\int_0^{\eta(x)}\left(\int_x^{x+s}
 \frac{r'(\xi)}{r^2(\xi)}d\xi\right)ds\right.  \\
 &\quad +\int_0^{\eta(x)}\
 \left(\left.\int_{x-s}^x\frac{r'(\xi)}{r^2(\xi)}d\xi\right)ds\right]\ge
 2\cdot\left[1-\frac{1}{2}\cdot\frac{r(x)}{\eta(x)}\int_0^{\eta(x)}\left(\int_x^{x+s}\frac{r'(\xi)}{r^2(\xi)}
 d\xi\right)ds\right.\\
 &\quad+\frac{1}{2}\cdot\frac{r(x)}{\eta(x)}\int_0^{\eta(x)}\left.\left(\int_{x-s}^x\frac{r'(\xi)}{r^2(\xi)}d\xi
 \right)\right]\ge 2(1-\varkappa_1(x))>1.
\end{align*}
Contradiction. Hence
$$d(x)\le 80\hat d(x)\qquad \text{for}\qquad |x|\gg 1.$$
To get the lower estimate of $d(x)$ for $|x|\gg1,$ we use
\begin{align}
1&=\int_{x-d(x)}^{x+d(x)}\frac{dt}{r(t)h(t)}\le
\frac{4e^2}{h(x)}\int_{x-d(x)}^{x+d(x)}\frac{dt}{r(t)}\nonumber\\
&\le
80\sqrt{r(x)q(x)}\left[2\frac{d(x)}{r(x)}-\int_0^{d(x)}\int_x^{x+s}\frac{r'(\xi)}{r^2(\xi)}d\xi
ds\right.\nonumber\\
&\quad +\left.\int_0^{d(x)}\frac{r'(\xi)}{r^2(\xi)}d\xi
ds\right]\le160 \frac{d(x)}{\hat
d(x)}(1+\varkappa_1(x))\le320\frac{d(x)}{\hat d(x)}.\label{5.25}
\end{align}
Hence
\begin{equation}\label{5.26}
d(x)\ge \frac{\widehat{d(x)}}{320}\qquad\text{for}\qquad |x|\gg
1.\end{equation}

Choose $x_0\gg1$ so that for $|x|\ge x_0$ inequalities
\eqref{5.25} and \eqref{5.26} would hold together. Let
$$f(x)=\frac{d(x)}{\widehat{d(x)}},\qquad x\in [-x_0,x_0].$$
By \lemref{lem2.10}, the function $f(x)$ is positive and
continuous on $[-x_0,x_0]$ and therefore attains on this segment a
positive minimum $m$ and a finite maximum $M.$

Let $c\gg1$ be such that
$$c^{-1}\le\min\left\{\frac{1}{320},m\right\}\le \max\{80,M\}\le
c.$$ With such a choice of $c$, taking into account the fact
proven above, we obtain \eqref{5.13}. The remaining assertions of
the theorem follow from \eqref{5.12}--\eqref{5.13}.\end{proof}

We also need the following facts.

\begin{thm}\label{thm5.5} \cite[Ch.XI, \S6]{18}. Suppose that
conditions \eqref{1.2} and \eqref{2.1} hold, and, in addition,
\begin{equation}\label{5.27}
\int_{-\infty}^0\frac{dt}{r(t)}=\int_0^\infty\frac{dt}{r(t)}=\infty.\end{equation}
Then the relations
\begin{equation}\label{5.28}
v(x)\to0\qquad\text{as}\qquad x\to-\infty, \qquad
u(x)\to0\qquad\text{as}\qquad x\to\infty \end{equation} hold if
and only if \begin{equation}\label{5.29} \int_{-\infty}^x
q(t)\int_t^x\frac{d\xi}{r(\xi)}dt=\int_x^\infty
q(t)\int_x^t\frac{d\xi}{r(\xi)}dt=\infty,\quad x\in \mathbb
R.\end{equation}
\end{thm}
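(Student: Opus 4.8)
The plan is to establish the equivalence separately at each end: $v(x)\to 0$ as $x\to-\infty$ if and only if $\int_{-\infty}^x q(t)\left(\int_t^x\frac{d\xi}{r(\xi)}\right)dt=\infty$, and, by the completely analogous argument with the roles of $v'\ge 0$ and $u'\le 0$ interchanged, $u(x)\to 0$ as $x\to+\infty$ if and only if $\int_x^\infty q(t)\left(\int_x^t\frac{d\xi}{r(\xi)}\right)dt=\infty$; so I would only discuss the $v$-statement. By \eqref{2.3}, $v>0$ and $v$ is nondecreasing, so $\ell:=\lim_{x\to-\infty}v(x)$ exists and lies in $[0,\infty)$, and everything reduces to deciding when $\ell=0$.

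The first step is to produce a clean integral identity. Integrating \eqref{2.2} gives $r(x)v'(x)-r(a)v'(a)=\int_a^x q(t)v(t)\,dt$ for $a<x$; since $(rv')'=qv\ge 0$, the function $rv'$ is nondecreasing and nonnegative, so $L_v:=\lim_{a\to-\infty}r(a)v'(a)$ exists in $[0,\infty)$. Here \eqref{5.27} enters decisively: if $L_v>0$ then $r(x)v'(x)\ge L_v$ for all $x$, whence $v(x)-v(a)\ge L_v\int_a^x\frac{dt}{r(t)}\to\infty$ as $a\to-\infty$, contradicting the finiteness of $v(x)$; therefore $L_v=0$ and $r(x)v'(x)=\int_{-\infty}^x q(t)v(t)\,dt$. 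Dividing by $r$, integrating in $v'$, and applying Fubini to the nonnegative integrand, I would get for $a<x$
\[ v(x)-v(a)=\left(\int_{-\infty}^a q(s)v(s)\,ds\right)\int_a^x\frac{dt}{r(t)}+\int_a^x q(s)v(s)\left(\int_s^x\frac{dt}{r(t)}\right)ds . \]
Since $\int_a^x\frac{dt}{r(t)}\le\int_s^x\frac{dt}{r(t)}$ for $s\le a$, the first term is at most $\int_{-\infty}^a q(s)v(s)\left(\int_s^x\frac{dt}{r(t)}\right)ds$, and this tends to $0$ as $a\to-\infty$ because the full integral over $(-\infty,x)$ is finite (it is $\le v(x)$ by the displayed line). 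Passing to the limit yields the identity
\[ v(x)-\ell=\int_{-\infty}^x q(t)v(t)\left(\int_t^x\frac{d\xi}{r(\xi)}\right)dt,\qquad x\in\mathbb R, \]
which I shall abbreviate as $(\star)$.

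Write $P(x):=\int_{-\infty}^x q(t)\left(\int_t^x\frac{d\xi}{r(\xi)}\right)dt\in[0,\infty]$. For the ``if'' direction, if $P(x_0)=\infty$ for some $x_0$, then using $v(t)\ge\ell$ in $(\star)$ gives $v(x_0)-\ell\ge\ell\,P(x_0)$, i.e.\ $v(x_0)\ge\ell\,(1+P(x_0))$, and the finiteness of $v(x_0)$ forces $\ell=0$. For the ``only if'' direction, I would assume $\ell=0$, fix $b$, and set $R_b(s)=\int_s^b\frac{d\xi}{r(\xi)}$; for $x\le b$, $(\star)$ with $\ell=0$ and $\int_s^x\frac{d\xi}{r(\xi)}\le R_b(s)$ give $v(x)\le\int_{-\infty}^x q(s)R_b(s)v(s)\,ds=:\Psi(x)$. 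If $P(b)=\int_{-\infty}^b q(s)R_b(s)\,ds$ were finite, then $\Psi'=q\,R_b\,v\le q\,R_b\,\Psi$ shows that $\Psi(x)\exp\!\left(-\int_{-\infty}^x q\,R_b\right)$ is nonincreasing on $(-\infty,b]$ and tends to $0$ at $-\infty$ (since $v$ is bounded there by monotonicity), hence $\Psi\equiv 0$ and $v\equiv 0$ on $(-\infty,b]$, contradicting $v>0$. Thus $\ell=0$ implies $P(b)=\infty$ for every $b$; combined with the previous observation, this gives $v(-\infty)=0\iff P(x)=\infty$ for all $x$. Applying the same reasoning to $u$ at $+\infty$ finishes the proof.

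I expect the main obstacle to be the limit passage that produces $(\star)$ — in particular, checking that $\left(\int_{-\infty}^a q v\right)\int_a^x\frac{dt}{r(t)}\to 0$ even though $\int_a^x\frac{dt}{r(t)}\to\infty$ — together with making the one-sided Gronwall estimate rigorous (the exponential $\exp(-\int_{-\infty}^x q\,R_b)$ is finite precisely because $P(b)<\infty$, and $v$ is bounded near $-\infty$ only thanks to its monotonicity). The use of condition \eqref{5.27} to eliminate the constant $L_v$ is exactly what makes the scheme go through; without it the relevant solution need not decay to zero and a more delicate analysis would be required.
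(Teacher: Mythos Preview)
The paper does not prove this theorem; it is quoted from Hartman \cite[Ch.~XI, \S6]{18} without argument, so there is no ``paper's own proof'' to compare against. Your proof is correct and self-contained. The derivation of $(\star)$ is sound: once you have $B(a):=\int_a^x q(s)v(s)\big(\int_s^x\frac{dt}{r(t)}\big)ds\le v(x)-v(a)\le v(x)$, the monotone limit $B(a)\uparrow\int_{-\infty}^x q v\big(\int_\cdot^x\frac{1}{r}\big)$ is finite, and then the first term is dominated by the tail $\int_{-\infty}^a$ of this convergent integral, which handles the $0\cdot\infty$ form exactly as you indicate. The ``if'' direction follows immediately from $(\star)$ and $v\ge\ell$. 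For the ``only if'' direction your Gronwall argument is correct: under $P(b)<\infty$ the function $\Psi(x)\exp\big(-\int_{-\infty}^x qR_b\big)$ is nonincreasing, and since $\Psi(x)\le v(b)\int_{-\infty}^x qR_b\to 0$ while the exponential tends to $1$ as $x\to-\infty$, the product tends to $0$ there; nonincreasing and nonnegative with limit $0$ at the left endpoint forces it to vanish identically on $(-\infty,b]$, giving $v\equiv 0$ there, the desired contradiction. The use of \eqref{5.27} to force $L_v=0$ is essential and you invoke it at the right place.
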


\begin{thm}\label{thm5.6} Suppose that conditions \eqref{1.2},
\eqref{2.1} and \eqref{5.28} hold, and, in addition, equation
\eqref{1.1} is correctly solvable in $L_p,$ $p\in(1,\infty).$ Then
equalities of \eqref{5.29} hold.\end{thm}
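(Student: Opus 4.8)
The plan is to reduce the statement to the Hartman criterion \thmref{thm5.5}. That theorem already produces the equalities \eqref{5.29} out of the hypothesis \eqref{5.28}, \emph{once} condition \eqref{5.27} is known to hold; so the whole proof comes down to verifying that, under \eqref{1.2} and \eqref{2.1}, correct solvability of \eqref{1.1} in $L_p$ forces $\int_{-\infty}^0 dt/r(t)=\int_0^\infty dt/r(t)=\infty$. Granting this, an application of \thmref{thm5.5} to the given coefficients $r,q$ gives both equalities of \eqref{5.29}.

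To verify \eqref{5.27} I would argue by contradiction, and by symmetry it suffices to exclude $r^{-1}\in L_1(0,\infty)$. The FSS $\{u,v\}$ of \thmref{thm2.1} exists with the properties \eqref{2.3}--\eqref{2.6}; from the representation $u(x)=v(x)\int_x^\infty dt/(r(t)v^2(t))$ (cf. \eqref{4.50}) and $v'\ge0$ one obtains $\rho(x)=v^2(x)\int_x^\infty dt/(r(t)v^2(t))\le\int_x^\infty dt/r(t)$, so $\rho$ is bounded on $[0,\infty)$ and $\rho(x)\to0$ as $x\to+\infty$; and since \eqref{1.1} is correctly solvable, \thmref{thm2.22} yields $S=\sup_x\rho(x)s(x)<\infty$. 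The contradiction has to be drawn from the interplay of this bound with the hypothesis $u(x)\to0$ (part of \eqref{5.28}), the Wronskian identity \eqref{2.4}, and the logarithmic-derivative formulas \eqref{4.34}: concretely, one must show that $r^{-1}\in L_1(0,\infty)$, $u(x)\to0$, $\int_x^\infty q>0$ and correct solvability cannot hold simultaneously. I expect this to be the genuinely hard step; everything else is bookkeeping and the citation to \thmref{thm5.5}.

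There is also a more self-contained route that sidesteps \thmref{thm5.5}. From $r(t)u'(t)-r(x)u'(x)=\int_x^t q(\xi)u(\xi)\,d\xi$, the monotonicity of $ru'$, the sign $ru'\le0$, passage to the limit $t\to\infty$ and the hypothesis $u(\infty)=0$, Fubini's theorem gives the identity $u(x)=L\int_x^\infty d\xi/r(\xi)+\int_x^\infty q(s)u(s)\bigl(\int_x^s d\xi/r(\xi)\bigr)\,ds$ with $L=-\lim_{t\to\infty}r(t)u'(t)\ge0$. If $r^{-1}\notin L_1(0,\infty)$, finiteness of the left side forces $L=0$, and one is left to deduce $\int_x^\infty q(s)\int_x^s d\xi/r(\xi)\,ds=\infty$ from $u(x)=\int_x^\infty q(s)u(s)\int_x^s d\xi/r(\xi)\,ds$ — a comparison/bootstrap argument of the type underlying \thmref{thm5.5}, using that $u$ is, up to a positive constant factor, the unique positive decreasing solution. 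Correct solvability still enters only to exclude the degenerate case $r^{-1}\in L_1$ (through boundedness of $\rho$ and $S<\infty$), so in either approach the crux is the same: ruling out $r^{-1}\in L_1$ on a half-line.
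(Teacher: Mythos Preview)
Your whole plan rests on deriving \eqref{5.27} from the data \eqref{1.2}, \eqref{2.1}, \eqref{5.28} and correct solvability, and this step is not merely ``hard'' --- it is false. Take $r(x)=e^{2|x|}$, $q(x)=e^{-|x|}$ (case $(3,1)$ of \thmref{thm5.7}). Conditions \eqref{1.2} and \eqref{2.1} hold; correct solvability holds (the paper records $G\in S_p^{(0)}$ in that cell); and since $r^{-1}\in L_1$, \lemref{lem4.9} gives $\rho(x)\to0$ at $\pm\infty$, whence $u(x)=\rho(x)/v(x)\to0$ and $v(x)=\rho(x)/u(x)\to0$, so \eqref{5.28} holds too. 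Yet for $x>0$,
\[
\int_x^\infty q(t)\int_x^t\frac{d\xi}{r(\xi)}\,dt
=\int_x^\infty e^{-t}\cdot\tfrac12\bigl(e^{-2x}-e^{-2t}\bigr)\,dt
=\tfrac13 e^{-3x}<\infty,
\]
so \eqref{5.29} fails. Thus under the hypotheses as you read them the conclusion is simply not true, and no argument can close your ``crux''.

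What is actually going on is that the printed hypothesis \eqref{5.28} is a misprint for \eqref{5.27}; this is also what the application in case $(1,1)$ needs (there $\alpha<0$, so \eqref{5.27} is immediate, while \eqref{5.28} would not be). The paper's proof runs in the opposite direction from yours: with \eqref{5.27} assumed, correct solvability gives boundedness of $G$, hence of $G_2$ by \eqref{2.33}, and \thmref{thm2.32} yields
\[
\sup_{x\in\mathbb R}\Bigl(\int_{-\infty}^x v(t)^p\,dt\Bigr)^{1/p}\Bigl(\int_x^\infty u(t)^{p'}\,dt\Bigr)^{1/p'}<\infty .
\]
Since $u$ is decreasing and $v$ increasing, $\lim_{x\to\infty}u(x)=\varepsilon_2\ge0$ and $\lim_{x\to-\infty}v(x)=\varepsilon_1\ge0$ exist; either $\varepsilon_i>0$ makes the supremum infinite, so $\varepsilon_1=\varepsilon_2=0$, i.e.\ \eqref{5.28}. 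Now \thmref{thm5.5}, with \eqref{5.27} available as a hypothesis, gives \eqref{5.29}. So the roles of \eqref{5.27} and \eqref{5.28} are exactly reversed compared to your plan: \eqref{5.27} is input, \eqref{5.28} is derived.
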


\begin{proof} The operator $G: L_p\to L_p,$ $p\in (1,\infty)$ is
 bounded by \thmref{thm2.18}. From \eqref{2.33} it follows that
 then so is the operator $G_2: L_p\to L_p,$ $p\in(1,\infty)$ (see
\eqref{2.31}). Then by \thmref{thm2.32} we have
 \begin{equation}\label{5.30}
 \sup_{x\in\mathbb
 R}\left(\int_{-\infty}^xv(t)^pdt\right)^{1/p}\left(\int_x^\infty
 u(t)^{p'}dt\right)^{1/p'}<\infty.
\end{equation}
Further, by \thmref{thm2.1} there
exist the limits
$$\lim_{x\to-\infty} v(x)=\varepsilon_1\ge 0,\qquad
\lim_{x\to\infty}u(x)=\varepsilon_2\ge0.$$ If here
$\varepsilon_1>0$ or $\varepsilon_2>0,$ then  \eqref{5.30} does
not hold. Hence $\varepsilon_1=\varepsilon_2=0.$ Then \eqref{5.29}
holds by \thmref{thm5.5}.\end{proof}

Let us now go to equation \eqref{5.1}. Denote by $S_p$ the set of
linear bounded operators acting from $L_p,$ $p\in(1,\infty)$, and
by $S_0^{(0)}$ the subset of $S_p$ consisting of the compact
operators. Thus writing $G\in S_p$ $(G\in S_p^{(0)})$ we mean that
the operator $G: L_p\to L_p$ is bounded (compact).

\begin{thm}\label{thm5.7} Let $G$ be the Green operator
corresponding to equation \eqref{5.1} (see \eqref{2.29}). Then,
regardless of $p\in(1,\infty)$ and depending on the numbers
$\alpha$ and $\beta$, the operator has the properties presented in
the following table.
\begin{table}[h]
\begin{equation}\label{5.31}
\begin{tabular}{|c|c|c|c|}
\hline
    $\alpha \ \setminus\ \beta$  & $\beta<0$ & $\beta=0$ & $\beta>0$  \\
   \hline
  $\alpha<0$ & $G\notin S_p$,
 & $G\in S_p,$\   $G\notin S_p^{(0)}$
 & $G\in S_p^{(0)}$
 \\
   \hline $\alpha=0$ &  $G\notin S_p$
 & $G\in S_p,$\   $G\notin S_p^{(0)}$
 & $G\in S_p^{(0)}$
  \\
    \hline  $\alpha>0$  & $G\in S_p^{(0)} $
 & $G\in S_p^{(0)} $
 & $G\in S_p^{(0)} $
 \\ \hline
\end{tabular}
\end{equation}
\end{table}
\end{thm}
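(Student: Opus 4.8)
The plan is to establish each of the nine entries of table \eqref{5.31} by sorting them into four groups, each handled by one of the criteria of \S3 or by the estimates of \thmref{thm5.2}. For $r(x)=e^{\alpha|x|}$, $q(x)=e^{\beta|x|}$ conditions \eqref{1.2}, \eqref{2.1}, \eqref{5.5} are immediate, and \eqref{1.3} follows from the explicit integrations carried out below, so all the auxiliary functions of \S2 are available. First I would dispose of the two ``easy strips.'' If $\beta>0$, then $q(x)=e^{\beta|x|}\to\infty$ as $|x|\to\infty$, so \corref{cor3.8} gives directly that \eqref{5.1} is correctly solvable and $G\in S_p^{(0)}$, for every $p$ and every $\alpha$ --- this is the whole column $\beta>0$. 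If $\alpha>0$, then $r^{-1}\in L_1(\mathbb R)$, and for $x\ge0$ one has $\int_{-\infty}^x r^{-1}\le 2/\alpha$ and $\int_x^\infty r^{-1}=\alpha^{-1}e^{-\alpha x}$, so $\nu(x)\le 4\alpha^{-4}e^{-\alpha|x|}\to0$ (using evenness of $r$ for $x<0$); hence $\nu<\infty$ and $\nu(x)\to0$, and \thmref{thm3.14} yields $G\in S_p^{(0)}$ for every $p$ and every $\beta$ --- the whole row $\alpha>0$. This leaves only the $2\times 2$ block $\alpha\le0$, $\beta\le0$.

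For the column $\alpha\le0$, $\beta=0$ (so $q\equiv1$) the plan is to apply \thmref{thm5.2}, whose hypotheses are easy here: $\varkappa_2\equiv0$ since $q'\equiv0$; $\hat d(x)=\sqrt{r(x)}=e^{\alpha|x|/2}$ is bounded; and, since $\int_x^{x+t}r'r^{-2}=r(x)^{-1}-r(x+t)^{-1}$, one gets $\varkappa_1(x)=\sup_{|t|\le 80\hat d(x)}|1-e^{-\alpha t}|\to0$ (identically $0$ if $\alpha=0$). The final clause of \thmref{thm5.2} then gives $B<\infty$ (because $\inf_x q=1>0$) together with failure of \eqref{3.1} (because $q\not\to\infty$), i.e. $G\in S_p$ but $G\notin S_p^{(0)}$ --- the whole column $\beta=0$ in the block.

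For $\alpha\le0$, $\beta<0$ one must instead show that \eqref{5.1} is \emph{not} correctly solvable, i.e. $B=\sup_x h(x)d(x)=\infty$. Two of the three cases are quick: if $\alpha=0$ then $\int_{x-a}^{x+a}e^{\beta|t|}dt\to0$ as $|x|\to\infty$ forces $m(a)=0$ for every $a$, so \thmref{thm2.24} gives $G\notin S_p$; if $\alpha<\beta<0$ then $\hat d(x)=e^{(\alpha-\beta)|x|/2}\to0$ and $\varkappa_1,\varkappa_2\to0$ just as above, so \thmref{thm5.2} applies and now yields $B=\infty$ because $\inf_x q=0$.

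The genuine obstacle is the last case, $\beta\le\alpha<0$, where $\hat d(x)$ is bounded or grows, so \thmref{thm5.2} is unavailable; here I would estimate $h$ and $d$ directly. Writing $a=-\alpha>0$ and $b=-\beta\ge a$, the plan is: for $x>0$ large, use $r^{-1}(t)=e^{at}$, $q(t)=e^{-bt}$ on $(0,\infty)$ to evaluate $F_1(d)=\int_{x-d}^x r^{-1}\cdot\int_{x-d}^x q$ at $d^{*}=\tfrac{b-a}{b}x+C$ (with a suitable constant $C=C(a,b)$, so that $0<x-d^{*}\to\infty$), check that $1-e^{-ad^{*}}\ge\tfrac12$, $1-e^{-bd^{*}}\ge\tfrac12$ and hence $F_1(d^{*})\ge e^{bC}/(4ab)\ge 1$, so that \thmref{thm5.1}\,(1) forces $d_1(x)\le d^{*}$ and therefore $x-d_1(x)\ge\tfrac ab x-C$. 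Then, by \eqref{2.12}, $h(x)=\bigl(\int_{x-d_1(x)}^{x+d_2(x)}q\bigr)^{-1}\ge\bigl(\int_{x-d_1(x)}^{\infty}e^{-bt}dt\bigr)^{-1}=b\,e^{b(x-d_1(x))}\ge c_0\,e^{ax}$; by evenness of the coefficients and continuity/positivity of $\rho$ (hence of $h$, via \eqref{2.14}) this upgrades to $h(x)\ge c\,e^{a|x|}$ for all $x\in\mathbb R$, so $r(x)h(x)=e^{-a|x|}h(x)\ge c>0$. Consequently $1=\int_{x-d(x)}^{x+d(x)}(rh)^{-1}\le 2d(x)/c$ gives $d(x)\ge c/2$, whence $h(x)d(x)\ge\tfrac{c^{2}}{2}e^{a|x|}\to\infty$: thus $B=\infty$ and $G\notin S_p$, completing the block and the table. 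The only real work beyond plugging into the theorems of \S2--\S3 is the bookkeeping in the choice of $C$ and in passing from these half-line bounds to bounds uniform in $x$.
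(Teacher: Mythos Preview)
Your proposal is correct, and for the bulk of the table ($\beta>0$ via \corref{cor3.8}; $\alpha>0$ via \thmref{thm3.14}; $\alpha=0,\beta<0$ via \thmref{thm2.24}; $\alpha<\beta<0$ via \thmref{thm5.2}) it matches the paper almost verbatim. The genuine divergence is in the sub-case $\beta\le\alpha<0$. The paper splits this further: for $\beta<\alpha<0$ it invokes \thmref{thm5.6} (together with \thmref{thm5.5} on the asymptotics of $u,v$) to show that correct solvability would force \eqref{5.29}, which in turn forces $\beta\ge\alpha$; for $\beta=\alpha<0$ it computes $d_1,d_2,\varphi,\psi,h,d$ exactly. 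You instead handle $\beta\le\alpha<0$ in one stroke by the elementary estimate $d_1(x)\le d^*$, $h(x)\ge c\,e^{a|x|}$, $r(x)h(x)\ge c$, $d(x)\ge c/2$, hence $B=\infty$. Your route is more self-contained --- it bypasses Theorems~\ref{thm5.5}--\ref{thm5.6} and the asymptotics of the principal solutions entirely --- at the price of the bookkeeping in the choice of $C=C(a,b)$ that you flag. The paper's route is shorter on computation and more conceptual (the obstruction is identified as failure of a known necessary condition), but requires those two additional theorems. A minor remark: your opening sentence that ``\eqref{1.3} follows from the explicit integrations'' is not literally true when $\alpha>0$ and $\beta<0$ (both $r^{-1}$ and $q$ lie in $L_1$), but this does not matter since in that cell you use \thmref{thm3.14}, which only needs \eqref{1.2} and \eqref{2.1}; in every cell where you actually invoke the auxiliary functions of \S2, \eqref{1.3} does hold. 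Also, for the cell $\alpha=\beta=0$ the paper uses \thmref{thm2.24} and \corref{cor3.9} rather than \thmref{thm5.2}; your use of \thmref{thm5.2} there (with $\varkappa_1=\varkappa_2\equiv0$) is equally valid.
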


\begin{proof}
Let us numerate the entries of matrix \eqref{5.31} in the usual
way and view them as instances of relations between $\alpha$ and
$\beta.$ We move along the rows of the matrix. Since in the case
of \eqref{5.1} the functions $r$ and $q$ are even, in all the
relations in the sequel, we only consider the case $x\ge0$
$(x\gg1)$.

\smallskip

\noindent $\underline{\text{Case}\ (1,1)\ (  \alpha<0,\ \beta<0)}$

Under conditions $(1,1)$ the hypotheses of \thmref{thm5.6} hold.
Therefore, the operator $G:L_p\to L_p,$ $p\in(1,\infty)$ can be
bounded only if \eqref{5.29} holds. In particular, we must have
the equality
\begin{equation}\label{5.32}
\infty=\int_0^\infty e^{\beta t}\left(\int_x^t
e^{-\alpha\xi}d\xi\right)dt\ \Rightarrow\
\beta\ge\alpha.\end{equation}

Below we consider cases \ a)~$\beta>\alpha$ and \
b)~$\beta=\alpha$ separately.

a) Let $\alpha<\beta<0$. In this case the hypotheses of
\thmref{thm5.2} hold, and therefore $B=\infty$ because
$\inf\limits_{x\in\mathbb R}q(x)=\inf\limits_{x\in\mathbb R}
e^{\beta|x|}=0.$ Thus $G\notin S$ by Theorems \ref{thm2.21} and
\ref{thm2.28}.

b) Let $\beta=\alpha<0.$ In this case $r=q,$ and one can compute
$h$ and $d$ directly. Thus the equation for $d_2(x)$ is of the
form (see \eqref{2.11})
$$1=\int_x^{x+d}e^{-\alpha\xi}d\xi\cdot\int_x^{x+d}e^{\alpha\xi}d\xi=\frac{(e^{|a|d}-1)(1-e^{-|\alpha|d})}
{\alpha^2},\quad d\ge0.$$ Hence $d_2(x)=c.$ To find $d_1(x)$, we
will first check that $d_1(x)\le x$ for all $x\gg1.$ Indeed, the
function $$F(x)=\int_0^x e^{-\alpha\xi}d\xi\cdot\int_0^x
e^{\alpha\xi}d\xi=\frac{e^{|\alpha|x}-e^{-|\alpha|x}-2}{\alpha^2}\to\infty$$
as $x\to\infty,$ and therefore $d_1(x)\le x$ for $x\gg1.$ Then
equation \eqref{2.11} for $d_1(x)$ is of the form:
$$1=\int_{x-d}^xe^{-\alpha\xi}d\xi\cdot\int_{x-d}^x
e^{\alpha\xi}d\xi=\frac{(1-e^{-|\alpha|d})(e^{|\alpha|d}-1)}{\alpha^2},\quad
d\ge0.$$ Hence $d_1(x)=d_2(x)=c.$ This easily implies the
equalities
$$\varphi(x)=ce^{|\alpha|\,|x|},\quad\psi(x)=ce^{|\alpha|\,|x|},\quad
h(x)=ce^{|\alpha|\,|x|},\quad d(x)=c.$$ Hence $B=\infty$ and
$G\notin S_p$, $p\in(1,\infty)$ by Theorems \ref{thm2.21} and
\ref{thm2.18}.

\smallskip

\noindent $\underline{\text{Case}\ (1,2)\ ( \alpha<0,\ \beta=0)}$

In this case $q(x)\equiv1,$ and therefore $G\in S_p$ by
\thmref{thm2.25}. We will use \thmref{thm5.2} to answer a more
subtle question on the inclusion $G\in S_p^{(0)}.$ It is easy to
see that in this case its hypotheses are satisfied, and equation
\eqref{3.1} does not hold because $q(x)\nrightarrow \infty$ as
$|x|\to\infty.$ Then $G\in S_p^{(0)}$, $p\in(1,\infty)$ by
\thmref{thm3.1}.

\newpage

\noindent $\underline{\text{Case}\ (1,3)\ ( \alpha<0,\ \beta>0)}$

In this situation conditions \eqref{1.2}--\eqref{1.3} hold and
$q(x)\to\infty$ as $|x|\to\infty.$ Then $G\in S_p^{(0)},$
$p\in(1,\infty)$ by Corollary \ref{cor3.8}.

\smallskip

\noindent $\underline{\text{Case}\ (2,1)\ ( \alpha=0,\ \beta<0)}$

Since $r\equiv1$ and $m(a)=0$, for any $a\in(0,\infty)$, we have
$G\notin S_p,$ $p\in(1,\infty)$ (see \thmref{thm2.24}).

\smallskip

\noindent $\underline{\text{Case}\ (2,2)\ ( \alpha=\beta=0)}$

Since $r\equiv q\equiv1,$ we have $G\in S_p,$ $p\in(1,\infty)$ by
\thmref{thm2.24}, and $G\notin S_p^{(0)}$, $p\in(1,\infty)$ by
Corollary \ref{cor3.9}.

\smallskip

\noindent $\underline{\text{Case}\ (2,3)\ ( \alpha=0,\ \beta>0)}$

We have $r\equiv1,$ $q(x)\to\infty$ as $|x|\to\infty.$ Hence $G\in
S_p^{(0)}\in(1,\infty)$ by Corollary \ref{cor3.8} or Corollary
\ref{cor3.9}.

\smallskip

\noindent $\underline{\text{Cases}\ (3,1); (3,2); (3,3)}$
$(\alpha>0$  and  $ \beta=0; \ \alpha>0$ {and} $ \beta>0,$ \
 respectively)

 All cases are treated in the same way. Clearly, $r^{-1}\in L_1,$
 $q>0$. Then $G\in S_p^{(0)}$, $p\in(1,\infty)$ by
 \thmref{thm3.13} or \thmref{thm3.14}.
 \end{proof}

 \section{Proofs of Otelbaev's Lemmas}

 In this section we present the proofs of Lemmas \ref{lem2.10},
 \ref{lem2.11}, \ref{lem2.15} and \ref{lem2.17} for the function
 $s(x)$ (see \eqref{2.19}). Assertions of such type (except for
 \lemref{lem2.17}, and with other auxiliary functions) were first
 applied by Otelbaev, and therefore we call them Otelbaev's Lemmas
 (see \cite{9}).

 \begin{proof}[Proof of \lemref{lem2.10}] Consider the function
 \begin{equation}\label{6.1}
 F(\eta)=\int_{x-\eta}^{x+\eta}\frac{dt}{r(t)\rho(t)},\qquad
 \eta\ge0.\end{equation}
 Clearly, the function $F(\eta)$ is continuous for $\eta\in[0,\infty;$
 $F(0)=0,$ $F(\infty)=\infty$ (see \eqref{2.10}), and
 $$F'(\eta)=\frac{1}{r(x+\eta)(\rho(x+\eta)}+\frac{1}{r(x-\eta)\rho(x-\eta)}>0.$$
 Therefore the second equation of \eqref{2.19} has a unique finite
 positive solution. Denote it by $s(x)$ and check that the
 function $s(x),$ $x\in\mathbb R$ is continuous. Towards this end,
 we show that the following inequality holds:
 \begin{equation}\label{6.2}
 |s(x+t)-sx)|\le|t|,\qquad |t|\le s(x),\qquad x\in\mathbb R.
 \end{equation}
 To check \eqref{6.2}, we have to consider two cases: \
 1)~$t\in[0,s(x)]$\ and \ 2)~$t\in[-s(x),0]$.

 They are treated in a similar way, and therefore below we only
 consider Case 1). Thus let $t\in[0,s(x)].$ Then we have the obvious
 inclusions
  $$[x-s(x),x+s(x)]\subseteq [(x+t)-(t+s(x)),(x+t)+(t+s(x))],$$
  $$[(x+t)-(s(x)-t),(x+t)+(s(x)-t)]\subseteq[x-s(x),x+s(x)],$$
 and therefore the following inequalities hold:

 \begin{equation*}
\left.\begin{array}{ll} \displaystyle{ 1=\int_{x-s(x)}^{x+s(x)}
\frac{d\xi}{r(\xi)\rho(\xi)} \le
\int_{(x+t-(t+s(x)))}^{(x+t)+(t+s(x))}
\frac{d\xi}{r(\xi)\rho(x)}},\\ \\ \displaystyle{
1=\int_{x-s(x)}^{x+s(x)} \frac{d\xi}{r(\xi)\rho(\xi)} \ge
\int_{(x+t)-(s(x)-t)}^{(x+t)+(s(x)-t)} \frac{d\xi}{r(\xi)\rho(x)}}
\end{array}\right\}\
\Rightarrow \ \left.\begin{array}{ll} \displaystyle{
s(x+t)\le t+s(x)}\\
\displaystyle{s(x+t)\ge s(x)-t}
\end{array}\right\}\ \Rightarrow \ \eqref{6.2}
\end{equation*}
{}From \eqref{6.2} it follows that $s(x),$ $x\in\mathbb R$ is
continuous.
\end{proof}

\begin{proof}[Proof of \lemref{lem2.11}] Let us rewrite
\eqref{6.2} in a different way:
\begin{equation}\label{6.3}
s(x)-|t|\le s(x+t)\le s(x)+|t|\quad\text{if}\quad |t|\le
s(x).\end{equation} Let $\xi=x+t.$ Then $t=\xi-x$, and in this
notation we obtain inequalities equivalent to \eqref{6.3}:
\begin{equation}\label{6.4}
s(x)-|\xi-x|\le s(\xi)\le s(x)+|\xi-x|\quad\text{if}\quad
|\xi-x|\le s(x).\end{equation}

Let $\varepsilon\in[0,1]$ and $|\xi-x|\le\varepsilon s(x).$ Then,
evidently, $|\xi-x|\le\varepsilon s(x)\le s(x)$, and \eqref{2.21}
follows from \eqref{6.4}:
$$s(x)-\varepsilon s(x)\le s(x)-|\xi-x|\le s(\xi)\le
s(x)+|\xi-x|\le s(x)+\varepsilon s(x).$$ Further, equalities
\eqref{2.23} are checked in the same way, and therefore below we
only consider the second one. We show that
$\varliminf\limits_{x\to\infty}(x-s(x))=\infty.$ Assume the
contrary. Then there exist a sequence $\{x_n\}_{n=1}^\infty$ such
that $x_n\to\infty$ as $n\to\infty$ and a number $c$ such that
$$x_n-s(x_n)\le c<\infty,\qquad n=1,2,\dots$$ From these
assumptions we obtain
$$1=\int_{x_n-s(x_n)}^{x_n+s(x_n)}\frac{dt}{r(t)\rho(t)}\ge
\int_c^{x_n}\frac{dt}{r(t)\rho(t)}\to\infty\quad\text{as}\quad
n\to\infty $$ (see \eqref{2.10}). Contradiction. Hence
$$\varliminf_{x\to\infty}(x-s(x))=\infty \ \Rightarrow\ \infty\le
\varliminf_{x\to\infty}(s-s(x))\le
\varlimsup_{x\to\infty}(x-s(x))\le\infty\ \Rightarrow$$
$$\varliminf_{x\to\infty}(x-s(x))=\varlimsup_{x\to\infty}(x-s(x))=\infty\
\Rightarrow\ \eqref{2.23}$$
\end{proof}

\begin{proof}[Proof of \lemref{lem2.15}] The assertion of the
lemma immediately follows from Lemmas \ref{lem2.10}, \ref{lem2.11}
and \ref{lem2.13}.
\end{proof}

\begin{proof}[Proof of \lemref{lem2.17}]
Below for $t\in [x,x+s(x)]$, we use \thmref{thm2.1}, \eqref{4.34},
\eqref{2.17} and \eqref{2.19}:
$$\frac{v'(t)}{v(t)}=\frac{1+r(t)\rho'(t)}{2r(t)\rho(t)}\le\frac{1}{r(t)\rho(t)},\quad
t\in[x,x+s(x)]\ \Rightarrow$$
$$ln\frac{v(x+s(x))}{v(x)}\le\int_x^{x+s(x)}\frac{dt}{r(t)\rho(t)}<\int_{x-s(x)}^{x+s(t)}\frac{dt}
{r(t)\rho(t)}=1.$$ Similarly,
$$ln \frac{v(x)}{v(x-s(x))}\le\int_{x-s(x)}^x\frac{dt}{r(t)\rho(t)}<\int_{x-s(x)}^{x+s(x)}\frac
{dt}{r(t)\rho(t)}=1.$$
 This gives the  inequalities of
\eqref{2.27}, for example:
$$e^{-1}\le\frac{v(x-s(x))}{v(x)}\le\frac{v(t)}{v(x)}\le\frac{v(x+s(x))}{v(x)}\le
e,\quad |t-x|\le s(x).$$ Inequalities \eqref{2.27} for the
function $\rho$    is a consequence of  the following inequalities
for $u$ and $v$:
$$c^{-1}\le \frac{\rho(t)}{\rho(x)}=\frac{u(t)}{u(x)}\
\frac{v(t)}{v(x)}\le c,\qquad |t-x|\le s(x).$$
\end{proof}

\end{document}